\newtheorem{theorem}{Theorem}[section]
\newtheorem{cor}[theorem]{Corollary}
\newtheorem{lemma}[theorem]{Lemma}
\newtheorem{prop}[theorem]{Proposition}
\newtheorem{remark}[theorem]{Remark}
\numberwithin{equation}{section}
\newcommand{\R}{\mathbb{R}}
\newcommand{\N}{\mathbb{N}}
\newcommand{\T}{\mathbb{T}}
\newcommand{\func}[3]{#1 : #2 \longrightarrow #3}
\newcommand{\disp}{\displaystyle}
\newcommand{\abs}[1]{\left|#1\right|}
\newcommand{\eps}{\varepsilon}
\newcommand{\norm}[1]{\left\|#1\right\|}
\newcommand{\normb}[1]{\big\|#1\big\|}
\renewcommand{\leq}{\leqslant}
\renewcommand{\geq}{\geqslant}
\renewcommand{\bar}{\overline}
\renewcommand{\tilde}{\widetilde}
\newcommand{\pa}[1]{\left(#1\right)}
\newcommand{\pab}[1]{\big(#1\big)}
\newcommand{\pabb}[1]{\Big(#1\Big)}
\newcommand{\cro}[1]{\left[#1\right]}
\newcommand{\br}[1]{\left\{#1\right\}}
\newcommand{\scalprod}[1]{\left\langle#1\right\rangle}
\newcommand{\scalprodb}[1]{\big\langle#1\big\rangle}
\newcommand\restr[2]{{
  \left.\kern-\nulldelimiterspace 
  #1 
  \right|_{ #2} 
  }}
\newcommand{\Sf}{\mathbb{S}^2}
\newcommand{\dd}{\mathrm{d}}            
\newcommand{\init}{{\hspace{0.4mm}\mathrm{in}}}
\newcommand{\ms}{\text{{\tiny MS}}}
\newcommand{\bb}{\text{{\tiny B}}}
\newcommand{\LL}{\text{{\tiny \bf L}}}
\newcommand{\KK}{\text{{\tiny \bf K}}}
\newcommand{\NuNu}{\text{{\tiny $\pmb{\nu}$}}}
\newcommand{\QQ}{\text{{\tiny \bf Q}}}
\newcommand{\TT}{\text{{\tiny \bf T}}}
\newcommand{\Ker}{\mathrm{ker\hspace{0.5mm}}}
\newcommand{\Span}{\textrm{Span}}
\newcommand{\ee}{\mathrm{e}}
\newcommand{\One}{\text{{\tiny (1)}}}
\newcommand{\Enn}{\text{{\tiny (N)}}}
\newcommand{\boldF}{\mathbf{F}}
\newcommand{\boldf}{\mathbf{f}}
\newcommand{\boldg}{\mathbf{g}}
\newcommand{\boldh}{\mathbf{h}}
\newcommand{\boldL}{\mathbf{L}}
\newcommand{\boldT}{\mathbf{T}}
\newcommand{\boldQ}{\mathbf{Q}}
\newcommand{\boldmu}{\pmb{\mu}}
\newcommand{\boldc}{\mathbf{c}}
\newcommand{\boldu}{\mathbf{u}}
\newcommand{\bepsM}{\mathbf{M^{\varepsilon}}}
\newcommand{\bmathepsM}{\bm{\mathcal{M}_{\varepsilon}}}
\newcommand{\bmathM}{\bm{\mathcal{M}}}
\newcommand{\bepsL}{\mathbf{L^{\varepsilon}}}
\newcommand{\bepsK}{\mathbf{K^{\varepsilon}}}
\newcommand{\bepsnu}{\pmb{\nu}^{\varepsilon}}
\newcommand{\bepsS}{\mathbf{{S^{\varepsilon}}}}
\newcommand{\bepsT}{\mathbf{T^{\varepsilon}}}
\newcommand{\epsM}{M^{\varepsilon}}
\newcommand{\mathM}{\mathcal{M}}
\newcommand{\epsL}{L^{\varepsilon}}
\newcommand{\epsK}{K^{\varepsilon}}
\newcommand{\epsnu}{\nu^{\varepsilon}}
\newcommand{\hilbertR}{L^2\big(\R^3,\pmb{\mu}^{-\frac{1}{2}}\big)}
\newcommand{\hilbertv}{L_v^2\big(\pmb{\mu}^{-\frac{1}{2}}\big)}
\newcommand{\hilbertTR}{L^2\big(\T^3\times\R^3,\pmb{\mu}^{-\frac{1}{2}}\big)}
\newcommand{\hilbertxv}{L_{x,v}^2\big(\pmb{\mu}^{-\frac{1}{2}}\big)}
\newcommand{\sobolevTR}[1]{H^{#1}\big(\T^3\times\R^3,\pmb{\mu}^{-\frac{1}{2}}\big)}
\newcommand{\sobolevxv}[1]{H_{x,v}^{#1}\big(\pmb{\mu}^{-\frac{1}{2}}\big)}
\newcommand{\sobolevx}[1]{H_x^{#1}\big(\bar{\boldc}^{-\frac{1}{2}}\big)}
\newcommand{\spaceR}{L_v^2\big(\R^3,\langle v\rangle^{\gamma/2}\pmb{\mu}^{-1/2}\big)}
\newcommand{\spacev}{L_v^2\big(\langle v\rangle^{\gamma/2}\pmb{\mu}^{-1/2}\big)}
\newcommand{\spacexv}{L_{x,v}^2\big(\langle v\rangle^{\gamma/2}\pmb{\mu}^{-\frac{1}{2}}\big)}
\newcommand{\spacex}[1]{H_x^{#1}\big(\boldc_{\infty}^{-\frac{1}{2}}\big)}
\newcommand{\fp}{f^{\prime}}
\newcommand{\fs}{f^*}
\newcommand{\fps}{f^{\prime *}}
\newcommand{\Mip}{{M_i^{\varepsilon}}^{\prime}}
\newcommand{\Mjps}{{M_j^{\varepsilon}}^{\prime *}}
\newcommand{\multideriv}{\partial^{\beta}_v\partial^{\alpha}_x}
\def\restriction#1#2{\mathchoice
              {\setbox1\hbox{${\displaystyle #1}_{\scriptstyle #2}$}
              \restrictionaux{#1}{#2}}
              {\setbox1\hbox{${\textstyle #1}_{\scriptstyle #2}$}
              \restrictionaux{#1}{#2}}
              {\setbox1\hbox{${\scriptstyle #1}_{\scriptscriptstyle #2}$}
              \restrictionaux{#1}{#2}}
              {\setbox1\hbox{${\scriptscriptstyle #1}_{\scriptscriptstyle #2}$}
              \restrictionaux{#1}{#2}}}
\def\restrictionaux#1#2{{#1\,\smash{\vrule height .8\ht1 depth .85\dp1}}_{\,#2}}
\def\namedlabel#1#2{\begingroup
    #2%
    \def\@currentlabel{#2}%
    \phantomsection\label{#1}\endgroup
}
\def\signmarc{\bigskip \begin{center} {
{\sc Marc Briant}\par\vspace{3mm}
Universit\'e de Paris, Universit\'e Paris Descartes\par \vspace{1mm}
Laboratoire MAP5, CNRS UMR 8145 \par \vspace{1mm}
F-75006 Paris, FRANCE \par
\vspace{3mm}
{\sc e-mail:}} \tt{briant.maths@gmail.com} \end{center}}
\def\signandrea{\bigskip \begin{center} {
{\sc Andrea Bondesan} \par\vspace{3mm}
Universit\'e de Paris, Universit\'e Paris Descartes\par \vspace{1mm}
Laboratoire MAP5, CNRS UMR 8145 \par \vspace{1mm}
F-75006 Paris, FRANCE \par

\vspace{3mm}
Sorbonne Universit\'e,\par \vspace{1mm}
Laboratoire Jacques-Louis Lions, CNRS UMR 7598 \par \vspace{1mm}
F-75005 Paris, FRANCE \par

\vspace{3mm}
{\sc e-mail:}} \tt{andrea.bondesan@parisdescartes.fr} \end{center}}
\begin{document} 

\title[Multi-species Boltzmann to Maxwell-Stefan]{Stability of the Maxwell-Stefan system\\in the diffusion asymptotics of the\\Boltzmann multi-species equation}
\author{Andrea Bondesan and Marc Briant}

\begin{abstract}
We investigate the diffusion asymptotics of the Boltzmann equation for gaseous mixtures, in the perturbative regime around a local Maxwellian vector whose fluid quantities solve a flux-incompressible Maxwell-Stefan system. Our framework is the torus and we consider hard-potential collision kernels with angular cutoff. As opposed to existing results about hydrodynamic limits in the mono-species case, the local Maxwellian we study here is not a local equilibrium of the mixture due to cross-interactions. By means of a hypocoercive formalism and introducing a suitable modified Sobolev norm, we build a Cauchy theory which is uniform with respect to the Knudsen number $\eps$. In this way, we shall prove that the Maxwell-Stefan system is stable for the Boltzmann multi-species equation, ensuring a rigorous derivation in the vanishing limit $\eps\to 0$.
\end{abstract}

\maketitle

\vspace*{10mm}

\noindent \textbf{Keywords:} Kinetic theory of gases, Boltzmann multi-species equation, Maxwell-Stefan system, hydrodynamic limit, perturbative setting, hypocoercivity. 


\tableofcontents

\section{Introduction}


The multi-species Boltzmann system is an extension of the standard Boltzmann mono-species equation \cite{Cer1,CerIllPul,Vil2}, adapted to the case where the particles constituting the rarefied gas are of different kinds. More precisely, a gaseous mixture composed of $N\geq 2$ different species of chemically non-reacting monoatomic particles, having atomic masses $(m_i)_{1\leq i\leq N}$ and evolving on the $3$-dimensional torus $\T^3$, can be modelled by means of a distribution function $\boldF=(F_1,\ldots,F_N)$, where $F_i=F_i(t,x,v)$ describes the evolution of the $i$-th species of the mixture and satisfies, for any $1\leq i\leq N$, the equation of Boltzmann type
\begin{equation}\label{multiBE}
\partial_t F_i + v\cdot \nabla_x F_i = Q_i(\boldF,\boldF) \quad \textrm{on } \R_+\times\T^3\times\R^3,
\end{equation}
with a given initial data
$$F_i(0,x,v) = F_i^\init(x,v),\quad x\in\T^3,\ v\in\R^3.$$
We mention in particular that one can derive this type of equations from Newtonian mechanics, at least formally, in the case of a single species gas \cite{Cer1,CerIllPul}. The rigorous derivation of the mono-species Boltzmann equation from Newtonian laws has by now only been proved locally in time (see \cite{Lan,IllPul2,IllPul3} and, more recently, \cite{GalSaiTex,PulSafSim}).  

\smallskip
Throughout the article, $N$-vectors (or vector-valued functions) will be denoted by bold letters, while the corresponding indexed letters will indicate their components. For example, $\mathbf{W}$ represents the vector or vector-valued function $(W_1,\ldots ,W_N)$.

\smallskip
The Boltzmann operator $\boldQ=(Q_1,\ldots, Q_N)$ is given for any $1\leq i\leq N$ by 
$$Q_i(\boldF,\boldF) = \sum\limits_{j=1}^N Q_{ij}(F_i,F_j),$$
where $Q_{ij}$ models the interactions between particles of either the same ($i=j$) or of different~($i\neq j$) species and is local in time and space. We focus on binary and elastic collisions, meaning that if two particles of different species of respective atomic masses $m_i$ and~$m_j$ collide with velocities $v'$ and $v'_*$, then the shape of their post-collisional velocities $v$ and $v_*$ is prescribed by the conservation of momentum and kinetic energy
\begin{equation}\label{elasticcollision}
m_iv + m_jv_*= m_iv' + m_jv'_*,\qquad \frac{1}{2}m_i |v|^2 + \frac{1}{2}m_j |v_*|^2 = \frac{1}{2}m_i |v'|^2 + \frac{1}{2}m_j|v'_*|^2.
\end{equation}
We point out that, unlike the mono-species case where $N=1$ and so $m_i=m_j=m$, we can see here an asymmetry in the role played by $v$ and $v_*$, due to the different masses of the species. This issue will be of primary interest in our work. The bi-species collision operators then read, for any $1\leq i,j\leq N$,
$$Q_{ij}(F_i,F_j)(v) =\int_{\R^3\times \mathbb{S}^{2}}B_{ij}\left(|v - v_*|,\cos\vartheta\right)\left[F_i'F_j^{\prime *} - F_iF_j^*\right]\dd v_*\dd \sigma,$$
where the shorthand notations $F_i'=F_i(v')$, $F_i=F_i(v)$, $F_j^{\prime *}=F_j(v'_*)$ and $F_j^*=F_j(v_*)$ are used with the definitions
$$\left\{ \begin{array}{rl} \displaystyle{v'} & \displaystyle{=\frac{m_iv+m_jv_*}{m_i+m_j} +  \frac{m_j}{m_i+m_j}|v-v_*|\sigma,}
\\[6mm]
\displaystyle{v' _*}&\displaystyle{=\frac{m_iv+m_jv_*}{m_i+m_j} - \frac{m_i}{m_i+m_j}|v-v_*|\sigma,} \end{array}\right.\qquad \cos\vartheta =  \frac{(v-v_*)\cdot \sigma}{\abs{v-v_*}}.$$
In particular, the cross-sections $B_{ij}$ model the physics of the binary collisions between particles. Here we shall focus on cutoff Maxwellian, hard-potential and hard-sphere collision kernels. Namely, let us make the following assumptions on each $B_{ij}$, $i$ and~$j$ being fixed.
\begin{enumerate}
\item[(H1)] It satisfies a symmetry property with respect to the interchange of both species indices~$i$ and $j$
$$ B_{ij}(|v-v_*|,\cos\vartheta)=B_{ji}(|v-v_*|,\cos\vartheta),\quad \forall v,v_*\in\R^3,\ \forall \vartheta\in\R. $$
\item[(H2)] It decomposes into the product of a kinetic part $\Phi_{ij}\geq 0$ and an angular part $b_{ij}\geq 0$, namely
$$ B_{ij}(|v-v_*|,\cos\vartheta)=\Phi_{ij}(|v-v_*|)b_{ij}(\cos\vartheta),\quad \forall v,v_*\in\R^3,\ \forall\ \vartheta\in\R. $$
\item[(H3)] The kinetic part has the form of hard or Maxwellian ($\gamma = 0$) potential, \textit{i.e.}
$$ \Phi_{ij}(|v-v_*|)=C_{ij}^{\Phi}|v-v_*|^{\gamma},\quad C_{ij}^{\Phi}>0,\quad \gamma\in [0,1],\quad \forall v,v_*\in\R^3. $$
\item[(H4)] For the angular part, we consider a strong form of Grad's angular cutoff \cite{Gra1}. We assume that there exists a constant $C >0$ such that
$$ 0 < b_{ij}(\cos\vartheta)\leq C|\sin\vartheta||\cos\vartheta|,\qquad b_{ij}^{\prime}(\cos\vartheta)\leq C,\qquad \vartheta\in [0,\pi]. $$
Furthermore, we assume that
$$\inf_{\sigma_1,\sigma_2\in\Sf}\int_{\Sf}\min\{b_{ii}(\sigma_1\cdot\sigma_3),b_{ii}(\sigma_2\cdot\sigma_3)\}\dd \sigma_3 >0.$$
\end{enumerate}
Note that the above hypotheses on the collision kernels are standard in both the multi-species setting \cite{DauJunMouZam,BriDau} and the mono-species one \cite{BarMou,Mou1}, and are assumed to hold in order to derive suitable spectral properties on the linear operator. Assumption (H1) translates the idea that the collisions are micro-reversible. Assumption (H2) is only made for the sake of clarity (even though it is commonly used in a lot of physical models) and could probably be relaxed at the price of technicalities. Assumption (H3) is proper to collision kernels that come from interaction potentials behaving like power-laws. Finally, the positivity assumption on the integrals appearing in (H4) is satisfied by most physical models and is required to obtain an explicit spectral gap estimate in the mono-species case \cite{BarMou,Mou1}, thus becoming a prerequisite to establish explicit computations of the spectral gap also in the multi-species setting \cite{BriDau}.

\smallskip
The first \textit{a priori} laws one can extract \cite{DesMonSal,BouGreSal2,DauJunMouZam} from $\eqref{multiBE}$ are the conservation, over time $t\geq 0$, of the quantities 
\begin{equation}\label{conservationlaws}
\begin{split}
\ & c_{i,\infty} = \int_{\T^3\times\R^3} F_i(t,x,v)\dd x\dd v,
\\[2mm] \ & \rho_\infty u_{\infty} = \sum\limits_{i=1}^N\int_{\T^3\times\R^3} m_ivF_i(t,x,v)\dd x\dd v,
\\[2mm]\ & \frac{3}{2} \rho_\infty \theta_{\infty} = \sum\limits_{i=1}^N\int_{\T^3\times\R^3} \frac{m_i}{2}\abs{v-u_\infty}^2F_i(t,x,v)\dd x\dd v,
\end{split}
\end{equation}
where $c_{i,\infty}$ stands for the number of particles of species $i$, and we have denoted the total mass of the mixture $\rho_\infty = \sum_{i=1}^N m_i c_{i,\infty}$, its total momentum $\rho_\infty u_\infty$ and its total energy~$\frac{3}{2}\rho_\infty \theta_\infty$. The lack of symmetry mentioned before clearly appears here, since only the total momentum and energy of the gas are preserved, as opposed to a preservation of the momentum and energy of each single species.

The second important feature is that the operator $\boldQ=(Q_1,\dots,Q_N)$ also satisfies a multi-species version of the classical $H$-theorem \cite{DesMonSal}, from which one deduces that the only distribution functions satisfying $\boldQ(\boldF,\boldF)=0$ are given by the local Maxwellian vectors~$\pa{M_{(c_i,u,\theta)}}_{1\leq i \leq N}$, having the specific form
\begin{equation}\label{local equilibrium}
M_{(c_i,u,\theta)}(t,x,v) = c_i(t,x)\pa{\frac{m_i}{2\pi \theta(t,x)}}^{3/2}e^{-m_i\frac{\abs{v-u(t,x)}^2}{2\theta(t,x)}},\quad t\geq 0,\ x\in\T^3,\ v\in\R^3,
\end{equation}
for some functions $\mathbf{c}=(c_i)_{1\leq i\leq N}$, $u$ and $\theta$. These are the fluid quantities associated to $\mathbf{M}$ and satisfy, for any $(t,x)\in\R_+\times\T^3$, the relations
\begin{eqnarray*}
c_i(t,x) &=& \int_{\R^3} M_{(c_i,u,\theta)}(t,x,v) \dd v,\quad 1\leq i\leq N,
\\[2mm] \sum\limits_{i=1}^N m_i c_i(t,x) u(t,x) &=& \sum\limits_{i=1}^N\int_{\R^3} m_i v M_{(c_i,u,\theta)}(t,x,v)\dd v
\\[2mm] \frac{3}{2}\sum\limits_{i=1}^N m_i c_i(t,x) \theta(t,x) &=& \sum\limits_{i=1}^N\int_{\R^3} \frac{m_i}{2} \abs{v-u(t,x)}^2 M_{(c_i,u,\theta)}(t,x,v)\dd v.
\end{eqnarray*}
In particular we stress again the fact that, in contrast with the case where only one gas is considered, in the multi-species framework not all local Maxwellian vectors are local equilibrium states for the mixture, since the relation $\boldQ(\boldF,\boldF)=0$ is satisfied if and only if the local bulk velocity $u(t,x)$ and temperature $\theta(t,x)$ are the same for each species.

\smallskip
Going further, since we work in $\T^3$, one can also prove that the only global equilibrium of the mixture, \textit{i.e.} the unique stationary solution $\boldF$ to $\eqref{multiBE}$, is given by the global Maxwellian vector $\pa{M_{(c_{i,\infty},u_\infty,\theta_\infty)}}_{1\leq i\leq N}$ whose fluid quantities satisfy the relations $\eqref{conservationlaws}$. It is defined, for any $1\leq i\leq N$, by
\begin{equation*}
M_{(c_{i,\infty},u_\infty,\theta_\infty)}(v) = c_{i,\infty}\pa{\frac{m_i}{2\pi \theta_\infty}}^{3/2}e^{-m_i\frac{\abs{v-u_\infty}^2}{2\theta_\infty}},\quad \forall v\in\R^3.
\end{equation*}
In particular, without loss of generality, in what follows we shall consider as unique global equilibrium of the mixture the $N$-vector $\boldmu=(\mu_1,\ldots,\mu_N)$, defined componentwise as
\begin{equation}\label{mu}
\mu_i(v) = c_{i,\infty}\pa{\frac{m_i}{2\pi}}^{3/2}e^{-m_i\frac{\abs{v}^2}{2}},\quad \forall v\in\R^3,
\end{equation}
and obtained by a translation and a dilation of the coordinate system, which allow to choose~$u_\infty=0$ and $\theta_\infty=1$.

\smallskip
The Cauchy theory and the trend to equilibrium for solutions of equation $\eqref{multiBE}$ studied in a perturbative setting around the global Maxwellian state $\eqref{mu}$ have been shown in $L^\infty_x\big(\T^3;L^1_v(\R^3)\big)$ with polynomial weights \cite{BriDau} and in $L^\infty_{x,v}\big(\T^3\times\R^3\big)$ with exponential and polynomial weights \cite{Bri2}. In the present work we are interested in studying the diffusion limit of equation $\eqref{multiBE}$. More precisely, we consider the following scaled version of $\eqref{multiBE}$, given for any $1\leq i\leq N$ by
\begin{equation}\label{multi BE scaled}
\partial_t F_i^\eps + \frac{1}{\eps}v\cdot\nabla_xF_i^\eps = \frac{1}{\eps^2}Q_i(\boldF^\eps,\boldF^\eps) \quad \textrm{on }\R_+\times\T^3\times\R^3,
\end{equation}
and we investigate the behaviour of the fluid quantities $c_i^\eps(t,x)$, $u_i^\eps(t,x)$ and $\theta_i^\eps(t,x)$ associated to each distribution $F_i^\eps$, when the scaling parameter $\eps>0$ vanishes. A first formal derivation \cite{BouGreSal2} showed that, in the case where $\mathbf{F^\eps} = \pab{M_{(c_i^\eps,u_i^\eps,T)}}_{1\leq i \leq N}$ is a local Maxwellian vector with constant temperature $\bar{\theta} >0$ and $\pa{c^\eps_i,u^\eps_i}_{1\leq i \leq N}$ converge towards $\pa{c_i,u_i}_{1\leq i \leq N}$, the limit macroscopic quantities are solutions, on $\R_+\times\T^3$, to the flux-incompressible Maxwell-Stefan system
\begin{gather}
\partial_t c_i + \nabla_x \cdot \pa{c_i u_i} = 0, \label{MS mass}
\\[2mm] - \nabla_x c_i = \sum\limits_{j=1}^N c_ic_j\frac{u_i - u_j}{\Delta_{ij}}, \label{MS momentum}
\\[2mm]\quad \partial_t \pa{\sum\limits_{i=1}^N c_i} = 0, \quad \nabla_x\cdot \pa{\sum\limits_{i=1}^N c_i u_i} =0,\label{MS incompressibility}
\end{gather}
where $\Delta_{ij}$ are symmetric (with respect to the species indices) positive constants only depending on $\bar{\theta}$, the masses $(m_i)_{1\leq i\leq N}$ and the collision kernels $(B_{ij})_{1\leq i,j\leq N}$. Note that the incompressibility is to be understood in the whole and not for each species. At first sight, the structure of the limit equations looks rather different from the Navier-Stokes limit of the Boltzmann mono-species equation ($N=1$). However, as showed in \cite{BouGreSal2}, the momentum balance equation appears partly at order $\eps^2$ under the explicit form
$$\eps^2\frac{m_i}{\bar{\theta}}\cro{\partial_t\pa{c_i^\eps u_i^\eps} + \nabla_x\cdot \pa{c_i^\eps u_i^\eps \otimes u_i^\eps}} + \nabla_x c_i^\eps = \sum\limits_{j=1}^N c_i^\eps c_j^\eps \frac{u_i^\eps - u_j^\eps}{\Delta_{ij}}, $$
and we actually see the Navier-Stokes structure showing up, without viscous terms because the solution is supposed to be a local Maxwellian and therefore the interactions between the microscopic part and the fluid quantities are not taken into account. 

In particular, the Maxwell-Stefan system \eqref{MS mass}--\eqref{MS momentum}--\eqref{MS incompressibility} is of core importance in physics and biology, since it is used to model the evolution of diffusive phenomena in mixtures \cite{Max,Ste,TDBCH,Cha,BouGotGre}. As such, its derivation from the kinetic equations is of great interest from both a mathematical and a physical point of view. 

As already underlined, the problem at hand is reminiscent of the hydrodynamic limit of the mono-species Boltzmann equation, towards the incompressible Navier-Stokes system. Therefore, let us first give a brief description of the strategies which have been developed in this context. We emphasize that the list is not exhaustive, does not concern any other type of hydrodynamic limit (such as Euler equations or acoustics) and we refer to \cite{StRay,Gol1} for more references and discussions.
\par The formal derivation of the Navier-Stokes limit from the mono-species Boltzmann equation came from a series of articles called the Bardos-Golse-Levermore (BGL) program \cite{BGL1,BGL2}. It has been made rigorous in various ways, always with the help of a Taylor expansion of the solution with respect to the parameter $\eps$ \cite{Gra}. One could look at perturbative solutions around a global Maxwellian $\mu(v) = c_{\infty}/(2\pi \theta_{\infty})^{3/2} \exp\br{-\frac{\abs{v}^2}{2\theta_{\infty}}}$, that is to say solutions of the form $F^\eps = \mu + \eps f^\eps$, and study their limit when $\eps$ tends to 0. This has been done by describing the spectrum of the linear operator $L(f^\eps) = \eps^{-2}\pab{Q(\mu,f^\eps) + Q(f^\eps,\mu)}$ \cite{EllPin,BarUka}, by directly tackling the Cauchy problem for $f^\eps$ in Sobolev spaces to get convergence results on the fluid quantities of $f^\eps$ in the setting of renormalised solutions \cite{GolStRay1,GolStRay2,LevMas} or by using hypocoercivity techniques \cite{Guo3,Bri1}. Another strategy is to investigate the stability of the Boltzmann equation not around a global equilibrium, but around a local Maxwellian whose fluid quantities solve the limit macroscopic system. In this spirit, De Masi, Esposito and Lebowitz \cite{DMEL} studied solutions of the form $F^\eps = M_{(c_{\infty},\eps u,\theta_{\infty})}+ \eps f^\eps$ where $u=u(t,x)$ is a smooth solution of the perturbed incompressible Navier-Stokes equation, around the constant macroscopic equilibrium $(c_{\infty},0,\theta_{\infty})$. The interest of this method, initiated by Caflisch \cite{Caf1} for the Euler limit of the Boltzmann equation, is that the main order term already encodes the limit system and one can thus study the fluid perturbations explicitly. 

\smallskip
For the time being, only formal derivations of the Maxwell-Stefan system from the Boltzmann multi-species equation have been obtained \cite{BouGreSal2,HutSal1,BouGrePav} and, unlike the BGL program \cite{BGL2} for the mono-species case, they do not provide any convergence of the fluid quantities given an \textit{a priori} convergence in distribution. Note moreover that in the context of mixtures other formal hydrodynamic limits have been also recently derived in a non-dissipative regime \cite{BasEspLebMar, BisDes2,BiaDog2,BarBisBruDes} and in a stationary regime for two species in a slab \cite{Bru1,Bru2} but, up to our knowledge, no rigorous convergences have been provided yet. This work aims at filling this gap.

\smallskip
\par The strategy of studying close-to-global equilibrium solutions offers bounds (and thus compactness) in Sobolev spaces, but has to be transferred into a convergence of the nonlinear moments. We therefore chose to investigate the hydrodynamic limit of the multi-species Boltzmann equation in the spirit of \cite{Caf1,DMEL}. More precisely, we construct solutions in Sobolev spaces to the perturbed system $\eqref{multi BE scaled}$ of the form $\boldF^\eps =  \bepsM+ \eps \boldf^\eps$, where the local Maxwellian state $\bepsM=(\epsM_1,\ldots,\epsM_N)$ is given for any $1\leq i\leq N$ by
\begin{equation}\label{perturbed solution}
\epsM_i(t,x,v) = c_i(t,x) \pa{\frac{m_i}{2\pi \bar{\theta}}}^{3/2} \exp\br{-m_i\frac{|v-\eps u_i(t,x)|^2}{2 \bar{\theta}}},\quad t\geq 0,\ x\in\T^3,\ v\in\R^3,
\end{equation}
and its fluid quantities $(c_i,u_i)_{1\leq i \leq N}$ are perturbative solutions of the incompressible Maxwell-Stefan system \eqref{MS mass}--\eqref{MS momentum}--\eqref{MS incompressibility}, whose global existence and exponential relaxation to equilibrium have been recently obtained by the authors in \cite{BonBri}.

We however differ from \cite{DMEL} in several ways. First of all, the local Maxwellian $\eqref{perturbed solution}$ is not an equilibrium state for the mixture, meaning in particular that the relation $\boldQ(\bepsM,\bepsM)=0$ does not hold true in our setting. Secondly, even though we only deal with the case of a fixed temperature $\bar{\theta}$, we do not ask~$c_i(t,x)$ to be constant, but we shall consider perturbations of a constant state \cite{BonBri}. At last, we shall not use higher order fluid expansions, but we are able to develop a hypocoercive strategy in the spirit of \cite{MouNeu,Bri1}, by separating the leading order of the limit Maxwell-Stefan components from the full microscopic and fluid part perturbations. The idea is indeed natural, remembering that the Maxwell-Stefan system is obtained when one only looks at the interactions at the first order in $\eps$. 

\smallskip
\par We plug the perturbation $\boldF^\eps =  \bepsM+ \eps \boldf^\eps$ into the rescaled Boltzmann system $\eqref{multi BE scaled}$ and we obtain the perturbed equation
\begin{equation}\label{perturbed BE}
\partial_t \boldf^\eps + \frac{1}{\eps}v\cdot \nabla_x \boldf^\eps = \frac{1}{\eps^2}\mathbf{L^\eps}(\boldf^\eps) + \frac{1}{\eps}\mathbf{Q}(\boldf^\eps,\boldf^\eps) + \bepsS,
\end{equation}
where the linear operator $\bepsL=(\epsL_1,\ldots,\epsL_N)$ is given for any $1\leq i\leq N$ by
\begin{equation}\label{Leps}
L^\eps_i(\boldf^\eps) = \sum\limits_{j=1}^N \pabb{Q_{ij}(\epsM_i,f_j^\eps) + Q_{ij}(f_i^\eps,\epsM_j)},
\end{equation}
and the source term $\bepsS$ encodes the distance between the Maxwell-Stefan system and the fluid part of the perturbed Boltzmann equation, and is defined through the relation
\begin{equation}\label{Seps}
\bepsS = \frac{1}{\eps^3}\boldQ(\bepsM,\bepsM) - \frac{1}{\eps} \partial_t \bepsM- \frac{1}{\eps^2}v\cdot \nabla_x\bepsM.
\end{equation}
The pertubative setting \eqref{perturbed BE}--\eqref{Leps}--\eqref{Seps} is classical in the case of the Boltzmann mono-species equation. Usually, the strategy consists in proving that the linear operator $\mathbf{L^\eps}$ is self-adjoint in some space, with non-trivial null space, and features a spectral gap. In fact, the spectral gap allows to get strong negative feedback from the microscopic part~(orthogonal to~$\Ker\bepsL$) of the solution and therefore one is only left with finding a way to control the kernel part of the solution, in order to close the energy estimates. This can be achieved either by investigating the fluid equations \cite{Guo3,BriDau} or by using hypocoercive norms \cite{MouNeu,Vil4,Bri1} which generate a complete negative return in higher Sobolev norms, \textit{via} the commutator $[v\cdot \nabla_x , \nabla_v] = -\nabla_x$.
\par Unfortunately, the central spectral gap property essentially comes from the fact that, in the mono-species case, any local Maxwellian is a local equilibrium. As soon as $N >1$ we have seen that a local Maxwellian vector is an equilibrium of the system of equations $\eqref{multi BE scaled}$ only if each component shares the same velocity and temperature $\eqref{local equilibrium}$. This is not the case for the local Maxwellian state $\eqref{perturbed solution}$ we consider, where each species evolves at its own speed~$\eps u_i$. Not only one looses the self-adjointness of the linear operator $\bepsL$ in the usual $L^2$ space weighted by the local Maxwellian $\bepsM$, but also the spectral gap property. However, it has been recently showed in \cite{BonBouBriGre} that if one does not have a spectral gap, it is still possible to recover a negative return on the fluid part by adding an error term of order $\eps$, as long as the velocities $(\eps u_i)_{1\leq i\leq N}$ remain small, of order $\eps$. Our contribution aims at extending this perturbative result to the case of more general densities $c_i(t,x)$, in particular to the case of perturbative solutions of the Maxwell-Stefan system, where the loss of the spectral gap remains at a lower order of magnitude. We then construct a new modified Sobolev norm which allows to recover a coercivity property and to close the energy estimates on the nonlinear terms. This is an extension and an adaptation of the hypocoercivity methods developed in \cite{MouNeu,Bri1}, where fine controls over the $\eps$-dependencies of the coefficients act to counter-balance the out-of-equilibrium property of our system, as well as the interactions with the non constant fluid quantities appearing in the weight. We emphasize again that this is achieved by looking at the perturbative solutions of the Maxwell-Stefan system constructed in \cite{BonBri}. Such a setting also appears in standard studies in the mono-species case, where one only recovers perturbative Leray solutions of the incompressible Navier-Stokes equation. Besides, the perturbative setting for the Maxwell-Stefan system proves itself to be exactly the one needed to control the problematic source term $\bepsS$. In particular, we underline once more that the perturbative setting we look at extends, to a non constant density and a non-zero momentum macroscopic equilibria (even though the perturbations we consider are small), the previous results of Caflisch \cite{Caf1} and De Masi, Esposito and Lebowitz \cite{DMEL} in the mono-species case, and provides a hypocoercive approach to their framework. 


The rest of the paper is structured as follows. In Section \ref{sec:Main results} we introduce the notations, we state our main theorem and we also provide a thorough description of the methods and strategies built up to prove it. Using the expansion $\boldF^\eps=\bepsM +\eps\boldf^\eps$ around the local Maxwellian vector $\bepsM$ given by $\eqref{perturbed solution}$ we establish in Section \ref{sec:perturbed BE} global existence and uniqueness of the fluctuations $\boldf^\eps$, which are solutions to the perturbed Boltzmann equation $\eqref{perturbed BE}$. The uniform in $\eps$ \textit{a priori} estimates eventually allow us to prove a stability result which tells that the distribution function $\boldF^\eps$ remains close to the local Maxwellian $\bepsM$ up to an order $\eps$, thus ensuring a rigorous derivation of the Maxwell-Stefan system \eqref{MS mass}--\eqref{MS momentum}--\eqref{MS incompressibility} from the Boltzmann multi-species equation $\eqref{perturbed BE}$. In particular, our strategy exploits the hypocoercive structure of $\eqref{perturbed BE}$, following the works \cite{MouNeu,Bri1}. This consists in proving a series of \textit{a priori} estimates satisfied by the operators~$\bepsL$ and $\boldQ$, and by the source term $\bepsS$. While in Section \ref{sec:perturbed BE} we only introduce these fundamental results, all their technical proofs are finally collected in Section \ref{sec:technical proofs}.
\bigskip

\section{Main result}\label{sec:Main results}

\subsection{Notations and Conventions}
Let us begin by detailing all the notations that we use throughout the article. Recalling that we denote $\mathbf{W}=(W_1,\ldots,W_N)$ any vector or vector-valued operator belonging to $\R^N$, we shall use the symbol $\mathbf{1}$ to name the specific vector $(1,\dots,1)$. Henceforth, the multiplication of $N$-vectors has to be understood in a component by component way, so that for any $\mathbf{w}, \mathbf{W}\in\R^N$ and any $q\in\mathbb{Q}$ we have
\begin{equation*}
\mathbf{w}\mathbf{W} = (w_i W_i)_{1\leq i\leq N},\quad \mathbf{W}^q=(W_i^q)_{1\leq i\leq N}.
\end{equation*}
Moreover, we introduce the Euclidean scalar product in $\R^N$, weighted by a vector $\mathbf{W}$, which is defined as
$$\langle \mathbf{f},\mathbf{g}\rangle_{\mathbf{W}}= \sum\limits_{i=1}^Nf_ig_i W_i,$$
and induces the norm $\norm{\boldf}_{\mathbf{W}}^2=\scalprod{\boldf,\boldf}_{\mathbf{W}}$. In particular, when $\mathbf{W}=\mathbf{1}$, the index $\mathbf{1}$ will be dropped in both the notations for the scalar product and the norm. At last, note that we shall also make use of the following shorthand notation
$$\langle v \rangle = \sqrt{1+\abs{v}^2},$$
not to be confused with the scalar product defined above.

\smallskip
The convention we choose for the functional spaces is to index the space by the name of the concerned variable. For $p\in [1,+\infty]$ we have
$$L^p_{t} = L^p (0,+\infty),\quad L^p_x = L^p\left(\T^3\right), \quad L^p_v = L^p\left(\R^3\right).$$
Consider now some positive measurable vector-valued functions $\mathbf{w}:\T^3\to(\R_+^*)^N$ in the variable $x$ and $\func{\mathbf{W}}{\R^3}{(\R_+^*)^N}$ in the variable $v$. For any $1\leq i\leq N$, we define the weighted Hilbert spaces $L^2(\T^3,w_i)$ and $L^2(\R^3, W_i)$ by introducing the respective scalar products and norms
\begin{equation*}
\begin{array}{lll}
\displaystyle \langle c_i,d_i\rangle_{L_x^2(w_i)}=\int_{\T^3}c_i d_i w_i^2\dd x,  &\ \  \| c_i\|^2_{L_x^2(w_i)}=\langle c_i,c_i\rangle_{L_x^2(w_i)}, &\ \  \forall c_i,d_i\in L^2(\T^3,w_i),
\\ \\  \displaystyle \langle f_i,g_i\rangle_{L_v^2(W_i)}=\int_{\R^3}f_i g_i W_i^2\dd v,  &\ \  \| f_i\|^2_{L_v^2(W_i)}=\langle f_i,f_i\rangle_{L_v^2(W_i)},  &\ \  \forall f_i,g_i\in L^2(\R^3,W_i).
\end{array}
\end{equation*}
With these definitions, we say that $\boldc:\T^3\to\R^N$ belongs to $L^2(\T^3,\mathbf{w})$ and that $\boldf:\R^3\to\R^N$ belongs to $L^2(\R^3, \mathbf{W})$ if and only if $c_i:\T^3\to\R\in L^2(\T^3,w_i)$ and $f_i :\R^3\to\R\in L^2(\R^3,W_i)$ for any $1\leq i\leq N$. The weighted Hilbert spaces $L^2(\T^3,\mathbf{w})$ and $L^2(\R^3,\mathbf{W})$ are therefore endowed with the induced scalar products and norms
\begin{gather*}
\scalprod{\boldc,\mathbf{d}}_{L^2_x(\mathbf{w})} = \sum_{i=1}^N\langle c_i,d_i\rangle_{L_x^2(w_i)},\qquad \|\boldc\|_{L_x^2(\mathbf{w})}=\left(\sum_{i=1}^N\| c_i\|^2_{L_x^2(w_i)}\right)^{1/2},
\\[3mm]  \langle\boldf,\mathbf{g}\rangle_{L_v^2(\mathbf{W})}=\sum_{i=1}^N\langle f_i,g_i\rangle_{L_v^2(W_i)},\qquad \|\boldf\|_{L_v^2(\mathbf{W})}=\left(\sum_{i=1}^N\| f_i\|^2_{L_v^2(W_i)}\right)^{1/2},
\end{gather*}
Note that in the specific case of positive measurable functions $\mathbf{W}:\R^3\to(\R_+^*)^N$ in the sole variable $v$, without risk of confusion we shall also consider the weighted Hilbert space~${L^2(\T^3\times\R^3,\mathbf{W})}$, defined similarly by the natural scalar product and norm 
\begin{equation*}
\begin{split}
\scalprod{\boldf,\boldg}_{L^2_{x,v}(\mathbf{W})}  & = \sum_{i=1}^N \int_{\T^3\times\R^3} f_i(x,v) g_i(x,v) W_i^2(v)\dd x\dd v,
\\[3mm] \norm{\boldf}_{L^2_{x,v}(\mathbf{W})}^2  & = \sum_{i=1}^N\int_{\T^3\times\R^3} f_i^2(x,v) W_i^2(v)\dd x\dd v,
\end{split}
\end{equation*}
for any $\boldf,\boldg\in L^2_{x,v}(\T^3\times\R^3,\mathbf{W})$.

Finally, in the same way we can introduce the corresponding weighted Sobolev spaces. Consider two multi-indices $\alpha,\beta\in\N^3$, of lengths $|\alpha|=\sum_{k=1}^3 \alpha_k$ and~$|\beta|=\sum_{k=1}^3 \beta_k$ respectively . We shall use the convention that $\alpha$ always refers to $x$-derivatives while $\beta$ refers to~$v$-derivatives. Note in particular that we shall use the standard notation of the canonical basis in $\R^3$ to name the specific multi-indices having one component equal to 1 and the others equal to 0, so for example $e_1=(1,0,0)$.

For any $s\in\N$ and any vector-valued functions $\boldc\in H^s(\T^3,\mathbf{w})$ and $\boldf\in H^s(\mathcal{S},\mathbf{W})$, where either $\mathcal{S}=\R^3$ or $\mathcal{S}=\T^3\times\R^3$, we define the norms
\begin{equation*}
\begin{split}
\norm{\boldc}_{H^s_x(\mathbf{w})} & = \left(\sum\limits_{i=1}^N \sum\limits_{|\alpha|\leq s}\norm{\partial_x^\alpha c_i}^2_{L^2_{x}(w_i)}\right)^{1/2},\\[3mm]
\norm{\mathbf{f}}_{H^s_{v}(\mathbf{W})} & = \left(\sum\limits_{i=1}^N \sum\limits_{|\beta| \leq s}\norm{\partial_v^\beta f_i}^2_{L^2_{v}(W_i)}\right)^{1/2},\\[3mm] 
\norm{\mathbf{f}}_{H^s_{x,v}(\mathbf{W})} & = \left(\sum\limits_{i=1}^N \sum\limits_{\abs{\alpha}+\abs{\beta}\leq s}\norm{\multideriv f_i}^2_{L^2_{x,v}(W_i)}\right)^{1/2}.
\end{split}
 \end{equation*}


\smallskip
\subsection{Statement of the result and strategy}

Thanks to the Cauchy theory built up by the authors \cite{BonBri}, we can construct the local Maxwellian $\bepsM$ whose fluid quantities are perturbative solutions of the Maxwell-Stefan system \eqref{MS mass}--\eqref{MS momentum}--\eqref{MS incompressibility}. For the reader convenience, let us first recall the result obtained in \cite{BonBri}.

\begin{theorem}\label{theo:Cauchy MS}
Let $s> 3$ be an integer, $\func{\bar{u}}{\R_+\times\T^3}{\R^3}$ be in $L^\infty\pab{\R_+; H^{s}(\T^3)}$ with~${\nabla_x\cdot\bar{u}=0}$, and consider $\mathbf{\bar{c}} > 0$. There exist $\delta_\ms$, $C_\ms$, $C'_\ms$, $\lambda_\ms >0$ such that for all~${\eps\in (0,1]}$ and for any initial datum $(\tilde{\mathbf{c}}^\init, \tilde{\mathbf{u}}^\init)\in H^s(\T^3)\times H^{s-1}(\T^3)$ satisfying, for almost any $x\in\T^3$ and for any $1\leq i\leq N$,
\begin{itemize}
\item[(i)]\textbf{Mass compatibility: } $\disp{\sum_{i=1}^N \tilde{c}^\init_i(x) = 0 \quad\mbox{and}\quad \int_{\T^3}\tilde{c}_i^\init(x) \dd x = 0}$, \\[-0.5mm]
\item[(ii)]\textbf{Mass positivity: } $\disp{\bar{c}_i +\eps \tilde{c}_i^\init (x) > 0}$, \\[-1.5mm]
\item[(iii)]\textbf{Moment compatibility: } $\disp{\nabla_x \tilde{c}^\init_i = \sum_{j\neq i}\frac{c_i^\init c_j^\init}{\Delta_{ij}}\pa{\tilde{u}_j^\init - \tilde{u}_i^\init}}$,\\[-0.5mm]
\item[(iv)] \textbf{Smallness assumptions: } $\disp{\norm{\tilde{\mathbf{c}}^\init}_ {H^s_x}\leq \delta_\ms}\quad$ and $\quad\disp{\norm{\bar{u}}_{L^\infty_t H^{s}_x}\leq \delta_\ms}$,
\end{itemize}
there exists a unique weak solution 
$$\pa{\mathbf{c},\mathbf{u}} = \pab{\bar{\mathbf{c}}+\eps \tilde{\mathbf{c}}, \bar{\mathbf{u}}+\eps\tilde{\mathbf{u}}}$$
in $L^\infty\pab{\R_+;H^s(\T^3)} \times L^\infty\pab{\R_+;H^{s-1}(\T^3)}$ to the incompressible Maxwell-Stefan system \eqref{MS mass}--\eqref{MS momentum}--\eqref{MS incompressibility}, such that initially~${\restriction{\pa{\tilde{\mathbf{c}},\tilde{\mathbf{u}}}}{t=0} = \pa{\tilde{\mathbf{c}}^\init,\tilde{\mathbf{u}}^\init}}$ a.e. on~$\T^3$. In particular, if $s>4$ and $\bar{u}\in C^0\big(\R_+; H^s(\T^3)\big)$, then the couple $(\mathbf{c},\mathbf{u})$ also belongs to $C^0\pab{\R_+;H^{s-1}(\T^3)} \times C^0\pab{\R_+;H^{s-2}(\T^3)}$.

Moreover, $\mathbf{c}$ is positive and the following relations hold a.e. on~${\R_+\times\T^3}$:
\begin{equation}\label{equimolar vectorial}
\scalprod{\mathbf{c},\tilde{\mathbf{u}}}=\sum_{i=1}^N c_i(t,x) \tilde{u}_i(t,x)=0 \quad\mbox{ and }\quad \int_{\T^3}\tilde{c}_i(t,x) \dd x = 0.
\end{equation}
Finally, for almost any time $t\geq 0$
\begin{eqnarray*}
\norm{\tilde{\mathbf{c}}}_{\sobolevx{s}} &\leq&   e^{- t \lambda_\ms}\norm{\tilde{\mathbf{c}}^\init}_{\sobolevx{s}},
\\[4mm]    \norm{\tilde{\mathbf{u}}}_{H^{s-1}_x} &\leq&  C_\ms e^{- t \lambda_\ms}\norm{\tilde{\mathbf{c}}^\init}_{\sobolevx{s}},
\\[3mm]    \int_0^t e^{2 (t-\tau) \lambda_\ms}\norm{\tilde{\mathbf{u}}(\tau)}^2_{H^{s}_x}\dd \tau &\leq & C'_\ms \norm{\tilde{\mathbf{c}}^\init}^2_{\sobolevx{s}}.
\end{eqnarray*}
The constants $\delta_\ms$, $\lambda_\ms$, $C_\ms$ and $C'_\ms$ are constructive and only depend on $s$, the number of species $N$, the diffusion coefficients $(\Delta_{ij})_{1\leq i,j\leq N}$ and the constant vector~$\mathbf{\bar{c}}$. In particular, they are independent of the parameter $\eps$.
\end{theorem}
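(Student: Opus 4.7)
The plan is first to plug the ansatz $\mathbf{c}=\bar{\mathbf{c}}+\eps\tilde{\mathbf{c}}$ and $\mathbf{u}=\bar{u}\mathbf{1}+\eps\tilde{\mathbf{u}}$ into \eqref{MS mass}--\eqref{MS momentum}--\eqref{MS incompressibility}. Using that $\bar{\mathbf{c}}$ is constant and $\nabla_x\cdot\bar{u}=0$, the mass equation reduces to
$$\partial_t\tilde{c}_i+\bar{u}\cdot\nabla_x\tilde{c}_i+\nabla_x\cdot(c_i\tilde{u}_i)=0,$$
while the momentum equation becomes the algebraic relation $-\nabla_x\tilde{\mathbf{c}}=A(\mathbf{c})\tilde{\mathbf{u}}$, where $A(\mathbf{c})$ is the symmetric Maxwell-Stefan matrix with entries $A_{ii}=\sum_{j\neq i}c_ic_j/\Delta_{ij}$ and $A_{ij}=-c_ic_j/\Delta_{ij}$. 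Its kernel is exactly $\Span(\mathbf{1})$, which is compatible with the mass conservation $\sum_i\tilde{c}_i=0$ propagated by the mass equation. Decomposing $\tilde{\mathbf{u}}=\tilde{\mathbf{u}}^\perp+\alpha\mathbf{1}$ with $\tilde{\mathbf{u}}^\perp\in\mathbf{1}^\perp$ and fixing $\alpha$ by the constraint $\scalprod{\mathbf{c},\tilde{\mathbf{u}}}=0$, one inverts $A$ on $\mathbf{1}^\perp$ to obtain $\tilde{\mathbf{u}}^\perp=-A^{\dagger}(\mathbf{c})\nabla_x\tilde{\mathbf{c}}$, and substituting back produces a closed quasilinear parabolic system for $\tilde{\mathbf{c}}$, uniformly elliptic on $\mathbf{1}^\perp$ as long as $\|\tilde{\mathbf{c}}\|_{L^\infty_x}$ stays small. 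The mean-zero property $\int\tilde{c}_i\,\dd x=0$ is likewise propagated from the integrated mass equation.

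\textbf{Uniform $H^s_x$ estimates and exponential decay.} Solutions are built by Picard iteration on the closed equation, freezing $\mathbf{c}$ in the coefficients at each step. The central ingredient is an anisotropic a priori estimate. Differentiating the closed equation $\partial_x^\alpha$ times for $|\alpha|\leq s$, testing against $\partial_x^\alpha\tilde{c}_i$ and summing in $i$, the term $\nabla_x\cdot(c_i\tilde{u}_i)$ yields (after integration by parts and using $-\nabla_x\tilde{\mathbf{c}}=A(\mathbf{c})\tilde{\mathbf{u}}$) the identity
$$\frac{1}{2}\frac{\dd}{\dd t}\norm{\tilde{\mathbf{c}}}_{\sobolevx{s}}^2+\scalprodb{A(\mathbf{c})\tilde{\mathbf{u}},\tilde{\mathbf{u}}}_{H^s_x}=R,$$
where $R$ gathers the transport contribution bounded by $\|\bar{u}\|_{H^s_x}\norm{\tilde{\mathbf{c}}}_{\sobolevx{s}}^2$ (handled using the divergence-freeness of $\bar{u}$ at highest order and Moser estimates at lower order) together with nonlinear commutators carrying a prefactor $\eps$ or $\norm{\tilde{\mathbf{c}}}_{\sobolevx{s}}$. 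The positive-definiteness of $A(\bar{\mathbf{c}})$ on $\mathbf{1}^\perp$, combined with the Poincaré inequality (applicable thanks to the zero-mean constraint) and the algebraic identity $\tilde{\mathbf{u}}^\perp=-A^{\dagger}(\mathbf{c})\nabla_x\tilde{\mathbf{c}}$, converts the dissipation into a coercive lower bound $\lambda_\ms\norm{\tilde{\mathbf{c}}}_{\sobolevx{s}}^2$. Under the smallness of $\tilde{\mathbf{c}}^\init$ and $\bar{u}$, one absorbs $R$ and Gr\"onwall delivers the stated decay. The bound on $\norm{\tilde{\mathbf{u}}}_{H^{s-1}_x}$ follows from $\tilde{\mathbf{u}}=-A^{\dagger}(\mathbf{c})\nabla_x\tilde{\mathbf{c}}+\alpha\mathbf{1}$ with a single derivative loss, and the time-integrated $H^s_x$ bound on $\tilde{\mathbf{u}}$ is extracted by integrating the dissipation against $e^{2\lambda_\ms(t-\tau)}$.

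\textbf{Existence, uniqueness, regularity, and main obstacle.} Passing to the limit in the Picard iteration with these uniform bounds and a weak compactness argument produces a weak solution; uniqueness follows from an energy estimate applied to the difference of two solutions. Positivity of $\mathbf{c}$ is preserved from the initial data by the divergence form of the mass equation for $\eps$ small, and \eqref{equimolar vectorial} holds by construction of $\alpha$. When $s>4$ and $\bar{u}\in C^0_tH^s_x$, the parabolic equation lifts $\tilde{\mathbf{c}}$ to $C^0_tH^{s-1}_x$, and the algebraic relation then transfers this to $\tilde{\mathbf{u}}\in C^0_tH^{s-2}_x$. The main obstacle is ensuring that every constant in the argument is independent of $\eps\in(0,1]$: the transport term $\bar{u}\cdot\nabla_x$ carries no factor of $\eps$ and must therefore be absorbed by the dissipation, which is precisely why $\|\bar{u}\|_{L^\infty_tH^s_x}$ has to be small; similarly, every commutator in the quasilinear principal part must be organized so that its coefficient carries either an $\eps$ or a $\norm{\tilde{\mathbf{c}}}_{\sobolevx{s}}$, which is the only way the closing estimate survives uniformly in $\eps$ and yields a decay rate $\lambda_\ms$ independent of the scaling parameter.
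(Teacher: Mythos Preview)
The paper does not contain a proof of this theorem: it is explicitly \emph{recalled} from the companion article \cite{BonBri} (``For the reader convenience, let us first recall the result obtained in \cite{BonBri}''), so there is no in-paper proof to compare your attempt against.

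That said, your sketch is consistent with the strategy the authors describe for \cite{BonBri}. The introduction (in a passage the authors ultimately commented out) outlines exactly the ingredients you use: an orthogonal reformulation with respect to the non-injective Maxwell-Stefan matrix yielding a closed quasilinear parabolic system, an anisotropic norm exploiting the positivity of the Maxwell-Stefan operator on $\mathbf{1}^\perp$, and the pseudo-invertibility linking $(c_i)$ to the orthogonal part of $(u_i)$. Your identification of the main obstacle --- that the $\bar{u}\cdot\nabla_x$ transport term carries no factor of $\eps$ and must therefore be absorbed by the dissipation, forcing the smallness of $\|\bar{u}\|_{L^\infty_tH^s_x}$ --- matches the authors' remark that ``the perturbative setting we ask for the Maxwell-Stefan system is intrinsic to the hydrodynamical limit in a dissipative regime''.

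One point to be careful about: you write that $\alpha$ is ``fixed by the constraint $\scalprod{\mathbf{c},\tilde{\mathbf{u}}}=0$'', but in the paper's framework this relation is an \emph{output} of the construction (equation \eqref{equimolar vectorial}), not an input. The incompressibility condition \eqref{MS incompressibility} only constrains $\nabla_x\cdot\scalprod{\mathbf{c},\mathbf{u}}$, so the kernel component of $\tilde{\mathbf{u}}$ is genuinely underdetermined by the PDE; the authors in \cite{BonBri} select the representative satisfying $\scalprod{\mathbf{c},\tilde{\mathbf{u}}}=0$ as part of their notion of solution (hence their remark about losing ``uniqueness in a strong sense'' while still obtaining ``a complete description of all the possible solutions''). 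Your sketch handles this correctly in practice, but the logical status of the constraint should be stated as a gauge choice rather than a derived equation.
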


\medskip
Now, recall that we can select as unique global equilibrium of the mixture the global Maxwellian state $\boldmu$ defined by $\eqref{mu}$, where $u_\infty=0$ and $\theta_\infty=1$. Thanks to the above result, we can choose the macroscopic equilibrium state $(\bar{c}_i,\bar{u})_{1\leq i\leq N}$ to be compatible with the global (kinetic) equilibrium of the mixture $\boldmu$, by taking $\bar{c}_i=c_{i,\infty}$ for any $1\leq i\leq N$. Therefore, supposing the uniform (in space) and constant (in time) temperature $\bar{\theta}$ to be equal to 1 for simplicity, we introduce the local Maxwellian vector $\bepsM=\mathbf{M}_{(\boldc,\eps\boldu,\mathbf{1})}=(\epsM_1,\ldots,\epsM_N)$, given for any $1\leq i\leq N$ by
\begin{equation}\label{Local Maxwellian eps}
\epsM_i(t,x,v)=c_i(t,x)\pa{\frac{m_i}{2\pi}}^{3/2}\exp\br{-m_i\frac{|v-\eps u_i(t,x)|^2}{2}},\quad  t\geq 0,\ x\in\T^3,\ v\in\R^3,
\end{equation}
where $(\boldc,\boldu)$ is the unique weak solution of the Maxwell-Stefan system \eqref{MS mass}--\eqref{MS momentum}--\eqref{MS incompressibility}, perturbed around the macroscopic equilibrium state $(\boldc_{\infty},\bar{\boldu})$. More precisely, the fluid quantities of $\bepsM$ take the form
\begin{equation}\label{Meps fluid quantities}
\left\{\begin{array}{l}
c_i(t,x) = c_{i,\infty} + \eps \tilde{c}_i(t,x),\\[5mm]
u_i(t,x) = \bar{u}(t,x) + \eps \tilde{u}_i(t,x),\ \ \nabla_x\cdot \bar{u}(t,x) = 0,
\end{array}\right.\qquad t\geq 0,\ x\in\T^3,
\end{equation}
for any $1\leq i\leq N$. In particular, we notice that under the assumptions of Theorem \ref{theo:Cauchy MS} the following fundamental properties are verified by the couple $(\boldc_{\infty}+\eps\tilde{\boldc},\bar{\boldu}+\eps\tilde{\boldu})$:
\begin{itemize}
\item[(i)] $\disp \inf_{\R^+\times\T^3}\min_{1\leq i\leq N} \pab{c_{i,\infty}+\eps \tilde{c}_i(t,x)}  > 0,$\\[1mm]
\item[(ii)] $\disp \scalprod{\boldc_{\infty}+\eps\tilde{\boldc}(t,x),\mathbf{1}} = C_0 >0\quad \textrm{a.e. on }\ \R_+\times\T^3,$\\[1mm]
\item[(iii)] $\disp \norm{\tilde{\boldc}}_{L^\infty_t \spacex{s}}\leq \delta_\ms,$\\[1mm]
\item[(iv)] $\disp \norm{\bar{u}}_{L^\infty_t H^s_x}\leq\delta_\ms\qquad \textrm{ and }\qquad \norm{\tilde{\boldu}}_{L^\infty_t H^s_x}\leq \delta_\ms C_\ms.$
\end{itemize}

Starting from this choice of the local Maxwellian, we consider solutions to the Boltzmann multi-species equation $\eqref{multi BE scaled}$ of the form $\boldF^\eps=\bepsM+\eps\boldf$, where the fluctuations $\boldf$ satisfy the perturbed system $\eqref{perturbed BE}$.

\smallskip
Our strategy for developing a uniform Cauchy theory for equation $\eqref{perturbed BE}$ is inspired by the works \cite{MouNeu, Bri1} and aims at building a suitable Sobolev-equivalent norm which satisfies a Gr\"{o}nwall-type inequality among solutions of $\eqref{perturbed BE}$. The idea of the method originates from the hypocoercive behaviour shown by some classical kinetic equations in the mono-species framework, where the interaction of a degenerate coercive operator with a conservative operator may induce global dissipation in all variables, and consequently relaxation towards equilibrium. A typical example is precisely the inhomogeneous Boltzmann mono-species equation. In fact, the mono-species Boltzmann operator $L$ linearized around a global equilibrium of the gaz exhibits a spectral gap which translates into a negative return in some Hilbert space depending on the sole velocity variable $v$. In particular, $L$ is degenerate in the sense that its kernel is much larger than the set of global equilibria. Here, precisely comes into play the effect of the conservative transport operator $v\cdot\nabla_x$, that introduces a dependence on the space variable $x$ which at first sight cannot be handled using the dissipation of $L$ in $v$. Nevertheless, it is possible to prove \cite{Vil4,MouNeu} that the association of these operators can actually produce a global negative return in both $x$ and $v$, if one considers a well-designed Lyapunov functional that is able to transfer the dissipation of $L$ into a (hypo) dissipation of $T=L-v\cdot\nabla_x$. One possibility is for example to introduce \cite{MouNeu} a modified Sobolev norm where we add to the usual $H^s_{x,v}$ norm, new suitable terms based on commutators of higher derivatives, such as the well-known $[v\cdot\nabla_x,\nabla_v]=-\nabla_x$. In this way, the linear part is dealt with, and the full nonlinear Boltzmann equation (close to equilibrium) can be tackled \cite{MouNeu} and its solution can be proved to relax towards a global equilibrium, with an exponential decay rate of convergence.

As already mentioned in the introduction, a similar strategy has been developed in \cite{Bri1} for the study of the hydrodynamic limit of the Boltzmann mono-species equation set under a standard diffusive scaling, leading to the same conclusions as \cite{MouNeu}. 
In particular, a global dissipation can be obtained if one introduces a new Sobolev norm which is adapted from the one used in \cite{MouNeu}, by adding a dependence on $\eps$ in its terms. In such a way, the operator $T^\eps = \eps^{-2} L - \eps^{-1}v\cdot \nabla_x$ exhibits the required hypocoercive behaviour, which provides a control on the nonlinear stiff term $\eps^{-1} Q$ and allows to ensure a global negative return in both $x$ and $v$, and the expected convergence to equilibrium for the solution.

In the multi-species setting we consider here, even if the linearization around the non-equilibrium Maxwellian $\bepsM$ gives rise to a linearized Boltzmann equation $\eqref{perturbed BE}$ involving the new operators $\bepsL$ and $\bepsS$, we wish to prove that the underlying hypocoercive structure of our model is in fact almost formally identical to the one featured by the mono-species equation studied in \cite{Bri1}. 
The approach that will guide our presentation from now on will consist in adapting the tools and computations developed in \cite{Bri1}, in order to study equation $\eqref{perturbed BE}$. However, we stress the fact that this is not a straightforward extension. Indeed, a full spectral gap property for $\bepsL$ is missing \cite{BonBouBriGre} when we settle our analysis in an Hilbert space weighted by the global equilibrium $\boldmu$, which is incompatible with the operator $\bepsL$, linearized around the local Maxwellian $\bepsM$. In particular, we recall that we see the appearance of an extra positive term of order $\eps$, which contains the projected part~$\pi_\boldL(\boldf)$ and precisely takes into account this incompatibility. It is therefore fundamental to ensure that this loss in the spectral gap 
does not affect too much the computations derived in \cite{Bri1} and that $\bepsL$ can still provide a complete negative return in combination with the transport operator $v\cdot \nabla_x$. This feature is indeed characteristic of our model and will appear any time the linearized operator $\bepsL$ (and more specifically its higher derivatives in $x$) is involved. It is at this point that the specific form of the fluid quantities $\eqref{Meps fluid quantities}$ prominently comes into play in order to gain a lower order factor in $\eps$ or in $\delta_\ms$. In particular, we underline the importance of having $\delta_\ms$ as a free parameter of the problem, since its choice shall be crucial in order to obtain a careful control on some of the extra terms produced by $\bepsL$, where the presence of the factor $\eps$ will not be enough to close the estimates as in \cite{Bri1}. Finally, we remind the reader the additional difficulties caused by the source term $\bepsS$, which displays an intricate dependence on the local Maxwellian $\bepsM$, and is at first glance of order $\mathcal{O}(\eps^{-3})$. Again, the particular form of the solution $(\boldc,\boldu)$ is at the core of our derivation of the correct estimates to control~$\bepsS$, and the perturbative setting we consider for the Maxwell-Stefan system will prove itself sufficient in order to handle the negative powers of $\eps$.


\smallskip
\noindent \textbf{Choice of the functional spaces.} It has been pointed out in \cite{BonBouBriGre} that even if $\bepsL$ exhibits no clear self-adjointness in the usual space of linearization $L^2\big((\bepsM)^{-\frac{1}{2}},\R^3\big)$ it is still possible to recover a partial spectral gap property for $\bepsL$ by linking it to the Boltzmann operator $\boldL$, linearized around the global equilibrium $\boldmu$. This connection is obtained by choosing $\hilbertR$ as the space of interest. Therefore, in order to exploit the result of \cite{BonBouBriGre}, from now on we set our study in the corresponding weighted Sobolev spaces~$\sobolevTR{s}$, defined for any $s\in\N$.

\smallskip
\noindent\textbf{Choice of a referent hypocoercive operator.} Next, we must identify the dissipative and the conservative operators that play a central role in the theory of hypocoercivity. Since it is crucial to determine the explicit expression of the equilibria of the mixture, $\bepsL$ is not a valuable choice as we possess no information about the shape of its kernel. Therefore, as in the mono-species case \cite{MouNeu,Bri1}, we select the hypocoercive operator of interest to be defined for any $\eps\in (0,1]$ by $\boldT^\eps = \eps^{-2}\boldL-\eps^{-1}v\cdot\nabla_x$, acting on $\sobolevTR{s}$. In particular, recall that the dissipative operator $\boldL=(L_1,\ldots,L_N)$ is defined for any $1\leq i\leq N$ by
$$L_i(\boldf)=\sum_{j=1}^N \Big(   Q_{ij}(\mu_i,f_j)+Q_{ij}(f_i,\mu_j)   \Big),$$
and is a closed self-adjoint operator in the space $\hilbertR$. Moreover, its kernel is described by an orthonormal basis $\big(\pmb{\phi}^{(k)}\big)_{1\leq k\leq N+4}$ in $\hilbertR$, that is
$$\Ker \boldL = \Span\Big(\pmb{\phi}^{(1)},\ldots,\pmb{\phi}^{(N+4)}\Big),$$
where
$$\left\{\begin{array}{ll}
\displaystyle \pmb{\phi}^{(i)}=\frac{1}{\sqrt{c_{i,\infty}}}\mu_i\pab{\delta_{ij}}_{1\leq j\leq N} = \frac{1}{\sqrt{c_{i,\infty}}}\mu_i\mathbf{e}^{\text{\tiny (i)}}, \quad 1\leq i\leq N, \\[9mm]
\displaystyle \pmb{\phi}^{(N+\ell)}=\frac{v_\ell}{\left(\sum_{j=1}^N m_j c_{j,\infty}\right)^{1/2}}\big(m_i \mu_i\big)_{1\leq i\leq N}, \quad 1\leq \ell\leq 3,  \\[9mm]
\displaystyle \pmb{\phi}^{(N+4)}=\frac{1}{\left(\sum_{j=1}^N c_{j,\infty}\right)^{1/2}}\left(\frac{m_i |v|^2-3}{\sqrt{6}} \mu_i\right)_{1\leq i\leq N}.
\end{array}\right.$$
In this way, we can write the orthogonal projection onto $\Ker \boldL$ in $\hilbertR$ as
\begin{equation}\label{Projection}
\pi_{\boldL}(\boldf)(v)=\sum_{k=1}^{N+4}\big\langle\boldf,\pmb{\phi}^{(k)}\big\rangle_{\hilbertv}\ \pmb{\phi}^{(k)}(v),\quad\forall\boldf\in \hilbertR.
\end{equation}
In particular, its explicit expression is given by
\begin{equation}\label{Projection explicit}
\begin{split}
\pi_{\boldL}(\boldf) = &\sum_{i=1}^N \cro{\frac{1}{c_{i,\infty}}\int_{\R^3}f_i\dd v}\mu_i \mathbf{e}^{\text{\tiny (i)}}
\\[3mm]  &  +\sum_{k=1}^3\frac{v_k}{\sum_{i=1}^N m_i c_{i,\infty}}\cro{\sum_{i=1}^N\int_{\R^3}m_i v_k f_i\dd v}\pab{m_i\mu_i}_{1\leq i\leq N} 
\\[3mm]  &  +\frac{1}{\sum_{i=1}^N c_{i,\infty}}\cro{\sum_{i=1}^N\int_{\R^3}\frac{m_i|v|^2-3}{\sqrt{6}}f_i\dd v}\pa{\frac{m_i|v|^2-3}{\sqrt{6}}\mu_i}_{1\leq i\leq N}.
\end{split}
\end{equation}

\smallskip
\noindent\textbf{Choice of the norm.} With the aim of deriving similar \textit{a priori} estimates to \cite{Bri1}, we choose the same Sobolev-equivalent norm. For any $s\in\N^*$ and any $\eps\in (0,1]$, we introduce the following functional, defined on the space $\sobolevTR{s}$ by
\begin{multline}\label{Sobolev norm}
\norm{\cdot}_{\mathcal{H}^s_\eps} = \left[\sum_{|\alpha|\leq s} a^{(s)}_{\alpha}\norm{\partial^\alpha_x\cdot}_{\hilbertxv}^2  + \eps \sum_{\substack{|\alpha|\leq s   \\[0.2mm]   k,\ \alpha_k >0}} b^{(s)}_{\alpha,k}\scalprod{\partial^{\alpha}_x\cdot,\partial^{\ee_k}_{v}\partial^{\alpha-\ee_k}_x\cdot}_{\hilbertxv} \right.
\\[-2mm]    \left.+\ \eps^2\sum_{\substack{|\alpha|+|\beta|\leq s  \\[0.2mm]  |\beta|\geq 1}} d^{(s)}_{\alpha,\beta}\norm{\multideriv \cdot}_{\hilbertxv}^2\right]^{1/2},
\end{multline}
for some positive constants $\pab{a^{(s)}_{\alpha}}_{\alpha}^s$, $\pab{b^{(s)}_{\alpha,k}}_{\alpha,k}^s$ and $\pab{d^{(s)}_{\alpha,\beta}}_{\alpha,\beta}^s$ to be appropriately fixed later.

\smallskip
Thanks to these choices, we can now establish our main result.

\begin{theorem}\label{theo:Cauchy BE}
Let the collision kernels $B_{ij}$ satisfy assumptions $\mathrm{(H1)}$--$\mathrm{(H2)}$--$\mathrm{(H3)}$--$\mathrm{(H4)}$, and consider the local Maxwellian $\bepsM$ defined by \eqref{Local Maxwellian eps}--\eqref{Meps fluid quantities}. There exist $s_0\in\N^*$, $\bar{\delta}_\ms>0$ and~${\eps_0\in (0,1]}$ such that the following statements hold for any integer $s \geq s_0$.
\begin{itemize}
\item[(i)] There exist three sets of positive constants $\pab{a^{(s)}_{\alpha}}_{\alpha}^s$, $\pab{b^{(s)}_{\alpha,k}}_{\alpha,k}^s$ and $\pab{d^{(s)}_{\alpha,\beta}}_{\alpha,\beta}^s$ such that, for all $\eps\in (0,\eps_0]$, both following norms are equivalent
\begin{equation*}
\quad \norm{\cdot}_{\mathcal{H}^s_\eps} \sim  \pa{\norm{\cdot}_{\hilbertxv}^2 + \sum_{|\alpha| \leq s}\norm{\partial^\alpha_x\cdot}_{\hilbertxv}^2 + \eps^2\sum_{\substack{|\alpha|+|\beta|\leq s   \\[0.2mm]   |\beta|\geq 1}}\norm{\multideriv\cdot}_{\hilbertxv}^2}^{1/2}.
\end{equation*}
\item[(ii)] There exists $\delta_\bb >0$ such that, for all $\eps\in (0,\eps_0]$, for all $\delta_\ms\in [0,\bar{\delta}_\ms]$ and for any initial datum $\boldf^\init$ in~$\sobolevTR{s}$ with
\begin{equation*}
\norm{\boldf^\init}_{\mathcal{H}^s_\eps}\leq \delta_\bb,\qquad \norm{\pi_\bepsT(\boldf^\init)}_{\hilbertxv}\leq C\delta_\ms,
\end{equation*}
for some positive constant $C >0$ independent of the parameters $\eps$ and $\delta_\ms$, there exists a unique $\boldf\in C^0\pa{\R_+;\sobolevTR{s}}$ such that $\boldF^\eps=\bepsM+\eps\boldf$ is the unique weak solution of the Boltzmann multi-species equation $\eqref{perturbed BE}$. Moreover, if~${\boldF^{\eps,\init}=\mathbf{M}^{\eps,\init}+\eps\boldf^\init\geq 0}$, then $\boldF^\eps(t,x,v)\geq 0$ almost everywhere on $\R_+\times\T^3\times\R^3$. Finally, for any time $t\geq 0$, $\boldF^\eps$ satisfies the stability property
\begin{equation*}
\norm{\boldF^\eps-\bepsM}_{\mathcal{H}^s_\eps}\leq \eps \delta_\bb, \quad \forall\eps\in (0,\eps_0].
\end{equation*}
\end{itemize}
The constant $\delta_\bb$ is explicit and only depend on the number of species $N$, on the atomic masses $(m_i)_{1\leq i\leq N}$, and on the cross sections $(B_{ij})_{1\leq i,j\leq N}$. In particular, it is independent of the parameters $\eps$ and $\delta_\ms$.
\end{theorem}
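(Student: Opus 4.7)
The plan is to establish Theorem \ref{theo:Cauchy BE} in two steps, following the hypocoercive roadmap of \cite{MouNeu,Bri1} but adapted to the non-equilibrium local Maxwellian $\bepsM$.

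\textbf{Part (i) --- Norm equivalence.} I would fix the coefficients hierarchically, starting with $\pab{a^{(s)}_{\alpha}}$ and $\pab{d^{(s)}_{\alpha,\beta}}$ of order one. Each $b$-cross term is then bounded by Young's inequality as
\begin{equation*}
\eps b^{(s)}_{\alpha,k} \big| \scalprod{\partial^\alpha_x \boldf, \partial^{\ee_k}_v \partial^{\alpha-\ee_k}_x \boldf}_{\hilbertxv} \big| \leq \frac{a^{(s)}_\alpha}{2} \|\partial^\alpha_x \boldf\|_{\hilbertxv}^2 + \frac{\eps^2 d^{(s)}_{\alpha-\ee_k,\ee_k}}{2} \|\partial^{\ee_k}_v \partial^{\alpha-\ee_k}_x \boldf\|_{\hilbertxv}^2,
\end{equation*}
provided $b^{(s)}_{\alpha,k}\leq c\sqrt{a^{(s)}_\alpha d^{(s)}_{\alpha-\ee_k,\ee_k}}$ for a small absolute constant $c$. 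Choosing the $b$'s this small absorbs the cross terms into half of the diagonal ones, producing the claimed equivalence uniformly in $\eps\in (0,\eps_0]$, since all the $\eps$-weights are correctly placed.

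\textbf{Part (ii) --- Energy estimates.} For the Cauchy theory, I would compute $\tfrac{1}{2}\tfrac{d}{dt}\|\boldf\|_{\mathcal{H}^s_\eps}^2$ along solutions of $\eqref{perturbed BE}$ by differentiating the equation in $\partial^\alpha_x$ and $\multideriv$, pairing each differentiated equation in $\hilbertxv$ against the matching derivative of $\boldf$, and summing against the weights in $\eqref{Sobolev norm}$. Four kinds of terms arise: (a) linear dissipation from $\bepsL$; (b) mixed commutator contributions from the $b$-cross products; (c) the stiff nonlinearity $\eps^{-1}\boldQ(\boldf,\boldf)$; (d) the source $\bepsS$. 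For (a) I would write $\bepsL=\boldL+(\bepsL-\boldL)$ and invoke the spectral gap of $\boldL$ on $(\Ker\boldL)^\perp$ in $\hilbertv$, which holds under (H1)--(H4) by \cite{BriDau}. This provides a dissipation comparable to $-K\eps^{-2}\|(\mathrm{Id}-\pi_{\boldL})\multideriv \boldf\|^2$, while the remainder $\bepsL-\boldL$ is pointwise of order $\eps\delta_\ms$ thanks to the decomposition $\eqref{Meps fluid quantities}$. The incompatibility term of order $\eps$ acting on $\pi_{\boldL}(\boldf)$ is absorbed by choosing $\delta_\ms$ small. For (b), integration by parts together with the commutator identity $[v\cdot\nabla_x,\nabla_v]=-\nabla_x$ produces the crucial negative return $-K\|\partial^\alpha_x\boldf\|_{\hilbertxv}^2$ (for $|\alpha|\geq 1$) that closes the fluid part of the dissipation.

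\textbf{Main obstacle and closing.} The delicate step is the source $\bepsS$, which is formally of order $\mathcal{O}(\eps^{-3})$. The key observation is that, by construction, the fluid quantities $(\boldc,\boldu)$ of $\bepsM$ satisfy the Maxwell-Stefan system \eqref{MS mass}--\eqref{MS momentum}--\eqref{MS incompressibility}, so expanding $\eps^{-3}\boldQ(\bepsM,\bepsM)-\eps^{-1}\partial_t\bepsM-\eps^{-2}v\cdot\nabla_x\bepsM$ in powers of $\eps$ causes the $\mathcal{O}(\eps^{-3})$ and $\mathcal{O}(\eps^{-2})$ leading orders to cancel identically, leaving a remainder controllable in the modified norm by $C\delta_\ms$ uniformly in $\eps$, thanks to the Sobolev and time-integrated estimates on $\tilde{\boldc}$ and $\tilde{\boldu}$ furnished by Theorem~\ref{theo:Cauchy MS}. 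Term (c) is handled by the classical trilinear bound for the Boltzmann operator, which gains an extra factor $\|\boldf\|_{\mathcal{H}^s_\eps}$ and is absorbed once $\|\boldf\|_{\mathcal{H}^s_\eps}\leq \delta_\bb$ is small. Assembling all pieces yields a differential inequality of the schematic form
\begin{equation*}
\frac{d}{dt}\|\boldf\|_{\mathcal{H}^s_\eps}^2 + K\,\mathcal{D}(\boldf) \leq C\bigl(\delta_\ms + \|\boldf\|_{\mathcal{H}^s_\eps}\bigr)\mathcal{D}(\boldf) + C\delta_\ms^2,
\end{equation*}
where $\mathcal{D}(\boldf)$ is a coercive dissipation norm controlling all derivatives of $\boldf$. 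A continuation argument on the set $\br{t\geq 0 : \|\boldf(t)\|_{\mathcal{H}^s_\eps}\leq \delta_\bb}$, for $\delta_\ms,\delta_\bb,\eps_0$ small enough, closes the estimate and produces global existence, uniform-in-$\eps$ bounds, and the stability $\|\boldF^\eps-\bepsM\|_{\mathcal{H}^s_\eps}=\eps\|\boldf\|_{\mathcal{H}^s_\eps}\leq \eps\delta_\bb$. Positivity of $\boldF^\eps$ follows from a standard positivity-preserving iterative scheme applied to the rescaled equation.
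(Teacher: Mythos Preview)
Your overall hypocoercive strategy matches the paper's, but there is a genuine gap in your handling of the source term $\bepsS$, and this is precisely the heart of the proof.

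You claim that the $\mathcal{O}(\eps^{-3})$ and $\mathcal{O}(\eps^{-2})$ contributions in $\bepsS$ ``cancel identically'' because $(\boldc,\boldu)$ solves the Maxwell--Stefan system, leaving a remainder of size $C\delta_\ms$ uniformly in $\eps$. Neither assertion is correct. First, the reduction $\eps^{-3}\boldQ(\bepsM,\bepsM)=\mathcal{O}(\eps^{-1})$ has nothing to do with Maxwell--Stefan: it comes from the velocity structure $u_i=\bar{u}+\eps\tilde{u}_i$, which makes $\bepsM$ lie $\mathcal{O}(\eps^2)$-close to the \emph{genuine} local equilibrium $\boldc\bmathM_{\bar{u}}$ with common bulk velocity, for which the collision operator vanishes. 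Second, and more seriously, the full source term remains of order $\eps^{-1}$ and \emph{cannot} be bounded uniformly in~$\eps$. The paper's decisive idea (Lemma~\ref{LemmaS}) is to split $\partial^\alpha_x\bepsS=\pi_\boldL(\partial^\alpha_x\bepsS)+\partial^\alpha_x\bepsS^\perp$: the Maxwell--Stefan equations are invoked \emph{only} to show that the fluid projection $\pi_\boldL(\partial^\alpha_x\bepsS)$ is $\mathcal{O}(\delta_\ms)$ (the mass equation kills the zeroth moment, the incompressibility and gradient relation tame the first and second), whereas the microscopic part $\partial^\alpha_x\bepsS^\perp$ stays of order $\eps^{-1}$. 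This stiff microscopic piece is harmless only because it is paired against $\partial^\alpha_x\boldf^\perp$, and the spectral-gap dissipation $-\lambda_\boldL\eps^{-2}\norm{\partial^\alpha_x\boldf^\perp}^2$ absorbs it via Young with weight $\eta/\eps$. Without this fluid/microscopic splitting your energy inequality blows up as $\eps\to 0$.

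A second, smaller omission: to close the fluid part you implicitly use a Poincar\'e inequality on $\pi_\boldL(\boldf)$. But here $\pi_\bepsT(\boldf)\neq 0$, because the source term continuously feeds the kernel projection, so the standard Poincar\'e fails. The paper establishes a corrected version $\norm{\pi_\boldL(\boldf)}^2\leq 2C_{\T^3}\norm{\nabla_x\boldf}^2+C^\TT\delta_\ms^2$ (Lemma~\ref{lem:Poincare}) by integrating the evolution of $\pi_\bepsT(\boldf)$ and using the hypothesis $\norm{\pi_\bepsT(\boldf^\init)}=\mathcal{O}(\delta_\ms)$; this correction is the origin of the residual $C^{(s)}\delta_\ms$ in the final Gr\"onwall estimate and the reason the exponential decay is lost.
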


\begin{remark}
Let us make a few comments that help in clarifying some important features of the above result.
\begin{itemize}
\item[(i)] First of all, the uniqueness has to be understood in a perturbative regime, that is among solutions of the form $\bepsM+\eps\boldf$ with small $\boldf$. Moreover, due to the presence of the source term $\bepsS$ we do not obtain an exponential decay to equilibrium for the perturbation $\boldf$, but we expect to recover it in the case where $\bar{\mathbf{u}}=0$ (see Remark \ref{rem:expo decay}).
\item[(ii)] We also emphasize that the Maxwell-Stefan model we consider is not properly the hydrodynamic limit of the Boltzmann multi-species equation, since the fluid quantities of the perturbation $\boldf$ have the same order of magnitude as the ones of the local Maxwellian $\bepsM$. However, we here prove that the system \eqref{MS mass}--\eqref{MS momentum}--\eqref{MS incompressibility} is stable in the limit $\eps\to 0$ of the perturbed equation $\eqref{perturbed BE}$. In other words, our result shows that the arising Maxwell-Stefan system is actually a stable perturbation of Fick's multi-species model, whose derivation from equation $\eqref{multi BE scaled}$ has been recently proved in \cite{BriGre}.
\item[(iii)] At last, it is important to note that since the proof of Theorem \ref{theo:Cauchy BE} is based on the hypocoercive formalism, its procedure can be actually applied to a wider class of limit macroscopic models which satisfy some minimal abstract hypotheses. In particular, our theorem could be easily generalized to cover a lot of previous results regarding the diffusive limit of the Boltzmann mono-species equation (and their counterparts in the multi-species setting), like the rigorous derivation of the Navier-Stokes equations obtained via Hilbert expansion \cite{DMEL} or by means of hypocoercivity techniques \cite{Bri1}. 
We shall not give any details of its proof since it could be recovered with a slight adjustment to our next computations, which is out of the principal scope of this work.
\end{itemize}
\end{remark}

\begin{theorem}[More general result]
Let the collision kernels $B_{ij}$ satisfy assumptions $\mathrm{(H1)}$--$\mathrm{(H2)}$--$\mathrm{(H3)}$--$\mathrm{(H4)}$, and consider some general fluid system of equations $\mathcal{P}(\boldc,\boldu) = 0$. Let the system possess a unique perturbative solution $(\boldc,\boldu)=(\bar{\boldc} + \eps\tilde{\boldc},\bar{\boldu} + \eps\tilde{\boldu})$ around any macroscopic equilibrium $(\bar{\boldc},\bar{\boldu})$ of $\mathcal{P}$, with $\bar{\boldc}\in (\R^*_+)^N$ and $\bar{\boldu} = (\bar{u},\ldots,\bar{u})$. Construct the local Maxwellian $\bepsM$ defined by \eqref{Local Maxwellian eps}, using the fluid quantities $(\boldc,\boldu)$ and suppose that there exists a constant $\delta_{\mathrm{fluid}} >0$ verifying
\vspace*{1mm}
\begin{itemize}
\item[(i)] $\displaystyle \tilde{\boldc}\in L^\infty\pab{\R_+;H^s(\T^3)}\quad\textrm{and}\quad \norm{\tilde{\boldc}}_{L^\infty_t H^s_x} \leq \delta_{\mathrm{fluid}},$\\[2mm]
\item[(ii)] $\displaystyle \bar{\boldu},\ \tilde{\boldu}\in L^\infty\pab{\R_+;H^{s-1}(\T^3)}\quad\textrm{and}\quad \norm{\bar{\boldu}}_{L^\infty_t H^{s-1}_x},\ \norm{\tilde{\boldu}}_{L^\infty_t H^{s-1}_x} \leq \delta_{\mathrm{fluid}},$\\[2mm]
\item[(iii)] The microscopic and the fluid parts of the source term $\bepsS$ are controlled in the $\mathcal{H}^s_\eps$ norm as
$$\norm{\mathbf{\pi_L^\bot}\pa{\bepsS}}_{\mathcal{H}^s_\eps} = \mathcal{O}\pa{\frac{\delta_{\rm fluid}}{\eps}} \quad\mbox{ and }\quad \norm{\mathbf{\pi_L}\pa{\bepsS}}_{\mathcal{H}^s_\eps} = \mathcal{O}\pa{\delta_{\rm fluid}}.$$
\end{itemize}
Then, there exist $s_0\in\N^*$, $\bar{\delta}_{\mathrm{fluid}} >0$ and~${\eps_0\in (0,1]}$ such that, for any integer $s \geq s_0$,
there exists an explicit $\delta_\bb >0$ such that, for all $\eps\in (0,\eps_0]$, for all $\delta_{\mathrm{fluid}}\in [0,\bar{\delta}_{\mathrm{fluid}}]$ and for any initial datum $\boldf^\init$ in~$\sobolevTR{s}$ with
\begin{equation*}
\norm{\boldf^\init}_{\mathcal{H}^s_\eps}\leq \delta_\bb,\qquad \norm{\pi_\bepsT(\boldf^\init)}_{\hilbertxv} = \mathcal{O}(\delta_{\mathrm{fluid}}),
\end{equation*}
there exists a unique $\boldf\in C^0\pa{\R_+;\sobolevTR{s}}$ such that $\boldF^\eps=\bepsM+\eps\boldf$ is the unique weak solution of the Boltzmann multi-species equation $\eqref{perturbed BE}$. Moreover, if~${\boldF^{\eps,\init}=\mathbf{M}^{\eps,\init}+\eps\boldf^\init\geq 0}$, then $\boldF^\eps(t,x,v)\geq 0$ almost everywhere on $\R_+\times\T^3\times\R^3$. Finally, for any time $t\geq 0$, $\boldF^\eps$ satisfies the stability property
\begin{equation*}
\norm{\boldF^\eps-\bepsM}_{\mathcal{H}^s_\eps}\leq \eps \delta_\bb, \quad \forall\eps\in (0,\eps_0].
\end{equation*}
\end{theorem}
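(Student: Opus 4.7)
The plan is to mimic the proof of Theorem~\ref{theo:Cauchy BE} essentially line by line, tracking precisely where the Maxwell-Stefan structure is exploited and replacing each use by the abstract hypotheses (i)--(iii). Inserting the ansatz $\boldF^\eps = \bepsM + \eps \boldf$ into the scaled Boltzmann equation \eqref{multi BE scaled} produces verbatim the perturbed system \eqref{perturbed BE}, with linearized operator $\bepsL$ given by \eqref{Leps} and source term $\bepsS$ given by \eqref{Seps}. As in the Maxwell-Stefan case, the goal is to establish, for the modified Sobolev norm $\norm{\cdot}_{\mathcal{H}^s_\eps}$ of \eqref{Sobolev norm}, an energy-type differential inequality of the schematic form
\[
\frac{1}{2}\frac{\dd}{\dd t}\norm{\boldf}_{\mathcal{H}^s_\eps}^2 \leq -\frac{K_1}{\eps^2}\norm{(\mathbf{I}-\pi_\boldL)\boldf}_{\mathcal{H}^s_\eps}^2 - K_2 \norm{\pi_\boldL\boldf}_{\mathcal{H}^s_\eps}^2 + \frac{K_3}{\eps}\norm{\boldf}_{\mathcal{H}^s_\eps}^3 + \mathcal{R}^\eps(\boldf),
\]
where $\mathcal{R}^\eps(\boldf)$ collects the contributions of the source term $\bepsS$. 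The first two terms on the right-hand side are the hypocoercive return produced by $\boldT^\eps = \eps^{-2}\boldL - \eps^{-1} v\cdot\nabla_x$ in the spirit of \cite{MouNeu,Bri1}, and the cubic term is the standard trilinear estimate on $\boldQ$.

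The key observation is that the dependence of every ingredient of this inequality on the fluid quantities $(\boldc,\boldu)$ only enters through the local Maxwellian $\bepsM$ and its space-time derivatives. The bounds (i) and (ii) provide exactly the smallness needed to reproduce, in the abstract setting, the three pillars of the hypocoercive argument: partial coercivity of $\bepsL$ up to an $\mathcal{O}(\eps)$ defect carried by $\pi_\boldL$, as established in \cite{BonBouBriGre}; the uniform trilinear control of $\boldQ(\boldf,\boldf)$ with the usual gain of weight; and the commutator bounds between $v\cdot\nabla_x$ and higher-order derivatives in $x$ and $v$ that underpin the equivalence of $\norm{\cdot}_{\mathcal{H}^s_\eps}$ with the full weighted Sobolev norm. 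Crucially, no specific Maxwell-Stefan identity (the equimolarity relation \eqref{equimolar vectorial}, the diffusion coefficients, or the momentum balance \eqref{MS momentum}) is actually required past the construction of $\bepsM$; only positivity of $\boldc$ and the smallness of $\tilde{\boldc},\bar{\boldu},\tilde{\boldu}$ are used. The structural assumption $\bar{\boldu}=(\bar u,\ldots,\bar u)$ is precisely what makes $\bepsM$ compatible, at leading order in $\eps$, with the global equilibrium $\boldmu$ and with the projection $\pi_\boldL$ of \eqref{Projection}.

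The main obstacle is the uniform-in-$\eps$ control of $\mathcal{R}^\eps(\boldf)$, which is exactly the reason hypothesis (iii) prescribes separate bounds on the two components of $\bepsS$. The microscopic part $(\mathbf{I}-\pi_\boldL)\bepsS$, of size $\mathcal{O}(\delta_{\rm fluid}/\eps)$, is tested against $(\mathbf{I}-\pi_\boldL)\boldf$ and absorbed via Young's inequality into the strong dissipation $\eps^{-2}\norm{(\mathbf{I}-\pi_\boldL)\boldf}_{\mathcal{H}^s_\eps}^2$, leaving a residual of order $\mathcal{O}(\delta_{\rm fluid}^2)$; the fluid part $\pi_\boldL \bepsS$, of size $\mathcal{O}(\delta_{\rm fluid})$, is balanced against the weaker $\mathcal{O}(1)$ hypocoercive return on $\pi_\boldL\boldf$ with a residual of the same order. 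The initial condition $\norm{\pi_\bepsT(\boldf^\init)}_{\hilbertxv} = \mathcal{O}(\delta_{\rm fluid})$ then controls the macroscopic drift produced by $\pi_\boldL\bepsS$, exactly as in Theorem~\ref{theo:Cauchy BE}. Once the differential inequality is closed, a standard bootstrap on $T^\star = \sup\br{t\ge 0:\norm{\boldf(t)}_{\mathcal{H}^s_\eps}\le \delta_\bb}$ for $\delta_\bb$ and $\bar\delta_{\rm fluid}$ small enough shows $T^\star = +\infty$, and the stability bound follows from $\boldF^\eps - \bepsM = \eps\boldf$. Existence, uniqueness, and propagation of non-negativity of $\boldF^\eps$ are then obtained exactly as in Theorem~\ref{theo:Cauchy BE}, through an iterative regularization scheme whose only fluid-side input is (i)--(iii). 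The most delicate technical point is a careful bookkeeping of the constants in order to guarantee that $\delta_\bb$ can be chosen independently of both $\eps$ and $\delta_{\rm fluid}$.
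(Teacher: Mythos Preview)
Your proposal is correct and aligns with the paper's own treatment: the paper does not give a separate proof of this theorem, stating instead (in the remark preceding it) that the result ``could be recovered with a slight adjustment to our next computations'' for Theorem~\ref{theo:Cauchy BE}, which is precisely the plan you outline. Your identification of where the Maxwell-Stefan structure is actually used---namely in Lemma~\ref{LemmaS} for the source-term splitting and in Lemma~\ref{lem:Poincare} for the Poincar\'e inequality---and your observation that hypotheses (i)--(iii) supply exactly the abstract replacements for those ingredients, is accurate and matches the spirit of the paper's claim.
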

\bigskip

\section{Perturbative Cauchy theory for the Boltzmann multi-species equation}\label{sec:perturbed BE}

\noindent In this section we establish a Cauchy theory for the Boltzmann multi-species equation $\eqref{perturbed BE}$, perturbed around the local Maxwellian state \eqref{Local Maxwellian eps}--\eqref{Meps fluid quantities}. In the first part we present the hypocoercive properties satisfied by the Boltzmann operators $\bepsL$ and $\boldQ$, which allow to connect our analysis to the formalism used in \cite{MouNeu,Bri1}. The proofs of these lemmata are postponed to Section \ref{sec:technical proofs} for clarity purposes. In the second part we prove Theorem \ref{theo:Cauchy BE}: 
we 
derive suitable uniform (in $\eps$) bounds for the problematic source term $\bepsS$; then list the \textit{a priori} energy estimates that we are able to recover for any solution $\boldf$ of $\eqref{perturbed BE}$; we at last notice that our estimates actually coincide (up to lower orders in $\eps$ or in $\delta_\ms$) to the ones obtained in \cite{Bri1}. 

\subsection{Hypocoercive formalism}\label{Hypocoercivity} 

The initial step consists in verifying some structural properties on $\bepsL$ and $\boldQ$, necessary to recover the basic \textit{a priori} energy estimates that will be presented in the following part. We shall see that the extension of the methods in \cite{Bri1} is absolutely nontrivial and many new issues come up in the analysis. For this reason, we choose to keep the same order of presentation of the technical assumptions adopted in \cite{Bri1}. This will allow us to enlighten the similarities and especially the main differences between our strategies. 

\smallskip
Before stating our lemmata, we here provide a brief description of the main features of the linearized Boltzmann operator $\bepsL$. We recall that $\bepsL$ can be written under the form
$$\bepsL=\bepsK-\bepsnu,$$
where $\bepsK=(\epsK_1,\ldots,\epsK_N)$ is defined, for any $1\leq i\leq N$, by
\begin{equation}\label{OperatorK}
\epsK_i(\boldf)(v)=\sum_{j=1}^N\int_{\R^3\times\Sf}B_{ij}(|v-v_*|,\cos\vartheta)\Big(\Mip \fps_j+\Mjps \fp_i-\epsM_i \fs_j\Big)\dd v_* \dd\sigma,
\end{equation}
and $\bepsnu=(\epsnu_1,\ldots ,\epsnu_N)$ acts like a multiplicative operator, namely, for any $1\leq i\leq N$,
$$\epsnu_i(\boldf)(v)=\sum_{j=1}^N \epsnu_{ij}(v)f_i(v),$$
\begin{equation}\label{CollisionFrequency}
\epsnu_{ij}(v)=\int_{\R^3\times\Sf} B_{ij}\big(|v-v_*|,\cos\vartheta\big)\epsM_j(v_*)\dd v_* \dd\sigma,\quad \forall v\in\R^3.
\end{equation}

\begin{remark}
Note that throughout this initial presentation we shall suppose that the solution $(\boldc,\boldu)$ of the Maxwell-Stefan system we consider is always smooth enough to perform all the estimates, as stated in Theorem \ref{theo:Cauchy MS}. The required regularity will be fixed afterwards and will allow to apply all the results obtained in this section. As such, the quantities $\min_i c_i >0$, $C_0=\scalprod{\boldc,\mathbf{1}}$, $\norm{\boldc}_{L^\infty_t \spacex{s}}$ and $\norm{\boldu}_{L^\infty_t H_x^s}$ are considered as constants and we only keep their track when the explicit order of magnitude $\mathcal{O}(\delta_\ms)$ is needed in order to close the energy estimates at the kinetic level. Also, each time a constant is written, it is independent of $\eps$. This is only done for a sake of simplicity and clarity, but nevertheless each of these constants will be computed explicitly inside the proofs.
\end{remark}

We begin with a lemma which establishes some general controls on $\bepsnu$ and $\bepsL$ (stating in particular the coercivity of $\bepsnu$). 

\begin{lemma}[Estimates on $\bepsnu$ and $\bepsL$] \label{lemma:Properties 1-2}
For any $\eps \in (0,1]$, the linearized operator $\bepsL$ and its multiplicative part $\bepsnu$ satisfy the following explicit bounds. There exist some explicit positive constants $C^{\NuNu}_1$, $C^{\NuNu}_2$ and $C^\LL_1$ such that, for any $\boldf,\boldg\in \hilbertR$, we have

\begin{gather}
C^{\NuNu}_1\norm{\boldf}_{\hilbertv}^2 \leq C^{\NuNu}_1\norm{\boldf}_{\spacev}^2\leq \scalprod{\bepsnu(\boldf),\boldf}_{\hilbertv}\leq C^{\NuNu}_2\norm{\boldf}_{\spacev}^2,  \label{Property1} \\[8mm]   
\abs{\scalprod{\bepsL(\boldf),\boldg}_{\hilbertv}} \leq C^\LL_1 \norm{\boldf}_{\spacev}\norm{\boldg}_{\spacev}.   \label{Property2} 
\end{gather}
\end{lemma}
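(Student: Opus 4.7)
The strategy is to reduce both inequalities to standard estimates for the Boltzmann collision operator linearized around a genuine global Maxwellian, by exploiting that the local Maxwellian $\bepsM$ is uniformly comparable, in a pointwise multiplicative sense, to a global Maxwellian of the form $\boldmu$.

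\textbf{Step 1: Uniform pointwise Maxwellian bounds on $\bepsM$.} The first thing I would do is establish that there exist constants $C_\pm, \theta_\pm > 0$, independent of $\eps$ and of $(t,x)$, such that for any $1\leq i\leq N$
\begin{equation*}
C_{-}\,c_{i,\infty}\pa{\frac{m_i}{2\pi\theta_{-}}}^{3/2}e^{-m_i|v|^2/(2\theta_{-})} \;\leq\; \epsM_i(t,x,v) \;\leq\; C_{+}\,c_{i,\infty}\pa{\frac{m_i}{2\pi\theta_{+}}}^{3/2}e^{-m_i|v|^2/(2\theta_{+})}.
\end{equation*}
This follows from the explicit form \eqref{Local Maxwellian eps} by expanding $|v-\eps u_i|^2 = |v|^2 - 2\eps v\cdot u_i + \eps^2|u_i|^2$ and absorbing the cross term via Young's inequality $|2\eps v\cdot u_i| \leq \eta|v|^2 + \eps^2|u_i|^2/\eta$, with $\eta$ chosen small so that $m_i(1-\eta) > 0$ but $m_i(1+\eta)$ is slightly bigger. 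The properties (i)--(iv) of the fluid quantities \eqref{Meps fluid quantities} (positivity of $c_i$ away from zero, boundedness of $c_i$ and of $\eps u_i$ in $L^\infty$ via Sobolev embedding) give uniform control of all the remaining prefactors in terms of $\delta_\ms$, $\boldc_\infty$ and the masses $(m_i)_{1\leq i\leq N}$.

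\textbf{Step 2: Proof of \eqref{Property1}.} The first inequality $\norm{\boldf}_{\hilbertv}^2 \leq \norm{\boldf}_{\spacev}^2$ is trivial since $\langle v\rangle^{\gamma/2}\geq 1$. For the other two, I would insert the bounds of Step 1 into the definition \eqref{CollisionFrequency} of $\epsnu_{ij}$. By (H2)--(H3),
\begin{equation*}
\epsnu_{ij}(v) = C^{\Phi}_{ij}\int_{\R^3\times\Sf}|v-v_*|^\gamma b_{ij}(\cos\vartheta)\epsM_j(v_*)\,\dd v_*\dd\sigma,
\end{equation*}
and by (H4) the angular integral is finite. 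The upper and lower Maxwellian bounds from Step 1 then reduce $\epsnu_{ij}(v)$, up to multiplicative constants uniform in $\eps$ and $(t,x)$, to the classical quantity $\int|v-v_*|^\gamma e^{-c|v_*|^2}\dd v_*$, which is well known to be equivalent to $\langle v\rangle^\gamma$ when $\gamma\in[0,1]$. Summing over $j$ and plugging into $\scalprod{\bepsnu(\boldf),\boldf}_{\hilbertv} = \sum_i \int \epsnu_i(v)\,f_i^2(v)\mu_i^{-1}(v)\,\dd v$ yields \eqref{Property1}.

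\textbf{Step 3: Proof of \eqref{Property2}.} Write $\bepsL = \bepsK - \bepsnu$. The $\bepsnu$-part is controlled by Cauchy--Schwarz in $\hilbertv$ together with the upper bound of \eqref{Property1}. For $\bepsK$, defined in \eqref{OperatorK}, I would proceed exactly as in the mono-species case and in the multi-species analysis of Briant--Daus: use the symmetry (H1) and the elastic collision rules \eqref{elasticcollision} to perform the pre/post-collisional change of variables $(v,v_*,\sigma)\leftrightarrow(v',v'_*,\sigma')$, which has Jacobian one and preserves $|v-v_*|$. Each of the three pieces of $\epsK_i$ is then a bilinear integral of the form $\int B_{ij}\,\mathcal{M}_*\,f\,g\,\dd v_*\dd\sigma\dd v$, where $\mathcal{M}_*$ is a product of two evaluations of $\bepsM$. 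Dominating each such evaluation by a Maxwellian via Step 1 and applying Cauchy--Schwarz in the measure $B_{ij}\,\mathcal{M}_*\dd v_*\dd\sigma\dd v$ with the weight $\langle v\rangle^{\gamma/2}\mu_i^{-1/2}$ produces the bound $C\,\norm{\boldf}_{\spacev}\norm{\boldg}_{\spacev}$ with a constant uniform in $\eps$.

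\textbf{Main obstacle.} The only non-routine point is to check that the pre/post-collisional change of variables for the cross-species terms of $\bepsK$, which involves the mass ratios $m_i/(m_i+m_j)$, produces constants independent of $\eps$. This is where Step 1 is essential: all occurrences of $\bepsM$ (including those at $v'$ and $v'_*$) get replaced by Maxwellians of a fixed temperature $\theta_+<1$ and hence all integrals are computed against decaying Gaussians whose parameters do not degenerate when $\eps\to 0$ or when $\delta_\ms$ is kept small.
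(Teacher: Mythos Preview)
Your proposal is correct and follows essentially the same strategy as the paper: establish uniform-in-$\eps$ pointwise Gaussian bounds on $\bepsM$, feed them into the definition of $\epsnu_{ij}$ to get $\epsnu_i(v)\sim\langle v\rangle^\gamma$, and then combine with Cauchy--Schwarz for the bilinear estimate on $\bepsL$. The paper parametrizes the Maxwellian sandwich via powers $\mathM_i^\delta$ and $\mathM_i^{1/\delta}$ with $\delta\in(0,1)$ rather than via temperatures $\theta_\pm$, but this is the same computation (a $\delta$-th power of a unit-temperature Maxwellian is a Maxwellian of temperature $1/\delta$). One minor difference in Step~3: the paper handles $\bepsK$ by passing to its Carleman kernel representation and invoking the pointwise kernel bounds already established in \cite{BonBouBriGre}, whereas you work directly with the collision integral and the pre/post-collisional change of variables; both routes are standard and yield the same constant $C^\LL_1$. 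Also, your description ``$\mathcal{M}_*$ is a product of two evaluations of $\bepsM$'' is slightly imprecise---each piece of $\epsK_i$ carries a single factor of $\bepsM$, and it is the weight $\mu_i^{-1}$ from the inner product that supplies the second Gaussian factor to be balanced; this does not affect the argument once $\delta>1/2$ (equivalently $\theta_+>1/2$) is chosen so that $\epsM_i\,\mu_i^{-1/2}$ remains integrable.
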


\smallskip
The constant $C^{\NuNu}_1$ in $\eqref{Property1}$ translates into a coercivity property for $\bepsnu$. Our next result shows that $\bepsnu$ also exhibits a defect of coercivity along its $v$-derivatives.

\begin{lemma}[Defect of coercivity for $\multideriv \bepsnu$]\label{LemmaNu}
For any $s\in\N^*$, and for all multi-indices $\alpha, \beta$ such that $|\alpha|+|\beta|=s$ and $|\beta|\geq 1$, there exist some positive constants $C^\NuNu_k$, with~$k\in\br{3,\ldots, 7}$, such that, for any $\boldf\in\sobolevTR{s}$,
\begin{multline}\label{Property3}
\scalprod{\multideriv \bepsnu(\boldf),\multideriv\boldf}_{\hilbertxv}\geq 
\\[3mm]    \pab{C^{\NuNu}_3-\eps\mathds{1}_{\br{|\alpha|\geq1}}C^{\NuNu}_4}\norm{\multideriv \boldf}_{\spacexv}^2  \qquad\qquad
\\[3mm]-\pab{C^{\NuNu}_5+\eps\mathds{1}_{\br{|\alpha|\geq1}}C^\NuNu_6}\norm{\boldf}_{\sobolevxv{s-1}}^2 \qquad\ \ 
\\[2mm]   - \eps\mathds{1}_{\br{|\alpha|\geq1}}C^{\NuNu}_7\sum_{0<|\alpha^\prime|+|\beta^\prime|\leq s-1}\norm{\partial^{\beta^\prime}_v\partial^{\alpha^\prime}_x \boldf}_{\spacexv}^2.
\end{multline}
\end{lemma}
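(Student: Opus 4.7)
The plan is to expand $\multideriv\bigl(\bepsnu(\boldf)\bigr)$ by the Leibniz rule and separate the only term that produces coercivity from the commutator remainders, whose size is controlled by a careful bookkeeping of $\eps$-factors. Since $\bepsnu$ acts by multiplication in a species-diagonal fashion with multiplier $\epsnu_i(t,x,v)=\sum_{j=1}^N\epsnu_{ij}(t,x,v)$, one writes
\begin{equation*}
\multideriv\bigl(\epsnu_i f_i\bigr)=\sum_{\substack{\alpha_1+\alpha_2=\alpha\\ \beta_1+\beta_2=\beta}}\binom{\alpha}{\alpha_1}\binom{\beta}{\beta_1}\bigl(\partial_x^{\alpha_1}\partial_v^{\beta_1}\epsnu_i\bigr)\bigl(\partial_x^{\alpha_2}\partial_v^{\beta_2}f_i\bigr).
\end{equation*}
The leading term $(\alpha_1,\beta_1)=(0,0)$ equals $\epsnu_i\multideriv f_i$, and applying Lemma \ref{lemma:Properties 1-2} pointwise in $(t,x)$ (then integrating over $\T^3$) yields the desired coercivity $\scalprod{\bepsnu(\multideriv\boldf),\multideriv\boldf}_{\hilbertxv}\geq C^\NuNu_1\|\multideriv\boldf\|^2_{\spacexv}$, which is the source of the constant $C^\NuNu_3$ in the statement.

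The rest of the argument relies on sharp pointwise bounds on derivatives of $\epsnu_{ij}$. Under assumptions (H2)--(H3), the classical estimate $|\partial_v^{\beta_1}\epsnu_{ij}(t,x,v)|\leq C\langle v\rangle^\gamma$ holds uniformly in $(t,x)$ and $\eps$ (for $|\beta_1|\leq s$), with a constant depending on $\|\boldc\|_{L^\infty_{t,x}}$ and $\|\eps\boldu\|_{L^\infty_{t,x}}$, both finite by Theorem \ref{theo:Cauchy MS} combined with the Sobolev embedding $H^s\hookrightarrow L^\infty$. Crucially, since $\epsM_j$ depends on $x$ only through $c_j=c_{j,\infty}+\eps\tilde c_j$ and through the drift $\eps u_j$, every $x$-derivative necessarily extracts a factor of $\eps$. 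An elementary computation on $\epsM_j$ then gives
\begin{equation*}
|\partial_x^{\alpha_1}\partial_v^{\beta_1}\epsnu_{ij}(t,x,v)|\leq C_{\alpha_1,\beta_1}\,\eps\,\langle v\rangle^\gamma,\qquad|\alpha_1|\geq 1,
\end{equation*}
where $C_{\alpha_1,\beta_1}$ is a polynomial expression in $\partial_x^{\alpha'}\tilde{\boldc}$ and $\partial_x^{\alpha'}\boldu$ with $|\alpha'|\leq|\alpha_1|$, uniformly bounded again by Sobolev embedding. This accounts precisely for the systematic $\eps\mathds{1}_{\{|\alpha|\geq 1\}}$ prefactor appearing on the right-hand side of the lemma.

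It then remains to estimate each Leibniz cross term with $(\alpha_1,\beta_1)\neq(0,0)$ using Cauchy-Schwarz and Young's inequality, splitting into two families. The family $|\alpha_1|=0$, $|\beta_1|\geq 1$ carries no $\eps$-factor: absorbing a small fraction of $\|\multideriv\boldf\|^2_{\spacexv}$ into the main coercive term leaves a remainder bounded by $C^\NuNu_5\|\boldf\|^2_{\sobolevxv{s-1}}$, corresponding to lower-order derivatives of $\boldf$. The family $|\alpha_1|\geq 1$ carries an extra $\eps$ in front, and the same Young's procedure produces the loss $\eps C^\NuNu_4\|\multideriv\boldf\|^2_{\spacexv}$, the lower-order $\hilbertxv$-Sobolev remainder $\eps C^\NuNu_6\|\boldf\|^2_{\sobolevxv{s-1}}$, and the anisotropic $\spacexv$-sum $\eps C^\NuNu_7\sum_{0<|\alpha'|+|\beta'|\leq s-1}\|\partial_v^{\beta'}\partial_x^{\alpha'}\boldf\|^2_{\spacexv}$, which reflects the $\langle v\rangle^\gamma$ growth inherited from the hard-potential kernel. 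Collecting these bounds yields $\eqref{Property3}$. The main obstacle I expect is the precise accounting of which lower-order derivatives end up in the $\hilbertxv$-type norm versus the stronger $\spacexv$-type norm; this distinction is the reason the statement features two separate lower-order error terms, and it is driven by whether the coefficient $\partial_x^{\alpha_1}\partial_v^{\beta_1}\epsnu_{ij}$ was bounded pointwise or absorbed against the $\langle v\rangle^\gamma$ weight of the coercivity term.
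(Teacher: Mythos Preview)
Your approach is essentially the same as the paper's: Leibniz expansion, coercivity from the leading term via Lemma~\ref{lemma:Properties 1-2}, and Young-type absorption of the cross terms, with the key observation that every $x$-derivative hitting $\epsM_j$ extracts a factor~$\eps$. The paper carries this out only for $|\alpha|=|\beta|=1$ and claims the general case by iteration, exactly as you outline.

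One point deserves sharpening. You write $|\partial_v^{\beta_1}\epsnu_{ij}|\leq C\langle v\rangle^\gamma$ uniformly, but for $|\beta_1|\geq 1$ the stronger (and equally classical) bound $\partial_v^{\beta_1}\epsnu_{ij}\in L^\infty_v$ holds, since $\nabla_v|v-v_*|^\gamma$ is integrable against a Maxwellian in~$v_*$. This is precisely what forces the non-$\eps$ remainder into the \emph{unweighted} norm $\|\boldf\|_{\sobolevxv{s-1}}$: the paper makes this explicit by writing the Young step as $\frac{(\partial_{v_\ell}\epsnu_i)^2}{\epsnu_i}\in L^\infty_v$, using the lower bound on $\epsnu_i$. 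By contrast, for the $x$-derivative contribution $\partial_{x_k}\epsnu_i=\eps\tilde\nu_i^\eps$, the object $\tilde\nu_i^\eps$ again grows like $\langle v\rangle^\gamma$ but has no sign, so no analogous ratio trick is available; this is exactly why the $C^\NuNu_7$ term carries the $\langle v\rangle^{\gamma/2}$ weight. Your closing paragraph anticipates this distinction correctly, but the earlier blanket bound $\leq C\langle v\rangle^\gamma$ would not by itself deliver it.
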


We here notice for the first time the main difference with the mono-species case treated in \cite{Bri1}, where the Boltzmann equation is perturbed around a global equilibrium. Since the Maxwellian $\bepsM$ we consider depends on the space variable, we see the appearance of several new terms in the estimates, coming precisely from the $x$-derivatives of $\bepsM$. Nevertheless, each of these correction terms is at a lower order in $\eps$ and they are thus likely to be controlled by the main terms, which correspond to the ones derived in \cite{Bri1}. 

\smallskip
Let us now turn to the analysis of $\bepsK$. Following again the strategy of Mouhot and Neumann \cite{MouNeu}, we want to prove that this operator exhibits a regularizing behaviour whenever we consider some $v$-derivatives. More precisely, we shall see that $\multideriv \bepsK$ is controlled by a main term only depending on lower derivatives of $\boldf$ plus a correction term~(which can be made arbitrary small) involving the same order of derivatives $\multideriv \boldf$.  We can think of this property as the counterpart of the defect of coercivity shown previously on $\multideriv \bepsnu$, in the sense that the correction term appearing in the bound of $\multideriv \bepsK$ will be controlled by the negative contribution coming from $\multideriv \bepsnu$. 


Compared to mono-species case, we shall dive deeper into the structure of the operator, encountering two major issues. 
The first issue is that we 
need to recover a full explicit expression of $\bepsK$ in a kernel form, which is not just a simple extension of the mono-species case. This is done in Appendix \ref{App:Carleman} by deriving a precise Carleman representation of $\bepsK$, following the work of \cite{BriDau}. 
The second issue arises because the kernel operator strongly depends on $|v-v_*|$ and we need to properly treat the $v$-derivatives in the small region where the relative velocity is close to zero. 
This last step will in particular involve dealing with the different decay rates of the global equilibria $\mu_i$, a strategy developed in \cite{BonBouBriGre}.

\begin{lemma}[Mixing properties in velocity for $\multideriv \bepsK$]\label{LemmaK}
Let $s\in\N^*$ and consider two multi-indices $\alpha, \beta$ such that $|\alpha|+|\beta|=s$ and $|\beta|\geq 1$. Then, for any $\xi \in (0,1)$, there exist two constants $C^\KK_1(\xi)$, $C^\KK_2 >0$ such that, for all $\boldf\in \sobolevxv{s}$,
\begin{equation}\label{Property4}  
 \scalprod{\multideriv \bepsK(\boldf),\multideriv \boldf}_{\hilbertxv} \leq C^\KK_1(\xi)\norm{\boldf}_{\sobolevxv{s-1}}^2+\xi C^\KK_2 \norm{\multideriv \boldf}_{\hilbertxv}^2.
\end{equation}
The constants $C^\KK_1(\xi)$ and $C^\KK_2$ are explicit, and in particular $C^\KK_2$ does not depend on the parameter $\xi$.
\end{lemma}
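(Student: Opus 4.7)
The plan is to adapt the strategy of Mouhot--Neumann \cite{MouNeu} for the mono-species Boltzmann equation linearized around a global Maxwellian, to our multi-species setting linearized around the non-equilibrium local Maxwellian $\bepsM$. The cornerstone is the Carleman-type representation of the gain part of $\bepsK$ established in Appendix \ref{App:Carleman}, which (together with the loss-like contribution $-\epsM_i f_j^*$, already in kernel form) recasts the operator as
\begin{equation*}
\epsK_i(\boldf)(v) = \sum_{j=1}^N \int_{\R^3} k^{\eps}_{ij}(v,v_*)\, f_j(v_*)\, \dd v_*,
\end{equation*}
where each $k^{\eps}_{ij}$ depends smoothly on the fluid quantities of $\bepsM$, decays in $|v-v_*|$ at a rate controlled by the masses $m_i$ and $m_j$, and carries an integrable singularity along the diagonal $v=v_*$ inherited from the Jacobian of the Carleman change of variables.

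Starting from this representation I would distribute $\multideriv$ via the Leibniz rule. The $x$-derivatives act only on $\bepsM$ through $k^{\eps}_{ij}$ and, thanks to the smoothness bounds on $(\boldc,\boldu)$ provided by Theorem \ref{theo:Cauchy MS}, yield bounded multiplicative factors whose polynomial growth in $v$ is absorbed by the Gaussian decay of $k^{\eps}_{ij}$. The $v$-derivatives are more subtle: each one either stays on $k^{\eps}_{ij}(v,v_*)$, or—after an integration by parts in $v_*$ justified by the decay of the kernel at infinity—is transferred onto $f_j$ at the cost of a $v_*$-derivative of the kernel. The delicate point is to avoid leaving a full $v$-derivative of top order on $f_j$ in the pairing with $\multideriv f_i$, since this would only produce $\norm{\multideriv\boldf}_{\hilbertxv}^2$ with no smallness factor, preventing later absorption by the coercivity of $\multideriv\bepsnu$ from Lemma \ref{LemmaNu}.

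To circumvent this, I would introduce a truncation parameter $\eta=\eta(\xi)>0$ and split the $v_*$-integration into the singular neighbourhood $\br{|v-v_*|\leq \eta}$ and its complement. Away from the diagonal, the kernel and all its derivatives up to order $s$ are uniformly bounded with Gaussian decay, so that every $v$-derivative may be freely shifted onto $k^{\eps}_{ij}$, producing a contribution bounded by $C(\eta)\norm{\boldf}_{\sobolevxv{s-1}}^2$ via Cauchy--Schwarz and Young's inequality. On the singular region one keeps the $v$-derivatives on $f_j$ and estimates the resulting integral through Cauchy--Schwarz together with the local integrability of $k^{\eps}_{ij}$, yielding a bound of the form $\omega(\eta)\norm{\multideriv\boldf}_{\hilbertxv}^2$ with $\omega(\eta)\to 0$ as $\eta\to 0$. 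Choosing $\eta$ so that $\omega(\eta)\leq \xi$ delivers the announced estimate, with $C^{\KK}_2$ independent of $\xi$ and $C^{\KK}_1(\xi)\to \infty$ as $\xi\to 0$.

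The main obstacle will be to establish the uniform Gaussian decay of $k^{\eps}_{ij}$ against the weight $\pmb{\mu}^{-1/2}$, which corresponds to the global equilibrium and differs from the local Maxwellian $\bepsM$ around which we linearize. Since each species carries its own decay rate $e^{-m_i|v|^2/2}$, a careful tracking of the mass-weighted exponential factors in the product $k^{\eps}_{ij}(v,v_*)\mu_i^{-1/2}(v)\mu_j^{1/2}(v_*)$ is required, in the spirit of the analysis developed in \cite{BonBouBriGre}. This is precisely where the perturbative smallness of $\norm{\tilde{\boldc}}_{L^\infty_t \spacex{s}}$ and $\norm{\boldu}_{L^\infty_t H^s_x}$ from Theorem \ref{theo:Cauchy MS} enters, ensuring that the $x$-derivatives of $\bepsM$ do not spoil the Gaussian behaviour of $k^{\eps}_{ij}$ and that both constants $C^{\KK}_1(\xi)$ and $C^{\KK}_2$ can be chosen independently of $\eps\in (0,1]$.
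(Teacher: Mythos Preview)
Your proposal is correct and follows essentially the same approach as the paper: Carleman representation of $\bepsK$ as a sum of kernel operators, a near/far splitting of the relative velocity at scale $\xi$, integration by parts to move $v$-derivatives onto the smooth part of the kernel, and control of the remainder by the smallness of $\int_{|v-v_*|\leq \xi}|k^{\eps}_{ij}|\,\dd v_*$, all under the weighted bounds $k^{\eps}_{ij}(v,v_*)\sqrt{\mu_j(v_*)/\mu_i(v)}\in L^1_{v_*}$ tracked as in \cite{BriDau,BonBouBriGre}. The only point worth sharpening is the mechanism for the integration by parts: the paper first changes variables to $(v,\eta)$ with $\eta=v-v_*$, so that $\nabla_v$ acting on $\int \kappa(v,\eta)f_j(v-\eta)\,\dd\eta$ hits both factors, and then uses $\nabla_v f_j(v-\eta)=-\nabla_\eta f_j(v-\eta)$ to transfer the $f_j$-derivative back onto the smooth kernel $\kappa^S$; this makes transparent why you need separate uniform $L^1_\eta$ bounds on $\nabla_v\kappa^S$, $\nabla_\eta\kappa^S$, and $\nabla_v\kappa^R$, rather than a single bound on $\nabla_v k^{\eps}_{ij}$ in the original variables.
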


The next crucial step in establishing a solid hypocoercivity framework is to determine whether $\bepsL$ possesses a spectral gap in the space of interest $\hilbertR$. 
The main issue is linked to the fact that $\bepsL$ is not self-adjoint in our setting and thus all the tools from classical spectral theory, as well as methods from \cite{DauJunMouZam, BriDau}, cannot be applied directly. Nevertheless, a partial answer to this problem has been already provided in \cite{BonBouBriGre}, where we were able to recover a quantitative upper bound for the Dirichlet form of the linearized operator $\bepsL$ in the space $\hilbertR$. We recall that the idea is to look for a penalization of type $\bepsL=\boldL+(\bepsL-\boldL)$, where $\boldL$ is the Boltzmann operator linearized around the global equilibrium $\boldmu$. 
In this way, the Dirichlet form of $\bepsL$ can be upper bounded by the usual spectral gap plus two correction terms of order $\eps$. The first term gives the distance between the non-equilibrium Maxwellian $\bepsM$ (of the linearization) and the global equilibrium $\boldmu$, while the second one translates the fact that the spectral projection is taken with respect to the space of equilibria of $\boldL$, which does not correspond to the space of equilibria of $\bepsL$. We present an extended and slightly different version of the estimate obtained in \cite{BonBouBriGre}, since our study requires more control on the term accounting for the projection onto $\Ker \boldL$ (say a control of order $\eps^2$). The reason is that we need to handle the factor $1/\eps^2$ in front of $\bepsL$. 
At last, we stress the fact that it is also crucial at this point to keep track of the quantity $\delta_\ms$ that uniformly bounds the $L^\infty_x$ norms of $\tilde{\boldc}$ and $\boldu$. Indeed, in what follows we shall eventually need the freedom in the choice of $\delta_\ms$, in order to take it small enough to close the \textit{a priori} estimates and recover the correct negative return in the Sobolev norm $\norm{\cdot}_{\mathcal{H}^s_\eps}$.

\begin{lemma}[Local coercivity of $\bepsL$]\label{LemmaL}
Let the collision kernels $B_{ij}$ satisfy assumptions~$\mathrm{(H1)}$--$\mathrm{(H2)}$--$\mathrm{(H3)}$--$\mathrm{(H4)}$, and let $\lambda_{\boldL}>0$ be the spectral gap in $\hilbertR$ of the operator~$\boldL$. There exists an explicit constant $C^\LL_2 >0$ such that, for all $\eps \in (0,1]$ and for any~$\eta_1 >0$, the operator $\bepsL$ satisfies the following estimate. For any $\boldf\in\hilbertR$
\begin{multline}\label{Property5} 
\scalprod{\bepsL(\boldf), \boldf}_{\hilbertv}\leq  -\pabb{\lambda_{\boldL}-(\eps +\eta_1) C^\LL_2}\norm{\boldf-\pi_{\boldL}(\boldf)}_{\spacev}^2 
\\[2mm]    + \eps^2 \delta_\ms\frac{C^\LL_2}{\eta_1}\norm{\pi_{\boldL}(\boldf)}_{\spacev}^2,\hspace{1cm}
\end{multline}
where $\pi_{\boldL}$ is the orthogonal projection onto $\Ker\boldL$ in $\hilbertR$, and we recall that $\delta_\ms >0$ can be chosen as small as desired from Theorem \ref{theo:Cauchy MS}.
\end{lemma}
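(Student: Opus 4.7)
My plan is to penalize $\bepsL$ around the global-equilibrium operator $\boldL$, writing $\bepsL=\boldL+(\bepsL-\boldL)$, and to exploit a pointwise (in $x$) orthogonality of $\bepsL(\boldf)$ with $\Ker\boldL$ that stems from the collisional invariants. Since the cross-sections obey (H1), the mass, momentum and energy conservation laws of $\boldQ$ pass to the bilinearization: for every $\boldf$ and every $x\in\T^3$,
\begin{equation*}
\int_{\R^3} L_i^\eps(\boldf)\,\dd v=0,\quad \sum_{i=1}^N m_i\int_{\R^3} v\, L_i^\eps(\boldf)\,\dd v=0,\quad \sum_{i=1}^N m_i\int_{\R^3}|v|^2 L_i^\eps(\boldf)\,\dd v=0.
\end{equation*}
In view of the explicit basis $(\pmb{\phi}^{(k)})_k$ of $\Ker\boldL$ recorded in \eqref{Projection}, these are exactly the orthogonality conditions $\scalprod{\bepsL(\boldf),\pmb{\phi}^{(k)}}_{\hilbertv}=0$ pointwise in $x$, and therefore
\begin{equation*}
\scalprod{\bepsL(\boldf),\pi_{\boldL}(\boldf)}_{\hilbertv}=0\quad\text{for a.e. }x\in\T^3.
\end{equation*}

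Setting $\boldf^\perp=\boldf-\pi_{\boldL}(\boldf)$, this cancellation reduces the Dirichlet form to
\begin{equation*}
\scalprod{\bepsL(\boldf),\boldf}_{\hilbertv}=\scalprod{\bepsL(\boldf^\perp),\boldf^\perp}_{\hilbertv}+\scalprod{\bepsL(\pi_{\boldL}(\boldf)),\boldf^\perp}_{\hilbertv}.
\end{equation*}
For the diagonal microscopic piece I insert the penalization: the self-adjointness of $\boldL$ and its classical spectral gap $\lambda_{\boldL}$ in $\hilbertR$ give $\scalprod{\boldL(\boldf^\perp),\boldf^\perp}_{\hilbertv}\leq -\lambda_{\boldL}\norm{\boldf^\perp}^2_{\spacev}$. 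To control $\bepsL-\boldL$ I Taylor-expand $\bepsM_i=\mu_i+\eps\tilde{M}_i+O(\eps^2)$ with $\tilde{M}_i=(\tilde{c}_i/c_{i,\infty}+m_i v\cdot u_i)\mu_i$; by Theorem \ref{theo:Cauchy MS} the prefactors are bounded in $L^\infty_t H^s_x$ by $C\delta_\ms$, so the bilinear continuity of the $Q_{ij}$ in $\spacev$ yields
\begin{equation*}
\abs{\scalprod{(\bepsL-\boldL)\boldg,\boldh}_{\hilbertv}}\leq C\eps\delta_\ms\norm{\boldg}_{\spacev}\norm{\boldh}_{\spacev}.
\end{equation*}
Absorbing $\delta_\ms$ into a constant produces $\scalprod{\bepsL(\boldf^\perp),\boldf^\perp}_{\hilbertv}\leq -(\lambda_{\boldL}-\eps C^\LL_2)\norm{\boldf^\perp}^2_{\spacev}$.

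The improvement to the kernel weight $\eps^2\delta_\ms$ is generated by the cross term. Since $\boldL\pi_{\boldL}(\boldf)=0$,
\begin{equation*}
\scalprod{\bepsL(\pi_{\boldL}(\boldf)),\boldf^\perp}_{\hilbertv}=\scalprod{(\bepsL-\boldL)\pi_{\boldL}(\boldf),\boldf^\perp}_{\hilbertv}\leq C\eps\delta_\ms\norm{\pi_{\boldL}(\boldf)}_{\spacev}\norm{\boldf^\perp}_{\spacev},
\end{equation*}
and Young's inequality $ab\leq \eta_1 a^2+b^2/(4\eta_1)$ gives $\eta_1\norm{\boldf^\perp}^2_{\spacev}+C^2\eps^2\delta_\ms^2\norm{\pi_{\boldL}(\boldf)}^2_{\spacev}/(4\eta_1)$. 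Using $\delta_\ms^2\leq\delta_\ms$ to trade one factor and summing all the contributions produces the announced inequality. The chief obstacle is the pointwise orthogonality $\bepsL(\boldf)\perp\Ker\boldL$ in $\hilbertv$: it hinges on combining the per-species mass conservation with the $(i,j)\leftrightarrow(j,i)$ pair symmetrization needed for cross-species momentum and energy conservation, and it is precisely this cancellation which eliminates the troublesome kernel-kernel contribution and lets us gain one extra factor $\eps$ over the corresponding estimate of \cite{BonBouBriGre}.
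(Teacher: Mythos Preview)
Your argument is correct and follows essentially the same route as the paper: both rely on the pointwise orthogonality $\bepsL(\boldf)\perp\Ker\boldL$ (which the paper records as \eqref{Property6}), the spectral gap of $\boldL$, the $O(\eps\delta_\ms)$ smallness of $\bepsL-\boldL$, and a Young splitting on the cross term $\langle(\bepsL-\boldL)\pi_{\boldL}(\boldf),\boldf^\perp\rangle$ to extract the extra factor $\eps^2\delta_\ms$. The only cosmetic difference is that the paper first decomposes $\bepsL=\boldL_\infty^\eps+\eps\tilde{\boldL}^\eps$ (separating the $c_{i,\infty}\mathcal{M}_i^\eps$ and $\eps\tilde{c}_i\mathcal{M}_i^\eps$ contributions of $\bepsM$) before penalizing around $\boldL$, whereas you handle everything through a single Taylor expansion of $\bepsM$ around $\boldmu$; the resulting bounds coincide once one takes $C^\LL_2\geq 1$.
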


We now turn to the study of the full nonlinear Boltzmann operator $\boldQ$. 


\begin{lemma}[Orthogonality to $\Ker\boldL$ and general controls on $\boldQ$]\label{LemmaQ}
The linear operator $\bepsL$ and the bilinear operator $\boldQ$ are othogonal to the kernel of $\boldL$, namely
\begin{equation}\label{Property6}
\pi_{\boldL}\pab{\bepsL(\boldf)}=\pi_{\boldL}\pab{\boldQ(\boldg,\boldh)}=0,\quad\forall \boldf,\boldg,\boldh\in\hilbertR.
\end{equation}
Moreover, $\boldQ$ satisfies the following estimate. For any $s\in\N$ and for all multi-indices~$\alpha, \beta$ such that $|\alpha|+|\beta|=s$, there exist two nonnegative functionals $\mathcal{G}_x^s$ and $\mathcal{G}_{x,v}^s$ satisfying~${\mathcal{G}_x^{s+1}\leq \mathcal{G}_x^s}$, ${\mathcal{G}_{x,v}^{s+1}\leq \mathcal{G}_{x,v}^s}$, and such that
\begin{equation}\label{Property7}
\abs{\big\langle \multideriv \boldQ(\boldg,\boldh),\boldf\big\rangle_{\hilbertxv}}\leq \left\{\begin{array}{cc}
\mathcal{G}_x^s(\boldg,\boldh)\norm{\boldf}_{\spacexv} & \textrm{if }\ |\beta|=0,\\[7mm]
\mathcal{G}_{x,v}^s(\boldg,\boldh)\norm{\boldf}_{\spacexv} & \textrm{if }\ |\beta|\geq 1.
\end{array}\right.
\end{equation}
In particular, there exists $s_0\in\N^*$ such that, for any integer $s \geq s_0$, there exists an explicit constant $C^\QQ_s>0$ verifying
\begin{equation}\label{Property8}
\begin{split}
& \mathcal{G}_x^s(\boldg,\boldh) \leq  C^\QQ_s\pa{\norm{\boldg}_{H_x^s\hilbertv}\norm{\boldh}_{H^s_x L^2_v\pab{\langle v\rangle^{\frac{\gamma}{2}}\boldsymbol\mu^{-\frac{1}{2}}}}+\norm{\boldh}_{H_x^s\hilbertv}\norm{\boldg}_{H^s_x L^2_v\pab{\langle v\rangle^{\frac{\gamma}{2}}\boldsymbol\mu^{-\frac{1}{2}}}}},\\[7mm]
& \mathcal{G}_{x,v}^s(\boldg,\boldh)\leq  C^\QQ_s\pa{\norm{\boldg}_{\sobolevxv{s}}\norm{\boldh}_{H^s_{x,v}\pab{\langle v\rangle^{\frac{\gamma}{2}}\boldsymbol\mu^{-\frac{1}{2}}}}+\norm{\boldh}_{\sobolevxv{s}}\norm{\boldg}_{H^s_{x,v}\pab{\langle v\rangle^{\frac{\gamma}{2}}\boldsymbol\mu^{-\frac{1}{2}}}}}.
\end{split}
\end{equation}
\end{lemma}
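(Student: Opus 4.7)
The lemma splits into two independent claims: the orthogonality \eqref{Property6}, and the bilinear bounds \eqref{Property7}--\eqref{Property8}, which I would handle separately.

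\textit{Orthogonality to $\Ker\boldL$.} This is a direct consequence of the standard conservation properties of the binary collision operators $Q_{ij}$, which are precisely what makes $\Ker\boldL$ be spanned by the multi-species collision invariants $(1,\ m_i v,\ m_i|v|^2/2)$. By the explicit form \eqref{Projection explicit} of $\pi_\boldL$, and since the weight $\boldmu^{-1}$ defining~$\hilbertv$ exactly cancels the Maxwellian factor present in each basis vector $\pmb{\phi}^{(k)}$, testing $\bepsL(\boldf)$ or $\boldQ(\boldg,\boldh)$ against any $\pmb{\phi}^{(k)}$ amounts to evaluating $L_i^\eps(\boldf)$, respectively $Q_i(\boldg,\boldh)$, against one of the three collision invariants. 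Each such integral vanishes as a direct consequence of the weak formulation of $Q_{ij}$, combined with the microreversibility and the elastic collision rules \eqref{elasticcollision}, yielding \eqref{Property6}.

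\textit{Bilinear estimates.} I would proceed by Leibniz expansion, a trilinear bound, and Sobolev embedding. Since $\boldQ$ is local in $x$ and bilinear, the $x$-derivatives distribute immediately; the $v$-derivatives are more delicate since they act on the collision integrand through the pre-collisional velocities, and I would handle them with the Carleman representation of Appendix~\ref{App:Carleman} (exactly as in the proof of Lemma~\ref{LemmaK}), which rewrites $\multideriv Q_{ij}(g_i,h_j)$ as a finite sum of expressions of the form $Q_{ij}\bigl(\partial^{\beta_1}_v\partial^{\alpha_1}_x g_i,\ \partial^{\beta_2}_v\partial^{\alpha_2}_x h_j\bigr)$ with $|\alpha_1|+|\alpha_2|=|\alpha|$ and $|\beta_1|+|\beta_2|=|\beta|$, modulo bounded velocity shifts controlled by the symmetries of $B_{ij}$. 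To each such term I would then apply the multi-species Grad-type trilinear estimate (see for instance \cite{BriDau,Mou1})
\begin{equation*}
\bigl|\bigl\langle Q_{ij}(g,h),f\bigr\rangle_{L^2_v(\mu_i^{-1/2})}\bigr|\lesssim \norm{g}_{L^2_v(\mu_i^{-1/2})}\norm{h}_{L^2_v(\langle v\rangle^{\gamma/2}\mu_j^{-1/2})}\norm{f}_{L^2_v(\langle v\rangle^{\gamma/2}\mu_i^{-1/2})},
\end{equation*}
together with its symmetric variant in which the roles of $g$ and $h$ are swapped. Finally, to integrate in $x$, I would place in $L^\infty_x$ (via the Sobolev embedding $H^{s_0}(\T^3)\hookrightarrow L^\infty(\T^3)$, valid for any integer $s_0>3/2$) the factor carrying the smaller number of $x$-derivatives, keeping the other two factors in $L^2_x$. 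Summing over the Leibniz terms produces exactly the bounds \eqref{Property8}: when $|\beta|=0$ no $v$-derivative hits $\boldg$ or $\boldh$, so only $x$-Sobolev norms appear in $\mathcal{G}_x^s$; when $|\beta|\geq 1$, both types of derivatives can land on the arguments and the full mixed-norm of $\mathcal{G}_{x,v}^s$ enters. The monotonicity $\mathcal{G}^{s+1}\leq \mathcal{G}^s$ is immediate in both cases, since these functionals are sums of products of Sobolev norms, which grow monotonically with the order.

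\textit{Main obstacle.} The technically delicate step is the $v$-derivative expansion above: differentiating inside the collision integral may in principle disrupt the balance of the Gaussian weights, because $v'$ and $v'_*$ mix $v$ and $v_*$ with mass-ratio coefficients $m_j/(m_i+m_j)$. I would overcome this exactly as for Lemma~\ref{LemmaK}, by combining the Carleman rewriting of Appendix~\ref{App:Carleman} with the careful treatment of the different Gaussian decay rates of the $\mu_i$ borrowed from \cite{BonBouBriGre}. Once this is set up, the remaining estimates reduce to routine Sobolev bookkeeping, and the threshold $s_0$ is fixed to be the smallest integer strictly larger than $3/2$.
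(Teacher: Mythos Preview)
Your treatment of the orthogonality \eqref{Property6} and the overall strategy for \eqref{Property7}--\eqref{Property8} (Leibniz expansion, a pointwise trilinear bound on each $Q_{ij}$, then Sobolev embedding in $x$) match the paper's proof, which simply invokes the mono-species computations of \cite[Appendix~A]{Bri1} applied component by component and sums over~$i,j$.

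However, your handling of the $v$-derivatives is misattributed. The Carleman representation of Appendix~\ref{App:Carleman} is built for the \emph{linearized} compact part $\bepsK$ --- it produces explicit kernel formulae for integration against a fixed Maxwellian --- and does not yield the Leibniz-type decomposition you describe for the bilinear $\boldQ$. The correct (and much simpler) reason the identity
\[
\partial_v^\beta Q_{ij}(g,h)=\sum_{\beta_1+\beta_2=\beta}\binom{\beta}{\beta_1}Q_{ij}\bigl(\partial_v^{\beta_1}g,\partial_v^{\beta_2}h\bigr)
\]
holds \emph{exactly} is the translation (Galilean) invariance of $Q_{ij}$: since $v',v'_*,|v-v_*|,\cos\vartheta$ all shift rigidly under $(v,v_*)\mapsto(v+w,v_*+w)$, one has $Q_{ij}\bigl(g(\cdot-w),h(\cdot-w)\bigr)=Q_{ij}(g,h)(\cdot-w)$, and differentiation at $w=0$ gives the Leibniz rule with no remainder. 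In particular, the ``main obstacle'' you flag --- disruption of the Gaussian weights by $v$-derivatives hitting the mass-ratio coefficients in $v',v'_*$ --- does not arise for this lemma at all, and neither the Carleman machinery nor the \cite{BonBouBriGre}-type weight analysis is needed; those tools are specific to Lemma~\ref{LemmaK}.
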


This last lemma completes the investigation of the hypocoercivity properties satisfied by our kinetic model. We can finally step forward to the actual proof of Theorem \ref{theo:Cauchy BE}.

\subsection{Proof of Theorem \ref{theo:Cauchy BE}}


For the sake of clarity, we divide our presentation into several steps. We begin by studying the source term $\bepsS$, which constitutes the main novelty of this work. Its correct estimate can in fact be seen as the last feature in providing a satisfactory hypocoercive framework. We then present some basic properties of the macroscopic projector $\pi_\boldL$ which is used in the next step in order to derive the \textit{a priori} energy estimates satisfied by the perturbations $\boldf$. In the last step, we adapt the computations carried out in \cite{Bri1} to recover the conclusions of~Theorem \ref{theo:Cauchy BE}.

\bigskip
\noindent \textbf{Step 1 -- Estimates on the source term $\bepsS$.} We here provide the study of the source term
$$\mathbf{S}^\eps = \frac{1}{\eps^3}\mathbf{Q}(\bepsM,\bepsM) - \frac{1}{\eps} \partial_t \mathbf{M^\eps} - \frac{1}{\eps^2}v\cdot \nabla_x\mathbf{M^\eps}.$$
This term gives the distance between the Maxwell-Stefan system and the fluid part of the Boltzmann equation, and it represents one of the main differences from the model considered in \cite{Bri1}, and also one of the main drawbacks. In fact, since $\bepsS$ strongly depends on inverse powers of $\eps$ and on $\bepsM$ and its derivatives, it is crucial to determine its leading order of magnitude with respect to $\eps$ and to understand the role played by the macroscopic quantities $\boldc$ and $\boldu$. 

The idea at the basis of our analysis is simple. The Lyapunov functional $\eqref{Sobolev norm}$ we chose is essentially made up of three terms: the $\hilbertxv$ norm which accounts for pure spatial derivatives and for mixed derivatives, and the $\hilbertxv$ scalar product which corresponds to a higher order commutator. In particular, as soon as one derivative in velocity is considered, the weights $\eps$ and $\eps^2$ are used in order to balance out the energy estimates by cancelling the stiffest terms. 

\smallskip
With this idea in mind, we consider three cases, each referring to one of the three sums appearing in the Sobolev norm $\eqref{Sobolev norm}$. Their investigation is similar and essentially based on the separated study of the linear part and of the nonlinear term $\boldQ(\bepsM,\bepsM)$. On the one side, the linear part accounts for an order $\eps^{-1}$ since we can check that $\nabla_x\bepsM=\mathcal{O}(\eps)$. On the other hand, we handle the nonlinear term by exploiting the particular form of $\boldu=\bar{\boldu}+\eps \tilde{\boldu}$. More precisely, we show that~$\bepsM$ is close to a local equilibrium of the mixture (with common macroscopic velocity~$\eps\bar{\boldu}$) up to an order~$\eps^2$, allowing to prove that $\multideriv \boldQ(\bepsM,\bepsM)=\mathcal{O}(\eps^2)$ for any multi-indices $\alpha,\beta\geq 0$. In such a way, we are able to deduce that the general leading order of the source term (and of its derivatives) is actually $\mathcal{O}(\eps^{-1})$, which can be easily handled in the $\mathcal{H}^s_\eps$ norm by the factors $\eps$ and $\eps^2$, as soon as one derivative in $v$ is considered. Unfortunately, since $x$-derivatives are instead controlled by a mere factor of order 1, the $\hilbertxv$ norm of $\partial^\alpha_x\bepsS$ could blow up at a rate of $\mathcal{O}(\eps^{-1})$.

That is why we derive a more precise estimate for the source term, in the case where only $x$-derivatives are taken into account. In particular, the strategy consists in decomposing $\partial^\alpha_x \bepsS$ into a fluid and a microscopic part as $\partial^\alpha_x\bepsS=\pi_{\boldL}(\partial^\alpha_x\bepsS)+\partial^\alpha_x \bepsS^\perp$, in order to show that the stiff problematic terms are actually concentrated in the sole orthogonal component $\partial^\alpha_x \bepsS^\perp$. 
This, together with the fact that $\boldc$ and $\boldu$ satisfy the Maxwell-Stefan system \eqref{MS mass}--\eqref{MS momentum}--\eqref{MS incompressibility}, will be sufficient to prove that actually $\pi_{\boldL}(\partial^\alpha_x\bepsS)=\mathcal{O}(1)$. Thanks to the previous considerations, we shall eventually recover the leading order $\partial^\alpha_x \bepsS^\perp=\mathcal{O}(\eps^{-1})$, but in this case it will no more constitute an issue since the spectral gap of $\partial^\alpha_x\bepsL$ will provide the needed negative return.


\begin{lemma}[Estimates on $\bepsS$]\label{LemmaS}
Let $s\in\N$ and $\boldf\in\sobolevTR{s}$, and consider two multi-indices $\alpha,\beta$ such that $|\alpha|+|\beta|=s$. If $|\beta|\geq 1$, there exist a positive constant~$C_{\alpha,\beta}$, such that, for any $\eps\in(0,1]$ and any $\eta_2 >0$,
\begin{equation}\label{Property9}
\abs{\scalprod{\multideriv\bepsS,\multideriv \boldf}_{\hilbertxv}} \leq  \frac{\delta_\ms^2 C_{\alpha,\beta}}{\eta_2} +\frac{\eta_2}{\eps^2}\norm{\multideriv \boldf}_{\spacexv}^2. 
\end{equation}

\vspace{1mm}
\noindent In particular, if $\alpha_k >0$ for some $k\in\br{1,2,3}$ and $\beta=e_k$, there exist $C_{\alpha, k}>0$ such that, for any $\eps\in(0,1]$ and any $\eta_3 >0$,
\begin{equation}\label{Property10}
\abs{\scalprod{\partial^\alpha_x \bepsS,\partial^{e_k}_v\partial^{\alpha-e_k}_x \boldf}_{\hilbertxv}} \leq  \frac{\delta_\ms^2 C_{\alpha, k}}{\eps \eta_3} +\frac{\eta_3}{\eps}\norm{\partial^{e_k}_v\partial^{\alpha-e_k}_x \boldf}_{\spacexv}^2. 
\end{equation}

\vspace{1mm}
\noindent Finally, if $|\beta|=0$, we ask for a stronger control on the projection $\pi_\boldL(\bepsS)$. There exist a positive constant $C_\alpha$ such that, for any $\eps\in(0,1]$ and any $\eta_4,\eta_5>0$,
\begin{equation}\label{Property11}
\abs{\scalprod{\partial^{\alpha}_x\bepsS,\partial^\alpha_x \boldf}_{\hilbertxv}} \leq \frac{\delta_\ms^2 C_\alpha}{\eta_4 \eta_5} +\eta_4 \norm{\pi_{\boldL}(\partial^\alpha_x\boldf)}_{\hilbertxv}^2+ \frac{\eta_5}{\eps^2} \norm{\partial^\alpha_x\boldf^\perp}_{\spacexv}^2. 
\end{equation}
The constants $C_{\alpha,\beta}$, $C_{\alpha,k}$ and $C_\alpha$ are explicit and only depend on the physical parameters of the problem, and on polynomials in the variables $\norm{\boldc}_{L^\infty_t \spacex{|\alpha|+4}}$ and $\norm{\boldu}_{L^\infty_t H^{|\alpha|+4}_x}$. In particular, they are independent of $\eps$.
\end{lemma}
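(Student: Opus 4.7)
The plan is to derive sharp pointwise bounds on $\bepsM$ and all its derivatives via a systematic $\eps$-expansion around the global equilibrium $\boldmu$, and then apply Cauchy--Schwarz and Young's inequality in the appropriate weighted norms. Writing $c_i=c_{i,\infty}+\eps\tilde c_i$ and $u_i=\bar u+\eps\tilde u_i$ in \eqref{Local Maxwellian eps} and Taylor-expanding the Gaussian produces $\bepsM=\boldmu+\eps\,\mathbf{M}^{(1)}+\eps^2\,\mathbf{M}^{(2)}+\eps^3\,\mathbf{R}^\eps$, where $M_i^{(1)}=\mu_i\bigl(\tilde c_i/c_{i,\infty}+m_i v\!\cdot\!\bar u\bigr)$ belongs to $\Ker\boldL$ (it is a linear combination of the basis modes $\pmb{\phi}^{(i)}$ and $\pmb{\phi}^{(N+\ell)}$ from \eqref{Projection explicit}), while $\mathbf{M}^{(2)}$ and $\mathbf{R}^\eps$ are Gaussian-decaying with polynomial coefficients in $(\tilde{\boldc},\bar u,\tilde{\boldu},v)$ uniformly controlled by $\delta_\ms$ in the relevant Sobolev norms. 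Since $\boldmu$ is independent of $t$ and $x$, this yields the fundamental pointwise controls $\multideriv\nabla_x\bepsM=\mathcal{O}(\eps\delta_\ms)$ and $\multideriv\partial_t\bepsM=\mathcal{O}(\eps\delta_\ms)$ in any Gaussian-weighted $L^2$ space, the time derivative being handled through the Maxwell-Stefan equations of Theorem \ref{theo:Cauchy MS}.

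To handle the stiff nonlinear term $\eps^{-3}\boldQ(\bepsM,\bepsM)$, I would introduce the common-velocity Maxwellian $\widetilde{\bepsM}$ obtained by replacing each $u_i$ by $\bar u$. The multi-species $H$-theorem gives $\boldQ(\widetilde{\bepsM},\widetilde{\bepsM})=0$, while $\bepsM-\widetilde{\bepsM}$ is of order $\eps^2\delta_\ms$ because $u_i-\bar u=\eps\tilde u_i$. Bilinearity of $\boldQ$ combined with the continuity estimate \eqref{Property8} then forces $\multideriv\boldQ(\bepsM,\bepsM)=\mathcal{O}(\eps^2\delta_\ms)$ in the weighted space, so that after division by $\eps^3$ one obtains the uniform bound $\norm{\multideriv\bepsS}_{\spacexv}\leq C\delta_\ms/\eps$ for every $|\alpha|+|\beta|=s$. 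Combining this with Cauchy--Schwarz and Young's inequality with weight $\eps^2/(4\eta_2)$ produces \eqref{Property9}, and a second Young application with weight $\eps/(4\eta_3)$ yields \eqref{Property10}.

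The subtle point is the improved estimate \eqref{Property11}, for which I split $\partial^\alpha_x\bepsS=\pi_{\boldL}(\partial^\alpha_x\bepsS)+\partial^\alpha_x\bepsS^\perp$. Orthogonality \eqref{Property6} gives $\pi_{\boldL}\bigl(\boldQ(\bepsM,\bepsM)\bigr)=0$, so only the linear transport and time derivative terms contribute to the fluid part. Since $\mathbf{M}^{(1)}\in\Ker\boldL$, the quantity $\eps^{-1}\pi_{\boldL}(\partial_t\bepsM)$ is already $\mathcal{O}(\delta_\ms)$ after using the Maxwell-Stefan continuity equation \eqref{MS mass} to control $\partial_t\tilde{\boldc}$. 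The transport term $\eps^{-2}v\!\cdot\!\nabla_x\bepsM$ would naively produce an $\eps^{-1}$ singularity through $\pi_{\boldL}(v\!\cdot\!\nabla_x\mathbf{M}^{(1)})$, but an explicit computation via \eqref{Projection explicit} shows that this projection assembles precisely into the combination of $\nabla_x\tilde c_i$, the friction terms $c_{i,\infty}c_{j,\infty}(\tilde u_i-\tilde u_j)/\Delta_{ij}$ appearing in \eqref{MS momentum}, and a contribution proportional to $\nabla_x\!\cdot\!\bar u$ coming from the energy mode; the former combination is $\mathcal{O}(\eps)$ by \eqref{MS momentum}, and the latter vanishes by the incompressibility \eqref{MS incompressibility}. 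What survives is of order $\delta_\ms$, whereas $\partial^\alpha_x\bepsS^\perp$ retains the $\mathcal{O}(\delta_\ms/\eps)$ bound, and two independent Young applications on the fluid and microscopic pairings with parameters $\eta_4$ and $\eta_5$ respectively combine into \eqref{Property11}.

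The main obstacle is precisely this cancellation in the fluid projection of the transport term: it is the only step in which the Maxwell-Stefan equations enter the proof in a non-cosmetic way, and the whole hypocoercive scheme rests on replacing the naive $\mathcal{O}(\delta_\ms/\eps)$ bound for $\pi_{\boldL}(\partial^\alpha_x\bepsS)$ by the sharp $\mathcal{O}(\delta_\ms)$ bound. Once this is secured, the remainder is routine: differentiating the expansion of $\bepsM$ in $x$ costs higher Sobolev norms of $\tilde{\boldc}$ and $\boldu$ which are finite by Theorem \ref{theo:Cauchy MS}, and the explicit constants $C_{\alpha,\beta}$, $C_{\alpha,k}$, $C_\alpha$ turn out to be polynomial in $\norm{\boldc}_{L^\infty_t\spacex{|\alpha|+4}}$ and $\norm{\boldu}_{L^\infty_t H^{|\alpha|+4}_x}$, as claimed.
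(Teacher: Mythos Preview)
Your overall strategy is sound and matches the paper's: show $\multideriv\bepsS=\mathcal{O}(\delta_\ms/\eps)$ via the common-velocity Maxwellian trick for the collision term, then split $\partial^\alpha_x\bepsS$ into fluid and microscopic parts for \eqref{Property11}, using $\pi_\boldL(\boldQ)=0$ and the Maxwell-Stefan relations to upgrade the fluid part to $\mathcal{O}(\delta_\ms)$. The Young-inequality bookkeeping is also correct.

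However, your description of the cancellation mechanism in the momentum mode of $\pi_\boldL(\partial^\alpha_x\bepsS)$ is wrong, and following it literally you would get stuck. You claim that $\pi_\boldL(v\cdot\nabla_x\mathbf{M}^{(1)})$ produces, in the momentum mode, ``the combination of $\nabla_x\tilde c_i$ and the friction terms $c_{i,\infty}c_{j,\infty}(\tilde u_i-\tilde u_j)/\Delta_{ij}$'' which then cancel by \eqref{MS momentum}. But friction terms cannot arise from the transport operator: they come from $\boldQ$, and you have already killed $\boldQ$ via $\pi_\boldL(\boldQ)=0$. Computing directly, the momentum mode of $\pi_\boldL(\eps\partial_t\bepsM+v\cdot\nabla_x\bepsM)$ is $\sum_i\bigl(\eps^2 m_i[\partial_t(c_iu_i)+\nabla_x\cdot(c_iu_i\otimes u_i)]+\nabla_x c_i\bigr)$; after dividing by $\eps^2$ the dangerous term is $\eps^{-2}\sum_i\nabla_x c_i$, which vanishes by the incompressibility constraint $\sum_i c_i=\text{const}$ in \eqref{MS incompressibility}, \emph{not} by the gradient--friction relation \eqref{MS momentum}. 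What remains is $\sum_i m_i[\partial_t(c_iu_i)+\nabla_x\cdot(c_iu_i\otimes u_i)]$, which is $\mathcal{O}(\delta_\ms)$ simply because every summand carries a factor of $u_i$.

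So the Maxwell-Stefan inputs you actually need in Step~2 are \eqref{MS mass} (to kill the mass modes) and \eqref{MS incompressibility} (to kill the singular parts of the momentum and energy modes); the momentum equation \eqref{MS momentum} enters only indirectly, in Step~1, to control $\partial_t\boldu$ in terms of spatial derivatives of $(\boldc,\boldu)$. Once you replace your friction-term argument by the correct $\sum_i\nabla_x c_i=0$ cancellation, the rest of your plan goes through exactly as in the paper.
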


\medskip
Again, the proof is left to Section \ref{sec:technical proofs}.
\bigskip


\noindent \textbf{Step 2 -- Properties of the fluid projection $\pi_\boldL$.} The derivation of the \textit{a priori} energy estimates finally requires a deeper understanding of the properties of the projectors $\pi_\bepsT$ and $\pi_\boldL$. We present in this step three lemmata which are preliminary to our following study, the main one being a result establishing a fundamental Poincar\'e-type inequality satisfied by $\pi_\boldL$.


\begin{lemma}\label{lem:T eps}
For any $\eps >0$, the operator $\bepsT=\frac{1}{\eps^2}\boldL - \frac{1}{\eps}v\cdot\nabla_x$, acting on $\sobolevTR{1}$, satisfies $\Ker\bepsT= \Ker\boldL\ \cap\ \Ker \pa{v\cdot\nabla_x}$. Moreover, the associated projection $\pi_\bepsT$ explicitly writes
\begin{equation}\label{Relation T L}
\pi_\bepsT(\boldf) = \int_{\T^3} \pi_\boldL(\boldf)\dd x,
\end{equation}
for any $\boldf\in\sobolevTR{1}$.
\end{lemma}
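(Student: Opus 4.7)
The plan is to handle the kernel identity first, then deduce the projection formula from it. The inclusion $\Ker\boldL\cap\Ker(v\cdot\nabla_x)\subseteq\Ker\bepsT$ is immediate by definition of $\bepsT$. For the reverse inclusion, I would rewrite $\bepsT\boldf=0$ as $\boldL\boldf=\eps\,v\cdot\nabla_x\boldf$ and test against $\boldf$ in $\hilbertxv$. Because $v$ does not depend on $x$ and the torus has no boundary, $v\cdot\nabla_x$ is skew-adjoint on $\sobolevTR{1}$ (with the $x$-independent weight $\boldmu^{-1/2}$), so the right-hand side contributes zero after pairing. On the left, the fibrewise spectral gap of $\boldL$ in $\hilbertv$ (the unperturbed case $\eps=0$ of Lemma \ref{LemmaL}) yields
\begin{equation*}
\scalprod{\boldL\boldf,\boldf}_{\hilbertxv}\leq -\lambda_{\boldL}\norm{\boldf-\pi_\boldL(\boldf)}_{\hilbertxv}^2\leq 0,
\end{equation*}
so equality with zero forces $\boldf=\pi_\boldL(\boldf)$, i.e.\ $\boldf\in\Ker\boldL$; substituting back into the equation then gives $v\cdot\nabla_x\boldf=0$.

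Next I would identify more concretely what $\Ker\bepsT$ looks like. A short Fourier-in-$x$ argument shows that any $L^2_{x,v}$ solution of $v\cdot\nabla_x\boldf=0$ on $\T^3\times\R^3$ must be $x$-independent: for every $k\in\Z^3\setminus\{0\}$ one has $(v\cdot k)\,\widehat{\boldf}(k,v)=0$ a.e.\ in $v$, and since $v\cdot k$ vanishes only on a hyperplane this forces $\widehat{\boldf}(k,\cdot)\equiv 0$. Combined with Step 1, elements of $\Ker\bepsT$ are precisely the $x$-independent vector-valued functions $\boldf(v)$ lying in $\Ker\boldL$, that is, constant-coefficient linear combinations of the basis $(\pmb{\phi}^{(k)})_{1\leq k\leq N+4}$.

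To conclude, I would compute the orthogonal projection in $\hilbertxv$ onto this finite-dimensional subspace. With the normalisation $|\T^3|=1$ used throughout the paper, the family $(\pmb{\phi}^{(k)})_{1\leq k\leq N+4}$, viewed as $x$-independent elements of $\hilbertxv$, remains orthonormal by virtue of its $\hilbertv$-orthonormality. Using the expression \eqref{Projection} of $\pi_\boldL$, the projection onto $\Ker\bepsT$ thus reads
\begin{equation*}
\pi_\bepsT(\boldf)=\sum_{k=1}^{N+4}\scalprod{\boldf,\pmb{\phi}^{(k)}}_{\hilbertxv}\pmb{\phi}^{(k)}=\int_{\T^3}\sum_{k=1}^{N+4}\scalprod{\boldf(x,\cdot),\pmb{\phi}^{(k)}}_{\hilbertv}\pmb{\phi}^{(k)}\dd x=\int_{\T^3}\pi_\boldL(\boldf)\dd x,
\end{equation*}
which is exactly the claimed formula \eqref{Relation T L}. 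The only mildly delicate point is the justification of the skew-adjointness of $v\cdot\nabla_x$ in $\hilbertxv$ on the domain $\sobolevTR{1}$; this is standard and relies on the periodicity in $x$ together with the $x$-independence of the weight $\boldmu^{-1/2}$, so it presents no real obstacle.
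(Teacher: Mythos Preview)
Your proof is correct. The paper in fact omits the proof of this lemma entirely, stating only that it ``is very simple and is therefore omitted'' and pointing to \cite[Section~4]{BriDau}; your argument via the spectral gap of $\boldL$ combined with the skew-adjointness of $v\cdot\nabla_x$ in $\hilbertxv$ is the standard one and is presumably what that reference contains. The normalisation issue you flag is real: without $|\T^3|=1$ the right-hand side of \eqref{Relation T L} is the integral rather than the average, and the paper itself is not fully consistent on this point (witness the appearance of $\frac{1}{|\T^3|}\pi_\bepsT(\boldf)$ in the proof of Lemma~\ref{lem:Poincare}), but this does not affect the substance of the argument.
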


The proof of this lemma is very simple and is therefore omitted. It can be found for example as an initial remark in \cite[Section 4]{BriDau}.

\bigskip
Next, we present a very useful property of the projection $\pi_\boldL$, which is intensively used in what follows: the equivalence of the $\hilbertv$ and $\spacev$ norms on the space $\Ker\boldL$.
 
\begin{lemma}\label{lem:Equivalence}
There exists a positive explicit constant $C_\pi$ such that
\begin{equation}\label{Norm equivalence}
\norm{\pi_\boldL(\boldf)}_{\spacev}^2 \leq C_\pi\norm{\pi_\boldL(\boldf)}_{\hilbertv}^2,
\end{equation}
for any $\boldf\in\hilbertR$.
\end{lemma}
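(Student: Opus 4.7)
The plan is to exploit the fact that $\Ker\boldL$ is a finite-dimensional subspace of $\hilbertR$, on which all continuous semi-norms are comparable. The explicit orthonormal basis $\bigl(\pmb{\phi}^{(k)}\bigr)_{1\leq k\leq N+4}$ provided right before the statement makes this comparison fully quantitative, since every basis element is of the form (polynomial in $v$) $\times\, \boldmu$, hence has Gaussian decay.

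Concretely, I would first write any element of $\Ker\boldL$ as
$$\pi_\boldL(\boldf) \;=\; \sum_{k=1}^{N+4} a_k\, \pmb{\phi}^{(k)}, \qquad a_k \;=\; \bigl\langle\boldf,\pmb{\phi}^{(k)}\bigr\rangle_{\hilbertv},$$
so that by $\hilbertv$-orthonormality of the basis,
$$\bigl\|\pi_\boldL(\boldf)\bigr\|_{\hilbertv}^{\,2} \;=\; \sum_{k=1}^{N+4} a_k^2.$$
Then I would expand the weighted norm:
$$\bigl\|\pi_\boldL(\boldf)\bigr\|_{\spacev}^{\,2} \;=\; \sum_{k,l=1}^{N+4} a_k a_l\, M_{kl}, \qquad M_{kl} \;:=\; \bigl\langle\pmb{\phi}^{(k)},\pmb{\phi}^{(l)}\bigr\rangle_{\spacev}.$$
The key observation is that each $M_{kl}$ is a finite number: the components of $\pmb{\phi}^{(k)}$ are polynomials in $v$ multiplied by a Maxwellian $\mu_i$, so $\pmb{\phi}^{(k)}\pmb{\phi}^{(l)}\boldmu^{-1}$ still decays like a Gaussian, and the extra polynomial weight $\langle v\rangle^{\gamma/2}$ with $\gamma\in[0,1]$ cannot affect integrability.

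Finally, the symmetric $(N{+}4)\times(N{+}4)$ matrix $M=(M_{kl})$ is positive semi-definite, so bounding its associated quadratic form by its operator norm yields
$$\bigl\|\pi_\boldL(\boldf)\bigr\|_{\spacev}^{\,2} \;\leq\; \|M\|_{\mathrm{op}}\, \sum_{k=1}^{N+4} a_k^2 \;=\; C_\pi\, \bigl\|\pi_\boldL(\boldf)\bigr\|_{\hilbertv}^{\,2},$$
with $C_\pi := \|M\|_{\mathrm{op}}$, which is explicit and depends only on $N$, the masses $(m_i)$, the equilibrium densities $(c_{i,\infty})$ and $\gamma$. There is no real obstacle here: the argument is just a dimension-counting finite-dimensional norm equivalence, made explicit via the closed-form basis of $\Ker\boldL$; the only thing to be slightly careful about is checking that the Gaussian tails of the basis elements absorb both the polynomial weights $\langle v\rangle^{\gamma/2}$ and the inverse Maxwellian weight $\boldmu^{-1/2}$ simultaneously, which is immediate since $\gamma\leq 1$.
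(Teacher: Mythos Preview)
Your proof is correct and follows essentially the same route as the paper: expand $\pi_\boldL(\boldf)$ in the orthonormal basis $(\pmb{\phi}^{(k)})$, reduce the weighted norm to the quadratic form $\sum_{k,\ell} a_k a_\ell\, M_{k\ell}$ with $M_{k\ell}=\langle\pmb{\phi}^{(k)},\pmb{\phi}^{(\ell)}\rangle_{\spacev}$, and bound it by $\sum_k a_k^2=\|\pi_\boldL(\boldf)\|_{\hilbertv}^2$ using Parseval. The only cosmetic difference is that the paper bounds the quadratic form via Young's inequality to get $C_\pi=(N+4)\max_{k,\ell}|M_{k\ell}|$, whereas you use the (slightly sharper) operator norm $C_\pi=\|M\|_{\mathrm{op}}$; also, your remark that ``$\gamma\leq 1$'' is needed for integrability is unnecessary, since any polynomial weight is absorbed by the Gaussian decay of the $\pmb{\phi}^{(k)}$.
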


\begin{proof}[Proof of Lemma \ref{lem:Equivalence}]
Recalling the explicit expression of $\pi_\boldL(\boldf)$ from $\eqref{Projection}$ by means of the orthonormal basis~$\pab{\pmb{\phi}^{(k)}}_{1\leq k\leq N+4}$ in $\hilbertR$, we can successively write
\begin{equation*}
\begin{split}
\|\pi_\boldL (\boldf) & \|_{\spacev}^2
\\[2mm]  & = \sum_{i=1}^N \int_{\R^3}\abs{\sum_{k=1}^{N+4}\big\langle\boldf,\pmb{\phi}^{(k)}\big\rangle_{\hilbertv}\phi_i^{(k)}(v)}^2 \langle v\rangle^\gamma \mu_i^{-1}\dd v
\\[4mm]    & = \sum_{k,\ell=1}^{N+4} \big\langle\boldf,\pmb{\phi}^{(k)}\big\rangle_{\hilbertv} \big\langle\boldf,\pmb{\phi}^{(\ell)}\big\rangle_{\hilbertv} \sum_{i=1}^N\int_{\R^3}\phi_i^{(k)} \phi_i^{(\ell)} \langle v\rangle^\gamma \mu_i^{-1}\dd v
\\[4mm]    & = \sum_{k,\ell=1}^{N+4} \big\langle\boldf,\pmb{\phi}^{(k)}\big\rangle_{\hilbertv} \big\langle\boldf,\pmb{\phi}^{(\ell)}\big\rangle_{\hilbertv} \big\langle \pmb{\phi}^{(k)},\pmb{\phi}^{(\ell)}\big\rangle_{\spacev}
\end{split}
\end{equation*}
\begin{equation*}
\begin{split}
   & \leq \frac{1}{2}\max_{1\leq k,\ell \leq N+4} \abs{\big\langle \pmb{\phi}^{(k)},\pmb{\phi}^{(\ell)}\big\rangle_{\spacev}}
\\[2mm]       &\hspace{3.3cm}\times\sum_{k,\ell=1}^{N+4} \pa{\big\langle\boldf,\pmb{\phi}^{(k)}\big\rangle_{\hilbertv}^2 + \big\langle\boldf,\pmb{\phi}^{(\ell)}\big\rangle_{\hilbertv}^2}
\\[4mm]  & = (N+4)\max_{1\leq k,\ell \leq N+4} \abs{\big\langle \pmb{\phi}^{(k)},\pmb{\phi}^{(\ell)}\big\rangle_{\spacev}} \sum_{k=1}^{N+4} \big\langle\boldf,\pmb{\phi}^{(k)}\big\rangle_{\hilbertv}^2
\\[4mm]    & = (N+4) \max_{1\leq k,\ell \leq N+4} \abs{\big\langle \pmb{\phi}^{(k)},\pmb{\phi}^{(\ell)}\big\rangle_{\spacev}} \norm{\pi_\boldL(\boldf)}_{\hilbertv}^2,
\end{split}
\end{equation*}
thanks to Parseval's identity.

\smallskip
In particular, the specific form of the elements of the orthonormal basis allows to easily show that $\pmb{\phi}^{(k)}\in \spaceR$ for any $1\leq k\leq N+4$. Therefore, the maximum of the scalar products is a bounded quantity, and we can finally choose
$$C_\pi = (N+4)\max_{1\leq k,\ell \leq N+4} \abs{\big\langle \pmb{\phi}^{(k)},\pmb{\phi}^{(\ell)}\big\rangle_{\spacev}}$$
to infer the validity of $\eqref{Norm equivalence}$.
\end{proof}

To conclude this preliminary step, we establish a Poincar\'e-type inequality for $\pi_\boldL$ which represents a tool of crucial importance in order to apply the hypocoercive strategy of \cite{MouNeu,Bri1}. Note however that, as opposed to the works in the mono-species setting, where the solution of the Boltzmann equation is perturbed around a global equilibrium, the linearization around $\bepsM$ we perform here does not give access to a perfect Poincar\'e inequality. Indeed, the projection part $\pi_\boldL(\boldf)$ does not have zero mean on the torus because of the presence of the source term $\bepsS$ in $\eqref{perturbed BE}$. We evade this obstacle by showing that a Poincar\'e inequality can still be recovered at a lower order, allowing the appearance of a small correction of order $\mathcal{O}(\delta_\ms^2)$.

\begin{lemma}\label{lem:Poincare}
Let us denote with $C_{\T^3}$ the Poincar\'e constant on the torus, and let $\delta\ms >0$. Consider a solution $\boldf\in\sobolevTR{1}$ of the perturbed Boltzmann equation $\eqref{perturbed BE}$, satisfying initially
$$\norm{\pi_\bepsT(\boldf^\init)}_{\hilbertxv}\leq C\delta_\ms,$$
for some constant $C>0$ independent of the parameters $\eps$ and $\delta_\ms$. There exists a positive constant $C^\TT$ such that
\begin{equation}\label{Poincare inequality}
\norm{\pi_\boldL(\boldf)}_{\hilbertxv}^2 \leq 2 C_{\T^3}\norm{\nabla_x \boldf}_{\hilbertxv}^2 + \delta_\ms^2 C^\TT.
\end{equation}
In particular, $C^\TT$ is explicit and does not depend on the parameter $\eps$.
\end{lemma}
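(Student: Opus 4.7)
The plan is to split $\pi_\boldL(\boldf)$ into its $x$-mean and a zero-mean remainder, applying the standard Poincaré inequality on $\T^3$ to the latter and controlling the former by integrating the projected evolution equation in time. By Lemma \ref{lem:T eps}, the quantity $\pi_\bepsT(\boldf)(t,v) = \int_{\T^3} \pi_\boldL(\boldf)(t,x,v)\dd x$ is exactly the $x$-average of $\pi_\boldL(\boldf)$, so in the decomposition $\pi_\boldL(\boldf) = \pi_\bepsT(\boldf) + \big(\pi_\boldL(\boldf) - \pi_\bepsT(\boldf)\big)$ the second term has vanishing mean on $\T^3$ for each fixed $v$. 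Applying the standard Poincaré inequality pointwise in $v$ and integrating against $\boldmu^{-1}$ over $\R^3$ gives
\begin{equation*}
\norm{\pi_\boldL(\boldf) - \pi_\bepsT(\boldf)}_{\hilbertxv}^2 \leq C_{\T^3} \norm{\nabla_x \pi_\boldL(\boldf)}_{\hilbertxv}^2 \leq C_{\T^3} \norm{\nabla_x \boldf}_{\hilbertxv}^2,
\end{equation*}
since $\pi_\boldL$ acts in the velocity variable through integrals against $v$-dependent basis vectors only, hence commutes with $\nabla_x$, and is a contraction as an orthogonal projector in $\hilbertv$. The triangle inequality $\norm{\pi_\boldL(\boldf)}^2 \leq 2\norm{\pi_\boldL(\boldf) - \pi_\bepsT(\boldf)}^2 + 2\norm{\pi_\bepsT(\boldf)}^2$ then reduces the proof to bounding $\norm{\pi_\bepsT(\boldf)(t)}_{\hilbertxv}^2$ by a multiple of $\delta_\ms^2$, uniformly in $t\geq 0$.

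For this I would project the perturbed equation $\eqref{perturbed BE}$ onto $\Ker\bepsT$. The transport term vanishes by periodicity since $\int_{\T^3} v\cdot\nabla_x \boldf\dd x = 0$, while $\bepsL(\boldf)$ and $\boldQ(\boldf,\boldf)$ are annihilated by $\pi_\boldL$ thanks to $\eqref{Property6}$, hence also by $\pi_\bepsT$. The projected equation therefore reduces to $\partial_t \pi_\bepsT(\boldf) = \pi_\bepsT(\bepsS)$, so that integrating in time yields
\begin{equation*}
\pi_\bepsT(\boldf)(t) = \pi_\bepsT(\boldf^\init) + \int_0^t \pi_\bepsT(\bepsS)(\tau) \dd \tau.
\end{equation*}
The initial contribution has size $C\delta_\ms$ by assumption, so the remaining issue is to show that the time integral above is also of order $\delta_\ms$, uniformly in $t\geq 0$.

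This last point is the main obstacle, since $\bepsS$ is naively of order $\eps^{-1}$ and cannot be integrated in time without substantial cancellations. Expanding $\pi_\bepsT(\bepsS)$ against the basis $\pab{\pmb{\phi}^{(k)}}_{1\leq k\leq N+4}$, the quadratic term $\eps^{-3}\boldQ(\bepsM,\bepsM)$ drops out entirely by the conservation properties of the Boltzmann operator tested against the collision invariants. For the mass components, $\eps^{-1}\int_{\T^3}\pi_\boldL(\partial_t\bepsM)\dd x$ reduces to $\eps^{-1}\partial_t\int_{\T^3} c_i\dd x$, which vanishes identically thanks to the Maxwell-Stefan mass equation $\eqref{MS mass}$ combined with periodicity, while the $\eps^{-2}$ streaming contribution also vanishes by periodicity. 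For the momentum and energy components, the singular $\eps^{-2}$ pressure-like gradient is killed again by periodicity and the remaining $\mathcal{O}(1)$ contributions reduce, using $\boldc = \boldc_\infty+\eps\tilde{\boldc}$ and $\boldu = \bar{\boldu}+\eps\tilde{\boldu}$, to time derivatives of integrated moments of $\tilde{\boldc}$ and $\tilde{\boldu}$. Theorem \ref{theo:Cauchy MS} then supplies the exponential bounds $\norm{\tilde{\boldc}(t)}_{H^s_x},\norm{\tilde{\boldu}(t)}_{H^{s-1}_x}\lesssim \delta_\ms e^{-\lambda_\ms t}$, making $\pi_\bepsT(\bepsS)$ integrable in time with $L^1_t$-norm of order $\delta_\ms$. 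Combining this with the initial bound produces $\norm{\pi_\bepsT(\boldf)(t)}_{\hilbertxv}^2 \leq \delta_\ms^2 C^\TT/2$ for some explicit, $\eps$-independent $C^\TT$, and plugging this into the triangle inequality concludes.
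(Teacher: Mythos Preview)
Your overall skeleton matches the paper's exactly: split $\pi_\boldL(\boldf)$ into its $x$-mean $\pi_\bepsT(\boldf)$ plus a mean-zero remainder, apply the torus Poincar\'e inequality to the remainder, and control the mean by projecting the equation. Your projected identity $\partial_t\pi_\bepsT(\boldf)=\pi_\bepsT(\bepsS)$ is also correct and is exactly the paper's $\partial_t\pi_\bepsT(\boldf)=-\frac{1}{\eps}\partial_t\pi_\bepsT(\bepsM)$ in disguise, since after killing the collision and transport terms the only surviving piece of $\bepsS$ under $\pi_\bepsT$ is $-\frac{1}{\eps}\partial_t\bepsM$.

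The gap is in the final step. You claim that the momentum and energy components of $\pi_\bepsT(\bepsS)$ ``reduce to time derivatives of integrated moments of $\tilde{\boldc}$ and $\tilde{\boldu}$'' and then invoke the exponential decay of $\tilde{\boldc},\tilde{\boldu}$ from Theorem~\ref{theo:Cauchy MS} to get $\pi_\bepsT(\bepsS)\in L^1_t$. This is not correct: the momentum component is proportional to $\partial_t\int_{\T^3}\sum_i m_i c_i u_i\,\dd x$, and expanding $c_iu_i=(c_{i,\infty}+\eps\tilde{c}_i)(\bar{u}+\eps\tilde{u}_i)$ produces a leading term $\big(\sum_i m_i c_{i,\infty}\big)\partial_t\int_{\T^3}\bar{u}\,\dd x$ depending on $\bar{u}$ alone. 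The hypotheses on $\bar{u}$ give only $\bar{u}\in L^\infty_t H^s_x$ with $\norm{\bar{u}}\leq\delta_\ms$; there is no control on $\partial_t\bar{u}$ and no decay, so you cannot conclude $L^1_t$-integrability of $\pi_\bepsT(\bepsS)$.

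The paper sidesteps this entirely: since $\pi_\bepsT(\bepsS)=-\frac{1}{\eps}\partial_t\pi_\bepsT(\bepsM)$ is an exact time derivative, the time integral collapses to the endpoint difference $-\frac{1}{\eps}\pi_\bepsT\big(\bepsM(t)-\mathbf{M}^{\eps,\init}\big)$. One then computes the moments of $\bepsM$ directly: the mass components vanish because $\int_{\T^3}\tilde{c}_i\,\dd x=0$ for all $t$, and the momentum and energy components carry an explicit factor $\eps$ (coming from $\int v\,\epsM_i\,\dd v=\eps c_iu_i$), which cancels the $\eps^{-1}$ in front and leaves quantities like $\int_{\T^3}(c_iu_i-c_i^\init u_i^\init)\,\dd x$, bounded at each fixed time by $\norm{\boldc}_{L^\infty_t}\norm{\boldu}_{L^\infty_t}\lesssim\delta_\ms$. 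No time-integrability of anything is needed. Your argument becomes correct as soon as you replace the $L^1_t$-estimate by this endpoint evaluation.
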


\begin{proof}[Proof of Lemma \ref{lem:Poincare}]
Pick a solution $\boldf\in\sobolevTR{1}$ of $\eqref{perturbed BE}$. Recalling the relation $\eqref{Relation T L}$ between the projectors, we observe that we can express $\pi_\boldL(\boldf)$ in terms of $\pi_\bepsT(\boldf)$. Applying Poincar\'e inequality, we first obtain
\begin{equation}\label{Poincare start}
\begin{split}
\norm{\pi_\boldL(\boldf)}_{\hilbertxv}^2  & \leq 2 \norm{\pi_\boldL(\boldf) - \frac{1}{\abs{\T^3}}\pi_\bepsT(\boldf)}_{\hilbertxv}^2 + \frac{2}{\abs{\T^3}^2}\norm{\pi_\bepsT(\boldf)}_{\hilbertxv}^2
\\[4mm]   & \leq  2 C_{\T^3} \norm{\nabla_x \pi_\boldL(\boldf)}_{\hilbertxv}^2 + \frac{2}{\abs{\T^3}^2}\norm{\pi_\bepsT(\boldf)}_{\hilbertxv}^2.
\end{split}
\end{equation}
Note that for the models in \cite{MouNeu,Bri1}, the authors are able to prove that $\pi_\bepsT(\boldf)=0$, starting from an initial datum satisfying $\pi_\bepsT(\boldf^\init)=0$. This is a peculiarity of the Boltzmann equation, which naturally preserves the action of the projection $\pi_\bepsT$ on it. Clearly, in our case we do not have the same property, and in fact our aim is to show that actually the second term accounts for an order $\mathcal{O}(\delta_\ms^2)$. To do this, we need to recover an equation for $\pi_\bepsT(\boldf)$ from $\eqref{perturbed BE}$.

\smallskip
Using Lemma \ref{lem:T eps}, thanks to the linearity of $\pi_\bepsT$ and to the orthogonality of $\bepsL$ and $\boldQ$ to $\Ker\boldL$ given by $\eqref{Property6}$, one can see that the transport terms and the Boltzmann operators cancel out, so that we are left with the identity, holding a.e. on~${\R_+\times\T^3\times\R^3}$,
\begin{equation*}
\partial_t \pi_\bepsT(\boldf) = -\frac{1}{\eps}\partial_t \pi_\bepsT(\bepsM),
\end{equation*}
where we have also used the fact that the projection operator commutes with the time derivative. In particular, integrating over $[0,t]$ the above identity, we obtain the desired equation for $\pi_\bepsT(\boldf)$, which reads
\begin{equation}\label{Initial identity}
\pi_\bepsT(\boldf) = \pi_\bepsT(\boldf^\init) - \frac{1}{\eps} \pi_\bepsT(\bepsM - \mathbf{M}^{\eps,\init}).
\end{equation}

\smallskip
Consequently, since $\bepsM$ and $\mathbf{M}^{\eps,\init}$ only depend on the macroscopic quantities $(\boldc,\boldu)$ and $(\boldc^\init,\boldu^\init)$, using the relation $\eqref{Relation T L}$ together with the formula $\eqref{Projection explicit}$ of the projection $\pi_\boldL$, it is possible to explicitly compute the value of $\pi_\bepsT(\bepsM-\mathbf{M}^{\eps,\init})$. It is first easy to check that, for any~${1\leq i\leq N}$,
\begin{equation*}
\begin{split}
\int_{\R^3}\epsM_i(t,x,v)\pa{\begin{array}{c}1\\ v\\ |v|^2 \end{array}}\dd v &= \pa{\begin{array}{c} c_i(t,x)\\ \\ \eps c_i(t,x)u_i(t,x)\\ \\\frac{3}{m_i}c_i(t,x)+\eps^2c_i(t,x)|u_i(t,x)|^2 \end{array}},
\end{split}
\end{equation*}
and the equivalent holds for $\mathbf{M}^{\eps,\init}$. It thus follows that
\begin{equation*}
\begin{split}
\pi_\bepsT(\bepsM- & \mathbf{M}^{\eps,\init}) =  \int_{\T^3} \pi_\boldL(\bepsM-\mathbf{M}^{\eps,\init})\dd x
\\[3mm]     = & \sum_{i=1}^N \cro{\frac{\eps}{c_{i,\infty}}\int_{\T^3} \pa{\tilde{c}_i(t,x)-\tilde{c}^\init(x)} \dd x}\mu_i \mathbf{e}^{\text{\tiny (i)}}
\\[3mm]  &  +\frac{\eps v}{\sum_{i=1}^N m_i c_{i,\infty}}\cdot \cro{\sum_{i=1}^N\int_{\T^3} m_i \pab{c_i u_i - c_i^\init u_i^\init}\dd x}\pab{m_i\mu_i}_{1\leq i\leq N} 
\\[3mm]  &  +\frac{\eps^2}{\sqrt{6}\sum_{i=1}^N c_{i,\infty}}\cro{\sum_{i=1}^N\int_{\T^3}m_i  \pab{c_i |u_i|^2 - c_i^\init |u_i^\init|^2}\dd x}\pa{\frac{m_i|v|^2-3}{\sqrt{6}}\mu_i}_{1\leq i\leq N}.
\end{split}
\end{equation*}
In particular, the couple $(\boldc,\boldu)$ is solution of the Maxwell-Stefan system \eqref{MS mass}--\eqref{MS momentum}--\eqref{MS incompressibility}, so that, from \eqref{equimolar vectorial}, one gets that the zeroth-order terms disappear, because both $\tilde{c}_i$ and $\tilde{c}_i^\init$ have zero mean on the torus. Therefore, replacing the above identity in $\eqref{Initial identity}$, we finally recover the expression of $\pi_\bepsT(\boldf)$, which writes explicitly
\begin{equation*}
\begin{split}
\pi_\bepsT(\boldf) = &\ \pi_\bepsT(\boldf^\init) - \frac{v}{\sum_{i=1}^N m_i c_{i,\infty}}\cdot \cro{\sum_{i=1}^N\int_{\T^3} m_i \pab{c_i u_i - c_i^\init u_i^\init} \dd x}\pab{m_i\mu_i}_{1\leq i\leq N} 
\\[3mm]      &\  - \frac{\eps}{\sqrt{6}\sum_{i=1}^N c_{i,\infty}}\cro{\sum_{i=1}^N\int_{\T^3}m_i \pab{c_i |u_i|^2 - c_i^\init |u_i^\init|^2}\dd x}\pa{\frac{m_i|v|^2-3}{\sqrt{6}}\mu_i}_{1\leq i\leq N}.
\end{split}
\end{equation*}
Recalling the uniform controls on $\boldc$ and $\boldu$ given by Theorem \ref{theo:Cauchy MS} and applying the continuous Sobolev embedding of $H^{s/2}_x$ in $L^\infty_x$ for $s>3$, one easily infers the existence of a positive constant $C^\TT$ such that
\begin{equation*}
\begin{split}
\Bigg| \frac{v}{\sum_{i=1}^N m_i c_{i,\infty}} & \cdot \sum_{i=1}^N \int_{\T^3}  m_i \pab{c_i u_i - c_i^\init u_i^\init}\dd x\Bigg|
\\[3mm]      & \leq  |v|\frac{N\abs{\T^3}}{C_0}\pa{\max_{1\leq i\leq N}m_i c_{i,\infty}} 
\\[2mm]     & \hspace{2.7cm} \times \pa{ \norm{\boldc}_{\spacex{s}} \norm{\boldu}_{H^{s-1}_x} + \norm{\boldc^\init}_{\spacex{s}} \norm{\boldu^\init}_{H^{s-1}_x} }
\\[3mm]     & \leq C^\TT |v| \pa{2\norm{\boldc_{\infty}}_{L^2_x\big(\boldc_{\infty}^{-\frac{1}{2}}\big)} + \eps\norm{\tilde{\boldc}}_{\spacex{s}} + \eps\norm{\tilde{\boldc}^\init}_{\spacex{s}}} 
\\[2mm]     & \hspace{4.7cm} \times\pa{ 2 \norm{\bar{\boldu}}_{H^{s-1}_x} + \eps\norm{\tilde{\boldu}}_{H^{s-1}_x} + \eps\norm{\tilde{\boldu}^\init}_{H^{s-1}_x}}
\\[5mm]    & \leq C^\TT \delta_\ms |v| \pab{\sqrt{N\abs{\T^3}} + \eps \delta_\ms} (1+\eps C_\ms),
\end{split}
\end{equation*}
and a similar result can be recovered for the second order terms. As $\eps\leq 1$, we can thus successively increase the value of $C^\TT>0$ to derive the final control
\begin{equation*}
\begin{split}
\norm{\pi_\bepsT(\boldf)}_{\hilbertxv}^2 & \leq 2\norm{\pi_\bepsT(\boldf^\init)}_{\hilbertxv}^2 + C^\TT \delta_\ms^2 \norm{(1+|v|+|v|^2)\boldmu}_{\hilbertv}^2
\\[3mm]     & \leq C^\TT\delta_\ms^2,
\end{split}
\end{equation*}
since the norm on the right-hand side is clearly finite, and we have also used the hypothesis on the initial datum $\boldf^\init$.

Moreover, since $\pi_\boldL$ commutes with $x$-derivatives, we can use the unique orthogonal writing~${\nabla_x \boldf=\pi_\boldL(\nabla_x \boldf) + \nabla_x \boldf^\perp}$ to get
\begin{equation*}
\norm{\nabla_x \pi_\boldL(\boldf)}_{\hilbertxv}^2 \leq \norm{\nabla_x \boldf}_{\hilbertxv}^2.
\end{equation*}
Plugging both last inequalities into $\eqref{Poincare start}$ and redefining accordingly $C^\TT$ allow to end the proof.

\end{proof}

\noindent \textbf{Step 3 -- \textit{A priori} energy estimates.} It is crucial at this point to show the strict similarities between the \textit{a priori} estimates that we obtain here, with the one derived in \cite[Section 3]{Bri1}. Indeed, it is enough to prove this analogy in order to deduce that the exact same computations of \cite{Bri1} apply in our case, leading to the expected results of existence and uniqueness. To do this, we write down all the estimates for the terms appearing in the modified Sobolev norm $\eqref{Sobolev norm}$, whose structure has been fixed at the beginning of this section. 
We list all the estimates, separating inside square brackets the extra terms that appear in our study. In particular, the extra terms involving the norms of $\boldf$ and its derivatives are estimated either by a factor of order $\eps$ or by a factor of order $\delta_\ms$, both of which can be taken as small as desired in order to close the estimates as in \cite{Bri1}. Note however that we cannot recover the exponential decay in time obtained in \cite{MouNeu,Bri1}, as the presence of the source term strongly influences the shape of the energy estimates. Despite this inconvenience, we mention that our analysis is not the sharpest possible, and we guess that an exponential decay rate could probably be recovered in the particular case when $\bar{u}=0$. We shall give a brief discussion on the topic later on. 



\smallskip
For the sake of simplicity, we shall use the notations $C^{(k)}$ and $\tilde{C}_k$ for the technical constants appearing in the following \textit{a priori} estimates. Their explicit values can be found in Appendix~\ref{App:apriori BE}, together with the detailed derivation of all the inequalities. In addition to this, in order to enlighten the computations, we suppose from now on that $\delta_\ms\leq 1$, even if this requirement may not be optimal.

\smallskip
Let $s\in\N^*$ and consider a function $\boldf$ in $\sobolevTR{s}$ which solves the perturbed Boltzmann equation $\eqref{perturbed BE}$ and satisfies initially $\norm{\pi_\bepsT(\boldf^\init)}_{\hilbertxv}=\mathcal{O}(\delta_\ms)$, as in the statement of Theorem \ref{theo:Cauchy BE}. Moreover, let us introduce the standard notation~${\boldf^\perp = \boldf - \pi_\boldL(\boldf)}$ for the part of $\boldf$ that is projected onto~$(\Ker\boldL)^\perp$. 

\medskip
\noindent The estimate for the $\hilbertxv$ norm of $\boldf$ reads
\begin{multline}\label{a priori f}
\frac{\dd}{\dd t}\norm{\boldf}_{\hilbertxv}^2 \leq -\frac{\lambda_\boldL}{\eps^2} \normb{\boldf^\perp}_{\spacexv}^2 + \frac{8(C^\LL_2+1)}{\lambda_\boldL}\mathcal{G}^0_x(\boldf,\boldf)^2
\\[3mm] \hspace{-1.5cm} + \Bigg[ \delta_\ms C^{(1)} \norm{\nabla_x \boldf}_{\hilbertxv}^2 + \tilde{C}\delta_\ms \Bigg]. 
\end{multline}

\noindent The time evolution of the $\hilbertxv$ norm of $\nabla_x \boldf$ is estimated as
\begin{multline}\label{a priori x f}
\frac{\dd}{\dd t}\norm{\nabla_x \boldf}_{\hilbertxv}^2 \leq -\frac{\lambda_\boldL}{\eps^2} \normb{\nabla_x \boldf^\perp}_{\spacexv}^2  +  \frac{8 (2 + 2C^\LL_2 + C^\LL_1 K_x)}{\lambda_\boldL}\mathcal{G}^1_x(\boldf,\boldf)^2
\\[6mm]  + \Bigg[ \delta_\ms C^{(2)} \norm{\nabla_x \boldf}_{\hilbertxv}^2 + \frac{\eps \delta_\ms C^{(3)}}{\eps^2} \normb{\boldf^\perp}_{\spacexv}^2 + \tilde{C}_x\delta_\ms \Bigg].
\end{multline}

\noindent The estimate for the $\hilbertxv$ norm of $\nabla_v\boldf$ reads
\begin{multline}\label{a priori v f}
\frac{\dd}{\dd t}\norm{\nabla_v \boldf}_{\hilbertxv}^2 \leq \frac{K_1}{\eps^2}\normb{\boldf^\perp}_{\spacexv}^2  + \frac{K_{\dd x}}{\eps^2} \norm{\nabla_x\boldf}_{\hilbertxv}^2   
\\[6mm]    -\frac{C^{\NuNu}_3}{\eps^2}\norm{\nabla_v\boldf}_{\spacexv}^2    + \frac{4(3+C^\KK_2)}{C^{\NuNu}_3}\mathcal{G}^1_{x,v}(\boldf,\boldf)^2  + \Bigg[ \frac{\tilde{C}_v \delta_\ms}{\eps^2} \Bigg].
\end{multline}

\noindent The estimate for the commutator reads, fro any $e >0$,
\begin{multline}\label{a priori xv f}
\frac{\dd}{\dd t}\scalprod{\nabla_x \boldf,\nabla_v\boldf}_{\hilbertxv} 
\\[5mm]    \qquad \leq \frac{2 C^\LL_1 e}{\eps^3}\normb{\nabla_x\boldf^\perp}_{\spacexv}^2  - \frac{1}{\eps}\norm{\nabla_x\boldf}_{\hilbertxv}^2 +\frac{C^{(4)}}{e \eps} \norm{\nabla_v\boldf}_{\spacexv}^2  
\\[6mm]   + \frac{2 e}{\eps}\mathcal{G}^1_x(\boldf,\boldf)^2 + \Bigg[ \delta_\ms e C^{(5)} \norm{\nabla_x\boldf}_{\hilbertxv}^2  \hspace{2.5cm} 
\\[4mm]   + \frac{\eps^2 e C^{(6)}}{\eps^3}\normb{\boldf^\perp}_{\spacexv}^2 + \frac{\delta_\ms^2 e \tilde{C}_{x,v}}{\eps}\Bigg].
\end{multline}

\noindent We then consider two multi-indices $\alpha,\beta\in\N^3$ such that $|\alpha|+|\beta|=s$. If $|\beta|=0$, the estimate for higher $x$-derivatives reads
\begin{multline}\label{a priori a f}
\frac{\dd}{\dd t}\norm{\partial^\alpha_x \boldf}_{\hilbertxv}^2\leq  -\frac{\lambda_\boldL}{\eps^2} \norm{\partial^\alpha_x \boldf^\perp}_{\spacexv}^2   + C^{(7)} \mathcal{G}^s_x(\boldf,\boldf)^2
\\[6mm]  \qquad\ \  + \Bigg[ \frac{\eps \delta_\ms K_\alpha}{\eps^2} \sum_{|\alpha'| \leq s-1}\norm{\partial^{\alpha'}_x \boldf^\perp}_{\spacexv}^2 
\\[3mm]   +  \delta_\ms C^{(8)}\norm{\partial^\alpha_x \boldf}_{\hilbertxv}^2  + \tilde{C}_\alpha \delta_\ms \Bigg]. 
\end{multline}

\noindent In the case when we have at least one derivative in $v$, that is when $|\beta|\geq 1$, we obtain the estimate on mixed derivatives
\begin{multline}\label{a priori ab f}
\frac{\dd}{\dd t}\norm{\multideriv \boldf}_{\hilbertxv}^2
\\[5mm]   \hspace{-4.5cm}  \leq  -\frac{C^{\NuNu}_3}{\eps^2}\norm{\multideriv \boldf}_{\spacexv}^2 +\frac{K_{s-1}}{\eps^2}\norm{\boldf}_{\sobolevxv{s-1}}^2  
\\[7mm]    \hspace{2cm} + C^{(9)} \sum_{k,\ \beta_k >0} \norm{\partial^{\beta-\ee_k}_v\partial^{\alpha+\ee_k}_x\boldf}_{\hilbertxv}^2 + \frac{4(5 + C^\KK_2 + C^\NuNu_4)}{C^\NuNu_3} \mathcal{G}^s_{x,v}(\boldf,\boldf)^2 
\\[5mm]   \hspace{-2.5cm} + \Bigg[ \frac{\eps C^\NuNu_7}{\eps^2}\norm{\boldf}_{H^{s-1}_{x,v}\big(\langle v\rangle^{\frac{\gamma}{2}}\boldmu^{-\frac{1}{2}}\big)}^2 + \tilde{C}_{\alpha,\beta} \delta_\ms \Bigg]. 
\end{multline}

\noindent  In the following, we also need to consider the particular case when we replace $\alpha$ with $\alpha-\ee_k$ and $\beta$ with $\ee_k$ in the previous estimate. Inequality $\eqref{a priori ab f}$ becomes
\begin{multline}\label{a priori ab f 2}
\frac{\dd}{\dd t}\norm{\partial^{\ee_k}_v\partial^{\alpha-\ee_k}_x \boldf}_{\hilbertxv}^2
\\[5mm]   \hspace{-2.5cm}  \leq  -\frac{C^{\NuNu}_3}{\eps^2}\norm{\partial^{\ee_k}_v\partial^{\alpha-\ee_k}_x \boldf}_{\spacexv}^2 +\frac{K_{s-1}}{\eps^2}\norm{\boldf}_{\sobolevxv{s-1}}^2  
\\[6mm]    \hspace{1cm} + C^{(9)} \norm{\partial^{\alpha}_x\boldf}_{\hilbertxv}^2 + \frac{4(5 + C^\KK_2 + C^\NuNu_4)}{C^\NuNu_3} \mathcal{G}^s_{x,v}(\boldf,\boldf)^2 
\\[6mm]    + \Bigg[ \frac{\eps C^\NuNu_7}{\eps^2}\norm{\boldf}_{H^{s-1}_{x,v}\big(\langle v\rangle^{\frac{\gamma}{2}}\boldmu^{-\frac{1}{2}}\big)}^2 + \tilde{C}_{\alpha,\beta} \delta_\ms \Bigg]. 
\end{multline}

\noindent Finally, we need to upper bound the commutator for higher derivatives
\begin{multline} \label{a priori ak f}
\frac{\dd}{\dd t}\scalprod{\partial^\alpha_x\boldf,\partial^{\ee_k}_v \partial^{\alpha-\ee_k}_x\boldf}_{\hilbertxv}
\\[4mm]  \hspace{-4.5cm} \leq  \frac{2 C^\LL_1 e}{\eps^3} \norm{\partial^\alpha_x\boldf^\perp}_{\spacexv}^2  - \frac{1}{\eps} \norm{\partial^\alpha_x \boldf}_{\hilbertxv}^2 
\\[5mm]    \hspace{-2.5cm} + \frac{C^{(10)}}{\eps e}\norm{\partial^{\ee_k}_v \partial^{\alpha-\ee_k}_x\boldf}_{\spacexv}^2 + \frac{2 e}{\eps C^\LL_1}\mathcal{G}^s_{x}(\boldf,\boldf)^2
\\[5mm]  \quad \qquad + \Bigg[  \frac{\delta_\ms e K_{\alpha,k}}{\eps}\norm{\boldf}_{H^{s-1}_{x,v}\big(\langle v\rangle^{\frac{\gamma}{2}}\boldmu^{-\frac{1}{2}}\big)}^2  + \delta_\ms e C^{(11)} \norm{\partial^\alpha_x\boldf}_{\hilbertxv}^2 
\\[4mm]   + \frac{\delta_\ms^2 e \tilde{C}_{\alpha,k}}{\eps} - \frac{1}{\eps} \scalprod{\partial^{\alpha - \ee_k}_x(v\cdot\nabla_x \boldf), v_k\partial^\alpha_x \boldf}_{\hilbertxv} \Bigg],
\end{multline}
where the estimate holds for any $e >0$, and we remember that the last term satisfies the fundamental property
\begin{equation}\label{Negative sign}
-\sum_{\substack{|\alpha| = s  \\[0.2mm]  k,\ \alpha_k >0}}\scalprod{\partial^{\alpha - \ee_k}_x(v\cdot\nabla_x \boldf), v_k\partial^\alpha_x \boldf}_{\hilbertxv} = - \sum_{|\alpha'| = s-1} \norm{v\cdot\nabla_x \partial^{\alpha'}_x\boldf}_{\hilbertxv}^2 \leq 0.
\end{equation}

\bigskip
Starting from these inequalities, we are now able to establish the link with \cite{Bri1}. For the sake of clarity, we divide the derivation of the \textit{a priori} energy estimates into two successive results. In the first one, we recover a preliminary upper bound which provides a partial negative return.

\begin{prop}\label{prop:preliminary a priori BE}
There exist $\bar{\delta}_\ms>0$ and $\eps_0\in (0,1]$ such that the following statements hold for any $s\in\N^*$.
\begin{itemize}
\item[(i)] There exist three sets of positive constants $\pab{a^{(s)}_{\alpha}}_{\alpha}^s$, $\pab{b^{(s)}_{\alpha,k}}_{\alpha,k}^s$ and $\pab{d^{(s)}_{\alpha,\beta}}_{\alpha,\beta}^s$ such that, for all $\eps\in (0,\eps_0]$,
\begin{equation}\label{A priori equivalence}
\quad \norm{\cdot}_{\mathcal{H}^s_\eps} \sim  \pa{\norm{\cdot}_{\hilbertxv}^2 + \sum_{|\alpha| \leq s}\norm{\partial^\alpha_x\cdot}_{\hilbertxv}^2 + \eps^2\sum_{\substack{|\alpha|+|\beta|\leq s   \\[0.2mm]   |\beta|\geq 1}}\norm{\multideriv\cdot}_{\hilbertxv}^2}^{1/2}.
\end{equation}
\item[(ii)] There exist four positive constants $K_0^{(s)}$, $K_1^{(s)}$, $K_2^{(s)}$ and $C^{(s)}$ such that, for any values of $\eps\in(0,\eps_0]$ and $\delta_\ms\in [0,\bar{\delta}_\ms]$, if $\boldf\in \sobolevTR{s}$ solves the perturbed Boltzmann equation $\eqref{perturbed BE}$ with initial datum $\boldf^\init$ belonging to $\sobolevTR{s}$ and satisfying ${\norm{\pi_\bepsT(\boldf^\init)}_{\hilbertxv} = \mathcal{O}(\delta_\ms)}$, then, for every time $t\geq 0$,
\begin{multline}\label{A priori estimate BE 1}
\frac{\dd}{\dd t} \norm{\boldf}_{\mathcal{H}^s_\eps}^2 \leq -K_0^{(s)}\pa{\norm{\boldf}_{H^s_{x,v}\big(\langle v\rangle^{\frac{\gamma}{2}}\boldmu^{-\frac{1}{2}}\big)}^2 + \frac{1}{\eps^2} \sum_{|\alpha|\leq s}\norm{\partial^\alpha_x\boldf^\perp}_{\spacexv}^2} 
\\[4mm]    + K_1^{(s)}\mathcal{G}^s_x(\boldf,\boldf)^2 + \eps^2 K_2^{(s)}\mathcal{G}^s_{x,v}(\boldf,\boldf)^2 +C^{(s)} \delta_\ms. 
\end{multline}
\end{itemize}
\end{prop}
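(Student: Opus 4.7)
For (i), the only non-manifestly-positive contribution to $\|\cdot\|^2_{\mathcal{H}^s_\eps}$ is the commutator cross-product weighted by $\eps b^{(s)}_{\alpha,k}$. I will bound it via Cauchy--Schwarz and Young's inequality: for any $\eta>0$,
$$\eps\,b^{(s)}_{\alpha,k}\,\big|\scalprod{\partial^\alpha_x \boldf,\,\partial^{\ee_k}_v\partial^{\alpha-\ee_k}_x \boldf}_{\hilbertxv}\big|\leq \eta\,\|\partial^\alpha_x \boldf\|^2_{\hilbertxv}+\frac{(b^{(s)}_{\alpha,k})^2}{4\eta}\,\eps^2\,\|\partial^{\ee_k}_v\partial^{\alpha-\ee_k}_x \boldf\|^2_{\hilbertxv}.$$
Picking $\eta$ as a small fraction of $a^{(s)}_\alpha$ and then fixing every $b^{(s)}_{\alpha,k}$ small compared to $\sqrt{a^{(s)}_\alpha\,d^{(s)}_{\alpha-\ee_k,\ee_k}}$ will give \eqref{A priori equivalence} uniformly in $\eps\in(0,1]$.

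For (ii), the plan is to differentiate $\|\boldf\|^2_{\mathcal{H}^s_\eps}$ in time and substitute the \textit{a priori} bounds \eqref{a priori f}, \eqref{a priori a f}, \eqref{a priori ab f}, \eqref{a priori ab f 2}, \eqref{a priori ak f}, choosing the weights in the hypocoercive cascade of \cite{Bri1}: $a^{(s)}_\alpha$ is allowed to grow with $|\alpha|$, the free parameter $e$ in \eqref{a priori ak f} is taken proportional to $\eps$, and $d^{(s)}_{\alpha,\beta}$ increases with $|\beta|$. The mixed-derivative bound \eqref{a priori ab f} (weighted by $\eps^2 d^{(s)}_{\alpha,\beta}$) produces a positive contribution $C^{(9)}\|\partial^{\beta-\ee_k}_v\partial^{\alpha+\ee_k}_x \boldf\|^2_{\hilbertxv}$ that will be absorbed by the negative return at the next level of the $d$-hierarchy; the commutator estimate \eqref{a priori ak f} delivers the essential negative return $-b^{(s)}_{\alpha,k}\eps^{-1}\|\partial^\alpha_x \boldf\|^2_{\hilbertxv}$ on the fluid part of $\partial^\alpha_x \boldf$, while its microscopic by-product $2C^\LL_1 e\,\eps^{-3}\|\partial^\alpha_x \boldf^\perp\|^2_{\spacexv}$ is absorbed by $-\lambda_\boldL a^{(s)}_\alpha\eps^{-2}\|\partial^\alpha_x \boldf^\perp\|^2_{\spacexv}$ from \eqref{a priori a f}; the term $C^{(10)}(\eps e)^{-1}\|\partial^{\ee_k}_v\partial^{\alpha-\ee_k}_x \boldf\|^2_{\spacexv}$ is absorbed into $-d^{(s)}_{\alpha-\ee_k,\ee_k}C^{\NuNu}_3\|\partial^{\ee_k}_v\partial^{\alpha-\ee_k}_x \boldf\|^2_{\spacexv}$ from \eqref{a priori ab f 2}; and the indefinite-sign piece of \eqref{a priori ak f} is neutralized after summation by \eqref{Negative sign}. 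Collecting yields a principal coercive term of the shape $-K_0^{(s)}(\|\boldf\|^2_{H^s_{x,v}(\langle v\rangle^{\gamma/2}\boldmu^{-1/2})}+\eps^{-2}\sum_{|\alpha|\leq s}\|\partial^\alpha_x\boldf^\perp\|^2_{\spacexv})$.

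The bracketed remainders in \eqref{a priori f}--\eqref{a priori ak f}, which are the genuinely new features compared with the mono-species analysis, split into three families that I will handle in turn: linear-in-$\boldf$ contributions weighted by $\delta_\ms$ or $\eps\delta_\ms$, which are absorbed into the coercive term once $\bar\delta_\ms$ and $\eps_0$ are taken small enough; microscopic terms of the form $\eps\delta_\ms\,\eps^{-2}\|\cdot\|^2_{\spacexv}$, absorbed by the part of the coercivity of size $\eps^{-2}\|\boldf^\perp\|^2_{\spacexv}$ once $\delta_\ms$ is further reduced; and purely constant source contributions $\tilde C_\star\delta_\ms$ coming from Lemma \ref{LemmaS}, which sum to the $C^{(s)}\delta_\ms$ appearing in \eqref{A priori estimate BE 1}. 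The nonlinear $\boldQ$ pieces regroup into $K_1^{(s)}\mathcal{G}^s_x(\boldf,\boldf)^2+\eps^2 K_2^{(s)}\mathcal{G}^s_{x,v}(\boldf,\boldf)^2$ via the monotonicity $\mathcal{G}^{s+1}_\star\leq\mathcal{G}^s_\star$ from Lemma \ref{LemmaQ}.

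\textbf{Main obstacle.} The delicate point is to fix the full hierarchy of constants $a^{(s)}_\alpha,\,b^{(s)}_{\alpha,k},\,d^{(s)}_{\alpha,\beta}$, together with $e$ and the thresholds $\eps_0,\bar\delta_\ms$, in a single coherent order so that every one of the above absorptions holds simultaneously and each $b^{(s)}_{\alpha,k}$-negative return actually dominates all positive contributions at the same level $|\alpha|$. I expect the multi-species corrections not to break this recursion, precisely because each extra bracketed term carries either an additional power of $\eps$ or a factor $\delta_\ms$; consequently the explicit recursion of \cite{Bri1} remains solvable at the cost of shrinking $\eps_0$ and $\bar\delta_\ms$.
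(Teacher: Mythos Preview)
Your plan matches the paper's proof: the same hypocoercive combination inherited from \cite{Bri1}, the bracketed $\delta_\ms$- and $\eps$-weighted extras absorbed by shrinking $\bar\delta_\ms,\eps_0$, and the indefinite piece in \eqref{a priori ak f} killed after summation via \eqref{Negative sign}. The paper organizes the recursion as an explicit induction on $s$ (through auxiliary functionals $F_s$ and $Q_{\alpha,k}$) and closes with the Poincar\'e-type inequality \eqref{Poincare inequality} to upgrade the partial negative return on $\|\boldf^\perp\|^2+\|\nabla_x\boldf\|^2$ to the full weighted $H^s_{x,v}$ norm; your outline is compatible with both steps.

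One concrete correction: the parameter $e$ must be a fixed constant, \emph{not} proportional to $\eps$. The $\eps$-balance is already supplied by the norm weights $\eps\,b^{(s)}_{\alpha,k}$ and $\eps^2\,d^{(s)}_{\alpha,\beta}$. After multiplying \eqref{a priori ak f} by $\eps b$ and \eqref{a priori ab f 2} by $\eps^2 d$, the term $C^{(10)}/(\eps e)$ becomes $bC^{(10)}/e$ (order~$1$) while the negative return $-C^{\NuNu}_3/\eps^2$ becomes $-dC^{\NuNu}_3$ (also order~$1$), so absorption only requires $bC^{(10)}/e<dC^{\NuNu}_3$ with $e$ fixed. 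If instead $e=C\eps$, the weighted $C^{(10)}$-contribution is $bC^{(10)}/(C\eps)$, which blows up as $\eps\to 0$ and cannot be absorbed by the $O(1)$ term $-dC^{\NuNu}_3$. The same remark applies to the companion absorption $2bC^\LL_1 e/\eps^2$ versus $-a\lambda_\boldL/\eps^2$: both sides already scale like $\eps^{-2}$ once the norm weights are in place, and the condition is simply $2bC^\LL_1 e<\tfrac12 a\lambda_\boldL$.
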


\begin{proof}[Proof of Proposition \ref{prop:preliminary a priori BE}]
Let $\boldf\in\sobolevTR{s}$ be any solution of $\eqref{perturbed BE}$ with initial condition satisfying the hypothesis of our statement, and consider a solution $(\boldc,\boldu)\in H^{s+5}_x\times H^{s+4}_x$ of the Maxwell-Stefan system \eqref{MS mass}--\eqref{MS momentum}--\eqref{MS incompressibility}. Moreover, suppose that $\delta_\ms\leq 1$ for simplicity. Under these hypotheses, estimates from \eqref{a priori f} up to \eqref{a priori ak f} hold and can be applied.

Following \cite[Section 5]{Bri1}, we proceed by induction on $s\in\N^*$. For $s=1$, the modified Sobolev norm $\mathcal{H}^1_\eps$ reads, for any $\boldf\in\sobolevTR{1}$,
\begin{multline*}
\norm{\boldf}_{\mathcal{H}^1_\eps}^2 = A\norm{\boldf}_{\hilbertxv}^2 + a\norm{\nabla_x \boldf}_{\hilbertxv}^2 
\\[3mm]   + b\eps\scalprod{\nabla_x\boldf,\nabla_v\boldf}_{\hilbertxv} + d\eps^2\norm{\nabla_v\boldf}_{\hilbertxv}^2.
\end{multline*}
Therefore, we consider the linear combination $A\eqref{a priori f} + a\eqref{a priori x f} + d\eps^2\eqref{a priori v f} + b\eps\eqref{a priori xv f}$. Recalling from Lemma \ref{LemmaQ} that we have the monotone behaviour $\mathcal{G}^0_x \leq \mathcal{G}^1_x$, we can easily find some positive constants $K_1^{(1)}$, $K_2^{(1)}$ and $C^{(1)}$ such that, at first,  
\begin{equation*}
\begin{split}
\frac{\dd}{\dd t} \norm{\boldf}_{\mathcal{H}^1_\eps}^2 \leq &\ \frac{1}{\eps^2}\cro{\eps\pab{a\delta_\ms C^{(3)} + \eps K_1 d + b\eps C^{(6)}} - \frac{A\lambda_\boldL}{2}}\Vert \boldf^\perp\Vert_{\spacexv}^2
\\[3mm]     &\  + \frac{1}{\eps^2} \cro{2C^\LL_1 e b - \frac{a\lambda_\boldL}{2}}\Vert \nabla_x\boldf^\perp\Vert_{\spacexv}^2
\\[3mm]     &\  + \cro{\delta_\ms \pab{A C^{(1)} + a C^{(2)}} + K_{\dd x} d - b \pabb{1 - \delta_\ms e C^{(5)}}} \norm{\nabla_x\boldf}_{\hilbertxv}^2
\\[3mm]     &\  + \cro{\frac{C^{(4)} b}{e} - d C^{\NuNu}_3} \norm{\nabla_v\boldf}_{\spacexv}^2
\\[4mm]     &\   - \frac{A\lambda_\boldL}{2\eps^2} \Vert \boldf^\perp\Vert_{\spacexv}^2 - \frac{a\lambda_\boldL}{2\eps^2} \Vert \nabla_x\boldf^\perp\Vert_{\spacexv}^2
\\[5mm]     &\  + K_1^{(1)} \mathcal{G}_x^1(\boldf,\boldf)^2  + \eps^2 K_2^{(1)} \mathcal{G}_{x,v}^1(\boldf,\boldf)^2 + C^{(1)}\delta_\ms.  
\end{split}
\end{equation*}
If we now choose
\begin{equation*}
\bar{\delta}_\ms \leq \min\br{1,\ \frac{K_{\dd x} d}{A C^{(1)} + a C^{(2)}},\  \frac{1}{2 e C^{(5)}}},\qquad \eps_0 \leq \min\br{1,\  \frac{K_1 d}{aC^{(3)} + K_1 d + b C^{(6)}}},
\end{equation*}
we can repeat the same computations in \cite{Bri1} in order to fix the values of $A$, $a$, $b$, $d$ and~$e$. In particular, the equivalence between the modified norm $\mathcal{H}^1_\eps$ and the standard Sobolev norm~$H^1_x\hilbertv$ immediately follows, and we also recover, because $\eps\in (0,1]$, the estimate
\begin{multline}\label{Estimate for 1}
\frac{\dd}{\dd t} \norm{\boldf}_{\mathcal{H}^1_\eps}^2  \leq - \left(  \Vert \boldf^\perp\Vert_{\spacexv}^2 + \Vert \nabla_x\boldf^\perp\Vert_{\spacexv}^2\right.
\\[2mm]   \hspace{5cm}\left. + \norm{\nabla_v\boldf}_{\spacexv}^2 + \norm{\nabla_x\boldf}_{\hilbertxv}^2 \right)
\\[4mm]    - \frac{A\lambda_\boldL}{2\eps^2} \norm{\boldf^\perp}_{\spacexv}^2 - \frac{a\lambda_\boldL}{2\eps^2}\Vert \nabla_x\boldf^\perp\Vert_{\spacexv}^2 
\\[6mm]    \qquad \quad+ K_1^{(1)} \mathcal{G}_x^1(\boldf,\boldf)^2  + \eps^2 K_2^{(1)} \mathcal{G}_{x,v}^1(\boldf,\boldf)^2 + C^{(1)} \delta_\ms. 
\end{multline}
Note that we have kept the constants $A$ and $a$ in the extra negative contributions coming from $\boldf^\perp$, for the sake of clarity. In particular, we stress the fact that $A$ and $a$ have been fixed independently of $\delta_\ms$ and $\eps$.

To conclude, we observe that thanks to the Poincar\'e inequality $\eqref{Poincare inequality}$ and the equivalence between the $\spacexv$ and $\hilbertxv$ norms on $\Ker\boldL$, the following upper bounds hold
\begin{gather*}
\norm{\boldf}_{\spacexv}^2 \leq C'\pa{\norm{\boldf^\perp}_{\spacexv}^2 + \frac{1}{2}\norm{\nabla_x\boldf}_{\hilbertxv}^2 + \delta_\ms^2},
\\[4mm]   \norm{\nabla_x \boldf}_{\spacexv}^2 \leq \tilde{C}' \pa{\norm{\nabla_x \boldf^\perp}_{\spacexv}^2 + \frac{1}{2}\norm{\nabla_x\boldf}_{\hilbertxv}^2},
\end{gather*}
for some positive constants $C'$ and $\tilde{C}'$ computed from $\eqref{Norm equivalence}$ and $\eqref{Poincare inequality}$, which are independent of the parameters $\delta_\ms$ and $\eps$. By plugging the above inequalities into $\eqref{Estimate for 1}$ and by adding the small extra term $\delta_\ms^2$ to the same one multiplying $C^{(1)}$, we deduce the existence of a positive constant $K_0^{(1)}$ (independent of $\delta_\ms$ and $\eps$) such that
\begin{multline*}
\frac{\dd}{\dd t} \norm{\boldf}_{\mathcal{H}^1_\eps}^2  \leq -K_0^{(1)} \norm{\boldf}_{H^1_{x,v}\big(\langle v\rangle^{\frac{\gamma}{2}}\boldmu^{-\frac{1}{2}}\big)}^2 
\\[4mm]      \qquad\qquad\qquad  - \frac{\lambda_\boldL\min\br{A,a}}{2\eps^2} \pa{\norm{\boldf^\perp}_{\spacexv}^2 + \Vert \nabla_x\boldf^\perp\Vert_{\spacexv}^2}
\\[6mm]   + K_1^{(1)}\mathcal{G}^s_x(\boldf,\boldf)^2 + \eps^2 K_2^{(1)}\mathcal{G}^s_{x,v}(\boldf,\boldf)^2 + C^{(1)}\delta_\ms,  
\end{multline*}
Therefore, redefining $K_0^{(1)}$ small enough to ensure that also $K_0^{(1)}\leq \lambda_\boldL\min\br{A,a}/2$, we finally recover estimate $\eqref{A priori estimate BE 1}$ in the case $s=1$.

\smallskip
For the general case, we suppose that the result is true up to the integer $s-1$, and we prove that it also holds for $s$. The first statement, about the equivalence between the~$\mathcal{H}^s_\eps$ and $H^s_x\hilbertv$ norms, is straightforward. For the second property, regarding the \textit{a priori} estimate, in a similar way to what we have done for $s=1$, we define this time
\begin{eqnarray}
F_s(t) & = & \eps^2 B \sum_{\substack{|\alpha|+|\beta|=s    \\[0.2mm]    |\alpha|\geq 2}} \norm{\multideriv \boldf}_{\hilbertxv}^2 + B' \sum_{\substack{|\alpha|=s   \\[0.2mm]    k,\ \alpha_k >0}} Q_{\alpha, k}(t), \label{Fs}
\\[5mm]    Q_{\alpha,k}(t) & = & a\norm{\partial^\alpha_x\boldf}_{\hilbertxv}^2 + b\eps \scalprod{\partial^{\ee_k}_v \partial^{\alpha-\ee_k}\boldf,\partial^\alpha_x\boldf}_{\hilbertxv} 
\\[3mm]        &&\hspace{5cm} + d\eps^2 \norm{\partial^{\ee_k}_v\partial^{\alpha-\ee_k}_x\boldf}_{\hilbertxv}^2, \nonumber
\end{eqnarray}
and the positive constants have to be fixed as previously. 

\noindent By taking the linear combination $a\eqref{a priori a f} + b \eps \eqref{a priori ak f} + d \eps^2 \eqref{a priori ab f 2}$, we compute
\begin{equation*}
\begin{split}
\frac{\dd}{\dd t} Q_{\alpha, k}(t) \leq &\ \frac{1}{\eps^2}\cro{2 C^\LL_1 e b - \frac{a \lambda_\boldL}{2}} \norm{\partial^\alpha_x\boldf^\perp}_{\spacexv}^2
\\[2mm]     &\ + \cro{\frac{C^{(10)} b}{e} - C^{\NuNu}_3 d}\norm{\partial^{\ee_k}_v\partial^{\alpha-\ee_k}_x\boldf}_{\spacexv}^2
\\[2mm]     &\ + \cro{\delta_\ms C^{(8)} a + \eps^2 C^{(9)} d - b\pabb{1 - \delta_\ms e C^{(11)}}}\norm{\partial^\alpha_x\boldf}_{\hilbertxv}^2
\\[2mm]     &\  - \frac{a \lambda_\boldL}{2\eps^2} \norm{\partial^\alpha_x\boldf^\perp}_{\spacexv}^2 + \frac{a \eps\delta_\ms K_\alpha}{\eps^2} \sum_{|\alpha'| \leq s-1}\norm{\partial^{\alpha'}_x \boldf^\perp}_{\spacexv}^2
\\[2mm]     &\ + K_{s-1} d \norm{\boldf}_{\sobolevxv{s-1}}^2 + \pabb{\delta_\ms e K_{\alpha,k} b + \eps C^\NuNu_7 d }\norm{\boldf}_{H^{s-1}_{x,v}\big(\langle v\rangle^{\frac{\gamma}{2}}\boldmu^{-\frac{1}{2}}\big)}^2
\\[4mm]     &\ + \tilde{K}_1^{(s)}\mathcal{G}^s_x(\boldf,\boldf)^2 + \eps^2 \tilde{K}_2^{(2)}\mathcal{G}^s_{x,v}(\boldf,\boldf)^2 + \tilde{C}^{(s)} \delta_\ms 
\\[5mm]     &\ -b \scalprod{\partial^{\alpha - \ee_k}_x(v\cdot\nabla_x \boldf), v_k\partial^\alpha_x \boldf}_{\hilbertxv}.
\end{split}
\end{equation*}
In particular, similarly to the case $s=1$, we can redefine $\bar{\delta_\ms}$ to satisfy also 
$$\bar{\delta}_\ms \leq \min\br{\frac{C^{(9)} d}{C^{(8)} a},\ \frac{1}{2 e C^{(11)}}},$$ 
so that the first three contributions are the same as in \cite[Section 5]{Bri1}. Therefore, we can set the values of $a$, $b$, $d$ and $e$ in such a way that
\begin{equation}\label{Evolution Q}
\begin{split}
\frac{\dd}{\dd t} Q_{\alpha, k}(t) \leq &\ - K_0^{(s)} \pa{\norm{\partial^\alpha_x\boldf}_{\spacexv}^2 + \norm{\partial^{\ee_k}_v\partial^{\alpha-\ee_k}_x\boldf}_{\spacexv}^2}
\\[3mm]     &\  - \frac{a \lambda_\boldL}{2\eps^2} \norm{\partial^\alpha_x\boldf^\perp}_{\spacexv}^2 + \frac{a \eps\delta_\ms K_\alpha}{\eps^2} \sum_{|\alpha'| \leq s-1}\norm{\partial^{\alpha'}_x \boldf^\perp}_{\spacexv}^2
\\[3mm]     &\ + \tilde{K}_{s-1} \norm{\boldf}_{H^{s-1}_{x,v}\big(\langle v\rangle^{\frac{\gamma}{2}}\boldmu^{-\frac{1}{2}}\big)}^2 + \tilde{K}_1^{(s)}\mathcal{G}^s_x(\boldf,\boldf)^2 + \eps^2 \tilde{K}_2^{(2)}\mathcal{G}^s_{x,v}(\boldf,\boldf)^2 + \tilde{C}^{(s)}\delta_\ms 
\\[5mm]     &\ -b \scalprod{\partial^{\alpha - \ee_k}_x(v\cdot\nabla_x \boldf), v_k\partial^\alpha_x \boldf}_{\hilbertxv},
\end{split}
\end{equation}
where we have used the equivalence between the $\spacexv$ and the $\hilbertxv$ norms on $\Ker\boldL$, and we have also accordingly redefined all the main constants of interest to enlighten the computations. In particular, we stress again the fact that all these constants remain independent of the parameters $\delta_\ms$ and $\eps$.

\smallskip
Now, note that if we sum $\eqref{Evolution Q}$ over $|\alpha|=s$ and $k=1,2,3$, such that~${\alpha_k >0}$, the terms accounting for the scalar products disappear, thanks to the property $\eqref{Negative sign}$.
Therefore, going back to the definition $\eqref{Fs}$ of $F_s(t)$, combining $B' \eqref{Evolution Q}$ with $\eps^2 B \eqref{a priori ab f}$ we obtain
\begin{equation*}
\begin{split}
\frac{\dd}{\dd t}  F_s & (t) \leq  -C^{\NuNu}_3 B\sum_{\substack{|\alpha|+|\beta|=s   \\[0.2mm]    |\beta|\geq 2}} \norm{\multideriv \boldf}_{\spacexv}^2 
 \\[3mm]    &\  + \sum_{\substack{|\alpha|+|\beta|=s   \\[0.2mm]    |\beta|\geq 2}} C^{(9)} B\eps^2 \sum_{k,\ \beta_k >0}\norm{\partial^{\ee_k}_v\partial^{\alpha-\ee_k}_x\boldf}_{\spacexv}^2
\\[3mm]     &\ - B' \tilde{K}_0^{(s)} \sum_{\substack{|\alpha|=s   \\[0.2mm]    k,\ \alpha_k >0}} \pa{\norm{\partial^\alpha_x\boldf}_{\spacexv}^2 + \norm{\partial^{\ee_k}_v\partial^{\alpha-\ee_k}_x\boldf}_{\spacexv}^2}
\\[3mm]     &\  - B' \sum_{\substack{|\alpha|=s   \\[0.2mm]    k,\ \alpha_k >0}} \pa{\frac{a \lambda_\boldL}{2\eps^2} \norm{\partial^\alpha_x\boldf^\perp}_{\spacexv}^2 - \frac{a \eps\delta_\ms K_\alpha}{\eps^2} \sum_{|\alpha'| \leq s-1}\norm{\partial^{\alpha'}_x \boldf^\perp}_{\spacexv}^2}
\\[6mm]     &\ + \pabb{B' \tilde{K}_{s-1} + B \pab{K_{s-1} +\eps C^\NuNu_7}} \norm{\boldf}_{H^{s-1}_{x,v}\big(\langle v\rangle^{\frac{\gamma}{2}}\boldmu^{-\frac{1}{2}}\big)}^2
\\[7mm]     &\ + K_1^{(s)}\mathcal{G}^s_x(\boldf,\boldf)^2 + \eps^2 K_2^{(2)}\mathcal{G}^s_{x,v}(\boldf,\boldf)^2
+ C^{(s)}\delta_\ms^2. 
\end{split}
\end{equation*}
Redefining the constants if necessary, we can then copy the arguments used in \cite[Section~5]{Bri1} to finally obtain the existence of $\eps_0\in (0,1]$ such that, for all $\eps\in (0,\eps_0]$,
\begin{multline*}
\hspace{-0.2cm}\frac{\dd}{\dd t}  F_s (t) \leq \tilde{K}_{s-1} \norm{\boldf}_{H^{s-1}_{x,v}\big(\langle v\rangle^{\frac{\gamma}{2}}\boldmu^{-\frac{1}{2}}\big)}^2 - \pa{\sum_{|\alpha|+|\beta|=s} \norm{\multideriv \boldf}_{\spacexv}^2}
\\[5mm]     \quad - B' \frac{a \lambda_\boldL}{2\eps^2}  \sum_{|\alpha|=s} \norm{\partial^\alpha_x\boldf^\perp}_{\spacexv}^2 + \tilde{B}'\frac{a \eps\delta_\ms K_\alpha}{\eps^2} \sum_{|\alpha'| \leq s-1}\norm{\partial^{\alpha'}_x \boldf^\perp}_{\spacexv}^2
\\[6mm]    \hspace{-3cm}  + K_1^{(s)}\mathcal{G}^s_x(\boldf,\boldf)^2 + \eps^2 K_2^{(2)}\mathcal{G}^s_{x,v}(\boldf,\boldf)^2 + C^{(s)}\delta_\ms. 
\end{multline*}
To conclude, for some positive constants $(K_p)_{p\leq s}$, we only have to consider a linear combination $\sum_{p=1}^s K_p F_p(t)$ and use the induction hypothesis, together with $\eps\leq 1$ and the last requirement
\begin{equation*}
\disp  \bar{\delta}_\ms  \leq \frac{\min_p K_p}{2\tilde{B}' a K_\alpha}.
\end{equation*}
Recalling that the functionals $\mathcal{G}^p_x$ and $\mathcal{G}^p_{x,v}$ are monotonically increasing in $p$, we can finally recover the following estimate, valid for any $\eps\in (0,\eps_0]$ and any $\delta_\ms\in [0,\bar{\delta}_\ms]$,
\begin{multline*}
\frac{\dd}{\dd t}  \pa{\sum_{p=1}^s K_p F_p (t)} 
\\[2mm]   \quad \leq - K_0^{(s)}\pa{\sum_{|\alpha|+|\beta|\leq s} \norm{\multideriv \boldf}_{\spacexv}^2 + \frac{1}{\eps^2} \sum_{|\alpha|\leq s} \norm{\partial^\alpha_x\boldf^\perp}_{\spacexv}^2}
\\[7mm]     + K_1^{(s)}\mathcal{G}^s_x(\boldf,\boldf)^2 + \eps^2 K_2^{(2)}\mathcal{G}^s_{x,v}(\boldf,\boldf)^2 + C^{(s)}\delta_\ms. 
\end{multline*}
which is the expected result.

\end{proof}

\smallskip
Starting from this general preliminary estimate, we can finally prove the result that establishes the uniform \textit{a priori} control on $\boldf$, which in turn ensures the stability of the expansion around $\bepsM$.

\begin{cor}\label{cor:A priori BE}
There exist $s_0\in \N^*$, $\bar{\delta}_\ms>0$ and $\eps_0\in (0,1]$ such that, for any integer~$s \geq s_0$, there exists $\delta_\bb >0$ such that, for any $\eps\in (0,\eps_0]$ and any $\delta_\ms\in [0,\bar{\delta}_\ms]$, if $\boldf\in\sobolevTR{s}$ solves the perturbed Boltzmann equation $\eqref{perturbed BE}$, and satisfies initially
\begin{equation*}
\norm{\boldf^\init}_{\mathcal{H}^s_\eps}\leq \delta_\bb,\qquad \norm{\pi_\bepsT(\boldf^\init)}_{\hilbertxv} = \mathcal{O}(\delta_\ms),
\end{equation*}
then $\norm{\boldf}_{\mathcal{H}^s_\eps}\leq \norm{\boldf^\init}_{\mathcal{H}^s_\eps}$, for all $t\geq 0$.
\end{cor}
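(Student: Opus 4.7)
The plan is a classical continuity (bootstrap) argument starting from the differential inequality $\eqref{A priori estimate BE 1}$ provided by Proposition \ref{prop:preliminary a priori BE}. The first step is to absorb the nonlinear contributions $K_1^{(s)}\mathcal{G}^s_x(\boldf,\boldf)^2+\eps^2 K_2^{(s)}\mathcal{G}^s_{x,v}(\boldf,\boldf)^2$ into the hypocoercive dissipation. Applying Lemma \ref{LemmaQ} with $\boldg=\boldh=\boldf$ and Cauchy--Schwarz, combined with the norm equivalence $\eqref{A priori equivalence}$ (which yields both $\norm{\boldf}^2_{H^s_x\hilbertv}\leq C\norm{\boldf}^2_{\mathcal{H}^s_\eps}$ and $\eps^2\norm{\boldf}^2_{\sobolevxv{s}}\leq C\norm{\boldf}^2_{\mathcal{H}^s_\eps}$), both nonlinear terms are bounded by $\tilde C\,\norm{\boldf}^2_{\mathcal{H}^s_\eps}\norm{\boldf}^2_{H^s_{x,v}(\langle v\rangle^{\gamma/2}\boldmu^{-1/2})}$ for some $\tilde C>0$ independent of $\eps$ and $\delta_\ms$. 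This produces
\begin{equation*}
\frac{\dd}{\dd t}\norm{\boldf}^2_{\mathcal{H}^s_\eps}\leq -\pab{K_0^{(s)}-\tilde C\norm{\boldf}^2_{\mathcal{H}^s_\eps}}\norm{\boldf}^2_{H^s_{x,v}(\langle v\rangle^{\gamma/2}\boldmu^{-1/2})}+C^{(s)}\delta_\ms.
\end{equation*}

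Next I would fix $\delta_\bb>0$ small enough that $\tilde C\delta_\bb^2\leq K_0^{(s)}/2$. As long as $\norm{\boldf(t)}_{\mathcal{H}^s_\eps}\leq \delta_\bb$, the bracket stays above $K_0^{(s)}/2$; since moreover the weighted norm dominates the modified norm (by norm equivalence and $\eps\leq 1$), one obtains
\begin{equation*}
\frac{\dd}{\dd t}\norm{\boldf}^2_{\mathcal{H}^s_\eps}\leq -\Lambda\,\norm{\boldf}^2_{\mathcal{H}^s_\eps}+C^{(s)}\delta_\ms,
\end{equation*}
for some explicit $\Lambda>0$ uniform in $\eps$ and $\delta_\ms$, and a direct Gronwall argument then gives $\norm{\boldf(t)}^2_{\mathcal{H}^s_\eps}\leq e^{-\Lambda t}\norm{\boldf^\init}^2_{\mathcal{H}^s_\eps}+C^{(s)}\delta_\ms/\Lambda$ as long as the smallness $\norm{\boldf}_{\mathcal{H}^s_\eps}\leq \delta_\bb$ persists.

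The argument is closed by a continuity method: setting $T^\star=\sup\br{T\geq 0\,:\,\norm{\boldf(t)}_{\mathcal{H}^s_\eps}\leq \norm{\boldf^\init}_{\mathcal{H}^s_\eps}\ \forall\, t\in[0,T]}$, continuity of $\boldf$ in $\sobolevTR{s}$ gives $T^\star>0$, and on $[0,T^\star]$ the preceding Gronwall bound applies. To rule out $T^\star<+\infty$, I would use the initial condition $\norm{\pi_\bepsT(\boldf^\init)}_{\hilbertxv}=\mathcal{O}(\delta_\ms)$: through the projection structure (Lemma \ref{lem:T eps}) this fluid component, which is not damped by $\bepsL$, forces $\norm{\boldf^\init}^2_{\mathcal{H}^s_\eps}$ to be comparable to $\delta_\ms$ from below. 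Choosing then $\bar\delta_\ms$ small enough that $C^{(s)}\bar\delta_\ms/\Lambda\leq \norm{\boldf^\init}^2_{\mathcal{H}^s_\eps}$, the Gronwall bound strictly enforces $\norm{\boldf(t)}_{\mathcal{H}^s_\eps}\leq \norm{\boldf^\init}_{\mathcal{H}^s_\eps}$ on $[0,T^\star]$, contradicting maximality.

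The principal obstacle is precisely this last balance between the constant source $C^{(s)}\delta_\ms$ (inherited from the fluid-kinetic mismatch $\bepsS$) and the non-increasing conclusion of the statement: it is handled by observing that the Maxwell-Stefan projection forces the intrinsic size of the fluctuation $\boldf$ to be of the same order $\delta_\ms$ as the asymptotic plateau of the Gronwall estimate, so the latter is absorbed by the initial norm. All remaining steps — absorbing the nonlinearities by smallness and propagating the $L^\infty_t\mathcal{H}^s_\eps$ control — mirror the closing argument of \cite{Bri1}.
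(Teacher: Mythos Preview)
Your argument tracks the paper's proof essentially verbatim up to and including the Gr\"onwall step: absorbing $\mathcal{G}^s_x,\mathcal{G}^s_{x,v}$ via Lemma~\ref{LemmaQ} and the equivalence $\eqref{A priori equivalence}$, choosing $\delta_\bb$ so that $\tilde C\delta_\bb^2\leq K_0^{(s)}/2$, and integrating to obtain $\norm{\boldf(t)}^2_{\mathcal{H}^s_\eps}\leq e^{-\Lambda t}\norm{\boldf^\init}^2_{\mathcal{H}^s_\eps}+C^{(s)}\delta_\ms/\Lambda$ as long as $\norm{\boldf}_{\mathcal{H}^s_\eps}\leq\delta_\bb$. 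This part is fine.

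The gap is in your closing step. You claim that the hypothesis $\norm{\pi_\bepsT(\boldf^\init)}_{\hilbertxv}=\mathcal{O}(\delta_\ms)$ ``forces $\norm{\boldf^\init}^2_{\mathcal{H}^s_\eps}$ to be comparable to $\delta_\ms$ from below'', and then choose $\bar\delta_\ms$ so that the plateau $C^{(s)}\delta_\ms/\Lambda\leq\norm{\boldf^\init}^2_{\mathcal{H}^s_\eps}$. But $\mathcal{O}(\delta_\ms)$ is an \emph{upper} bound on the projection, not a lower one: nothing prevents $\boldf^\init$ from having arbitrarily small (even vanishing) $\mathcal{H}^s_\eps$ norm while $\delta_\ms>0$ remains fixed by the Maxwell--Stefan data that builds $\bepsM$ and hence $\bepsS$. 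The source term $C^{(s)}\delta_\ms$ is present regardless of the size of $\boldf^\init$, so your inequality $C^{(s)}\bar\delta_\ms/\Lambda\leq\norm{\boldf^\init}^2_{\mathcal{H}^s_\eps}$ cannot be arranged uniformly over admissible initial data.

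The paper closes the bootstrap differently and, in fact, proves only the weaker (but sufficient for Theorem~\ref{theo:Cauchy BE}) bound $\norm{\boldf(t)}_{\mathcal{H}^s_\eps}\leq\delta_\bb$: since $\delta_\bb$ is fixed \emph{first} and independently of $\delta_\ms$, one then imposes $\bar\delta_\ms\leq \delta_\bb^2\Lambda/(4C^{(s)})$, so that the Gr\"onwall plateau satisfies $C^{(s)}\delta_\ms/\Lambda\leq\delta_\bb^2/4$. Combined with $\norm{\boldf^\init}_{\mathcal{H}^s_\eps}\leq\delta_\bb$, the convex combination $e^{-\Lambda t}\norm{\boldf^\init}^2+ (1-e^{-\Lambda t})C^{(s)}\delta_\ms/\Lambda$ never exceeds $\delta_\bb^2$, and the continuity argument closes. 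The literal non-increase $\norm{\boldf}_{\mathcal{H}^s_\eps}\leq\norm{\boldf^\init}_{\mathcal{H}^s_\eps}$ stated in the Corollary is not actually established by the paper's own argument (nor can it be, for the reason above); what is proved and used downstream is the uniform bound by $\delta_\bb$.
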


\begin{proof}[Proof of Corollary \ref{cor:A priori BE}]
Since we are under the same hypotheses of Proposition \ref{prop:preliminary a priori BE}, we have seen how to fix the values of $\bar{\delta}_\ms >0$ and $\eps_0\in (0,1]$, so that, for any $s\in\N^*$, the \textit{a priori} estimate $\eqref{A priori estimate BE 1}$ holds for any time $t\geq 0$. Note in particular that we can get rid of the negative terms involving the pure $x$-derivatives of $\boldf$, as their presence was needed in the previous result only to ensure the control of the small positive extra terms of~$\mathcal{O}(\eps)$, in order to close the induction procedure. Therefore, for any $s\in\N^*$ and any $t\geq 0$,  the time evolution of the $\mathcal{H}^s_\eps$ norm of $\boldf$ is uniformly controlled in $\eps\in (0,\eps_0]$ and $\delta_\ms\in [0,\bar{\delta}_\ms]$ as
\begin{multline}\label{Stability start}
\frac{\dd}{\dd t} \norm{\boldf}_{\mathcal{H}^s_\eps}^2 \leq -K_0^{(s)}\norm{\boldf}_{H^s_{x,v}\big(\langle v\rangle^{\frac{\gamma}{2}}\boldmu^{-\frac{1}{2}}\big)}^2 + K_1^{(s)}\mathcal{G}^s_x(\boldf,\boldf)^2 + \eps^2 K_2^{(s)}\mathcal{G}^s_{x,v}(\boldf,\boldf)^2  + C^{(s)}\delta_\ms.
\end{multline}
The idea is then to properly bound the functionals $\mathcal{G}^s_x$ and $\mathcal{G}^s_{x,v}$. For this, recall from Lemma \ref{LemmaQ} that we can find an integer $s_0\in\N^*$ for which estimates $\eqref{Property8}$ hold for any integer $s \geq s_0$. Moreover, thanks to the equivalence between the $\mathcal{H}^s_\eps$ norm and the standard Sobolev norm given by $\eqref{A priori equivalence}$, we can infer the existence of two positive constants $C_{\textrm{eq}}$ and~${C_{\textrm{\tiny EQ}}}$ such that
\begin{multline}\label{Sobolev equivalence}
C_{\textrm{eq}}\pa{\norm{\cdot}_{\hilbertxv}^2 + \sum_{|\alpha| \leq s}\norm{\partial^\alpha_x\cdot}_{\hilbertxv}^2 + \eps^2\sum_{\substack{|\alpha|+|\beta|\leq s   \\[0.2mm]   |\beta|\geq 1}}\norm{\multideriv\cdot}_{\hilbertxv}^2} 
\\[5mm]    \leq \norm{\cdot}_{\mathcal{H}^s_\eps}^2 \leq C_{\textrm{\tiny EQ}} \norm{\cdot}_{\sobolevxv{s}}^2.
\end{multline}
Therefore, $\mathcal{G}^s_x$ can be successively estimated as
\begin{equation*}
\begin{split}
\mathcal{G}_x^s(\boldf,\boldf)^2 \leq &\   2 (C^\QQ_s)^2 \norm{\boldf}_{H_x^s\hilbertv}^2 \norm{\boldf}_{H^s_x L^2_v\pab{\langle v\rangle^{\frac{\gamma}{2}}\boldsymbol\mu^{-\frac{1}{2}}}}^2
\\[3mm]   \leq &\  \frac{2 (C^\QQ_s)^2}{C_{\textrm{eq}}} \norm{\boldf}_{\mathcal{H}^s_\eps}^2 \norm{\boldf}_{H^s_{x,v}\pab{\langle v\rangle^{\frac{\gamma}{2}}\boldsymbol\mu^{-\frac{1}{2}}}}^2,
\end{split}
\end{equation*}
and similarly, for $\mathcal{G}^s_{x,v}$, we get
\begin{equation*}
\mathcal{G}_x^s(\boldf,\boldf)^2 \leq   \frac{2 (C^\QQ_s)^2 }{\eps^2 C_{\textrm{eq}}} \norm{\boldf}_{\mathcal{H}^s_\eps}^2 \norm{\boldf}_{H^s_{x,v}\pab{\langle v\rangle^{\frac{\gamma}{2}}\boldsymbol\mu^{-\frac{1}{2}}}}^2.
\end{equation*}
Plugging the above inequalities into $\eqref{Stability start}$, we thus obtain
\begin{equation*}
\frac{\dd}{\dd t} \norm{\boldf}_{\mathcal{H}^s_\eps}^2 \leq \pa{ \frac{2(C^\QQ_s)^2}{C_{\textrm{eq}}}  \pab{K_1^{(s)} + K_2^{(s)}} \norm{\boldf}_{\mathcal{H}^s_\eps}^2 - K_0^{(s)}}\norm{\boldf}_{H^s_{x,v}\big(\langle v\rangle^{\frac{\gamma}{2}}\boldmu^{-\frac{1}{2}}\big)}^2  + C^{(s)}\delta_\ms.
\end{equation*}
If we now choose $\delta_\bb >0$ satisfying
\begin{equation*}
\frac{2(C^\QQ_s)^2}{C_{\textrm{eq}}}\pab{K_1^{(s)} + K_2^{(s)}}\delta_\bb^2 \leq \frac{K_0^{(s)}}{2},
\end{equation*}
thanks to $\eqref{Sobolev equivalence}$, we can also infer
\begin{equation*}
\begin{split}
\frac{\dd}{\dd t} \norm{\boldf}_{\mathcal{H}^s_\eps}^2 \leq &\ - \frac{K_0^{(s)}}{2} \norm{\boldf}_{H^s_{x,v}\big(\langle v\rangle^{\frac{\gamma}{2}}\boldmu^{-\frac{1}{2}}\big)}^2 + C^{(s)}\delta_\ms 
\\[4mm]    \leq &\ - \frac{K_0^{(s)}}{2 C_{\textrm{\tiny EQ}} } \norm{\boldf}_{\mathcal{H}^s_\eps}^2 +C^{(s)}\delta_\ms, 
\end{split}
\end{equation*}
as long as $\norm{\boldf}_{\mathcal{H}^s_\eps}\leq \delta_\bb$. But now setting $\lambda_\bb = K_0^{(s)}/2C_{\textrm{\tiny EQ}}$,  from Gr\"onwall's lemma, we deduce that, for any $s\geq s_0$, for any $\eps\in (0,\eps_0]$ and for all $\delta_\ms\in [0,\bar{\delta}_\ms]$,
\begin{equation}\label{Uniform control BE}
\norm{\boldf}_{\mathcal{H}^s_\eps}^2 \leq \norm{\boldf^\init}_{\mathcal{H}^s_\eps}^2 e^{-\lambda_\bb t}  +  \frac{C^{(s)} \delta_\ms}{\lambda_\bb} \pa{1 - e^{-\lambda_\bb t}}, 
\end{equation}
or, more explicitly,
\begin{equation*}
\norm{\boldf}_{\mathcal{H}^s_\eps}\leq \disp \max\br{\delta_\bb,\ \pa{\frac{ \delta_\ms C^{(s)} }{\lambda_\bb}}^{1/2}}.
\end{equation*}
Therefore, recalling that $\delta_\bb$ has been chosen independently of the parameter $\bar{\delta}_\ms >0$, if we impose
\begin{equation*}
\bar{\delta}_\ms\leq \frac{\delta_\bb^2}{4} \frac{\lambda_\bb}{C^{(s)}},
\end{equation*}
we finally ensure the validity of the estimate $\norm{\boldf}_{\mathcal{H}^s_\eps}\leq \delta_\bb$ for any time $t\geq 0$, hence concluding the proof.

\end{proof}

\begin{remark}\label{rem:expo decay}
We emphasize that the above result only provides a stability condition for the expansion around $\bepsM$, in the sense that the $\mathcal{H}^s_\eps$ norm of $\boldf$ can be bounded uniformly in time by the constant $\delta_\bb >0$. Unfortunately, we cannot obtain a full exponential decay in time because of the presence of the extra term $C^{(s)}\delta_\ms^2$ which accounts for the macroscopic quantities that define $\bepsM$. It is particularly important to emphasize that the constant $C^{(s)}$ contains factors depending on $\norm{\boldc_{\infty}}_{L^2_x}$ and~${\norm{\bar{\boldu}}_{L^\infty_t H^{s+4}_x}}$, which are uniformly controlled by some positive constants, but cannot exhibit an exponential decay. To be more specific about this issue, we actually guess that the only problematic quantity is the incompressible velocity~$\bar{\boldu}$, because it appears as a multiplicative factor in some of the terms composing $C^{(s)}$. In particular, if we get rid of $\bar{\boldu}$ by looking at the stationary macroscopic state $(\boldc_{\infty},\mathbf{0})$, we can probably recover a global exponential decay in time for $\boldf$, by means of the exponential decays obtained for $\tilde{\boldc}$ and~$\tilde{\boldu}$. Besides, this feature can be explained with the perturbative theory of convergence towards equilibrium which, in our case, prescribes the exponential-in-time relaxation towards the sole global equilibrium $\boldmu$, having macroscopic velocity $\boldu_\infty=\mathbf{0}$. Therefore, the only physically meaningful incompressible velocity $\bar{\boldu}$ should precisely be $\mathbf{0}$. 
\end{remark}


\bigskip
\noindent \textbf{Step 3 -- Existence and uniqueness of the perturbation $\boldf$.} We are finally able to prove that the solution~$\boldf$ to the perturbed Boltzmann equation $\eqref{perturbed BE}$ exists uniquely in time, on $\R_+$. Even if the proofs are very standard, we sketch the ideas behind and the computations, for the sake of completeness.

\begin{prop}\label{prop:Existence BE}
Let the collision kernels $B_{ij}$ satisfy assumptions $\mathrm{(H1)}$--$\mathrm{(H2)}$--$\mathrm{(H3)}$--$\mathrm{(H4)}$, and consider the local Maxwellian $\bepsM$ defined by \eqref{Local Maxwellian eps}--\eqref{Meps fluid quantities}. Let $s\geq s_0$, given by Lemma \ref{LemmaQ}. There exist $\eps_0\in (0,1]$ and $\bar{\delta}_\ms$, $\delta_\bb >0$ such that, for all $\eps\in (0,\eps_0]$, all $\delta_\ms\in [0,\bar{\delta}_\ms]$, and for any initial datum $\boldf^\init$ in~$\sobolevTR{s}$ satisfying
\begin{equation*}
\norm{\boldf^\init}_{\mathcal{H}^s_\eps}\leq \delta_\bb,\qquad \norm{\pi_\bepsT(\boldf^\init)}_{\hilbertxv} = \mathcal{O}(\delta_\ms),
\end{equation*}
there exists $\boldf\in C^0\pa{\R_+;\sobolevTR{s}}$ such that $\boldF^\eps=\bepsM+\eps\boldf$ is a weak solution of the Boltzmann multi-species equation $\eqref{perturbed BE}$. Moreover, if ${\boldF^{\eps,\init}=\mathbf{M}^{\eps,\init}+\eps\boldf^\init\geq 0}$, then also $\boldF^\eps(t,x,v)\geq 0$ a.e. on $\R_+\times\T^3\times\R^3$.

\end{prop}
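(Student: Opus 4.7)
The plan is to combine the uniform \emph{a priori} estimate of Corollary \ref{cor:A priori BE} with a standard iterative construction, and then recover positivity of $\boldF^\eps$ from the mild formulation. Fix $s\geq s_0$, $\eps\in(0,\eps_0]$, $\delta_\ms\in[0,\bar\delta_\ms]$, and take $\delta_\bb$ provided by Corollary~\ref{cor:A priori BE} (shrinking it further if necessary).

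\textbf{Iterative scheme.} I would set $\boldf^{(0)}=0$ and, given $\boldf^{(n)}$, define $\boldf^{(n+1)}$ as the unique solution of the linear equation
\begin{equation*}
\partial_t \boldf^{(n+1)} + \frac{1}{\eps}v\cdot\nabla_x \boldf^{(n+1)} = \frac{1}{\eps^2}\bepsL(\boldf^{(n+1)}) + \frac{1}{\eps}\boldQ\big(\boldf^{(n)},\boldf^{(n+1)}\big) + \bepsS,
\end{equation*}
with initial datum $\boldf^{(n+1)}\restriction{}{t=0}=\boldf^\init$. Existence of each $\boldf^{(n+1)}$ in $C^0(\R_+;\sobolevTR{s})$ for fixed $n$ is obtained by a further regularization (splitting $\bepsL=\bepsK-\bepsnu$ and treating $\bepsK$ and $\boldQ(\boldf^{(n)},\cdot)$ as lower-order perturbations of the transport-absorption semigroup generated by $-\eps^{-1}v\cdot\nabla_x-\eps^{-2}\bepsnu$); the source $\bepsS$ is fixed and in $\sobolevTR{s}$ thanks to Lemma~\ref{LemmaS}.

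\textbf{Uniform bounds.} The energy estimates \eqref{a priori f}--\eqref{a priori ak f} leading to Proposition~\ref{prop:preliminary a priori BE} are linear in the test direction and only use the orthogonality and continuity properties of $\boldQ$ via the functionals $\mathcal{G}^s_x,\mathcal{G}^s_{x,v}$. Performing exactly the same manipulations on the linearized scheme produces, in place of \eqref{Stability start}, the inequality
\begin{equation*}
\frac{\dd}{\dd t}\norm{\boldf^{(n+1)}}_{\mathcal{H}^s_\eps}^2 \leq -K_0^{(s)}\norm{\boldf^{(n+1)}}_{H^s_{x,v}(\langle v\rangle^{\gamma/2}\boldmu^{-\frac{1}{2}})}^2 + C'\norm{\boldf^{(n)}}_{\mathcal{H}^s_\eps}\,\norm{\boldf^{(n+1)}}_{H^s_{x,v}(\langle v\rangle^{\gamma/2}\boldmu^{-\frac{1}{2}})}^2 + C^{(s)}\delta_\ms,
\end{equation*}
so that, by induction on $n$ and for $\delta_\bb$ small enough in terms of $K_0^{(s)}$, $C^{(s)}$ and the norm equivalence constants of \eqref{A priori equivalence}, one propagates $\norm{\boldf^{(n)}(t)}_{\mathcal{H}^s_\eps}\leq\delta_\bb$ for all $t\geq 0$ and all $n\in\N$.

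\textbf{Passage to the limit.} The sequence being uniformly bounded in $L^\infty_t(\mathcal{H}^s_\eps)$, I would show it is Cauchy in a weaker norm, say $L^\infty_t(\mathcal{H}^{s-1}_\eps)$. Writing the equation for $\boldg^{(n)}=\boldf^{(n+1)}-\boldf^{(n)}$, the source $\bepsS$ cancels out and the only nonlinear contribution is $\eps^{-1}\boldQ(\boldf^{(n)}-\boldf^{(n-1)},\boldf^{(n+1)})+\eps^{-1}\boldQ(\boldf^{(n-1)},\boldg^{(n)})$; the same hypocoercive energy estimate at level $s-1$, together with Lemma~\ref{LemmaQ}, gives a contraction estimate of the form $\norm{\boldg^{(n)}}_{\mathcal{H}^{s-1}_\eps}\leq \tfrac12\norm{\boldg^{(n-1)}}_{\mathcal{H}^{s-1}_\eps}$ once $\delta_\bb$ is fixed small enough. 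Hence $\boldf^{(n)}\to\boldf$ in $C^0(\R_+;\mathcal{H}^{s-1}_\eps)$; weak-$*$ compactness in $L^\infty_t(\mathcal{H}^s_\eps)$ and interpolation then promote the limit $\boldf$ to $C^0(\R_+;\sobolevTR{s})$ and allow passage to the limit in all terms of the equation, yielding a weak solution of \eqref{perturbed BE} with the claimed regularity.

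\textbf{Positivity.} For $\boldF^\eps=\bepsM+\eps\boldf$, I would rewrite \eqref{multi BE scaled} componentwise in mild form along characteristics $X_i(\tau)=x-\eps^{-1}(t-\tau)v$. Splitting
\begin{equation*}
Q_i(\boldF^\eps,\boldF^\eps)(v) = Q_i^+(\boldF^\eps,\boldF^\eps)(v) - F_i^\eps(v)\sum_{j=1}^N \int_{\R^3\times\Sf} B_{ij}\,F_j^{\eps,*}\,\dd v_*\dd\sigma,
\end{equation*}
where $Q_i^+\geq 0$ whenever $\boldF^\eps\geq 0$, the mild formulation reads
\begin{equation*}
F_i^\eps(t,x,v) = e^{-\mathcal{N}_i^\eps[\boldF^\eps](t)}F_i^{\eps,\init}(X_i(0),v) + \int_0^t e^{-(\mathcal{N}_i^\eps[\boldF^\eps](t)-\mathcal{N}_i^\eps[\boldF^\eps](\tau))}\frac{1}{\eps^2}Q_i^+(\boldF^\eps,\boldF^\eps)(\tau,X_i(\tau),v)\,\dd \tau,
\end{equation*}
with $\mathcal{N}_i^\eps[\boldF^\eps]$ the integrated collision frequency. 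A standard bootstrap (truncate $Q^+$ and iterate) applied to this Duhamel formula, using the uniform $\mathcal{H}^s_\eps$ bound on $\boldf$ to ensure that the above loss term is integrable in time along characteristics, preserves nonnegativity from $\boldF^{\eps,\init}\geq 0$ to $\boldF^\eps(t,\cdot,\cdot)\geq 0$.

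\textbf{Main obstacle.} The most delicate point is the contraction step: in the difference equation the source $\bepsS$ disappears but the linear operator $\bepsL$ only provides the \emph{partial} spectral gap \eqref{Property5}, with an extra $\eps^2\delta_\ms$ term on $\pi_\boldL$. One must check that the Poincar\'e-type inequality of Lemma~\ref{lem:Poincare}, applied to $\boldg^{(n)}$ (whose projection $\pi_\bepsT(\boldg^{(n)})$ vanishes since both iterates share the same initial datum), closes the energy estimate at level $s-1$ without regenerating any $\delta_\ms$-independent obstruction, so that a genuine contraction is obtained for $\delta_\bb$ and $\bar\delta_\ms$ small enough.
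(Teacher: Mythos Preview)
Your overall architecture (iterative linearization, uniform bounds, convergence, positivity via characteristics) is sound, but there is a concrete gap in your uniform-bound step, and your convergence argument differs substantively from the paper's.

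\textbf{The gap in the uniform bounds.} Your displayed differential inequality for $\norm{\boldf^{(n+1)}}_{\mathcal{H}^s_\eps}^2$ records only \emph{one} of the two cross terms generated by $\mathcal{G}^s_x(\boldf^{(n)},\boldf^{(n+1)})^2$. By \eqref{Property8} the functional is symmetric in the weighted and unweighted factors, so the full bound is
\begin{equation*}
\mathcal{G}^s_x(\boldf^{(n)},\boldf^{(n+1)})^2 \lesssim \norm{\boldf^{(n)}}_{\mathcal{H}^s_\eps}^2\norm{\boldf^{(n+1)}}_{H^s_{x,v}(\langle v\rangle^{\gamma/2}\boldmu^{-1/2})}^2 + \norm{\boldf^{(n+1)}}_{\mathcal{H}^s_\eps}^2\norm{\boldf^{(n)}}_{H^s_{x,v}(\langle v\rangle^{\gamma/2}\boldmu^{-1/2})}^2.
\end{equation*}
The first term is absorbed by the dissipation once $\norm{\boldf^{(n)}}_{\mathcal{H}^s_\eps}\leq\delta_\bb$; the second is not, because it multiplies $\norm{\boldf^{(n+1)}}_{\mathcal{H}^s_\eps}^2$ by the \emph{weighted} norm of $\boldf^{(n)}$, which is not controlled by $\delta_\bb$ pointwise in $t$. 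A Gr\"onwall argument then blows up with $\int_0^t\norm{\boldf^{(n)}}_{H^s_{x,v}(\langle v\rangle^{\gamma/2}\boldmu^{-1/2})}^2\dd\tau$, for which you have no global-in-time control at step $n$. The paper closes this by working on a \emph{finite} interval $[0,T_0]$ and propagating inductively the functional
\begin{equation*}
E_{[0,T_0]}(\boldf)=\sup_{t\in[0,T_0]}\Big(\norm{\boldf(t)}_{\mathcal{H}^s_\eps}^2+\int_0^t\norm{\boldf(\tau)}_{H^s_{x,v}(\langle v\rangle^{\gamma/2}\boldmu^{-1/2})}^2\dd\tau\Big),
\end{equation*}
which contains exactly the time-integrated dissipation needed to absorb the second cross term. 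Integrating on $[0,t]$ yields $E(\boldf^{(n+1)})\leq C\norm{\boldf^\init}_{\mathcal{H}^s_\eps}^2+\tfrac12 E(\boldf^{(n+1)})+T_0C^{(s)}\delta_\ms$, closing for $T_0$ chosen proportional to $\delta_\bb/\delta_\ms$. Global existence then follows by restarting on $[T_0,2T_0]$, using Corollary~\ref{cor:A priori BE} to guarantee $\norm{\boldf^{(\infty)}(T_0)}_{\mathcal{H}^s_\eps}\leq\delta_\bb$ as new datum.

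\textbf{Convergence: contraction versus compactness.} Here your route and the paper's genuinely diverge. The paper does not prove any contraction; once $(\boldf^{(n)})$ is uniformly bounded in $E_{[0,T_0]}$ it invokes Rellich--Kondrachov to extract a subsequence converging in a weaker Sobolev space, then passes to the limit by continuity of $\bepsT$, $\bepsL-\boldL$ and $\boldQ$. Your contraction-in-$\mathcal{H}^{s-1}_\eps$ argument is a legitimate alternative, and your observation that $\pi_\bepsT(\boldg^{(n)})\equiv 0$ (so Lemma~\ref{lem:Poincare} applies with no $\delta_\ms$-residue) is precisely what makes it viable; but the same missing cross term reappears in the $\boldg^{(n)}$ energy estimate, so you will again need the $E$-type functional on finite intervals to obtain an honest contraction factor. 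The paper's compactness route avoids this bookkeeping at the cost of not producing a constructive convergence rate.
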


\begin{proof}[Proof of Proposition \ref{prop:Existence BE}]
The proof of the existence is based on a standard iterative method, where we construct a solution on a finite time interval $[0,T_0]$, and we then show that $T_0$ can be extended up to $+\infty$. Let $s \geq s_0$, given by Lemma \ref{LemmaQ} and consider a solution $(\boldc,\boldu)$ of the Maxwell-Stefan system \eqref{MS mass}--\eqref{MS momentum}--\eqref{MS incompressibility} which is at least $L^\infty\pab{\R_+;H^{s+5}(\T^3)}\times L^\infty\pab{\R_+;H^{s+4}(\T^3)}$. In particular, recall that $(\boldc,\boldu)$ also belongs to $C^0\pab{\R_+;H^{s+4}(\T^3)}\times C^0\pab{\R_+;H^{s+3}(\T^3)}$, so that the local Maxwellian $\bepsM$ is continuous with respect to $t\geq 0$. 

\smallskip
With these choices, set initially 
\begin{equation}\label{Begin iteration}
\boldf^{(0)}=\boldf^\init,\qquad T_0 = \frac{\delta_\bb \min\br{1,\frac{K_0^{(s)}}{2}}}{4 C^{(s)}\delta_\ms},
\end{equation}
and suppose that on $[0,T_0]$ a sequence of functions $\big(f^{(\bar{n})}\big)_{0<\bar{n}\leq n}$ is given up to an integer~${n \in\N^*}$, satisfying, for any $0<\bar{n}\leq n$ and for any $t\in [0,T_0]$,
\begin{equation*}
\boldf^{(\bar{n})}\in \sobolevTR{s},\qquad \norm{\pi_\bepsT\big(\boldf^{ (\bar{n}) }\big)}_{\hilbertxv} = \mathcal{O}(\delta_\ms).
\end{equation*}
By induction on $n>0$, we define the function $\boldf^{(n+1)}$ such that
\begin{equation}\label{Iterative scheme BE}
\left\{\begin{array}{l}
\partial_t \boldf^{(n+1)} + \frac{1}{\eps}v\cdot\nabla_x \boldf^{(n+1)} = \frac{1}{\eps^2} \bepsL(\boldf^{(n+1)}) + \frac{1}{\eps}\boldQ(\boldf^{(n)},\boldf^{(n+1)}) + \bepsS,\\[6mm]
\restriction{\boldf^{(n+1)}}{t=0} = \boldf^\init.
\end{array}\right.
\end{equation}
It is now a classical result the existence of a solution $\boldf^{(n+1)}\in\sobolevTR{s}$ for the above evolution equation. Indeed, recalling the definition of $\bepsT=\frac{1}{\eps^2}\boldL-\frac{1}{\eps}v\cdot\nabla_x$, we can rewrite
\begin{equation}\label{Evolution existence}
\partial_t \boldf^{(n+1)} = \bepsT(\boldf^{(n+1)}) + \frac{1}{\eps^2}\pab{\bepsL-\boldL}(\boldf^{(n+1)})+ \frac{1}{\eps}\boldQ(\boldf^{(n)},\boldf^{(n+1)}) + \bepsS.
\end{equation}
Since $\boldL$ is self-adjoint in $\hilbertTR$ and possesses a spectral gap $\lambda_\boldL$, and $v\cdot\nabla_x$ is anti-symmetric, it is easy to prove that $\bepsT$ generates a strongly continuous semigroup in~$\sobolevTR{s}$ (see \cite{Kat} for the general theory, and \cite[Section 4.2]{BriDau} for the specific case of the Boltzmann multi-species equation). In particular, as done in \cite[Section 6]{Bri1}, introducing the following functional defined on $\sobolevTR{s}$
\begin{equation*}
E_{[0,T_0]}(\boldf) = \sup_{t\in[0,T_0]} \pa{\norm{\boldf(t)}_{\mathcal{H}^s_\eps}^2 + \int_0^t \norm{\boldf(\tau)}_{H^s_{x,v}\big(\langle v\rangle^{\frac{\gamma}{2}}\boldmu^{-\frac{1}{2}}\big)}^2 \dd\tau},
\end{equation*}
from estimates \eqref{Property7}--\eqref{Property8} of Lemma \ref{LemmaQ}, it immediately follows that
$$\boldQ(\boldf^{(n)},\cdot): \pa{\sobolevTR{s},E_{[0,T_0]}(\cdot)}\to \pa{\sobolevTR{s},\norm{\cdot}_{\sobolevxv{s}}}$$ 
is a bounded linear operator, and the same holds for the penalization $\bepsL-\boldL$. To ease the computations, from now on let us drop the subscript $[0,T_0]$ in $E_{[0,T_0]}$. By means of the functional~$E$, one can then apply Duhamel's formula in combination with a suitable fixed point argument to show that, as long as $\norm{\boldf^\init}_{\mathcal{H}^s_\eps}$ is chosen small enough, there exists a solution $\boldf^{(n+1)}$ to $\eqref{Evolution existence}$, which is in $\sobolevTR{s}$ for any time $t\in [0,T_0]$. Moreover, applying $\pi_\bepsT$ to $\eqref{Evolution existence}$ and performing an identical study to the one carried out in the proof of Lemma \ref{lem:Poincare} to derive the Poincar\'e inequality $\eqref{Poincare inequality}$, we also ensure that
\begin{equation*}
\norm{\pi_\bepsT\big(\boldf^{(n+1)}\big)}_{\hilbertxv} \leq C^\TT \delta_\ms, \qquad \forall t\in[0,T_0],
\end{equation*}
so that, by induction, the sequence $\pab{\boldf^{(n)}}_{n\in\N}$ remains well-defined.

\smallskip
Starting from these considerations, one then aims at proving that the constructed sequence~${\pab{\boldf^{(n)}}_{n\in\N}}$ can be uniformly bounded in the $E$ norm. For this, since obviously~${E\big(\boldf^{(0)}\big)=\norm{\boldf^\init}_{\mathcal{H}^s_\eps}\leq \delta_\bb}$ from our initial choice $\eqref{Begin iteration}$, we proceed by induction on~${n\in\N}$, supposing that we can uniformly bound $E\big(\boldf^{(n)}\big)\leq \delta_\bb$ up to some integer $n >0$, and proving that also $E\big(\boldf^{(n+1)}\big)\leq \delta_\bb$.

\smallskip
Now, each $\boldf^{(n)}$ satisfies all the hypotheses needed in order to copy the computations used to derive the \textit{a priori} estimates in the previous step. More precisely, from Proposition \ref{prop:preliminary a priori BE}, we know that there exist $\bar{\delta}_\ms >0$ and $\eps_0\in (0,1]$ such that, for any $s \geq s_0$, for any~${\delta_\ms\in [0,\bar{\delta}_\ms]}$ and ${\eps\in (0,\eps_0]}$, one recovers the estimate
\begin{multline*}
\frac{\dd}{\dd t} \norm{\boldf^{(n+1)}}_{\mathcal{H}^s_\eps}^2 \leq -K_0^{(s)}\norm{\boldf^{(n+1)}}_{H^s_{x,v}\big(\langle v\rangle^{\frac{\gamma}{2}}\boldmu^{-\frac{1}{2}}\big)}^2 
\\[2mm]    + K_1^{(s)}\mathcal{G}^s_x\pab{\boldf^{(n)},\boldf^{(n+1)}}^2 + \eps^2 K_2^{(s)}\mathcal{G}^s_{x,v}\pab{\boldf^{(n)},\boldf^{(n+1)}}^2 +C^{(s)} \delta_\ms. 
\end{multline*}
Moreover, thanks to $\eqref{Sobolev equivalence}$ and proceeding like in the proof of Corollary \ref{cor:A priori BE}, we also establish the bounds
\begin{equation*}
\begin{split}
\mathcal{G}_x^s \big(\boldf^{(n)} &,\boldf^{(n+1)}\big)^2 
\\[2mm]      & \leq  \frac{4 (C^\QQ_s)^2}{C_{\textrm{eq}}} \pa{\norm{\boldf^{(n)}}_{\mathcal{H}^s_\eps}^2 \norm{\boldf^{(n+1)}}_{H^s_{x,v}\pab{\langle v\rangle^{\frac{\gamma}{2}}\boldsymbol\mu^{-\frac{1}{2}}}}^2 + \norm{\boldf^{(n+1)}}_{\mathcal{H}^s_\eps}^2 \norm{\boldf^{(n)}}_{H^s_{x,v}\pab{\langle v\rangle^{\frac{\gamma}{2}}\boldsymbol\mu^{-\frac{1}{2}}}}^2},
\\[6mm]   \mathcal{G}_x^s  \big(\boldf^{(n)} &,\boldf^{(n+1)}\big)^2 
\\[2mm]    & \leq \frac{4 (C^\QQ_s)^2 }{\eps^2 C_{\textrm{eq}}} \pa{\norm{\boldf^{(n)}}_{\mathcal{H}^s_\eps}^2 \norm{\boldf^{(n+1)}}_{H^s_{x,v}\pab{\langle v\rangle^{\frac{\gamma}{2}}\boldsymbol\mu^{-\frac{1}{2}}}}^2 + \norm{\boldf^{(n+1)}}_{\mathcal{H}^s_\eps}^2 \norm{\boldf^{(n)}}_{H^s_{x,v}\pab{\langle v\rangle^{\frac{\gamma}{2}}\boldsymbol\mu^{-\frac{1}{2}}}}^2}.
\end{split}
\end{equation*}
Consequently, naming $K^{(s)} = \frac{4 (C^\QQ_s)^2 }{C_{\textrm{eq}}}\pab{K_1^{(s)} + K_2^{(s)}}$, we deduce the following estimate 
\begin{equation*}
\begin{split}
\frac{\dd}{\dd t} \norm{\boldf^{(n+1)}}_{\mathcal{H}^s_\eps}^2 \leq &\ -K_0^{(s)}\norm{\boldf^{(n+1)}}_{H^s_{x,v}\big(\langle v\rangle^{\frac{\gamma}{2}}\boldmu^{-\frac{1}{2}}\big)}^2 + K^{(s)} \norm{\boldf^{(n)}}_{\mathcal{H}^s_\eps}^2 \norm{\boldf^{(n+1)}}_{H^s_{x,v}\pab{\langle v\rangle^{\frac{\gamma}{2}}\boldsymbol\mu^{-\frac{1}{2}}}}^2  
\\[5mm]  &\ \hspace{3cm} + K^{(s)} \norm{\boldf^{(n+1)}}_{\mathcal{H}^s_\eps}^2 \norm{\boldf^{(n)}}_{H^s_{x,v}\pab{\langle v\rangle^{\frac{\gamma}{2}}\boldsymbol\mu^{-\frac{1}{2}}}}^2 +C^{(s)} \delta_\ms
\\[4mm]   \leq &\ \pa{K^{(s)}E\big(\boldf^{(n)}\big) - K_0^{(s)}} \norm{\boldf^{(n+1)}}_{H^s_{x,v}\big(\langle v\rangle^{\frac{\gamma}{2}}\boldmu^{-\frac{1}{2}}\big)}^2 
\\[4mm]    &\ \hspace{3.2cm}+ K^{(s)} E\big(\boldf^{(n+1)}\big) \norm{\boldf^{(n)}}_{H^s_{x,v}\big(\langle v\rangle^{\frac{\gamma}{2}}\boldmu^{-\frac{1}{2}}\big)}^2 + C^{(s)}\delta_\ms,
\end{split}
\end{equation*}
holding for any $t\in [0,T_0]$.

Choosing $\delta_\bb>0$ such that $E\big(\boldf^{(n)}\big)\leq K_0^{(s)}/2 K^{(s)}$, we can thus integrate the previous inequality on $[0,t]$, with $t\leq T_0$, to obtain
\begin{multline*}
\norm{\boldf^{(n+1)}}_{\mathcal{H}^s_\eps}^2 + \frac{K_0^{(s)}}{2}\int_0^t \norm{\boldf^{(n+1)}(\tau)}_{H^s_{x,v}\big(\langle v\rangle^{\frac{\gamma}{2}}\boldmu^{-\frac{1}{2}}\big)}^2 \dd\tau 
\\[4mm]    \leq \norm{\boldf^\init}_{\mathcal{H}^s_\eps}^2 + K^{(s)} E\pab{\boldf^{(n+1)}} E\pab{\boldf^{(n)}} + t C^{(s)}\delta_\ms.
\end{multline*}
Therefore, if we also suppose that
$$\delta_\bb \leq \min\br{1,\frac{K_0^{(s)}}{2}} / 2 K^{(s)}\qquad \textrm{and}\qquad \delta_\bb\leq  \min\br{1,\frac{K_0^{(s)}}{2}} / 4,$$
thanks to the induction hypothesis on $E\pab{\boldf^{(n)}}$, we deduce that
\begin{equation*}
E\pab{\boldf^{(n+1)}} \leq \frac{2}{\min\br{1,\frac{K_0^{(s)}}{2}}} \norm{\boldf^\init}_{\mathcal{H}^s_\eps}^2 + \frac{2 C^{(s)}\delta_\ms}{\min\br{1,\frac{K_0^{(s)}}{2}}} T_0 \leq \delta_\bb,
\end{equation*}
thanks to our choice of $T_0$. Hence, the sequence ${\pab{\boldf^{(n)}}_{n\in\N}}$ is uniformly bounded by $\delta_\bb$ in the~$E$ norm, and thus also in~${L^\infty\pa{0,T_0;\sobolevxv{s}}\ \cap\  L^1\pa{0,T_0;H^s_{x,v}\big(\langle v\rangle^{\frac{\gamma}{2}}\boldmu^{-\frac{1}{2}}\big)}}$. Therefore, thanks to the Rellich-Kondrachov theorem on compact embeddings into less regular Sobolev spaces, we can take the limit $n\to +\infty$ in $\eqref{Iterative scheme BE}$, since $\bepsT$, $\bepsL-\boldL$ and~$\boldQ$ are continuous. In particular, we can extract a subsequence that converges towards a function~$\boldf^{(\infty)}$ which belongs to $C^0\pa{[0,T_0];\sobolevTR{s}}$ and solves the initial value problem
\begin{equation*}
\left\{\begin{array}{l}
\partial_t \boldf^{(\infty)} + \frac{1}{\eps}v\cdot\nabla_x \boldf^{(\infty)} = \frac{1}{\eps^2} \bepsL(\boldf^{(\infty)}) + \frac{1}{\eps}\boldQ(\boldf^{(\infty)},\boldf^{(\infty)}) + \bepsS,\\[6mm]
\restriction{\boldf^{(\infty)}}{t=0} = \boldf^\init.
\end{array}\right.
\end{equation*}
This proves our result on the interval $[0,T_0]$. Moreover, $\boldf^{(\infty)}$ satisfies
\begin{equation*}
\norm{\pi_\bepsT\big(\boldf^{(\infty)}\big)}_{\hilbertxv} \leq C^\TT \delta_\ms, \qquad \forall t\in[0,T_0],
\end{equation*}
and the estimate $\norm{\boldf^{(\infty)}}_{\mathcal{H}^s_\eps}\leq \delta_\bb$, thanks to Corollary \ref{cor:A priori BE}. By simply restarting this procedure on a new time interval $[T_0,2T_0]$ using $\boldf^{(\infty)}(T_0)$ as initial datum and considering the corresponding functional~${E_{[T_0,2T_0]}}$, one can repeat the previous computations, so that we recover the existence of a solution $\boldf\in C^0\pa{\R_+;\sobolevTR{s}}$, as desired. In particular, thanks to the \textit{a priori} estimates that we previously established, this solution satisfies the uniform control
\begin{equation*}
\norm{\boldf}_{\mathcal{H}^s_\eps}^2 \leq \delta_\bb,\quad\forall t\geq 0,
\end{equation*}
holding for any $\eps\in (0,\eps_0]$.

\smallskip
The positivity of the solution $\boldF^\eps=\bepsM + \eps\boldf$ is finally showed using a standard method, which can be found for example in \cite[Section 6.3]{BriDau}. This ends the proof.

\end{proof}

Finally, the constructed solution is unique, providing the full Cauchy theory of our problem.

\begin{prop}\label{prop:Uniqueness BE}
Let $s \geq s_0$ and consider a function $\boldf^\init$ satisfying the assumptions of Proposition \ref{prop:Existence BE}. There exist $\bar{\delta}_\ms >0$ and ${\eps_0\in (0,1]}$ such that, if $\boldf$ and $\boldg$ are two solutions of $\eqref{perturbed BE}$ having the same initial datum $\boldf^\init$, then $\boldf=\boldg$.
\end{prop}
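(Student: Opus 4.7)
The plan is to apply the hypocoercive machinery developed in Propositions \ref{prop:preliminary a priori BE} and Corollary \ref{cor:A priori BE} to the difference $\boldh = \boldf - \boldg$, exploiting the two crucial simplifications that distinguish the difference equation from \eqref{perturbed BE}: the source term $\bepsS$ disappears (since it depends only on $\bepsM$), and the bilinearity of $\boldQ$ yields a genuinely linear equation in $\boldh$. Concretely, subtracting the equations satisfied by $\boldf$ and $\boldg$, the function $\boldh$ satisfies
\begin{equation*}
\partial_t \boldh + \frac{1}{\eps}v\cdot \nabla_x \boldh = \frac{1}{\eps^2}\bepsL(\boldh) + \frac{1}{\eps}\pab{\boldQ(\boldf,\boldh) + \boldQ(\boldh,\boldg)}, \qquad \boldh^\init = 0.
\end{equation*}

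The first step is to observe that $\pi_\bepsT(\boldh) \equiv 0$ for all times. Indeed, applying $\pi_\bepsT$ to the above equation and using the characterization $\Ker\bepsT = \Ker\boldL \cap \Ker(v\cdot \nabla_x)$ from Lemma \ref{lem:T eps} together with the orthogonality $\pi_\boldL(\bepsL\cdot) = \pi_\boldL(\boldQ(\cdot,\cdot)) = 0$ from \eqref{Property6}, every term on the right-hand side vanishes under $\pi_\bepsT$, leaving $\partial_t \pi_\bepsT(\boldh) = 0$. Since the initial datum vanishes, we conclude that $\pi_\bepsT(\boldh) = 0$ for all $t\geq 0$. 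This is the key gain compared to the full perturbation $\boldf$: the Poincar\'e inequality now holds in its clean form
\begin{equation*}
\norm{\pi_\boldL(\boldh)}_{\hilbertxv}^2 \leq 2 C_{\T^3}\norm{\nabla_x \boldh}_{\hilbertxv}^2,
\end{equation*}
with no $\mathcal{O}(\delta_\ms^2)$ correction, precisely because the offending source $\bepsS$ is absent.

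The second step is to re-run verbatim the \textit{a priori} energy estimates \eqref{a priori f}--\eqref{a priori ak f} and the combination argument of Proposition \ref{prop:preliminary a priori BE} on $\boldh$, with two differences: the $\tilde{C}\delta_\ms$-type contributions stemming from $\bepsS$ through Lemma \ref{LemmaS} are replaced by zero, and each bilinear estimate \eqref{Property7}--\eqref{Property8} is applied to $\boldQ(\boldf,\boldh) + \boldQ(\boldh,\boldg)$, producing functionals of the form $\mathcal{G}_x^s(\boldf,\boldh) + \mathcal{G}_x^s(\boldh,\boldg)$ (and similarly for $\mathcal{G}_{x,v}^s$). Using the uniform bounds $\norm{\boldf}_{\mathcal{H}^s_\eps}, \norm{\boldg}_{\mathcal{H}^s_\eps} \leq \delta_\bb$ from Corollary \ref{cor:A priori BE} together with the equivalence \eqref{A priori equivalence}, each of these bilinear terms is controlled by $C\delta_\bb \norm{\boldh}_{\mathcal{H}^s_\eps} \norm{\boldh}_{H^s_{x,v}(\langle v\rangle^{\gamma/2}\boldmu^{-1/2})}$, exactly as in the proof of Corollary \ref{cor:A priori BE}. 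Absorbing these terms into the negative return $-K_0^{(s)}\norm{\boldh}_{H^s_{x,v}(\langle v\rangle^{\gamma/2}\boldmu^{-1/2})}^2$ by choosing $\delta_\bb$ small enough (possibly shrinking the constant from the existence statement), we arrive at
\begin{equation*}
\frac{\dd}{\dd t}\norm{\boldh}_{\mathcal{H}^s_\eps}^2 \leq -\frac{K_0^{(s)}}{2}\norm{\boldh}_{H^s_{x,v}(\langle v\rangle^{\gamma/2}\boldmu^{-1/2})}^2 \leq 0,
\end{equation*}
valid for all $t\geq 0$, $\eps\in(0,\eps_0]$ and $\delta_\ms\in[0,\bar\delta_\ms]$. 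Since $\boldh^\init = 0$, Gr\"onwall's lemma (or a direct integration) forces $\norm{\boldh}_{\mathcal{H}^s_\eps} \equiv 0$, and hence $\boldf = \boldg$.

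The only mildly delicate point is the treatment of the bilinear terms $\boldQ(\boldf,\boldh)+\boldQ(\boldh,\boldg)$: one must verify that Lemma \ref{LemmaQ} genuinely produces a factor $\norm{\boldh}_{H^s_{x,v}(\langle v\rangle^{\gamma/2}\boldmu^{-1/2})}$ (rather than $\norm{\boldh}_{\mathcal{H}^s_\eps}$) that can be absorbed by the dissipative term. This is precisely the structure of the estimates \eqref{Property7}--\eqref{Property8}, where the "test" slot carries the $\spacexv$-type norm while the remaining two slots enter only through $\mathcal{H}^s_\eps$-controlled norms, so the absorption works identically to the proof of Corollary \ref{cor:A priori BE}. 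No new ingredient beyond the hypocoercive framework already established is required.
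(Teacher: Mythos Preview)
Your overall strategy is correct and close to the paper's, but there is a genuine gap in your absorption argument. You claim that the bilinear contributions are controlled by $C\delta_\bb\norm{\boldh}_{\mathcal{H}^s_\eps}\norm{\boldh}_{H^s_{x,v}(\langle v\rangle^{\gamma/2}\boldmu^{-1/2})}$, asserting that in \eqref{Property8} ``the remaining two slots enter only through $\mathcal{H}^s_\eps$-controlled norms''. This is not the structure of \eqref{Property8}: in each product one of the two arguments always carries the $\langle v\rangle^{\gamma/2}$-weighted norm. Hence $\mathcal{G}^s_x(\boldf,\boldh)^2$ produces, besides the absorbable term $\norm{\boldf}_{\mathcal{H}^s_\eps}^2\norm{\boldh}_{H^s_{x,v}(\langle v\rangle^{\gamma/2}\boldmu^{-1/2})}^2$, a cross term
\[
\norm{\boldh}_{\mathcal{H}^s_\eps}^2\,\norm{\boldf}_{H^s_{x,v}(\langle v\rangle^{\gamma/2}\boldmu^{-1/2})}^2
\]
(and the analogous one with $\boldg$). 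Since $\norm{\boldf}_{H^s_{x,v}(\langle v\rangle^{\gamma/2}\boldmu^{-1/2})}$ is \emph{not} bounded pointwise in time by $\delta_\bb$, this term cannot be absorbed into $-K_0^{(s)}\norm{\boldh}_{H^s_{x,v}(\langle v\rangle^{\gamma/2}\boldmu^{-1/2})}^2$, and your claimed inequality $\frac{\dd}{\dd t}\norm{\boldh}_{\mathcal{H}^s_\eps}^2\leq 0$ does not follow.

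The paper handles precisely this difficulty by working with the functional $E_{[0,T_0]}(\boldh)=\sup_{t\in[0,T_0]}\big(\norm{\boldh}_{\mathcal{H}^s_\eps}^2+\int_0^t\norm{\boldh}_{H^s_{x,v}(\langle v\rangle^{\gamma/2}\boldmu^{-1/2})}^2\big)$: after integrating in time, the cross term becomes $\tilde K\,E(\boldh)\big(E(\boldf)+E(\boldg)\big)\leq 2\tilde K\delta_\bb\,E(\boldh)$, which yields $E(\boldh)\leq cE(\boldh)$ with $c<1$ on $[0,T_0]$, hence $E(\boldh)=0$; one then iterates on successive intervals. Your argument can be repaired more simply: after absorbing the first bilinear piece you are left with $\frac{\dd}{\dd t}\norm{\boldh}_{\mathcal{H}^s_\eps}^2\leq \tilde K\big(\norm{\boldf}_{H^s_{x,v}(\langle v\rangle^{\gamma/2}\boldmu^{-1/2})}^2+\norm{\boldg}_{H^s_{x,v}(\langle v\rangle^{\gamma/2}\boldmu^{-1/2})}^2\big)\norm{\boldh}_{\mathcal{H}^s_\eps}^2$, and Gr\"onwall with $\boldh^\init=0$ gives $\boldh\equiv 0$ directly (the time integral of the bracket is finite by the existence construction). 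Your observation that $\pi_\bepsT(\boldh)\equiv 0$ and hence the Poincar\'e inequality is clean is correct and indeed underlies the disappearance of the $C^{(s)}\delta_\ms$ term; the paper uses this implicitly.
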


\begin{proof}[Proof of Proposition \ref{prop:Uniqueness BE}]
As in the proof of the previous result, fix initially a time
\begin{equation*}
T_0 = \frac{\delta_\bb \min\br{1,\frac{K_0^{(s)}}{2}}}{4 C^{(s)}\delta_\ms}.
\end{equation*}
Next, define $\boldh = \boldf - \boldg$. Subtracting the equations satisfied by $\boldf$ and $\boldg$, we see that $\boldh$ is solution on $[0,T_0]\times\T^3\times\R^3$ of
\begin{equation*}
\left\{\begin{array}{l}
\partial_t \boldh +\frac{1}{\eps}v\cdot \nabla_x\boldh = \frac{1}{\eps^2}\bepsL(\boldh) + \frac{1}{\eps}\pabb{\boldQ(\boldh,\boldf) + \boldQ(\boldg,\boldh)},\\[4mm]
\restriction{\boldh}{t=0} = 0.
\end{array}\right.
\end{equation*}
The idea is to derive similar estimates to the ones obtained in the proof of the previous result. For this, note the linear part obeys the same upper bounds, and for the nonlinear terms we shall again use estimates $\eqref{Property8}$ of Lemma \ref{LemmaQ}. 

\smallskip
More precisely, using the \textit{a priori} estimate of Proposition \ref{prop:preliminary a priori BE}, we get
\begin{multline*}
\frac{\dd}{\dd t} \norm{\boldh}_{\mathcal{H}^s_\eps}^2 \leq -K_0^{(s)}\norm{\boldh}_{H^s_{x,v}\big(\langle v\rangle^{\frac{\gamma}{2}}\boldmu^{-\frac{1}{2}}\big)}^2 + K_1^{(s)}\pabb{\mathcal{G}^s_x\pab{\boldh,\boldf}^2 + \mathcal{G}^s_x\pab{\boldg,\boldh}^2} 
\\[2mm]       + \eps^2 K_2^{(s)}\pabb{\mathcal{G}^s_{x,v}\pab{\boldh,\boldf}^2 + \mathcal{G}^s_{x,v}\pab{\boldg,\boldh}^2}. 
\end{multline*}
The terms inside the parentheses are then bounded is the same way as before, using $\eqref{Property8}$ and the norm equivalences provided by $\eqref{Sobolev equivalence}$. Skipping the computations and denoting~${E=E_{[0,T_0]}}$, we recover the existence of a positive constant $\tilde{K}$ such that
\begin{equation*}
\begin{split}
\frac{\dd}{\dd t} \norm{\boldh}_{\mathcal{H}^s_\eps}^2 & \leq \Bigg( \tilde{K} \pabb{\norm{\boldf}_{\mathcal{H}^s_\eps}^2 + \norm{\boldg}_{\mathcal{H}^s_\eps}^2} - K_0^{(s)} \Bigg) \norm{\boldh}_{H^s_{x,v}\big(\langle v\rangle^{\frac{\gamma}{2}}\boldmu^{-\frac{1}{2}}\big)}^2
\\[3mm]      &\hspace{3cm} + \tilde{K} \pa{\norm{\boldf}_{H^s_{x,v}\big(\langle v\rangle^{\frac{\gamma}{2}}\boldmu^{-\frac{1}{2}}\big)}^2 + \norm{\boldg}_{H^s_{x,v}\big(\langle v\rangle^{\frac{\gamma}{2}}\boldmu^{-\frac{1}{2}}\big)}^2}  \norm{\boldh}_{\mathcal{H}^s_\eps}^2
\\[6mm]     & \leq \Bigg( \tilde{K} \pabb{E(\boldf) + E(\boldg)} - K_0^{(s)} \Bigg) \norm{\boldh}_{H^s_{x,v}\big(\langle v\rangle^{\frac{\gamma}{2}}\boldmu^{-\frac{1}{2}}\big)}^2
\\[3mm]      &\hspace{3cm} + \tilde{K} \pa{\norm{\boldf}_{H^s_{x,v}\big(\langle v\rangle^{\frac{\gamma}{2}}\boldmu^{-\frac{1}{2}}\big)}^2 + \norm{\boldg}_{H^s_{x,v}\big(\langle v\rangle^{\frac{\gamma}{2}}\boldmu^{-\frac{1}{2}}\big)}^2}  E(\boldh).
\end{split}
\end{equation*}
In particular, since $\boldf$ is a solution of the Boltzmann multi-species equation $\eqref{perturbed BE}$, we have seen previously that it satisfies the estimate $E(\boldf) \leq \delta_\bb$ on $[0,T_0]$, and the same holds for $\boldg$. Therefore, choosing initially $\delta_\bb\leq K_0^{(s)}/4\tilde{K}$ and integrating on $[0,t]$, with $t\leq T_0$, since $\boldh^\init=0$ we obtain this time
\begin{equation*}
\norm{\boldh}_{\mathcal{H}^s_\eps}^2 + \frac{K_0^{(s)}}{2}\int_0^t \norm{\boldh(\tau)}_{H^s_{x,v}\big(\langle v\rangle^{\frac{\gamma}{2}}\boldmu^{-\frac{1}{2}}\big)}^2 \dd\tau \leq \tilde{K} E(\boldh)\pabb{E(\boldf) + E(\boldg)},
\end{equation*}
from which immediately follows that
\begin{equation*}
E(\boldh) \leq \frac{2\tilde{K} \delta_\bb}{\min\br{1,\frac{K_0^{(s)}}{2}}} E(\boldh).
\end{equation*}
Taking $\delta_\bb >0$ sufficiently small allows to conclude that $E(\boldh)=0$, and thus $\boldf=\boldg$ for any time~$t\in [0,T_0]$. We can then repeat the computations on the time interval $[T_0,2T_0]$ wtih $E=E_{[T_0,2T_0]}$, recovering the same conclusions. An iteration of this procedure finally allows to deduce that $\boldf=\boldg$ for all $t\geq 0$. This ends the proof.

\end{proof}

\noindent \textbf{Step 4 -- Conclusion.} Theorem \ref{theo:Cauchy BE} is a direct gathering of Propositions \ref{prop:preliminary a priori BE}, \ref{prop:Existence BE}, \ref{prop:Uniqueness BE} and Corollary \ref{cor:A priori BE}.
Moreover, thanks to Theorems \ref{theo:Cauchy MS} and \ref{theo:Cauchy BE}, both the local Maxwellian $\bepsM$ and the perturbation $\boldf$ are well-defined, are unique and exist globally in time. Therefore, we can now reconstruct $\boldF^\eps = \bepsM + \eps \boldf$, which is the unique global weak solution of the Boltzmann multi-species equation $\eqref{perturbed BE}$, perturbed around the non-equilibrium state $\bepsM$. 
To conclude, since the perturbation $\boldf$ is uniformly bounded in the $\mathcal{H}^s_\eps$ norm by the constant $\delta_\bb$ (independent of $\eps$ and computed in the Corollary \ref{cor:A priori BE}), we also deduce the stability property
\begin{equation*}
\norm{\boldF^\eps-\bepsM}_{\mathcal{H}^s_\eps} = \eps\norm{\boldf}_{\mathcal{H}^s_\eps}\leq \eps \delta_\bb,\quad \forall \eps\in (0,\eps_0].
\end{equation*}

\bigskip

\section{Technical proofs of the hypocoercivity properties}\label{sec:technical proofs}

\noindent In this last section we collect all the proofs of the lemmata which lead to the hypocoercive structure of our model. 

\smallskip
Since, in what follows, we often estimate the Euclidean distance between~$\bepsM$ and~$\boldmu$, in order to enlighten our computations we here introduce the local and global Maxwellians~${\bmathepsM=\mathbf{M}_{(\mathbf{1},\eps \boldu, \mathbf{1})}}$ and $\bmathM=\mathbf{M}_{(\mathbf{1},\mathbf{0}, \mathbf{1})}$ which will allow us to write $\boldmu=\boldc_{\infty}\bmathM$ and, in this way, separate $\bepsM$ into a close-to-equilibrium part and a lower order term as~${\bepsM=\boldc_{\infty}\bmathepsM+\eps \tilde{\boldc}\bmathepsM}$.

\subsection{Proof of Lemma \ref{lemma:Properties 1-2}}\label{Lemma1}
Fix $\delta\in (0,1)$. In order to prove $\eqref{Property1}$, we initially observe that, since $\epsM_i=c_i {\mathcal{M}_i^\eps}$, we can apply to ${\mathcal{M}_i^\eps}$ similar estimates to the ones derived in \cite[Lemma 4.1]{BonBouBriGre} to obtain for any $1\leq i\leq N$ the upper bound
\begin{equation}\label{UpperM}
\epsM_i(t,x,v) \leq C_\delta^{\mathrm{up}} \pa{1+\exp\br{\frac{4 m_i}{1-\delta}\eps^2|u_i(t,x)|^2}} c_i\mathM_i^{\delta}(v),\quad t\geq 0,x\in\T^3, v\in\R^3,
\end{equation}
where
$$C_{\delta}^{\mathrm{up}}=\max_{1\leq i\leq N} \pa{\frac{m_i}{2\pi}}^{\frac{3(1-\delta)}{2}}\pa{\sup_{|v|\in\R_+}e^{-(1-\delta)\frac{m_i}{4}|v|^2}+1} >0.$$
Similarly we can recover a lower bound for $\epsM_i$ as follows. For any $1\leq i\leq N$, we can write
$${\mathcal{M}_i^\eps}(t,x,v)=\mathM_i^{1/\delta}(v){\mathcal{M}_i^\eps}(t,x,v)\mathM_i^{-1/\delta}(v),\quad t\geq 0,\ x\in\T^3,\ v\in\R^3.$$
Then, the product $\mathcal{M}^\eps \mathM_i^{-1/\delta}$ can be lower estimated as
$${\mathcal{M}_i^\eps}(v)\mathM_i^{-1/\delta}(v)\geq\pa{\frac{m_i}{2\pi}}^{\frac{3(\delta-1)}{2\delta}}\exp\br{\frac{m_i(1-\delta)}{2\delta}|v|^2-m_i\eps|v||u_i|-\eps^2\frac{m_i}{2}|u_i|^2},$$
where we have dropped the dependence on both variables $(t,x)$ for the sake of simplicity. We then distinguish two cases.

\begin{enumerate}
\item For $\disp |v|\geq\frac{4\delta\eps|u_i|}{1-\delta}$, we have
$${\mathcal{M}_i^\eps}(v)\mathM_i^{-1/\delta}(v)\geq \pa{\frac{m_i}{2\pi}}^{\frac{3(\delta-1)}{2\delta}}\exp\br{-\frac{m_i}{2}\eps^2|u_i|^2}.$$

\vspace{-1mm}
\item For $\disp |v|\leq\frac{4\delta\eps|u_i|}{1-\delta}$, we get
$${\mathcal{M}_i^\eps}(v)\mathM_i^{-1/\delta}(v)\geq \pa{\frac{m_i}{2\pi}}^{\frac{3(\delta-1)}{2\delta}}\exp\br{-4m_i\frac{\delta}{1-\delta}\eps^2|u_i|^2-\frac{m_i}{2}\eps^2|u_i|^2}.$$
\end{enumerate}
These estimates allow to deduce that
\begin{equation}\label{LowerM}
\epsM_i(v)\geq C_\delta^{\mathrm{low}}\exp\br{-m_i\frac{7\delta+1}{2(1-\delta)}\eps^2|u_i|^2}c_i\mathM_i^{1/\delta}, \quad\forall v\in\R^3,
\end{equation}
where we simply set
$$C_\delta^{\mathrm{low}}=\min_{1\leq i\leq N}\pa{\frac{m_i}{2\pi}}^{\frac{3(\delta-1)}{2\delta}}>0.$$

Gathering now $\eqref{UpperM}$ and $\eqref{LowerM}$, it is easy to deduce that, for any $1\leq i,j\leq N$, there exist two positive constants $\bar{\nu}_{ij}^{(\delta)}, \tilde{\nu}_{ij}^{(\delta)}$ such that, for all $v\in\R^3$,
$$0< \bar{\nu}_{ij}^{(\delta)}  e^{-m_j\frac{7\delta+1}{2(1-\delta)}\eps^2|u_j|^2}c_j \langle v \rangle^\gamma \leq     \epsnu_{ij}(v)\leq \tilde{\nu}_{ij}^{(\delta)} \pa{1 + e^{\frac{4 m_j}{1-\delta}\eps^2|u_j|^2}}c_j  \langle v \rangle^\gamma.$$
This in turns implies that, for any $1\leq i\leq N$, there also exist $\bar{\nu}_i^{(\delta)},\tilde{\nu}_i^{(\delta)} > 0$ such that,
for all $v\in\R^3$,
\begin{equation}\label{Upper+Lower}
\begin{split}
0<\bar{\nu}_i^{(\delta)}  \exp\Bigg\{& - \max_{1\leq i\leq N}m_i\frac{7\delta+1}{2(1-\delta)}\eps^2\norm{\boldu}_{L^\infty_tL_x^\infty}^2\Bigg\}\pa{\min_{1\leq i\leq N}c_i} \langle v  \rangle^\gamma 
\\[4mm]   & \leq   \epsnu_i(v)\leq \tilde{\nu}_i^{(\delta)} \pa{1 + \exp\Bigg\{\max_{1\leq i\leq N} m_i\frac{4\eps^2}{1-\delta}\norm{\boldu}_{L^\infty_tL_x^\infty}^2\Bigg\}}\norm{\boldc}_{L^\infty_tL_x^\infty}  \langle v \rangle^\gamma,
\end{split}
\end{equation}
recalling that $\min_i c_i >0$ a.e. on $\R_+\times\T^3$.

Since $\bepsnu$ is a multiplicative operator, from these bounds it is finally easy to compute explicitly the values of $C^{\NuNu}_1$ and $C^{\NuNu}_2$ such that
$$C^{\NuNu}_1\norm{\boldf}_{\spacev}^2\leq \scalprod{\bepsnu(\boldf),\boldf}_{\hilbertv}\leq C^{\NuNu}_2\norm{\boldf}_{\spacev}^2$$
is satisfied. Moreover, since $\langle v\rangle^\gamma \geq 1$, it is then straightforward that the $\spacev$ norm upper bounds the $\hilbertv$ norm. Thus $\eqref{Property1}$ is proved for any $\delta\in (0,1)$.

\bigskip
At last, estimate $\eqref{Property2}$ can be proved in a very similar way using the fact that $\bepsK$ can be written under a kernel form and applying $\eqref{UpperM}$ to obtain the same bounds as in \cite{BonBouBriGre}. From this and from $\eqref{Property1}$, using the Cauchy-Schwarz inequality, we infer $\eqref{Property2}$ by choosing
$$C^\LL_1 =C(\delta) \norm{\boldc}_{L^\infty_t L_x^\infty}\Bigg(1 + \exp\br{\max_{1\leq i\leq N}m_i\frac{4\eps^2}{1-\delta}\norm{\boldu}_{L^\infty_tL_x^\infty}}\Bigg)\max_{1\leq i,j\leq N}\sqrt{\frac{\bar{c}_j}{\bar{c}_i}} >0.$$
Here $C(\delta)$ is a constant only depending on the masses $(m_i)_{1\leq i\leq N}$, the number of species $N$ and an arbitrary parameter $\delta\in(\bar{\delta},1)$, where $\bar{\delta}$ is fixed independently of $\eps$. This concludes the proof.

\subsection{Proof of Lemma \ref{LemmaNu}}
We prove the result in the simple case where $|\alpha|=|\beta|=1$ (the case $|\alpha|=0$ being included). The general case can then be obtained in a similar way, by iterating our computations.

We first notice that, thanks to our assumptions on the collision kernels $B_{ij}$, it is easy to check that $\nabla_v \epsnu_i\in L_v^\infty(\R^3)$ for any $1\leq i\leq N$. In fact, choosing $\delta\in (0,1)$ independently of $\eps$ and using the upper bound $\eqref{UpperM}$ on $\epsM_j$, we obtain, for any $v\in\R^3$,
\begin{eqnarray*}
|\nabla_v\epsnu_i(v)|&=&\left|\sum_{j=1}^N\int_{\R^3\times\Sf} b_{ij}(\cos\vartheta)\gamma|v-v_*|^{\gamma -1}\frac{v-v_*}{|v-v_*|}\epsM_j(v_*)\dd v_* \dd\sigma\right|           
\\[2mm]   &\leq& C(\delta)\sum_{j=1}^N c_j\int_{\R^3}\gamma|v-v_*|^{\gamma-1}e^{-\delta m_j\frac{|v_*|^2}{2}}\dd v_* \leq C(\delta)\scalprod{\boldc,\mathbf{1}} <+\infty,              
\end{eqnarray*}
since $\gamma\in[0,1]$ and thus the above integral is clearly finite.

\smallskip
Next, considering the $(x_k,v_\ell)$ derivatives, we can write
\begin{equation*}
\begin{split}
\scalprod{\partial_{v_\ell}\partial_{x_k}\bepsnu(\boldf),\partial_{v_\ell}\partial_{x_k}\boldf}_{\hilbertxv}  =&  \sum_{i=1}^N \int_{\T^3\times\R^3}\partial_{v_\ell}(\epsnu_i\partial_{x_k}f_i)\partial_{v_\ell}\partial_{x_k}f_i\mu_i^{-1}\dd x\dd v
\\[2mm]&+ \sum_{i=1}^N\int_{\T^3\times\R^3}\partial_{v_\ell}(\partial_{x_k}\epsnu_i f_i)\partial_{v_\ell}\partial_{x_k}f_i\mu_i^{-1}\dd x\dd v.
\end{split}
\end{equation*}
Denote by $I_1$ and $I_2$, respectively, the first and second term on the right-hand side. Using Young's inequality, the first term can be estimated as
\begin{eqnarray*}
I_1 &=& \sum_{i=1}^N \pa{\int_{\T^3\times\R^3}\partial_{v_\ell}\epsnu_i\partial_{x_k}f_i\partial_{v_\ell}\partial_{x_k}f_i\mu_i^{-1}\dd x\dd v+\int_{\T^3\times\R^3} \epsnu_i |\partial_{v_\ell}\partial_{x_k}f_i|^2\mu_i^{-1}\dd x\dd v} 
\\[1mm]&\geq&    \sum_{i=1}^N \pa{-\frac{1}{2}\int_{\T^3\times\R^3}\frac{(\partial_{v_\ell}\epsnu_i)^2}{\epsnu_i}(\partial_{x_k}f_i)^2 \mu_i^{-1}\dd x\dd v+\frac{1}{2}\int_{\T^3\times\R^3} \epsnu_i |\partial_{v_\ell}\partial_{x_k}f_i|^2\mu_i^{-1}\dd x\dd v}       
\\[3mm]&\geq& C^{\NuNu}_3 \norm{\partial_{v_\ell}\partial_{x_k}\boldf}_{\spacexv}^2 -C^{\NuNu}_5\norm{\boldf}_{\sobolevxv{1}}^2,
\end{eqnarray*}
where $C^{\NuNu}_3$ is obtained from $\eqref{Upper+Lower}$ as
$$C^{\NuNu}_3=\frac{1}{2}\pa{\min_{1\leq i\leq N}\bar{\nu}_i^{(\delta)} c_i} \exp\br{-\max_{1\leq i\leq N} m_i \frac{7\delta + 1}{2(1-\delta)}\eps^2\norm{\boldu}_{L^\infty_t L^\infty_x}}>0,$$
and

$$C^{\NuNu}_5=\max_{1\leq i\leq N}\sup_{v\in\R^3}\frac{(\partial_{v_\ell}\epsnu_i)^2}{2\epsnu_i}\hspace{7cm}$$
is positive and finite thanks again to the lower bound $\eqref{Upper+Lower}$ and the fact that $\nabla_v\epsnu_i\in L_v^\infty(\R^3)$ for any $1\leq i\leq N$. 

To estimate the second term, we first notice that from $\mathbf{M^\eps} = \boldc_{\infty}\mathbf{\mathcal{M}^\eps} + \eps\mathbf{\tilde{c}}\mathbf{\mathcal{M}^\eps}$ we get, for any $(t,x,v)\in\R_+\times\T^3\times\R^3$,
\begin{equation*}
\begin{split}
\partial_{x_k}\epsnu_i  (t&,x,v) \hspace{12cm}
\\[1mm]  = &\ \sum_{j=1}^N \int_{\R^3\times\Sf}b_{ij}(\cos\vartheta)|v-v_*|^\gamma \partial_{x_k}\epsM_j(v_*)\dd v_* \dd\sigma 
\\[2mm] = &\ \eps\sum_{j=1}^N  \int_{\R^3\times\Sf}b_{ij}(\cos\vartheta)|v-v_*|^\gamma \pabb{\partial_{x_k} \tilde{c}_j+m_j c_j (v_*-\eps u_j)\cdot\partial_{x_k} u_j}{\mathcal{M}_j^\eps}(v_*)\dd v_* \dd\sigma\hspace{-3cm}
\\[4mm] = &\ \eps\tilde{\nu}_i^\eps(t,x,v),
\end{split}
\end{equation*}
hence $\tilde{\nu}_i^\eps$ and $\epsnu_i$ have the same structure. In particular, as done for $\epsnu_i$, it is easy to show that $\nabla_v \tilde{\nu}_i^\eps\in L_v^\infty(\R^3)$ and that the following upper bound, similar to $\eqref{Upper+Lower}$, is satisfied for all~$v\in\R^3$:
\begin{multline}\label{UpperTildenu}
 \tilde{\nu}_i^\eps(v)\leq C_i(\delta) \exp\br{\max_{1\leq i\leq N} m_i\frac{7+\delta}{2(1-\delta)}\eps^2\norm{\boldu}_{L^\infty_t L_x^\infty}^2}
 \\[2mm]    \times \pabb{\norm{\partial_{x_k}\tilde{\boldc}}_{L^\infty_t L_x^\infty}+\norm{\boldc}_{L^\infty_tvL_x^\infty}\pab{1 + \norm{\boldu}_{L^\infty_t L_x^\infty}}\norm{\partial_{x_k}\boldu}_{L^\infty_tL_x^\infty}}  \scalprod{v}^\gamma.
\end{multline}
Note that, in this case, we are no more able to prove a lower bound for $ \tilde{\nu}_i^\eps$, as it was done in $\eqref{Upper+Lower}$ for $\nu_i^\eps$. From this lack of positivity comes the last term in the estimate. In fact, using again Young's inequality, we easily see that $I_2$ can be estimated as
\begin{eqnarray*}
I_2&=&\eps\sum_{i=1}^N \pa{\int_{\T^3\times\R^3}\partial_{v_\ell}\tilde{\nu}_i^\eps f_i\partial_{v_\ell}\partial_{x_k}f_i\mu_i^{-1}\dd x\dd v + \int_{\T^3\times\R^3} \tilde{\nu}_i^\eps\partial_{v_\ell}f_i \partial_{v_\ell}\partial_{x_k}f_i \mu_i^{-1}\dd x\dd v}                        
\\[3mm]&\geq& \eps \sum_{i=1}^N \left(-\frac{1}{2}\int_{\T^3\times\R^3}\frac{(\partial_{v_\ell}\tilde{\nu}_i^\eps)^2}{\epsnu_i}f_i^2 \mu_i^{-1}\dd x\dd v-\frac{1}{2}\int_{\T^3\times\R^3}\tilde{\nu}_i^\eps(\partial_{v_\ell}f_i)^2 \mu_i^{-1}\dd x\dd v\right.
\\[2mm]&&\hspace{6cm}  \left. - \frac{1}{2}\int_{\T^3\times\R^3} (\epsnu_i+\tilde{\nu}_i^\eps)|\partial_{v_\ell}\partial_{x_k}f_i|^2\mu_i^{-1}\dd x\dd v\right) 
\\[5mm]&\geq&  -\eps C^{\NuNu}_4\norm{\partial_{v_\ell}\partial_{x_k}\boldf}_{\spacexv}^2 -\eps C^{\NuNu}_6 \norm{\boldf}_{\sobolevxv{1}}^2 - \eps C^{\NuNu}_7\norm{\partial_{v_\ell}\boldf}_{\spacexv},
\end{eqnarray*}
where $C^{\NuNu}_4$, $C^{\NuNu}_7$ are easily recovered from $\eqref{Upper+Lower}$ and $\eqref{UpperTildenu}$, and
$$C^{\NuNu}_6=\max_{1\leq i\leq N}\sup_{v\in\R^3}\frac{(\partial_{v_l}\tilde{\nu}_i^\eps)^2}{2\epsnu_i} >0.$$
Gathering the estimates for $I_1$ and $I_2$ concludes the proof.

\subsection{Proof of Lemma \ref{LemmaK}}

We prove the result in the simplest case where $|\alpha|=0$ and $|\beta|=1$. The extension to the general case is again straightforward. We emphasize that compared to the usual case when the Maxwellian only depends on~$v$,~$x$-derivatives of $\bepsM$ generate new terms involving products of the form $\eps \partial^{\alpha'}_x \mathbf{u} \partial^{\alpha}_x \mathbf{\tilde{c}}$, displaying a lower order in $\eps$.

\medskip
We want to estimate the term
$$\scalprod{\nabla_v \bepsK(\boldf),\nabla_v \boldf}_{\hilbertxv}=\sum_{i=1}^N \int_{\T^3\times \R^3}\nabla_v \epsK_i(\boldf) \nabla_v f_i\mu_i^{-1} \dd x \dd v,$$
where, for any $1\leq i\leq N$, $\epsK_i$ is written under its kernel form (explicitly recovered in Appendix \ref{App:Carleman}) as
$$\epsK_i(\boldf)(v)=\sum_{j=1}^N\br{\int_{\R^3}\kappa_{ij}^{(1)}(v,v_*)\fs_j\dd v_* + \int_{\R^3}\kappa_{ij}^{(2)}(v,v_*)\fs_i\dd v_* - \int_{\R^3}\kappa_{ij}^{(3)}(v,v_*)\fs_j\dd v_*}.$$
For any fixed $i,j$, we shall treat separately the three kernels.

\bigskip
\noindent \textbf{Step 1 - Estimates for $\bm{\kappa_{ij}^{(1)}}$.} Starting from $\kappa_{ij}^{(1)}$, we recall that its full explicit expression is given in Appendix \ref{App:Carleman} by $\eqref{kappa1}$. We compute its $v$-derivatives and immediately note that the singularity of the operator coming from $|v-v_*|$ imposes to introduce a proper splitting (depending on a small parameter) in order to deal with a smooth part that we can derive infinitely many times and a remainder part (taking into account the region where $v-v_*$ is close to 0) which goes to zero with the small parameter. Let us relabel the relative velocity $\eta =v-v_*$ and choose $v$ and $\eta$ as new variables of $\kappa_{ij}^{(1)}$, rewriting $v_*=v-\eta$. Thus, denoting $\bar{R}=R(v,\eta)$ and $\bar{O}=O(v,\eta)$ the quantities
\begin{equation*}
\begin{split}
\bar{R} &= \frac{m_j}{|m_i-m_j|}|\eta|,\\[3mm]
\bar{O} &= \frac{m_i}{m_i-m_j} v - \frac{m_j}{m_i-m_j}(v-\eta),
\end{split}
\end{equation*}
defined for $i\neq j$, the operator $\kappa_{ij}^{(1)}$ reads
\begin{multline*}
\kappa_{ij}^{(1)}(v,\eta)=C_{ij} c_i |\eta|^{\gamma}\int_{\Sf}\frac{b_{ij}\pa{\omega\cdot\frac{\eta}{|\eta|}}}{\abs{\frac{m_i^2+m_j^2}{(m_i-m_j)^2}+\frac{2m_i m_j}{(m_i-m_j)|m_i-m_j|}\pa{\omega\cdot\frac{\eta}{|\eta|}}}^{1-\gamma}}
\\[2mm]    \times e^{-\frac{m_i}{2}\abs{\bar{R}\omega +\bar{O}}^2+\eps m_i\pab{\bar{R}\omega +\bar{O}}\cdot u_i-\eps^2\frac{m_i}{2}|u_i|^2}\ \dd \omega.        
\end{multline*}
In particular,
\begin{equation*}
b_{ij}\pa{\omega\cdot\frac{\eta}{|\eta|}}=b_{ij}\pa{\frac{\frac{2m_im_j}{(m_i-m_j)^2}+\frac{m_i^2+m_j^2}{(m_i-m_j)|m_i-m_j|}\pa{\omega\cdot\frac{\eta}{|\eta|}}}{\frac{m_i^2+m_j^2}{(m_i-m_j)^2}+\frac{2m_im_j}{(m_i-m_j)|m_i-m_j|}\pa{\omega\cdot\frac{\eta}{|\eta|}}}}, 
\end{equation*}
and  
\begin{equation*}
\begin{split}
&\abs{\bar{R}\omega +\bar{O}}^2 = \frac{2m_j^2}{(m_i-m_j)^2}|\eta|^2 +\frac{2 m_j|\eta|}{|m_i-m_j|}(v\cdot\omega)+\frac{2 m_j^2|\eta|}{(m_i-m_j)|m_i-m_j|}(\eta\cdot\omega)
\\[2mm]   & \hspace{2.3cm} + |v|^2+\frac{2m_j}{m_i-m_j}(v\cdot\eta),
\\[4mm]    & \pa{\bar{R}\omega +\bar{O}}\cdot u_i = \frac{m_j}{|m_i-m_j|}|\eta|(\omega\cdot u_i) + v\cdot u_i + \frac{m_j}{m_i-m_j}(\eta\cdot u_i).
\end{split}
\end{equation*}
The operator $\kappa_{ij}^{(1)}(v,\eta)$ is clearly regular with respect to the variable $v$, thus we only need to ensure that we can take derivatives also with respect to $\eta$. For some small parameter $\xi >0$, let us introduce a mollified indicator function $\mathds{1}_{\br{|\cdot|\geq \xi}}$ and define the splitting
$$\kappa_{ij}^{(1)}=\kappa_{ij}^{(1),S}+\kappa_{ij}^{(1),R},$$ 
where we have called the smooth part
$$\kappa_{ij}^{(1),S}(v,\eta)= \mathds{1}_{\br{|\eta|\geq \xi}} \kappa_{ij}^{(1)}(v,\eta),$$
and $\kappa_{ij}^{(1),R}$ is the remainder. It is straightforward to check that $\kappa_{ij}^{(1),S}(v,\eta)$ is smooth in both variables $v$ and $\eta$, and that $\kappa_{ij}^{(1),R}$ is integrable with respect to $\eta$ near 0. More precisely, one can prove
 that there exists $\bar{\delta}\in (0,1)$, depending on $\norm{\boldu}$ and $\norm{\boldc}$ but not on $\eps$, such that $\kappa_{ij}^{(1),R}$ is uniformly bounded with respect to $v$, namely
$$\kappa_{ij}^{(1),R}(v,\eta) \sqrt{\frac{\mu_j(v-\eta)}{\mu_i(v)}}\leq C_{ij}(\delta) |\eta|^\gamma e^{-C(\delta)|\eta|^2},\quad \forall v\in\R^3,$$
where $C_{ij}(\delta), C(\delta) >0$ are two explicitly computable constants depending on an arbitrary parameter $\delta\in (\bar{\delta},1)$. This last fact allows to easily deduce that 
\begin{equation}\label{Estimate1}
\norm{\kappa_{ij}^{(1),R}(v,\eta)\sqrt{\frac{\mu_j(v-\eta)}{\mu_i(v)}}}_{L_{\eta}^1} \leq C^{(1)}_{ij}(\delta) \xi^{\gamma+3},  \quad \forall v\in\R^3.
\end{equation}

\noindent Now, setting
$$W_{ij}^{(1)}\pa{\omega\cdot\frac{\eta}{|\eta|}}= \abs{\frac{m_i^2+m_j^2}{(m_i-m_j)^2}+\frac{2m_i m_j}{(m_i-m_j)|m_i-m_j|}\pa{\omega\cdot\frac{\eta}{|\eta|}}}^{\gamma-1}$$
to ease the computations, we can take the $\eta$-derivative of $\kappa_{ij}^{(1),S}$ to obtain
\begin{equation*}
\begin{split}
\nabla_\eta \kappa_{ij}^{(1),S}(v,\eta) =&\  C_{ij} \nabla_\eta \mathds{1}_{\br{|\eta|\geq \xi}} |\eta|^{\gamma} \int_{\Sf} b_{ij}W_{ij}^{(1)}\epsM_i(\bar{R}\omega+\bar{O})\dd \omega
\\[2mm]&\ +C_{ij} \mathds{1}_{\br{|\eta|\geq \xi}} \nabla_\eta |\eta|^{\gamma} \int_{\Sf} b_{ij}W_{ij}^{(1)}\epsM_i(\bar{R}\omega+\bar{O})\dd \omega
\\[2mm]&\ +C_{ij}\mathds{1}_{\br{|\eta|\geq \xi}} |\eta|^{\gamma} \int_{\Sf} \nabla_\eta b_{ij}W_{ij}^{(1)}\epsM_i(\bar{R}\omega+\bar{O})\dd \omega
\\[2mm]&\ +C_{ij} \mathds{1}_{\br{|\eta|\geq \xi}} |\eta|^{\gamma} \int_{\Sf} b_{ij}\nabla_\eta W_{ij}^{(1)}\epsM_i(\bar{R}\omega+\bar{O})\dd \omega 
\\[2mm]&\ +C_{ij} \mathds{1}_{\br{|\eta|\geq \xi}}  |\eta|^{\gamma} \int_{\Sf} b_{ij}W_{ij}^{(1)}\nabla_\eta \epsM_i(\bar{R}\omega+\bar{O})\dd \omega
\\[4mm]:=&\ I^S_{1,1} + I^S_{1,2} + I^S_{1,3} + I^S_{1,4} + I^S_{1,5}.
\end{split}
\end{equation*}
Direct computations yield the four derivatives
\begin{itemize}
\item[(i)]  $\disp \nabla_\eta |\eta|^{\gamma}   = \gamma|\eta|^{\gamma-1}\frac{\eta}{|\eta|},$  \\[3mm]  
\item[(ii)] $\disp \nabla_\eta b_{ij}\pa{\omega\cdot\frac{\eta}{|\eta|}}   = b_{ij}^{\prime}\pa{\omega\cdot\frac{\eta}{|\eta|}}\frac{\frac{(m_i+m_j)^2}{(m_i-m_j)|m_i-m_j|}\pa{\frac{\omega}{|\eta|}-\frac{\eta\cdot\omega}{|\eta|^2}\frac{\eta}{|\eta|}}}{\pa{\frac{m_i^2+m_j^2}{(m_i-m_j)^2}+\frac{2m_im_j}{(m_i-m_j)|m_i-m_j|}\pa{\omega\cdot\frac{\eta}{|\eta|}}}^2},$  \\[3mm] 
\item[(iii)]  $\disp \nabla_\eta W_{ij}^{(1)}\pa{\omega\cdot\frac{\eta}{|\eta|}}   = \frac{\frac{2(\gamma-1)m_i m_j}{(m_i-m_j)|m_i-m_j|}\pa{\frac{\omega}{|\eta|}-\frac{\eta\cdot\omega}{|\eta|^2}\frac{\eta}{|\eta|}}}{\abs{\frac{m_i^2+m_j^2}{(m_i-m_j)^2}+\frac{2m_i m_j}{(m_i-m_j)|m_i-m_j|}\pa{\omega\cdot\frac{\eta}{|\eta|}}}^{2-\gamma}},$   \\[4mm]                             
\item[(iv)]    
$$\disp \nabla_\eta \epsM_i(\bar{R}\omega+\bar{O})   = -\frac{m_i}{2}\epsM_i(\bar{R}\omega+\bar{O})\pa{\nabla_\eta \abs{\bar{R}\omega+\bar{O}}^2-2\eps\nabla_\eta \pabb{(\bar{R}\omega +\bar{O})\cdot u_i}},$$
\end{itemize}
where
\begin{equation*}
\begin{split}
\hspace{-1.7cm}\nabla_\eta \abs{\bar{R}\omega+\bar{O}}^2 = &\ \frac{4m_j^2}{(m_i-m_j)^2}\eta+\frac{2m_j}{|m_i-m_j|}\frac{\eta}{|\eta|}(v\cdot\omega) +\frac{2m_j}{m_i-m_j}v 
\\[2mm]    &\  + \frac{2m_j^2}{(m_i-m_j)|m_i-m_j|}\pa{\frac{\eta}{|\eta|}(\eta\cdot\omega)+|\eta|\omega},
\end{split}
\end{equation*}
and
$$\nabla_\eta\pabb{(\bar{R}\omega+\bar{O})\cdot u_i}=\frac{m_j}{|m_i-m_j|}\frac{\eta}{|\eta|}(\omega\cdot u_i)+\frac{m_j}{m_i-m_j}u_i.\hspace{3.5cm}$$
Our aim is now to bound each term $I^S_{1,k}$ uniformly with respect to $v\in\R^3$. The estimate for~$I^S_{1,1}$ is straightforward, since $|b_{ij}|\leq C$ from Grad's cutoff assumption we have made on the collision kernels, and
$$\abs{\frac{m_i^2+m_j^2}{(m_i-m_j)^2}+\frac{2m_i m_j}{(m_i-m_j)|m_i-m_j|}\pa{\omega\cdot\frac{\eta}{|\eta|}}}^{\gamma-1}\leq \abs{\frac{m_i^2+m_j^2-2m_i m_j}{(m_i-m_j)^2}}^{\gamma-1}=1,$$
thanks to the hypothesis that $\gamma\in [0,1]$. Thus, $\big|W_{ij}^{(1)}\big|\leq 1$ and we can apply the upper bound $\eqref{UpperM}$ for the local Maxwellian $\epsM_i$  to initially infer that
$$\abs{I^S_{1,1}}\leq  C(\delta_1)c_i |\eta|^{\gamma}\int_{\Sf}\mathM_i^{\delta_1}(\bar{R}\omega+\bar{O})\dd \omega,$$
for any $\delta_1\in (0,1)$. Then, using the same estimates as in \cite[Lemma 5.1]{BriDau}, we recover the bound
$$\abs{I^S_{1,1}}\leq  C(\delta_1)c_i |\eta|^{\gamma}e^{-\delta_1 \lambda(m_i,m_j)|\eta|^2-\delta_1\lambda(m_i,m_j)\frac{\abs{|\eta|^2-2(v\cdot\eta)}^2}{|\eta|^2}}\sqrt{\frac{\mathM_i^{\delta_1}(v)}{\mathM_j^{\delta_1}(v-\eta)}}.$$
For $I^S_{1,2}$, $I^S_{1,3}$ and $I^S_{1,4}$, we obtain a similar result, since
\begin{equation*}
\begin{split}
\nabla_\eta |\eta|^\gamma\leq \gamma|\eta|^{\gamma-1},\qquad  \nabla_\eta b_{ij} \leq C_{ij}|\eta|^{-1}\quad \textrm{ and also }\quad \nabla_\eta W_{ij}^{(1)}\leq C_{ij}|\eta|^{-1},
\end{split}
\end{equation*}
thanks to the assumption that $\big|b_{ij}^{\prime}(\cos\vartheta)\big|\leq C$. Thus, for $k\in\br{2,3,4}$, we deduce that 
$$\abs{I^S_{1,k}}\leq  C(\delta_k)c_i |\eta|^{\gamma-1}e^{-\delta_k \lambda(m_i,m_j)\pa{|\eta|^2-\frac{\abs{|\eta|^2-2(v\cdot\eta)}^2}{|\eta|^2}}}\sqrt{\frac{\mathM_i^{\delta_k}(v)}{\mathM_j^{\delta_k}(v-\eta)}},$$
for any $\delta_k\in (0,1)$, $k=2,3,4$. A bit different is the last term $I^S_{1,5}$. We observe that
$$\nabla_\eta \abs{\bar{R}\omega+\bar{O}}^2\leq  C_{ij}\pab{|\eta| + |v|} \quad\mbox{ and }\quad \nabla_\eta\pabb{(\bar{R}\omega+\bar{O})\cdot u_i}\leq C_{ij}|u_i|,$$
thus

$$\nabla_\eta \epsM_i(\bar{R}\omega+\bar{O})\leq C_{ij}(\delta_5)c_i \pab{|\eta| + |v| +\eps|u_i|}\mathM_i^{\delta_5}(\bar{R}\omega+\bar{O}),$$
for any $\delta_5\in (0,1)$. Therefore, since $\eps\leq 1$, we can upper bound the last term as
$$ \abs{I^S_{1,5}}\leq  C(\delta_5)c_i |\eta|^{\gamma}\pab{|\eta|+|v|+|u_i|}e^{-\delta_5 \lambda(m_i,m_j)\pa{|\eta|^2+\frac{\abs{|\eta|^2-2(v\cdot \eta)}^2}{|\eta|^2}}}\sqrt{\frac{\mathM_i^{\delta_5}(v)}{\mathM_j^{\delta_5}(v-\eta)}}.$$
Gathering the estimates on $I^S_{1,k}$, $k\in\br{1,\ldots,5}$, and, in particular, carefully choosing the parameter~${\delta_5\in (0,1)}$ in order to control the term in $|v|$, the computations carried out in~\cite{BonBouBriGre} directly apply and provide the uniform bound     
\begin{multline}\label{Uniform eta 1 S}
\hspace{-0.3cm}\abs{\nabla_\eta \kappa_{ij}^{(1),S}(v,\eta)}\sqrt{\frac{\mu_j(v-\eta)}{\mu_i(v)}}
\\[2mm]    \leq C_{ij}(\delta)\sqrt{\frac{\bar{c}_j}{\bar{c}_i}} \norm{\boldc}_{L^\infty_t L_x^\infty}\pa{1+ \norm{\boldu}_{L^\infty_t L_x^\infty}}\pab{1 + |\eta|^{\gamma-1} + |\eta|^{\gamma+1} + |\eta|^\gamma} e^{-C(\delta)|\eta|^2},
\end{multline} 
holding for any $v\in\R^3$ and any $\delta\in (\bar{\delta},1)$, where $\bar{\delta}\in (0,1)$ is fixed independently of $\eps$.

\smallskip
In a similar way, we can estimate the $v$-derivatives of $\kappa_{ij}^{(1),S}$ and $\kappa_{ij}^{(1),R}$. It is even simpler in this case, since the only dependence on $v$ inside these operators appears in the~Maxwellian $\epsM_i(\bar{R}\omega+\bar{O})$. Looking at $\nabla_v \kappa_{ij}^{(1),S}$ (for $\nabla_v \kappa_{ij}^{(1),R}$ the same computations will apply), we easily get
\begin{equation*}
\nabla_v \kappa_{ij}^{(1),S}(v,\eta) =  C_{ij} \mathds{1}_{\br{|\eta|\geq \xi}} |\eta|^{\gamma} \int_{\Sf} b_{ij}W_{ij}^{(1)} \nabla_v \epsM_i(\bar{R}\omega+\bar{O})\dd \omega,
\end{equation*}
where this time
\begin{equation*}
\nabla_v \epsM_i(\bar{R}\omega+\bar{O}) = -\frac{m_i}{2}\epsM_i(\bar{R}\omega+\bar{O})\pa{\frac{2m_j}{|m_i-m_j|}|\eta|\omega + 2v + \frac{2m_j}{m_i-m_j}\eta - 2\eps u_i}. 
\end{equation*}
Therefore, repeating the previous considerations and thanks to the straightforward upper bound
\begin{equation*}
\nabla_v \epsM_i(\bar{R}\omega+\bar{O}) \leq C_{ij}(\delta) c_i \pab{|\eta| + |v| + |u_i|} \mathcal{M}^\delta_i(\bar{R}\omega + \bar{O}),
\end{equation*}
valid for any $\delta\in (0,1)$, we recover the uniform estimate
\begin{multline}\label{Uniform v 1 S}
\hspace{-0.3cm}\abs{\nabla_v \kappa_{ij}^{(1),S}(v,\eta)}\sqrt{\frac{\mu_j(v-\eta)}{\mu_i(v)}}
\\[2mm]     \leq C_{ij}(\delta)\sqrt{\frac{\bar{c}_j}{\bar{c}_i}} \norm{\boldc}_{L^\infty_t L_x^\infty}\pa{1+ \norm{\boldu}_{L^\infty_t L_x^\infty}}\pab{1+|\eta|^\gamma+|\eta|^{\gamma+1}} e^{-C(\delta)|\eta|^2},
\end{multline}
holding for all $v\in\R^3$ and for any $\delta\in (\bar{\delta},1)$, where $\bar{\delta}\in (0,1)$ is fixed independently of $\eps$, and can in particular be chosen in order to satisfy both $\eqref{Uniform v 1 S}$ and $\eqref{Uniform eta 1 S}$. Using the same arguments in order to derive an upper bound for $\nabla_v \kappa_{ij}^{(1),R}$, estimates \eqref{Uniform eta 1 S}--\eqref{Uniform v 1 S} finally imply that for all $v\in\R^3$ we have
\begin{equation}\label{Estimate2}
\begin{split}
\norm{\nabla_\eta \kappa_{ij}^{(1),S}(v,\eta)\sqrt{\frac{\mu_j(v-\eta)}{\mu_i(v)}}}_{L^1_\eta} & \leq C^{(1)}_{ij}(\delta,\xi),
\\[4mm]    \norm{\nabla_v \kappa_{ij}^{(1),S}(v,\eta)\sqrt{\frac{\mu_j(v-\eta)}{\mu_i(v)}}}_{L^1_\eta} & \leq C^{(1)}_{ij}(\delta,\xi),
\\[4mm]    \norm{\nabla_v \kappa_{ij}^{(1),R}(v,\eta)\sqrt{\frac{\mu_j(v-\eta)}{\mu_i(v)}}}_{L^1_\eta} & \leq C^{(1)}_{ij}(\delta,\xi).
\end{split}
\end{equation}

\medskip
\noindent \textbf{Step 2 - Estimates for $\bm{\kappa_{ij}^{(2)}}$.} We proceed in the same way as for $\kappa_{ij}^{(1)}$. We rename the relative velocity $\eta=v-v_*$, so that $v_*=v-\eta$, and the explicit form of $\kappa_{ij}^{(2)}$ given by $\eqref{kappa2}$ reads in the new configuration
\begin{multline*}
\kappa_{ij}^{(2)}(v,\eta)= \mathcal{P}_{ij}(v,\eta)  \int_{\R^2}b_{ij}\pa{\frac{\pa{\frac{m_i+m_j}{2m_j}}^2 |\eta|^2 - |X|^2}{\pa{\frac{m_i+m_j}{2m_j}}^2 |\eta|^2 + |X|^2}}\pa{\pa{\frac{m_i+m_j}{2m_j}}^2 |\eta|^2 + |X|^2}^{\frac{\gamma-1}{2}} 
\\[3mm]     \times \epsM_j\pa{R\pa{\frac{\eta}{|\eta|},\pa{0,X+\frac{1}{2}\bar{X}}}} \dd X,
\end{multline*}
with
$$\mathcal{P}_{ij}(v,\eta)=\frac{C_{ji}}{|\eta|} e^{-\frac{m_i^2}{8m_j}|\eta|^2-\frac{m_j}{8}\frac{\abs{|\eta|^2-2(v\cdot\eta)}^2}{|\eta|^2}+\eps \frac{m_i}{2}(\eta\cdot u_j)-\eps\frac{m_j}{2}\pa{\frac{|\eta|^2-2(v\cdot\eta)}{|\eta|}\frac{\eta}{|\eta|}}\cdot u_j}\sqrt{\frac{\mu_i(v)}{\mu_i(v-\eta)}}.$$
Next, for some small parameter $\xi >0$, introduce a mollified indicator function $\mathds{1}_{\br{|\cdot|\geq \xi}}$ and define the splitting
$$\kappa_{ij}^{(2)}=\kappa_{ij}^{(2),S}+\kappa_{ij}^{(2),R},$$
where again we have called the smooth part
$$\kappa_{ij}^{(2),S}(v,\eta)= \mathds{1}_{\br{|\eta|\geq \xi}} \kappa_{ij}^{(2)}(v,\eta),$$
and $\kappa_{ij}^{(2),R}$ is the remainder. 
Following again \cite[Lemma 5.1]{BriDau} and the computations in \cite{BonBouBriGre}, a similar analysis to the one carried out for $\kappa_{ij}^{(1)}$ gives in this case
$$\kappa_{ij}^{(2),R}(v,\eta) \sqrt{\frac{\mu_i(v-\eta)}{\mu_i(v)}}\leq C_{ji}(\delta)|\eta|^{\gamma-2}e^{-C(\delta)|\eta|^2},\quad \forall v\in\R^3,$$
for some positive explicit constants $C_{ji}(\delta), C(\delta)$ depending on a parameter $\delta\in (\bar{\delta},1)$, where~$\bar{\delta}\in (0,1)$. This in turns means that we recover the needed control in $L^1_\eta$, which reads
\begin{equation}\label{Estimate3}
\norm{\kappa_{ij}^{(2),R}(v,\eta)\sqrt{\frac{\mu_i(v-\eta)}{\mu_i(v)}}}_{L_{\eta}^1} \leq C^{(2)}_{ij}(\delta) \xi^{\gamma+1},  \quad \forall v\in\R^3.
\end{equation}
Next, we consider some $\eta$-derivatives of $\kappa_{ij}^{(2),S}$. Naming this time
$$W_{ij}^{(2)}(|\eta|^2,|X|^2)=\pa{\pa{\frac{m_i+m_j}{2m_j}}^2 |\eta|^2 + |X|^2}^{\frac{\gamma-1}{2}},$$
direct computations show that 
%
%
%
%
%
\begin{itemize}
\item[(i)]  $\disp \nabla_\eta\mathcal{P}_{ij}(v,\eta)\leq  C_{ji} \pa{1+ \norm{\boldu}_{L^\infty_{t,x}}}\pa{|v| + |v|^2 + |v|^4 + \sum_{k=-2}^1|\eta|^k} \mathcal{P}_{ij}(v,\eta),$  \\[2mm]
\item[(ii)]      $\disp \nabla_\eta b_{ij}(|\eta|^2,|X|^2)\leq C_{ji} \frac{|\eta|^2+|X|^4}{|\eta|^4},$   \\[2mm]
\item[(iii)]   $\disp W_{ij}^{(2)}(|\eta|^2,|X|^2)\leq C_{ji}|\eta|^{\gamma-1},$   \\[2mm]
\item[(iv)]   $\disp \nabla_\eta W_{ij}^{(2)}(|\eta|^2,|X|^2)\leq C_{ji} |\eta|^{\gamma-2},$  
\end{itemize}
and at last that
$$\nabla_{\eta}\abs{R\pa{\frac{\eta}{|\eta|},\pa{0,X+\frac{1}{2}\bar{X}}}-\eps u_j}^2 \leq C_{ji}\frac{1+\norm{\boldu}_{L^\infty_{t,x}}^2+|X|^2}{|\eta|}.$$
By gathering these estimates together, we derive the following upper bound for $\nabla_\eta \kappa_{ij}^{(2),S}$:
\begin{equation*}
\abs{\nabla_\eta \kappa_{ij}^{(2),S}(v,\eta)}\sqrt{\frac{\mu_i(v-\eta)}{\mu_i(v)}}\leq  C_{ji}(\delta) \norm{\boldc}_{L^\infty_{t,x}}\pa{1+ \norm{\boldu}_{L^\infty_{t,x}}}\pa{1+\sum_{k=0}^4 |\eta|^{\gamma-k}}e^{-C(\delta)|\eta|^2},
\end{equation*}
holding for all $v\in\R^3$ and for any $\delta\in (\bar{\delta},1)$, where $\bar{\delta}\in (0,1)$ is fixed independently of $\eps$. Then, we can obtain a similar control for $\nabla_v \kappa_{ij}^{(2),S}$, which reads 
\begin{equation*}
\abs{\nabla_v \kappa_{ij}^{(2),S}(v,\eta)}\sqrt{\frac{\mu_i(v-\eta)}{\mu_i(v)}} \leq C_{ji}(\delta) \norm{\boldc}_{L^\infty_{t,x}}\pa{1+\eps \norm{\boldu}_{L^\infty_{t,x}}}\pa{1+|\eta|^\gamma+|\eta|^{\gamma-1}}e^{-C(\delta)|\eta|^2},
\end{equation*}
and the same holds for the remainder term $\nabla_v \kappa_{ij}^{(2),R}$. The collection of these inequalities finally ensure the expected bounds in $L^1_\eta$
\begin{equation}\label{Estimate4}
\begin{split}
\norm{\nabla_\eta \kappa_{ij}^{(2),S}(v,\eta)\sqrt{\frac{\mu_i(v-\eta)}{\mu_i(v)}}}_{L^1_\eta} & \leq C^{(2)}_{ij}(\delta,\xi),
\\[4mm]    \norm{\nabla_v \kappa_{ij}^{(2),S}(v,\eta)\sqrt{\frac{\mu_i(v-\eta)}{\mu_i(v)}}}_{L^1_\eta} & \leq C^{(2)}_{ij}(\delta,\xi),
\\[4mm]    \norm{\nabla_v \kappa_{ij}^{(2),R}(v,\eta)\sqrt{\frac{\mu_i(v-\eta)}{\mu_i(v)}}}_{L^1_\eta} & \leq C^{(2)}_{ij}(\delta,\xi).
\end{split}
\end{equation}

\medskip
\noindent \textbf{Step 3 - Estimates for $\bm{\kappa_{ij}^{(3)}}$.} The case of $\kappa_{ij}^{(3)}$ is straightforward and we do not need any additional effort. Introducing as before the splitting $\kappa_{ij}^{(3)}=\kappa_{ij}^{(3),S}+\kappa_{ij}^{(3),R}$ and renaming $\eta=v-v_*$, $v_*=v-\eta$, very easy computations provide the bound for the remainder $\kappa_{ij}^{(3),R}$
\begin{equation}\label{Estimate5}
\norm{\kappa_{ij}^{(3),R}(v,\eta)\sqrt{\frac{\mu_j(v-\eta)}{\mu_i(v)}}}_{L_{\eta}^1} \leq C^{(3)}_{ij}(\delta) \xi^{\gamma+3},
\end{equation}
and the estimates for the derivatives
\begin{equation}\label{Estimate6}
\begin{split}
\norm{\nabla_\eta \kappa_{ij}^{(3),S}(v,\eta)\sqrt{\frac{\mu_j(v-\eta)}{\mu_i(v)}}}_{L^1_\eta} & \leq C^{(3)}_{ij}(\delta,\xi),
\\[4mm]   \norm{\nabla_v \kappa_{ij}^{(3),S}(v,\eta)\sqrt{\frac{\mu_j(v-\eta)}{\mu_i(v)}}}_{L^1_\eta} & \leq C^{(3)}_{ij}(\delta,\xi),  
\\[4mm]    \norm{\nabla_v \kappa_{ij}^{(3),R}(v,\eta)\sqrt{\frac{\mu_j(v-\eta)}{\mu_i(v)}}}_{L_{\eta}^1} & \leq C^{(3)}_{ij}(\delta,\xi),
\end{split}
\end{equation}
for all $v\in\R^3$. 

\medskip
\noindent \textbf{Step 4 - Conclusion.} Now, we can successively write 
\begin{equation*}
\begin{split}
&\scalprod{\nabla_v \bepsK(\boldf),\nabla_v \boldf}_{\hilbertxv} 
\\[2mm]  &\quad= \sum_{i,j=1}^N \int_{\T^3\times\R^3}  \pa{\nabla_v \int_{\R^3}\kappa_{ij}^{(1)}(v,v_*)f^*_j + \kappa_{ij}^{(2)}(v,v_*)f^*_i - \kappa_{ij}^{(3)}(v,v_*)f^*_j\dd v_*} \nabla_v f_i \mu_i^{-1}\dd x\dd v 
\\[2mm] &\quad  =    \sum_{i,j=1}^N \int_{\T^3\times\R^3}  \pa{\nabla_v \int_{\R^3}\pa{\kappa_{ij}^{(1),S}(v,\eta)+\kappa_{ij}^{(1),R}(v,\eta)}f_j(v-\eta)\dd\eta}\nabla_v f_i\mu_i^{-1}\dd x\dd v 
\\[1mm]    &\quad\quad\quad   + \sum_{i,j=1}^N \int_{\T^3\times\R^3}  \pa{\nabla_v \int_{\R^3}\pa{\kappa_{ij}^{(2),S}(v,\eta)+\kappa_{ij}^{(2),R}(v,\eta)}f_i(v-\eta)\dd\eta}\nabla_v f_i\mu_i^{-1}\dd x\dd v
\\[1mm]    &\quad\quad\quad     + \sum_{i,j=1}^N \int_{\T^3\times\R^3}  \pa{\nabla_v \int_{\R^3}\pa{\kappa_{ij}^{(3),S}(v,\eta)+\kappa_{ij}^{(3),R}(v,\eta)}f_j(v-\eta)\dd\eta}\nabla_v f_i\mu_i^{-1}\dd x\dd v 
\\[4mm]  &\ \   :=\mathcal{I}_1 + \mathcal{I}_2 +\mathcal{I}_3.
\end{split}
\end{equation*}
The behaviour of the three terms is the same. We shall thus focus on the first one and the others will be estimated in an equivalent way. Since
$$\nabla_v f_j(v-\eta)=-\nabla_\eta f_j(v-\eta),$$
we initially apply integration by parts to deduce that
\begin{equation*}
\begin{split}
\nabla_v \int_{\R^3}\Big(\kappa_{ij}^{(1),S} &(v,\eta)+\kappa_{ij}^{(1),R}(v,\eta)\Big)f_j(v-\eta)\dd\eta 
\\[2mm]   &\quad = \int_{\R^3}\pa{\nabla_v \kappa_{ij}^{(1),S}(v,\eta)-\nabla_\eta \kappa_{ij}^{(1),S}(v,\eta)+\nabla_v \kappa_{ij}^{(1),R}(v,\eta)}f_j(v-\eta)\dd\eta  
\\[2mm]   &\quad\quad\ +\int_{\R^3} \kappa_{ij}^{(1),R}(v,\eta)\nabla_v f_j(v-\eta)\dd\eta.
\end{split}
\end{equation*}
Thus, if we multiply and divide inside the integral by the factor $\sqrt{\mu_j(v-\eta)/\mu_i(v)}$, we can apply Cauchy-Schwarz inequality and an $L^1/L^2$ convolution inequality, together with estimates $\eqref{Estimate1}$ and $\eqref{Estimate2}$, to finally deduce that
$$\mathcal{I}_1 \leq C_1^{(1)}(\delta,\xi)\norm{\boldf}_{\hilbertxv}^2+\xi C_2^{(1)}(\delta)\norm{\nabla_v \boldf}_{\hilbertxv}^2,$$
for any $\xi \in (0,1)$.

\smallskip
As already mentioned, the same argument applies to $\mathcal{I}_2$ and $\mathcal{I}_3$ using estimates \eqref{Estimate3}--\eqref{Estimate4}--\eqref{Estimate5}--\eqref{Estimate6} and for some positive constants $C_k^{(2)}, C_k^{(3)}$, $k=1,2$. Gathering the bounds for the three terms concludes the proof.

\subsection{Proof of Lemma \ref{LemmaL}}

The proof follows exactly the computations presented in \cite{BonBouBriGre}, except for the final estimate where Young's inequality allows to refine the control on $\pi_{\boldL}(\boldf)$. Let us sketch the idea.

\smallskip
We initially rewrite $\bepsL$ componentwise as
\begin{equation*}
\begin{split}
\epsL_i & (\boldf) =
\\[1mm]  & \sum_{j=1}^N \int_{\R^3\times\Sf}B_{ij}(v,v_*,\vartheta)\Big(c_{i,\infty}{\mathcal{M}_i^\eps}^{\prime}\fps_j+c_{j,\infty}{\mathcal{M}_j^\eps}^{\prime *}\fp_i - c_{i,\infty}{\mathcal{M}_i^\eps}\fs_j-c_{j,\infty}{\mathcal{M}_j^\eps}^*f_i\Big)\dd v_* \dd\sigma        
\\[2mm]      & \qquad\ +\eps   \sum_{j=1}^N \int_{\R^3\times\Sf}B_{ij}(v,v_*,\vartheta)\pabb{\tilde{c}_i{\mathcal{M}_i^\eps}^{\prime}\fps_j+\tilde{c}_j{\mathcal{M}_j^\eps}^{\prime *}\fp_i-\tilde{c}_i{\mathcal{M}_i^\eps}\fs_j-\tilde{c}_j{\mathcal{M}_j^\eps}^*f_i}\dd v_* \dd\sigma
\\[6mm]    &\quad:= L_{i,\infty}^{\eps}(\boldf)+\eps\tilde{L}_i^{\eps}(\boldf). 
\end{split}
\end{equation*}
Now, we can study the Dirichlet form of $\bepsL$ by introducing the penalization with respect to $\boldL$. Recall the shorthand notation $\boldf^\perp = \boldf - \pi_\boldL(\boldf)$. Using the fact that $\bepsL$ shares the conservation properties of the Boltzmann operator $\boldQ$, we can successively write
\begin{equation*}
\begin{split}
\scalprod{\bepsL(\boldf),\boldf}_{\hilbertv} = &\ \big\langle \bepsL(\boldf),\boldf^\perp\big\rangle_{\hilbertv} 
\\[3mm]    = &\ \big\langle \boldL(\boldf),\boldf^\perp\big\rangle_{\hilbertv} + \big\langle\boldL_{\infty}^{\eps}(\boldf)-\boldL(\boldf),\boldf^\perp\big\rangle_{\hilbertv}
\\[2mm]    &\ + \eps\big\langle \tilde{\boldL}^{\eps}(\boldf),\boldf^\perp\big\rangle_{\hilbertv}.
\end{split}
\end{equation*}
We estimate each term separately. Thanks to \cite[Theorem 3.3]{BriDau}, the first term provides the spectral gap for $\boldL$ 
$$\big\langle \boldL(\boldf),\boldf^\perp\big\rangle_{\hilbertv}\leq -\lambda_{\boldL}\big\Vert\boldf^\perp\big\Vert_{\spacev}^2.$$
Recalling that $\boldL_{\infty}^{\eps}-\boldL=\mathcal{O}(\eps)$, the second term can be estimated following the strategy of \cite{BonBouBriGre}. We split $\boldf=\boldf^\perp+\pi_{\boldL}(\boldf)$ and we then apply Young's inequality with a positive constant $\eta_1/\eps$ to recover
\begin{multline*}
\big\langle\pab{\boldL_{\infty}^{\eps}-\boldL}(\boldf),\boldf^\perp\big\rangle_{\hilbertv} =  \big\langle\pab{\boldL_{\infty}^{\eps}-\boldL}(\boldf^\perp),\boldf^\perp\big\rangle_{\hilbertv} 
\\[2mm]   \hspace{5cm}+ \big\langle\pab{\boldL_{\infty}^{\eps}-\boldL}\pab{\pi_{\boldL}(\boldf)},\boldf^\perp\big\rangle_{\hilbertv} 
\\[2mm]  \leq \pab{\eps+\eta_1}C^\LL_2\big\Vert\boldf^\perp\big\Vert_{\spacev}^2+\eps^2 \norm{\boldu}_{L^\infty_{t,x}}\frac{C^\LL_2}{\eta_1}\norm{\pi_{\boldL}(\boldf)}_{\spacev}^2.\qquad
\end{multline*}
In a very similar way, using again Young's inequality with the same constant $\eta_1/\eps$ and increasing the constant $C^\LL_2$ if necessary, the third term provides 
\begin{equation*}
\begin{split}
\big\langle\tilde{\boldL}^\eps(\boldf),\boldf^\perp\big\rangle_{\hilbertv} = &\ \big\langle\tilde{\boldL}^\eps(\boldf^\perp),\boldf^\perp\big\rangle_{\hilbertv}+\big\langle\tilde{\boldL}^\eps(\pi_{\boldL}(\boldf)),\boldf^\perp\big\rangle_{\hilbertv}
\\[3mm]    \leq &\ \frac{\eta_1 C^\LL_2}{\eps}\big\Vert\boldf^\perp\big\Vert_{\spacev}^2 + \eps \norm{\tilde{\boldc}}_{L^\infty_{t,x}}\frac{C^\LL_2}{\eta_1}\norm{\pi_{\boldL}(\boldf)}_{\spacev}^2.
\end{split}
\end{equation*}
Gathering the three estimates, and using the Sobolev embedding of $H^{s/2}_x$ in $L^\infty_x$ to upper bound $\norm{\boldu}_{L^\infty_{t,x}}$ and $\norm{\tilde{\boldc}}_{L^\infty_{t,x}}$ by~$\delta_\ms$, we obtain the expected result. This ends the proof.

\subsection{Proof of Lemma \ref{LemmaQ}}
The first part comes from the conservation properties of the Boltzmann operator. In fact, we have seen that
$$\sum_{i=1}^N\int_{\R^3} Q_i(\boldg,\boldh)(v) \psi_i(v) \dd v=0,\quad \forall \boldg,\boldh\in \hilbertR,   $$
whenever $\pmb{\psi}\in\textrm{Span}\big(\mathbf{e}^{\One},\ldots,\mathbf{e}^{\Enn},v_1\mathbf{m},v_2\mathbf{m},v_3\mathbf{m},|v|^2\mathbf{m}\big)$ is a collision invariant of the mixture. In particular the above equality rewrites
$$\big\langle\boldQ(\boldg,\boldh),\pi_{\boldL}(\boldf)\big\rangle_{\hilbertv}=0,\quad \forall \boldf,\boldg,\boldh\in \hilbertR, $$
which is exactly $\eqref{Property6}$ for $\boldQ$. Then the property follows also for $\bepsL$, since by definition~$\bepsL(\boldf)=\boldQ(\bepsM,\boldf)+\boldQ(\boldf,\bepsM)$ and both terms are orthogonal to $\Ker\boldL$.

\medskip
The second part is an extension of the same property satisfied by the Boltzmann operator in the mono-species case. Let $s\in\N$ and $\alpha,\beta$ be fixed such that $|\alpha|+|\beta| = s$. We initially note that
$$\big\langle\multideriv \boldQ(\boldg,\boldh),\boldf\big\rangle_{L^2_{x,v}\big(\mu_i^{-\frac{1}{2}}\big)} = \sum_{i,j=1}^N \big\langle\multideriv Q_{ij}(g_i,h_j),f_i\big\rangle_{L^2_{x,v}\big(\mu_i^{-\frac{1}{2}}\big)}.$$
Since each $Q_{ij}$ acts like a mono-species Boltzmann operator, we can easily adapt the computations made in \cite[Appendix A]{Bri1} to prove that, for any $1\leq i,j\leq N$, there exist two nonnegative functionals $\mathcal{G}_{ij}^{x,v,s}$ and $\mathcal{G}_{ij}^{x,s}$ satisfying $\mathcal{G}_{ij}^{x,v,s+1}\leq \mathcal{G}_{ij}^{x,v,s}$, $\mathcal{G}_{ij}^{x,s+1}\leq \mathcal{G}_{ij}^{x,s}$ and such that
$$\big\langle\multideriv Q_{ij}(g_i,h_j),f_i\big\rangle_{L^2_{x,v}\big(\mu_i^{-\frac{1}{2}}\big)}\leq \left\{\begin{array}{cc}
\mathcal{G}_{ij}^{x,s}(g_i,h_j)\norm{f_i}_{L^2_{x,v}\big(\langle v \rangle^{\frac{\gamma}{2}}\mu_i^{-\frac{1}{2}}\big)} & \textrm{if }\ |\beta|=0,\\[7mm]
\mathcal{G}_{ij}^{x,v,s}(g_i,h_j)\norm{f_i}_{L^2_{x,v}\big(\langle v \rangle^{\frac{\gamma}{2}}\mu_i^{-\frac{1}{2}}\big)} & \textrm{if }\ |\beta|\geq 1.
\end{array}\right.$$
Moreover, there exists an integer $s_0 > 0$ such that, for all $s > s_0$, there exists an explicit constant $C^Q_{ij}>0$ verifying
\begin{multline*}
\mathcal{G}_{ij}^{x,s}(g_i,h_j) 
\\[1mm]     \leq C^Q_{ij}\pa{\norm{g_i}_{H_x^s L_v^2\pab{\mu_i^{-\frac{1}{2}}}}\norm{h_j}_{H_x^s L^2_v\pab{\langle v \rangle^{\frac{\gamma}{2}}\mu_j^{-\frac{1}{2}}}}+\norm{h_j}_{H_x^s L_v^2\pab{\mu_j^{-\frac{1}{2}}}}\norm{g_i}_{H^s_x L^2_v \pab{\langle v \rangle^{\frac{\gamma}{2}}\mu_i^{-\frac{1}{2}}}}},
\end{multline*}
and
\begin{multline*}
\mathcal{G}_{ij}^{x,v,s}(g_i,h_j)
\\[1mm]     \leq C^Q_{ij}\pa{\norm{g_i}_{H_{x,v}^s\pab{\mu_i^{-\frac{1}{2}}}}\norm{h_j}_{H_{x,v}^s\pab{\langle v \rangle^{\frac{\gamma}{2}}\mu_j^{-\frac{1}{2}}}}+\norm{h_j}_{H_{x,v}^s\pab{\mu_j^{-\frac{1}{2}}}}\norm{g_i}_{H^s_{x,v}\pab{\langle v \rangle^{\frac{\gamma}{2}}\mu_i^{-\frac{1}{2}}}}}.
\end{multline*}
From this, we immediately deduce that
$$\big\langle\multideriv \boldQ(\boldg,\boldh),\boldf\big\rangle_{\hilbertxv}\leq \left\{\begin{array}{cc}
\pa{\sum_{i,j=1}^N \mathcal{G}_{ij}^{x,s}(g_i,h_j)}\norm{\boldf}_{\spacexv} & \textrm{if }\ |\beta|=0,\\[9mm]
\pa{\sum_{i,j=1}^N \mathcal{G}_{ij}^{x,v,s}(g_i,h_j)}\norm{\boldf}_{\spacexv} & \textrm{if }\ |\beta|\geq 1,
\end{array}\right.$$
and defining $C^\QQ_s=N^2\disp \max_{1\leq i,j\leq N}C^Q_{ij}$ finishes the proof of \eqref{Property7}--\eqref{Property8}.

\subsection{Proof of Lemma \ref{LemmaS}}

We divide the proof into two separated parts for the sake of clarity. The first one deals with inequalities \eqref{Property9}--\eqref{Property10}, where at least one $v$-derivative is considered, since the proofs of the two estimates are almost the same. The second part treats pure spatial derivatives, which require a slightly different approach. Note that, throughout the proof, we shall carefully keep track of the parameter~$\delta_\ms^2$ which uniformly bounds the quantities $\norm{\tilde{\boldc}}_{L^\infty_t \spacex{s}}^2$ and $\norm{\boldu}_{L^\infty_t H^s_x}^2$. For simplicity, all the other factors multiplying the term of interest $\delta_\ms^2$ will always be hidden inside a suitable constant, even if a lower order (in $\delta_\ms$ or in $\eps$) appears. This choice will also help in enlightening the computations.

\bigskip 
\noindent \textbf{Step 1 - Estimates with velocity derivatives.}
The idea to recover the first estimate $\eqref{Property9}$ is to show that $\multideriv \bepsS=\mathcal{O}(\eps^{-1})$. For this, let us initially extract a power $\eps^{-1}$ from the source term and write
\begin{multline*}
\scalprod{\multideriv\bepsS,\multideriv \boldf}_{\hilbertxv}=\frac{1}{\eps}\Bigg(-\scalprod{\multideriv\pab{\partial_t\bepsM+\frac{1}{\eps}v\cdot\nabla_x \bepsM},\multideriv\boldf}_{\hilbertxv}
\\[2mm]      +\frac{1}{\eps^2}\scalprod{\multideriv\boldQ(\bepsM,\bepsM),\multideriv\boldf}_{\hilbertxv}\Bigg).
\end{multline*}
We shall treat the linear part and the nonlinear term separately.

Computing the time and space derivatives in the linear contribution, we note that, for any $1\leq i\leq N$,
\begin{multline}\label{Linear 1}
\partial_t \epsM_i+\frac{1}{\eps}v\cdot\nabla_x\epsM_i=\Bigg(\partial_t c_i +\eps m_i c_i (v-\eps u_i)\cdot \partial_t u_i
\\     +\sum_{k=1}^3 v_k\pabb{\eps\partial_{x_k}\tilde{c}_i+ m_i c_i (v-\eps u_i)\cdot \partial_{x_k}u_i}\Bigg){\mathcal{M}_i^\eps},\qquad
\end{multline}
and that the leading order seems to be $\mathcal{O}(1)$. Let us show that it is actually $\mathcal{O}(\delta_\ms)$. For this, recall that $(\boldc,\boldu)$ is the unique perturbative solution of the Maxwell-Stefan system \eqref{MS mass}--\eqref{MS momentum}--\eqref{MS incompressibility}. From the mass equation:
$$\partial_t \boldc = - \nabla_x\cdot(\boldc\boldu) = - \eps\nabla_x\tilde{\boldc} \cdot\boldu - \boldc\nabla_x\cdot\boldu$$
we easily deduce a bound for $\partial_t \boldc$. Rewriting the momentum equation $\eqref{MS momentum}$ in vectorial form:
$$\nabla_x \boldc = A(\boldc)\boldu,$$
we can derive an estimate for $\partial_t\boldu$ by taking the time derivative on both sides to obtain
$$A(\boldc)\partial_t\boldu= -\partial_t A(\boldc)\boldu +\partial_t(\nabla_x \boldc).$$
It is then straightforward to check that the right-hand side of the above equality satisfies
$$-\partial_t A(\boldc)\boldu +\partial_t(\nabla_x \boldc)\in (\Ker A)^\perp,$$
so that the relation can be inverted by applying $A(\boldc)^{-1}$, and the equation governing $\partial_t\boldu$ explicitly reads
\begin{equation*}
\partial_t\boldu = -A(\boldc)^{-1}\pabb{\partial_t A(\boldc)\boldu - \nabla_x \pab{- \eps\nabla_x\tilde{\boldc} \cdot\boldu - \boldc\nabla_x\cdot\boldu}}.
\end{equation*}
Noticing that one of the macroscopic quantities $\tilde{\boldc}$ and $\boldu$ (or their derivatives) always appears as a factor in each term of $\eqref{Linear 1}$, by means of the continuous Sobolev embedding of $H^{s/2}_x$ in $L^\infty_x$, $s>3$,  we can infer the existence of a polynomial $P$, such that, for any $1\leq i\leq N$ and for almost any~$(t,x,v)\in\R_+\times\T^3\times\R^3$,
\begin{equation*}
\abs{\partial_t \epsM_i(t,x,v)+\frac{1}{\eps}v\cdot\nabla_x\epsM_i(t,x,v)}^2  \leq \delta_\ms^2 C(\delta) P(v) \mathM_{i}^{2\delta}(v),
\end{equation*}
for a positive constant $C(\delta)$ only depending on an arbitrary parameter $\delta\in (0,1)$, which is chosen independently of $\eps$. Note in particular the factor $2$ (simply coming from the square power) in the exponent of $\mathM_i^{2\delta}$, which is crucial in order to compensate the weight $\mu_i^{-1}$ of the $\hilbertxv$ norm. Moreover, we again emphasize that the other terms containing $\norm{\boldc}_{L^\infty_t \spacex{s}}$ or $\norm{\boldu}_{L^\infty_t H^s_x}$ (for $s\leq 4$) have been simply collected inside the constant $C(\delta)$.

Now, taking $\alpha$ and $\beta$ derivatives of the linear term only increases the number of factors depending on polynomials of $v$, and (at most) $|\alpha|+4$ derivatives of $\boldc$ and $\boldu$. In particular it does not modify the leading order $\mathcal{O}(\eps^{-1})$, nor the presence of the multiplicative constant $\delta_\ms^2$. Therefore, the exact same arguments apply, and we deduce that there also exist a polynomial $P_L$ and a constant $C_L(\delta)>0$ such that, for any $1\leq i\leq N$ and for almost any~$(t,x,v)\in\R_+\times\T^3\times\R^3$,
\begin{equation}\label{Linear start}
\abs{\multideriv\pa{\partial_t \epsM_i(t,x,v)+\frac{1}{\eps}v\cdot\nabla_x\epsM_i(t,x,v)}}^2 \leq \delta_\ms^2 C_{L}(\delta) P_L (v) \mathM_{i}^{2\delta}(v),
\end{equation}
where again $\delta\in (0,1)$ is arbitrarily chosen, and the other dependencies on $\norm{\boldc}_{L^\infty_t H^{\abs{\alpha}+4}_x}$ and $\norm{\boldu}_{L^\infty_t H^{\abs{\alpha}+4}_x}$ have been hidden inside $C_L(\delta)$.

\vspace{1mm}
If we now fix $\delta\in (1/2,1)$ in $\eqref{Linear start}$, applying Young's inequality with a positive constant $\eta_2/\eps$ and recalling the that we use the notation $\mu_i=c_{i,\infty}\mathcal{M}_i$ for any $1\leq i\leq N$, we can finally estimate the linear part as
\begin{equation*}
\frac{1}{\eps}\abs{\scalprod{\multideriv\pab{\partial_t\bepsM+\frac{1}{\eps}v\cdot\nabla_x \bepsM},\multideriv\boldf}_{\hilbertxv}}\hspace{6cm}
\end{equation*}
\begin{equation} \label{LinearPart}
\begin{split}
\qquad  & \leq \frac{1}{\eta_2}\norm{\multideriv\pab{\partial_t\bepsM+\frac{1}{\eps}v\cdot\nabla_x \bepsM}}_{\hilbertxv}^2 +\frac{\eta_2}{\eps^2}\norm{\multideriv \boldf}_{\hilbertxv}^2
\\[3mm]  & \leq \frac{\delta_\ms^2 C_{L}(\delta)}{\eta_2} \sum_{i=1}^N\int_{\T^3\times\R^3} c_{i,\infty}^{-1} P_{L}(v)\mathM_i^{2\delta-1}(v) \dd x \dd v +\frac{\eta_2}{\eps^2}\norm{\multideriv \boldf}_{\hilbertxv}^2 
\\[5mm]  & \leq \frac{\delta_\ms^2 C_L(\delta)}{\eta_2\disp\min_{1\leq i\leq N} c_{i,\infty}} +\frac{\eta_2}{\eps^2}\norm{\multideriv \boldf}_{\hilbertxv}^2,  
\end{split}
\end{equation}
by simply increasing the constant $C_L(\delta)$ in order to include the value of the sum of the integrals, which are clearly finite since $2\delta-1 >0$ thanks to our choice of the parameter $\delta$.

\medskip
Let us continue with the analysis of the nonlinear term $\boldQ(\bepsM,\bepsM)$. For this, recall that the macroscopic velocity $\boldu$ is defined componentwise as $u_i=\bar{u}+\eps \tilde{u}_i$, and introduce the local Maxwellian distribution $\bmathM_{\bar{u}}=(\mathM_{\bar{u},1},\ldots,\mathM_{\bar{u},N})$, given for any $1\leq i\leq N$ by
$$\mathM_{\bar{u},i}(t,x,v)=\pa{\frac{m_i}{2\pi}}^{3/2}\exp\br{-\frac{m_i}{2}\abs{v-\eps \bar{u}(t,x)}^2},\quad t\geq 0,\ x\in\T^3,\ v\in\R^3.$$
Since the bulk velocity $\bar{u}$ is common to all the different species, $\bmathM_{\bar{u}}$ is a local equilibrium state of the mixture. This means in particular that $\boldQ(\bmathM_{\bar{u}},\bmathM_{\bar{u}})=0$. We exploit this property by following the same used to study the linearized operator $\bepsL$. Namely, we split $\bepsM=\boldc\bmathM_{\bar{u}}+(\bepsM-\boldc\bmathM_{\bar{u}})$ into a local equilibrium part which cancels the nonlinear term, plus a penalty which is close to this local equilibrium up to an order $\eps^2\norm{\tilde{\boldu}}_{L^\infty_{t,x}}$. More precisely, since also~$\boldQ(\boldc\bmathM_{\bar{u}},\boldc\bmathM_{\bar{u}})=0$, we can rewrite
\begin{equation*}
\begin{split}
\boldQ(\bepsM,\bepsM)=&\ \boldQ(\boldc\bmathM_{\bar{u}},\bepsM-\boldc\bmathM_{\bar{u}})+\boldQ(\bepsM-\boldc\bmathM_{\bar{u}},\boldc\bmathM_{\bar{u}})
\\[2mm]    & +\boldQ(\bepsM-\boldc\bmathM_{\bar{u}},\bepsM-\boldc\bmathM_{\bar{u}}).
\end{split}
\end{equation*}
Similarly to the case of the linear part, one can infer (see \cite[Lemma 4.1]{BonBouBriGre} for more details) the existence of some polynomials $P_1$ and $P_2$ in one variable, such that, for any $1\leq i\leq N$ and for almost any $(t,x,v)\in\R_+\times\T^3\times\R^3$,
%
%
%
%
\begin{equation}\label{Nonlinear start}
\begin{split}
\multideriv\pabb{c_i(t,x)\pab{\mathM^\eps_i(t,x,v) - \mathM_{\bar{u},i}(t,x,v)}} &\leq \eps^2 \delta_\ms C_1(\delta) P_1(v)\mathM_i^\delta(v),   
\\[5mm]   \multideriv \pab{c_i(t,x)\mathM_{\bar{u},i}(t,x,v)} &\leq C_2(\delta) P_2(v)\mathM_i^\delta(v),  
\end{split}
\end{equation}
for some positive constants $C_1(\delta), C_2(\delta)$ depending on a parameter $\delta\in (0,1)$, which can be chosen independently of $\eps$. It is important to note that, for $|\alpha|=0$, we can still recover the correct factor $\delta_\ms$ in the first inequality, but this is not the case for the second one. Indeed, when $|\alpha|=0$, the computations carried out in Section \ref{Lemma1}, proof of Lemma \ref{lemma:Properties 1-2}, show that
\begin{equation*}
\partial^\beta_v \pab{c_i(\mathM^\eps_i-\mathM_{\bar{u},i})} \leq \eps^2 \norm{\tilde{\boldu}}_{L^\infty_{t,x}} \tilde{C}_1(\delta) \tilde{P}_1(v)\mathM_i^\delta(v),
\end{equation*}
for some other polynomial $\tilde{P}_1$ and a constant $\tilde{C}_1(\delta)>0$. In particular, we immediately see that this does not represent an issue, since we always consider products of type~${\multideriv\pab{c_i(\mathM^\eps_i-\mathM_{\bar{u},i})} \times \multideriv \pab{c_i\mathM_{\bar{u},i}}}$, which preserve the expected $\mathcal{O}(\delta_\ms^2)$.

Indeed, using estimate $\eqref{Property8}$ from Lemma \ref{LemmaQ} applied to the decomposition of $\boldQ(\bepsM,\bepsM)$, we deduce that
\begin{equation*}
\begin{split}
\frac{1}{\eps^3}& \abs{\scalprod{\multideriv \boldQ(\bepsM,\bepsM),\multideriv \boldf}_{\hilbertxv}} 
\\[2mm]    & \quad \leq \frac{2 C^\QQ_s }{\eps}\Bigg(\norm{\boldc\bmathM_{\bar{u}}}_{\sobolevxv{s}} \norm{\frac{\bepsM-\boldc\bmathM_{\bar{u}}}{\eps^2}}_{H^s_{x,v}\big( \langle v \rangle^{\frac{\gamma}{2}} \boldsymbol\mu^{-\frac{1}{2}}\big)}
\\[1mm]    & \quad \quad + \norm{\frac{\bepsM-\boldc\bmathM_{\bar{u}}}{\eps^2}}_{\sobolevxv{s}}\norm{\boldc\bmathM_{\bar{u}}}_{H^s_{x,v}\big( \langle v \rangle^{\frac{\gamma}{2}} \boldsymbol\mu^{-\frac{1}{2}}\big)}\Bigg)\norm{\multideriv \boldf}_{\spacexv}                    
\\[2mm]     & \quad \quad + \frac{C^\QQ_s}{\eps}\Bigg(\norm{\bepsM-\boldc\bmathM_{\bar{u}}}_{\sobolevxv{s}}\norm{\frac{\bepsM-\boldc\bmathM_{\bar{u}}}{\eps^2}}_{H^s_{x,v}\big( \langle v \rangle^{\frac{\gamma}{2}} \boldsymbol\mu^{-\frac{1}{2}}\big)}
\\[1mm]     & \quad \quad + \norm{\frac{\bepsM-\boldc\bmathM_{\bar{u}}}{\eps^2}}_{\sobolevxv{s}} \norm{\bepsM-\boldc\bmathM_{\bar{u}}}_{H^s_{x,v}\big( \langle v \rangle^{\frac{\gamma}{2}} \boldsymbol\mu^{-\frac{1}{2}}\big)}\Bigg) \norm{\multideriv\boldf}_{\spacexv}.                   
\end{split}
\end{equation*}
Now, each term inside the parentheses is dealt with by simply taking the square power in the inequalities $\eqref{Nonlinear start}$. In particular, we recognize the same structure of $\eqref{Linear term}$, which allows to infer the boundedness of all the terms involving $\boldc\bmathM_{\bar{u}}$ and $\bepsM-\boldc\bmathM_{\bar{u}}$, by some constant only depending on $\norm{\boldc}_{L^\infty_t\spacex{|\alpha|+4}}$ and $\norm{\boldu}_{L^\infty_t H^{|\alpha|+4}_x}$. In this way we recover the correct leading order $\mathcal{O}(\eps^{-1})$, and we can finally apply Young's inequality with the same constant $\eta_2/\eps$ used for the linear part, to get
\begin{equation}\label{NonlinearPart}
 \frac{1}{\eps^3}\scalprod{\multideriv \boldQ(\bepsM,\bepsM),\multideriv \boldf}_{\hilbertxv}\leq      
\frac{\delta_\ms^2 C_Q(\delta)}{\eta_2} +\frac{\eta_2}{\eps^2}\norm{\multideriv \boldf}_{\spacexv}^2. 
\end{equation}
To conclude, since $\langle v\rangle^\gamma\geq 1$, it is straightforward that
\begin{equation*}
\norm{\multideriv\boldf}_{\hilbertxv}\leq \norm{\multideriv\boldf}_{\spacexv},\quad \forall\boldf\in\sobolevTR{s}.
\end{equation*}
Thanks to this inequality, we can further bound the linear part $\eqref{LinearPart}$ to finally ensure the validity of the first estimate $\eqref{Property9}$, by gathering $\eqref{LinearPart}$ and $\eqref{NonlinearPart}$ with the choices~${C_{\alpha,\beta}=\max\br{\frac{C_L(\delta)}{\min_i c_{i,\infty}},C_Q(\delta)}}$.

\bigskip
The same arguments can be used to derive estimate $\eqref{Property10}$. The only difference is that, in the $\mathcal{H}^s_\eps$ norm, we only have a factor $\eps$ multiplying the commutator. Therefore, in this case, we are constrained to preserve the order $\eps^{-1}$ in front of each term. This is easily achieved by choosing a general positive constant $\eta_3$ (in place of $\eta_2/\eps$) each time Young's inequality is applied. In this way, estimate $\eqref{Property10}$ for the commutator directly follows.

\bigskip
\noindent \textbf{Step 2 - Estimates for the spatial derivatives.}
In order to prove estimate $\eqref{Property11}$ for the $x$-derivatives, we first expand the scalar product by means of the orthogonal projection~$\pi_\boldL$. We split
\begin{equation}\label{Last estimate}
\scalprod{\partial^\alpha_x \bepsS,\partial^\alpha_x\boldf}_{\hilbertxv}=\scalprod{\pi_{\boldL}(\partial^\alpha_x\bepsS),\pi_{\boldL}(\partial^\alpha_x \boldf)}_{\hilbertxv} +\big\langle\partial^\alpha_x \bepsS^\perp,\partial^\alpha_x \boldf^\perp\big\rangle_{\hilbertxv},
\end{equation}
and we study both contributions separately. Regarding the first term, let us begin by making some considerations about $\pi_{\boldL}(\bepsS)$. Using the linearity of the orthogonal projection, together with the fact that $\boldQ(\boldf,\boldf)\in(\Ker\boldL)^\perp$, from Lemma \ref{LemmaQ}, we obtain
\begin{eqnarray*}
\pi_{\boldL}(\bepsS)&=&\pi_{\boldL}\pa{-\frac{1}{\eps}\partial_t \bepsM-\frac{1}{\eps^2}v\cdot\nabla_x \bepsM}+\frac{1}{\eps^3}\pi_{\boldL}\pab{\boldQ(\bepsM,\bepsM)}
\\[2mm]&=& -\frac{1}{\eps^2}\pi_{\boldL}\pab{\eps\partial_t\bepsM+v\cdot\nabla_x\bepsM}.            
\end{eqnarray*}
The idea is again to prove that this term is actually $\mathcal{O}(\delta_\ms)$. In fact, $\pi_{\boldL}\pab{\eps\partial_t\bepsM+v\cdot\nabla_x\bepsM}$ is nothing but the projection onto the macroscopic equations governed by the Maxwell-Stefan system \eqref{MS mass}--\eqref{MS momentum}--\eqref{MS incompressibility}. Thus, following the computations in \cite{BouGreSal2}, it is easy to see that, for any $1\leq i\leq N$,
$$\int_{\R^3}\epsM_i(t,x,v)\pa{\begin{array}{c}1\\ v\\ |v|^2 \end{array}}\dd v=\pa{\begin{array}{c} c_i(t,x)\\ \\ \eps c_i(t,x)u_i(t,x)\\ \\\frac{3}{m_i}c_i(t,x)+\eps^2c_i(t,x)|u_i(t,x)|^2 \end{array}}.$$
We can then deduce, for any $1\leq i\leq N$, that
\begin{equation*}
\int_{\R^3}\pab{\eps\partial_t\epsM_i+v\cdot\nabla_x \epsM_i}\dd v=\eps\pabb{\partial_t c_i+\nabla_x\cdot (c_i u_i)},\hspace{5.4cm}
\end{equation*}

\begin{equation*}
\int_{\R^3}m_i v\pab{\eps\partial_t\epsM_i+v\cdot\nabla_x \epsM_i}\dd v=\eps^2m_i\pabb{\partial_t(c_i u_i)+\nabla_x\cdot(c_i u_i\otimes u_i)}+\nabla_x c_i,\hspace{1.5cm}
\end{equation*}

\begin{multline*}
\int_{\R^3}\frac{m_i|v|^2-3}{\sqrt{6}}\pab{\eps\partial_t\epsM_i+v\cdot\nabla_x \epsM_i}\dd v=
\\[1mm]   \frac{2\eps}{\sqrt{6}}\nabla_x\cdot(c_i u_i) + \frac{\eps^3}{\sqrt{6}}\pabb{m_i \partial_t (c_i |u_i|^2)+3\nabla_x\cdot(c_i|u_i|^2u_i)},
\end{multline*}
since

$$\int_{\R^3}|v|^2v\cdot\nabla_x\epsM_i\dd v=\eps\frac{5}{m_i}\nabla_x\cdot (c_i u_i)+\eps^3\frac{3}{m_i}\nabla_x\cdot(c_i|u_i|^2u_i).$$
The fact that $(\boldc,\boldu)$ is a solution of the system \eqref{MS mass}--\eqref{MS momentum}--\eqref{MS incompressibility} ensures that, for any~${1\leq i\leq N}$,
$$\partial_t c_i+\nabla_x\cdot (c_i u_i)=0,$$
with
$$\sum_{i=1}^N\nabla_x c_i=0,\qquad \partial_t\pa{\sum_{i=1}^N c_i}=0,\qquad \nabla_x\cdot \pa{\sum_{i=1}^N c_i u_i}=0.  $$
This allows to conclude from $\eqref{Projection explicit}$ that the orthogonal projection of $\bepsS$ onto $\Ker\boldL$ explicitly writes
\begin{equation*}
\begin{split}
\pi_{\boldL} & (\bepsS)=
\\[2mm]   & -\frac{v}{\sum_{i=1}^N m_i c_{i,\infty}}\cdot \cro{\sum_{i=1}^N m_i\pabb{\partial_t(c_i u_i)+\nabla_x\cdot(c_i u_i\otimes u_i)}}\pab{m_i\mu_i}_{1\leq i\leq N}
\\[2mm]    & -\frac{\eps}{\sqrt{6}\sum_{i=1}^N c_{i,\infty}}\cro{\sum_{i=1}^N\pabb{m_i\partial_t (c_i |u_i|^2)+3\nabla_x\cdot(c_i|u_i|^2u_i)}}\pa{\frac{m_i|v|^2-3}{\sqrt{6}}\mu_i}_{1\leq i\leq N}.
\end{split}
\end{equation*}
Supposing that we have all the required regularity on the solution $(\boldc,\boldu)$, as done in the previous step, we now note that either $\tilde{c}_i$ or $u_i$ always appear in each term of the sums in the left-hand side. Therefore, we can infer the existence of two constants $C_1,\:C_2 >0$ such that
\begin{gather*}
\norm{\sum_{i=1}^N\pabb{\partial_t(c_i u_i)+\nabla_x\cdot(c_i u_i\otimes u_i)}}_{L_x^2}^2 \leq C_1 \delta_\ms^2,
\\[3mm]   \norm{\sum_{i=1}^N\pabb{m_i \partial_t (c_i |u_i|^2)+3\nabla_x\cdot(c_i|u_i|^2u_i)}}_{L_x^2}^2  \leq C_2 \delta_\ms^2,
\end{gather*}
which in turn ensures that it is possible to control $\pi_{\boldL}(\bepsS)$ in the $\hilbertxv$ norm as
\begin{equation*}
\begin{split}
\norm{\pi_{\boldL}(\bepsS)}_{\hilbertxv}^2  & \leq C_{\pi_\boldL}\delta_\ms^2 \norm{(1+|v|+|v|^2)\boldmu}_{\hilbertxv}^2 
\\[3mm]   & \leq C_{\pi_\boldL} \delta_\ms^2, 
\end{split}
\end{equation*}
since the integral is clearly finite (we have also used that $\eps\in (0,1]$).

Moreover, because the orthogonal projection only acts on the velocity variable, it commutes with the $x$-derivatives. Note that this property was not at hand in the previous step, when dealing with $v$-derivatives. Therefore, in the case $|\beta|=0$ we are looking at here, the arguments used for $\pi_{\boldL}(\bepsS)$ apply in the exact same way to $\pi_{\boldL}(\partial^\alpha_x \bepsS)$, allowing to deduce that
\begin{equation*}
\norm{\pi_{\boldL}(\partial^\alpha_x\bepsS)}_{\hilbertxv}^2 = \norm{\partial^\alpha_x \pi_{\boldL}(\bepsS)}_{\hilbertxv}^2 \leq C_\alpha^{(1)} \delta_\ms^2. 
\end{equation*}
We conclude that the first contribution can easily be estimated applying Young's inequality with a constant $\eta_4 >0$, to get
\begin{equation}\label{ProjectionPart}
\begin{split}
\scalprod{\pi_{\boldL}(\partial^\alpha_x\bepsS),\pi_{\boldL}(\partial^\alpha_x\boldf)}_{\hilbertxv} \leq &\ \frac{1}{\eta_4}\norm{\pi_{\boldL}(\partial^\alpha_x\bepsS)}_{\hilbertxv}^2 + \eta_4 \norm{\pi_{\boldL}(\partial^\alpha_x\boldf)}_{\hilbertxv}^2
\\[5mm]   \leq &\  \frac{\delta_\ms^2 C_\alpha^{(1)}}{\eta_4} + \eta_4 \norm{\pi_{\boldL}(\partial^\alpha_x\boldf)}_{\hilbertxv}^2. 
\end{split}
\end{equation}

The second contribution in $\eqref{Last estimate}$ is handled in a very similar way, by explicitly writing the definition of the orthogonal part. It gives
\begin{eqnarray*}
\big\langle\partial^\alpha_x\bepsS^\perp,\partial^\alpha_x\boldf^\perp\big\rangle_{\hilbertxv}&=&\big\langle\partial^\alpha_x\bepsS-\pi_{\boldL}(\partial^\alpha_x\bepsS),\partial^\alpha_x\boldf^\perp\big\rangle_{\hilbertxv}
\\[3mm]    &=&\big\langle\partial^\alpha_x\bepsS,\partial^\alpha_x\boldf^\perp\big\rangle_{\hilbertxv}-\big\langle\pi_{\boldL}(\partial^\alpha_x\bepsS),\partial^\alpha_x\boldf^\perp\big\rangle_{\hilbertxv}.
\end{eqnarray*}
Looking at these two terms, we regognize the same structure of the scalar products that led to estimates $\eqref{LinearPart}$, $\eqref{NonlinearPart}$ and $\eqref{ProjectionPart}$. In fact, all the computations carried out until now only depend on the particular form of the source term, and not on the scalar product in itself (the final upper bounds have been always obtained thanks to an application of Young's inequality). Using Young's inequality with two positive constants $\eta_5/\eps$ and $\eta_5$, we can thus repeat the previous considerations to recover (increasing the values of the constants if necessary)
\begin{equation*}
\begin{split}
\big\langle\partial^\alpha_x\bepsS,\partial^\alpha_x\boldf^\perp\big\rangle_{\hilbertxv}\leq \frac{\delta_\ms^2 \pab{C_L(\delta) + C_Q(\delta)}}{\eta_5} + \frac{\eta_5}{\eps^2}\norm{\partial^\alpha_x\boldf^\perp}_{\spacexv}^2,
\end{split}
\end{equation*}
from \eqref{LinearPart}--\eqref{NonlinearPart}, and
\begin{equation*}
\begin{split}
\big\langle\pi_{\boldL}(\partial^\alpha_x\bepsS),\partial^\alpha_x\boldf^\perp\big\rangle_{\hilbertxv} \leq \frac{\delta_\ms^2 C_\alpha^{(1)}}{\eta_5} + \eta_5 \norm{\partial^\alpha_x\boldf^\perp}_{\hilbertxv}^2,
\end{split}
\end{equation*}
from $\eqref{ProjectionPart}$. Therefore, since the $\spacexv$ norm controls the $\hilbertxv$ norm, we obtain
\begin{equation}\label{OrthogonalPart}
\begin{split}
\big\langle\partial^\alpha_x\bepsS^\perp,\partial^\alpha_x\boldf^\perp\big\rangle_{\hilbertxv} \leq &\  \frac{\delta_\ms^2 C_\alpha^{(2)}}{\eta_5} + \frac{\eta_5}{\eps^2} \norm{\partial^\alpha_x\boldf^\perp}_{\spacexv}^2. 
\end{split}
\end{equation}

\vspace{1mm}
We finally infer the validity of the last estimate $\eqref{Property11}$, by gathering \eqref{ProjectionPart}--\eqref{OrthogonalPart} with the choice $C_{\alpha} = \max\br{\eta_5 C_\alpha^{(1)},\eta_4 C_\alpha^{(2)}}$. This concludes the proof of Lemma \ref{LemmaS}. 
\bigskip


\appendix


\section{Explicit Carleman representation of the operator $\bepsK$}\label{App:Carleman}

\noindent We here provide the basic tools that are used in Lemma \ref{LemmaK} to prove the regularizing effect of $\multideriv \bepsK$. Looking at the work of Mouhot and Neumann \cite{MouNeu} in the mono-species context, the authors recover this property by transferring to the kernel of the compact operator $K$ all the derivatives, which are then computed and estimated explicitly. This analysis is possible mainly because the kernel of $K$ has itself an explicit expression. Ideally, one may want to apply a similar strategy in our multi-species framework, but this would require knowing the structure of the kernel of $\bepsK$.

In this first appendix we derive an explicit expression of the kernel of $\bepsK$, following the methods of \cite{BriDau} where a Carleman representation of the Boltzmann multi-species operator was obtained. In particular, we shall rework the pointwise estimates established by the authors, replacing them by a series of pointwise equalities where all the technical computations are made fully explicit.

\vspace{1mm}
Let us begin by recalling that $\bepsK=(\epsK_1,\ldots,\epsK_N)$  can be written componentwise, for any $\boldf\in\hilbertR$, under the kernel form \cite[Lemma 5.1]{BriDau}
\begin{equation*}
\begin{split}
 \epsK_i(\boldf)&(v) = 
 \\[2mm]   & \sum_{j=1}^N C_{ij} \int_{\R^3}\left(\frac{1}{|v-v_*|}\int_{\tilde{E}^{ij}_{v v_*}}\frac{B_{ij}\left(v-V(w,v_*),\frac{v_*-w}{|w-v_*|}\right)}{|w-v_*|}\epsM_i(w)\ \dd \tilde{E}(w)\right)\fs_j\dd v_*
 \\[2mm]    + &  \sum_{j=1}^N C_{ji} \int_{\R^3}\left(\frac{1}{|v-v_*|}\int_{E^{ij}_{v v_*}}\frac{B_{ij}\left(v-V(v_*,w),\frac{w-v_*}{|w-v_*|}\right)}{|w-v_*|}\epsM_j(w)\ \dd E(w)\right)f_i^*\dd v_* 
 \\[2mm]      -  &  \sum_{j=1}^N\int_{\R^3}\pa{\int_{\Sf}B_{ij}(\abs{v-v_*},\cos\theta)\epsM_i\dd \sigma}\fs_j\dd v_*, 
\end{split}
\end{equation*}
where we have defined $V(w,v_*)=v_*+m_i m_j^{-1}w-m_i m_j^{-1}v$ and called $C_{ij},C_{ji}>0$ some explicit constants which only depend on the masses $m_i, m_j$. Moreover, we have denoted by $\dd E$ the Lebesgue measure on the hyperplane $E_{v v_*}^{ij}$, orthogonal to $v-v_*$ and passing through 
\[       V_E(v,v_*)=\frac{m_i+m_j}{2 m_j}v-\frac{m_i-m_j}{2m_j}v_*,      \]
and by $\dd \tilde{E}$ the Lebesgue measure on the space $\tilde{E}_{v v_*}^{ij}$ which corresponds to the hyperplane $E_{v v_*}^{ij}$ whenever $m_i=m_j$, and to the sphere of radius $R=R(v,v_*)$
\[        R=\frac{m_j}{|m_i-m_j|}|v-v_*|     \]
and centred at $O=O(v,v_*)$
\[       O=\frac{m_i}{m_i-m_j}v-\frac{m_j}{m_i-m_j}v_*,     \]
whenever $m_i\neq m_j$.

We can thus define, for any $1\leq i,j\leq N$, the kernels
\begin{equation*}
\begin{split}
\kappa_{ij}^{(1)}(v,v_*) & = \frac{C_{ij}}{|v-v_*|}\int_{\tilde{E}^{ij}_{v v_*}}\frac{B_{ij}\left(v-V(w,v_*),\frac{v_*-w}{|w-v_*|}\right)}{|w-v_*|}\epsM_i(w)\ \dd \tilde{E}(w),
\\[2mm]     \kappa_{ij}^{(2)}(v,v_*) & = \frac{C_{ji}}{|v-v_*|}\int_{E^{ij}_{v v_*}}\frac{B_{ij}\left(v-V(v_*,w),\frac{w-v_*}{|w-v_*|}\right)}{|w-v_*|}\epsM_j(w)\ \dd E(w),
\\[5mm]    \kappa_{ij}^{(3)}(v,v_*) & = \int_{\Sf}B_{ij}(\abs{v-v_*},\cos\theta)\epsM_i\dd \sigma,
\end{split}
\end{equation*}
where we have dropped the parameter $\eps$ in order to enlighten our notations. In this way, each $\epsK_i$ can be rewritten as
\[         \epsK_i(\boldf)(v)=\sum_{j=1}^N\br{\int_{\R^3}\kappa_{ij}^{(1)}(v,v_*)\fs_j\dd v_* + \int_{\R^3}\kappa_{ij}^{(2)}(v,v_*)\fs_i\dd v_* - \int_{\R^3}\kappa_{ij}^{(3)}(v,v_*)\fs_j\dd v_*}.            \]
Let us now fix two indices $i,j\in\br{1,\ldots,N}$ and study each of the three kernels separately.

\subsection{Explicit form of $\kappa_{ij}^{(1)}$}  

The first kernel is easy to make explicit, since the domain of integration $\tilde{E}_{v v_*}^{ij}$ is a sphere. We thus perform an initial change of variables consisting on a translation of its centre and a dilation of its radius, in order to end up on $\Sf$. In this new coordinate system, $\kappa_{ij}^{(1)}$ reads
\[              \kappa_{ij}^{(1)}(v,v_*)=\frac{C_{ij}m_j^2}{(m_i -m_j)^2}\abs{v-v_*}\int_{\Sf}\frac{b_{ij}(v,v_*,\omega)}{|R\omega+O-v_*|^{1-\gamma}}\epsM_i(R\omega+O)\ \dd \omega              \]
where the angular part $b_{ij}$ explicitly writes
\[               b_{ij}(v,v_*,\omega) = b_{ij}\pa{\frac{\pab{v-V(R\omega+O,v_*)}\cdot\pab{v_*-\pa{R\omega+O}}}{\abs{v-V(R\omega+O,v_*)}\abs{R\omega+O-v_*}}},                 \]
recalling that we have defined $V(w,v_*)=v_*+m_i m_j^{-1}w-m_i m_j^{-1}v$. Simple algebraic manipulations show that
\begin{equation*}
\begin{split}
v-V(R\omega+O,v_*) = &\  v-v_*-\frac{m_i}{m_j}(R\omega+O)+\frac{m_i}{m_j}v
\\[2mm]   &   -\frac{m_i}{\abs{m_i-m_j}}\abs{v-v_*}\omega-\frac{m_j}{m_i-m_j}(v-v_*), 
\end{split}
\end{equation*}
and similarly
\[    v_*-\pa{R\omega+O}=-\frac{m_j}{\abs{m_i-m_j}}\abs{v-v_*}\omega-\frac{m_i}{m_i-m_j}(v-v_*).    \]
It is then easy to check that
\begin{equation*}
\begin{split}
\pab{v-V(R\omega+O,v_*)}& \cdot \pab{v_*-\pa{R\omega+O}} 
\\[2mm]      = &\ \frac{2m_im_j}{(m_i-m_j)^2}|v-v_*|^2+\frac{m_i^2+m_j^2}{(m_i-m_j)|m_i-m_j|}|v-v_*|(v-v_*)\cdot \omega
\\[2mm]      = &\  |v-v_*|^2\pa{\frac{2m_im_j}{(m_i-m_j)^2}+\frac{m_i^2+m_j^2}{(m_i-m_j)|m_i-m_j|}\frac{(v-v_*)\cdot\omega}{|v-v_*|}}
\end{split}
\end{equation*}
and
\begin{equation*}
\begin{split}
\abs{v-V(R\omega+O,v_*)}^2 & = \abs{R\omega+O-v_*}^2
\\[2mm]    & = \frac{m_i^2+m_j^2}{(m_i-m_j)^2}|v-v_*|^2+ \frac{2m_im_j}{(m_i-m_j)|m_i-m_j|}|v-v_*|(v-v_*)\cdot\omega
\\[2mm]     & =  |v-v_*|^2\pa{\frac{m_i^2+m_j^2}{(m_i-m_j)^2}+\frac{2m_im_j}{(m_i-m_j)|m_i-m_j|}\frac{(v-v_*)\cdot\omega}{|v-v_*|}}.   
\end{split}
\end{equation*}
Thus, the angular part depends on $v$ only through the cosine of the deviation angle between $\frac{v-v_*}{|v-v_*|}$ and $\omega\in\Sf$, and finally reads
\[     b_{ij}\pa{\omega\cdot\frac{v-v_*}{|v-v_*|}}=b_{ij}\pa{\frac{\frac{2m_im_j}{(m_i-m_j)^2}+\frac{m_i^2+m_j^2}{(m_i-m_j)|m_i-m_j|}\frac{(v-v_*)\cdot\omega}{|v-v_*|}}{\frac{m_i^2+m_j^2}{(m_i-m_j)^2}+\frac{2m_im_j}{(m_i-m_j)|m_i-m_j|}\frac{(v-v_*)\cdot\omega}{|v-v_*|}}}.                 \] 
We can then rewrite our integral term $\kappa_{ij}^{(1)}$ as
\begin{multline}\label{kappa1}   
\kappa_{ij}^{(1)}(v,v_*)=C_{ij}c_i |v-v_*|^{\gamma}\int_{\Sf}\frac{b_{ij}\pa{\omega\cdot\frac{v-v_*}{|v-v_*|}}c_i }{\abs{\frac{m_i^2+m_j^2}{(m_i-m_j)^2}+\frac{2m_i m_j}{(m_i-m_j)|m_i-m_j|}\frac{(v-v_*)\cdot\omega}{|v-v_*|}}^{1-\gamma}} 
\\[3mm]      \times e^{-\frac{m_i}{2}\abs{R\omega +O}^2+\eps m_i (R\omega +O)\cdot u_i-\eps^2\frac{m_i}{2}|u_i |^2}\ \dd \omega,
\end{multline}
where we have renamed for simplicity
\[         C_{ij}=\frac{C_{ij} m_j^2}{(m_i -m_j)^2}\pa{\frac{m_i}{2\pi}}^{3/2}        \]
and the exponent explicitly reads
\begin{equation*}
\begin{split}
\abs{R\omega +O}^2  = &   \frac{m_j^2}{(m_i-m_j)^2}|v-v_*|^2 +\frac{2 m_j|v-v_*|}{(m_i-m_j)|m_i-m_j|}(m_i v-m_j v_*)\cdot\omega
\\[2mm]     &   + \frac{1}{(m_i-m_j)^2}|m_i v-m_j v_*|^2,
\end{split}
\end{equation*}

\vspace{3mm}
\[       (R\omega +O)\cdot u_i = \frac{m_j}{|m_i-m_j|}|v-v_*| \omega\cdot u_i + \frac{(m_i v-m_j v_*)\cdot u_i}{m_i-m_j}.          \]
This concludes the study for $\kappa_{ij}^{(1)}$.

\subsection{Explicit form of $\kappa_{ij}^{(2)}$}  

Recovering the explicit expression of $\kappa_{ij}^{(2)}$ is more subtle. We recall that the domain of integration $E_{v v_*}^{ij}$ is the hyperplane orthogonal to $v-v_*$ and passing through 
\[       V_E(v,v_*)=\frac{m_i+m_j}{2 m_j}v-\frac{m_i-m_j}{2m_j}v_*.        \]
Let us consider  $\omega\in\pab{\Span(v-v_*)}^\perp$ and let us make the initial change of variables $w=V_E(v,v_*)+\omega$ which translates $E_{v v_*}^{ij}$ to the parallel hyperplane passing through the origin of $\R^3$. Thus $\kappa_{ij}^{(2)}$ transforms into
\begin{equation*}
\begin{split}
\kappa_{ij}^{(2)}(v,v_*)   &  = \frac{C_{ji}}{|v-v_*|}\int_{E^{ij}_{v v_*}}\frac{B_{ij}\left(v-V(v_*,w),\frac{w-v_*}{|w-v_*|}\right)}{|w-v_*|}\epsM_j(w)\ \dd E(w) 
\\[2mm]     &  = \frac{C_{ji}}{|v-v_*|}\int_{(v-v_*)^\perp}\frac{b_{ij}(v,v_*,\omega)}{|v-V(v_*,V_E(v,v_*)+\omega)|^{1-\gamma}}\epsM_j(V_E(v,v_*)+\omega)\ \dd \omega,
\end{split}
\end{equation*}
where the angulat part writes
\[      b_{ij}(v,v_*,\omega)= b_{ij}\pa{\frac{\pab{ v-V(v_*,V_E(v,v_*)+\omega)\cdot \pab{V_E(v,v_*)+\omega-v_*} }}{\abs{v-V(v_*,V_E(v,v_*)+\omega)}|V_E(v,v_*)+\omega-v_*|}}.     \]
Easy calculations show that
\begin{gather*}
V_E(v,v_*)+\omega-v_*=\frac{m_i+m_j}{2m_j}(v-v_*)+\omega, 
\\[2mm]   v-V(v_*,V_E(v,v_*)+\omega)=\frac{m_i+m_j}{2m_j}(v-v_*)-\omega,  
\end{gather*}
and
\begin{gather*}
\pab{v-V(v_*,V_E(v,v_*)+\omega)}\cdot \pab{V_E(v,v_*)+\omega-v_*}=\pa{\frac{m_i+m_j}{2m_j}}^2 |v-v_*|^2 - |\omega|^2,
\\[2mm]     \abs{v-V(v_*,V_E(v,v_*)+\omega)}^2 = \abs{V_E(v,v_*)+\omega-v_*}^2= \pa{\frac{m_i+m_j}{2m_j}}^2 |v-v_*|^2 + |\omega|^2.    
\end{gather*}
Moreover, the exponent of the Maxwellian can be computed as follows. We initially develop the square to get
\[        \abs{V_E(v,v_*)+\omega-\eps u_j}^2=\abs{V_E(v,v_*)+\omega}^2 -2\eps\pab{V_E(v,v_*)+\omega}\cdot u_j +\eps^2| u_j |^2.      \]
The first term can be rewritten as
\begin{equation*}
\begin{split}
\abs{V_E(v,v_*)+\omega}^2  &  = \abs{\omega+\frac{1}{2}(v+v_*)+\frac{m_i}{2m_j}(v-v_*)}^2 
\\[2mm]      &  =  \abs{\omega+\frac{1}{2}(v+v_*)}^2 +\frac{m_i^2}{4m_j^2}|v-v_*|^2+\frac{m_i}{2m_j}\pab{|v|^2-|v_*|^2} 
\\[2mm]      &  =   \abs{\omega+\frac{1}{2}V^\perp}^2 +\frac{1}{4}\abs{V^\parallel}^2+\frac{m_i^2}{4m_j^2}|v-v_*|^2+\frac{m_i}{2m_j}\pab{|v|^2-|v_*|^2}, 
\end{split}
\end{equation*}
where we have decomposed $v+v_*=V^\parallel + V^\perp$, with $V^\parallel$ being the projection onto $\mathrm{Span}(v-v_*)$ and $V^\perp$ being the orthogonal part. In the same way, the second term reads
\[          \pab{V_E(v,v_*)+\omega}\cdot u_j=\pa{\frac{1}{2}V^\parallel+\frac{m_i}{2 m_j}(v-v_*)}\cdot u_j + \pa{\frac{1}{2}V^\perp+\omega}\cdot u_j.         \]
Since by the definition of $V^\parallel$
\[      \abs{V^\parallel}^2 =\frac{\pab{(v+v_*)\cdot(v-v_*)}^2}{|v-v_*|^2}=\frac{\abs{|v|^2-|v_*|^2}^2}{|v-v_*|^2},      \]  
the kernel $\kappa_{ij}^{(2)}$ becomes
\begin{gather*}
\kappa_{ij}^{(2)}(v,v_*)=\mathcal{P}_{ij}(v,v_*)\int_{(v-v_*)^\perp}b_{ij}(v,v_*,\omega)W_{ij}(v,v_*,\omega)\epsM_j\pa{\omega+\frac{1}{2}V^\perp}\dd E(\omega),
\end{gather*}
where
\begin{gather*}
\mathcal{P}_{ij}(v,v_*)=\frac{C_{ji}}{|v-v_*|} e^{-\frac{m_i^2}{8m_j}|v-v_*|^2-\frac{m_j}{8}\frac{\abs{|v|^2-|v_*|^2}^2}{|v-v_*|^2}+\eps \frac{m_i}{2}(v-v_*)\cdot u_j + \eps\frac{m_j}{2}\frac{|v|^2-|v_*|^2}{|v-v_*|}\frac{(v-v_*)\cdot u_j}{|v-v_*|}}\sqrt{\frac{\mu_i}{\mu_i^*}},
 \\[3mm]    b_{ij}(v,v_*,\omega)=b_{ij}\pa{\frac{\pa{\frac{m_i+m_j}{2m_j}}^2 |v-v_*|^2 - |\omega|^2}{\pa{\frac{m_i+m_j}{2m_j}}^2 |v-v_*|^2 + |\omega|^2}}, \qquad
 \\[3mm]     W_{ij}(v,v_*,\omega)=\pa{\pa{\frac{m_i+m_j}{2m_j}}^2 |v-v_*|^2 + |\omega|^2}^{\frac{\gamma-1}{2}}.
\end{gather*}

Finally, it remains to take care of the domain of integration which still depends on $(v,v_*)$. The idea is to transform the hyperplane defined by $(v-v_*)^\perp$ to end up on $\R^2$. Proceeding as in \cite[Proposition 2.4]{Mou1}, we first observe that the integral is even with respect to $v-v_*$, since it only depends on its modulus. Thus, we focus on the set of relative velocities $v-v_*$ such that the first coordinate is nonnegative. Call $e_1$ the first unit vector of the corresponding orthonormal basis and, for any fixed $\frac{v-v_*}{|v-v_*|}$, introduce the linear transformation
\[          \mathcal{L}\pa{\frac{v-v_*}{|v-v_*|},\mathcal{X}}=2\frac{\pa{e_1+\frac{v-v_*}{|v-v_*|}}\cdot\mathcal{X}}{\abs{e_1+\frac{v-v_*}{|v-v_*|}}^2}\pa{e_1+\frac{v-v_*}{|v-v_*|}}-\mathcal{X},\quad \forall\mathcal{X}\in\R^3,           \] 
which corresponds to the specular reflection through the axis defined by $e_1+\frac{v-v_*}{|v-v_*|}$. Now, $\mathcal{L}$ is a diffeomorphism from $\br{\mathcal{X}=\pa{0,X}, X\in\R^2}$ onto $(v-v_*)^\perp$, with unitary Jacobian matrix. Thus, we can use this linear transformation to pass from $(v-v_*)^\perp$ to $\R^2$ into the integral of $\kappa_{ij}^{(2)}$, which can be finally explicitly written as
\begin{multline}\label{kappa2}    
\hspace{-2mm}\kappa_{ij}^{(2)}(v,v_*)  = 
\\[2mm]   \mathcal{P}_{ij}(v,v_*)  \int_{\R^2}b_{ij}\pa{\frac{\pa{\frac{m_i+m_j}{2m_j}}^2 |v-v_*|^2 - |X|^2}{\pa{\frac{m_i+m_j}{2m_j}}^2 |v-v_*|^2 + |X|^2}} \pa{\pa{\frac{m_i+m_j}{2m_j}}^2 |v-v_*|^2 + |X|^2}^{\frac{\gamma-1}{2}}
\\[3mm]      \times \epsM_j\pa{\mathcal{L}\pa{\frac{v-v_*}{|v-v_*|},\pa{0,X+\frac{1}{2}\bar{X}}}} \dd X, 
\end{multline}
where we have called $\bar{X}\in\R^2$ the preimage of $V^\perp\in (v-v_*)^\perp$ through the transformation $\mathcal{L}$, and we have used the straightforward identity
\[          \abs{\mathcal{L}\pa{\frac{v-v_*}{|v-v_*|},(0,X)}}=|X|, \qquad \forall X\in\R^2,\ \forall v,v_*\in\R^3.        \]
This concludes the study for $\kappa_{ij}^{(2)}$.

\subsection{Explicit form of $\kappa_{ij}^{(3)}$}  

The analysis of $\kappa_{ij}^{(3)}$ is the easiest one, since it is already fully explicit. It simply reads
\begin{equation*}
\begin{split}
\kappa_{ij}^{(3)}(v,v_*)  & =  \int_{\Sf}B_{ij}(\abs{v-v_*},\cos\theta)\epsM_i\dd \sigma 
\\[3mm]     & = \int_{\Sf} b_{ij}\pa{\sigma\cdot \frac{v-v_*}{|v-v_*|}}|v-v_*|^\gamma \epsM_i(v)\dd\sigma. 
\end{split}
\end{equation*}
This concludes its study.


\section{Proofs of the \textit{a priori} energy estimates for the Boltzmann equation}\label{App:apriori BE}

\noindent We shall follow the computations in \cite[Appendix B]{Bri1}, in order to show that we recover very similar \textit{a priori} estimates for the quantities in $\sobolevxv{s}$.

Let us thus consider an integer $s\in\N^*$ and a function $\boldf\in\sobolevTR{s}$ which solves the perturbed Boltzmann equation
\begin{equation}\label{Initial Equation}
\partial_t\boldf+\frac{1}{\eps}v\cdot\nabla_x \boldf=\frac{1}{\eps^2}\bepsL(\boldf)+\frac{1}{\eps}\boldQ(\boldf,\boldf)+\bepsS,
\end{equation}
and satisfies initially $\norm{\pi_{\bepsT}(\boldf^\init)}_{\hilbertxv}=\mathcal{O}(\delta_\ms)$. In particular, we suppose from now on that $\delta_\ms\leq 1$. Indeed, even if not optimal, this choice helps in enlightening the presentation.

To simplify the computations, we recall that we denote $\boldf^\perp=\boldf-\pi_\boldL(\boldf)$ the projection onto $(\Ker\boldL)^\perp$.

\subsection{Time evolution of $\norm{\boldf}_{\hilbertxv}^2$}  

We initially take the scalar product of $\eqref{Initial Equation}$ against $\boldf \boldmu^{-1}$ and we integrate in $x$ and $v$ to get
\begin{equation*}
\begin{split}
\frac{\dd}{\dd t}\norm{\boldf}_{\hilbertxv}^2   =  &\ \frac{2}{\eps^2}\scalprod{\bepsL(\boldf),\boldf}_{\hilbertxv} - \frac{2}{\eps}\scalprod{v\cdot\nabla_x\boldf, \boldf}_{\hilbertxv}
\\[3mm]    &  + \frac{2}{\eps}\scalprod{\boldQ(\boldf,\boldf),\boldf}_{\hilbertxv}+2\scalprod{\bepsS,\boldf}_{\hilbertxv}.
\end{split}
\end{equation*}
Thanks to the anti-symmetry of $v\cdot\nabla_x$, the term containing the transport operator vanishes. In order to bound the linear term, we exploit the spectral gap estimate $\eqref{Property5}$ satisfied by $\bepsL$. We successively obtain
\begin{multline}\label{Linear term}
\frac{2}{\eps^2}\langle  \bepsL(\boldf), \boldf\rangle_{\hilbertxv} \leq  -\frac{2}{\eps^2}\pabb{\lambda_{\boldL}-(\eps +\eta_1) C^\LL_2}\normb{\boldf^\perp}_{\spacexv}^2  
\\[3mm]    \hspace{7cm} + \frac{2 \delta_\ms  C^\LL_2}{\eta_1}\norm{\pi_{\boldL}(\boldf)}_{\spacexv}^2
\\[4mm]  \leq  -\frac{2}{\eps^2}\pabb{\lambda_{\boldL}-(\eps +\eta_1) C^\LL_2}\normb{\boldf^\perp}_{\spacexv}^2 + \delta_\ms\frac{4 C^\LL_2 C_\pi C_{\T^3}}{\eta_1}\norm{\nabla_x\boldf}_{\hilbertxv}^2
\\[4mm]    + \delta_\ms^3 \frac{2 C^\LL_2 C_\pi C^\TT}{\eta_1},
\end{multline}
where we have also used the equivalence between the $\spacexv$ and $\hilbertxv$ norms on $\Ker \boldL$ from Lemma \ref{lem:Equivalence}, and the Poincar\'e inequality $\eqref{Poincare inequality}$. 

The bilinear term is handled thanks to properties \eqref{Property6}--\eqref{Property7}. Applying Young's inequality with a positive constant $\eta/\eps$, we recover
\begin{equation}\label{Non-linear term}
\frac{2}{\eps}\scalprod{\boldQ(\boldf,\boldf),\boldf}_{\hilbertxv}\leq \frac{2}{\eta}\mathcal{G}^0_x(\boldf,\boldf)^2+\frac{2\eta}{\eps^2}\normb{\boldf^\perp}_{\spacexv}^2.
\end{equation}
Finally, the source term is dealt with using estimate $\eqref{Property11}$ for the $x$-derivatives, which also holds when $|\alpha|=0$, with some positive constant $C$. Applying again the Poincar\'e inequality $\eqref{Poincare inequality}$, we compute
\begin{equation}\label{Source term}
\begin{split}
2\langle \bepsS,\boldf\rangle_{\hilbertxv} \leq & \ \frac{2 \delta_\ms^2 C}{\eta_4 \eta_5} +2 \eta_4 \norm{\pi_{\boldL}(\boldf)}_{\hilbertxv}^2+ \frac{2 \eta_5}{\eps^2} \normb{\boldf^\perp}_{\spacexv}^2
\\[6mm]   \leq &\   \frac{2 \delta_\ms^2 C}{\eta_4 \eta_5} + \frac{2 \eta_5}{\eps^2} \normb{\boldf^\perp}_{\spacexv}^2 + 4 \eta_4 C_{\T^3} \norm{\nabla_x\boldf}_{\hilbertxv}^2
\\[4mm]   & \hspace{7.5cm} + 2\eta_4 \delta_\ms^2 C^\TT.
\end{split}
\end{equation}

Gathering inequalities \eqref{Linear term}--\eqref{Source term}, with the choices $\eta = \eta_1 = \eta_5 = \frac{\lambda_\boldL}{4(C^\LL_2 + 1)}$ and also $\eta_4=\delta_\ms$ and $\eps\leq \eta$, we obtain 
\begin{equation}
\begin{split}
\frac{\dd}{\dd t}\norm{\boldf}_{\hilbertxv}^2 \leq &\  -\frac{\lambda_\boldL}{\eps^2} \normb{\boldf^\perp}_{\spacexv}^2 + \frac{8(C^\LL_2+1)}{\lambda_\boldL}\mathcal{G}^0_x(\boldf,\boldf)^2
\\[3mm]  &\  + \delta_\ms \pa{8 C_{\T^3} + \frac{32 C^\LL_2 C_\pi C_{\T^3} (C^\LL_2 + 1)}{\lambda_\boldL}}\norm{\nabla_x \boldf}_{\hilbertxv}^2 
\\[3mm]  &\   + \delta_\ms \pa{2 C^\TT + \frac{8 C^\LL_2 C_\pi C^\TT(C^\LL_2+1)}{\lambda_\boldL} + \frac{8 C (C^\LL_2+1)}{\lambda_\boldL}},
\end{split}
\end{equation}
holding for any $\delta_\ms \in [0,1]$. By choosing
\begin{gather*}
C^{(1)} = 8 C_{\T^3} + \frac{32 C^\LL_2 C_\pi C_{\T^3} (C^\LL_2 + 1)}{\lambda_\boldL}, \\[2mm]
\tilde{C} = 2 C^\TT + \frac{8 C^\LL_2 C_\pi C^\TT(C^\LL_2+1)}{\lambda_\boldL} + \frac{8 C (C^\LL_2+1)}{\lambda_\boldL},
\end{gather*}
we thus recover the first estimate $\eqref{a priori f}$.

\subsection{Time evolution of $\norm{\nabla_x\boldf}_{\hilbertxv}^2$}  

The time evolution of the $\hilbertxv$ norm of $\nabla_x\boldf$ is given by
\begin{equation*}
\begin{split}
\frac{\dd}{\dd t}\norm{\nabla_x \boldf}_{\hilbertxv}^2   =  &\ \frac{2}{\eps^2}\scalprod{\nabla_x\bepsL(\boldf),\nabla_x\boldf}_{\hilbertxv} - \frac{2}{\eps}\scalprod{\nabla_x \pa{v\cdot\nabla_x\boldf},\nabla_x \boldf}_{\hilbertxv}
\\[3mm]    &  + \frac{2}{\eps}\scalprod{\nabla_x\boldQ(\boldf,\boldf),\nabla_x\boldf}_{\hilbertxv}+2\scalprod{\nabla_x\bepsS,\nabla_x \boldf}_{\hilbertxv}.
\end{split}
\end{equation*}
First of all, since the transport operator commutes with $x$-derivatives, the anti-symmetry property allows again to get rid of it. We then study the linearized operator $\bepsL$. Applying the Leibniz derivation rule and using the orthogonality of $\boldQ$ to $\Ker \boldL$ given by $\eqref{Property6}$, we initially observe that
\begin{equation}\label{Linear initial}
\begin{split}
\scalprod{\nabla_x\bepsL(\boldf),\nabla_x\boldf}_{\hilbertxv} = &\  \scalprodb{\boldQ(\nabla_x\bepsM,\boldf)+\boldQ(\boldf,\nabla_x\bepsM),\nabla_x\boldf^\perp}_{\hilbertxv}
\\[3mm]     & \ + \scalprod{\bepsL(\nabla_x\boldf),\nabla_x\boldf}_{\hilbertxv}.
\end{split}
\end{equation}
The addends involving the derivative of $\bepsM$ are controlled by a factor of order $\mathcal{O}(\eps\delta_\ms )$. Using estimate $\eqref{Property2}$ we thus get
\begin{multline*}
\scalprodb{\boldQ(\nabla_x\bepsM,\boldf)+\boldQ(\boldf,\nabla_x\bepsM),\nabla_x\boldf^\perp}_{\hilbertxv}   
\\[2mm]    \leq \eps\delta_\ms C^\LL_1 K_x \norm{\boldf}_{\spacexv} \normb{\nabla_x\boldf^\perp}_{\spacexv}.
\end{multline*}
Using the fact that $\boldf=\pi_\boldL(\boldf)+\boldf^\perp$ and the equivalence of the $\hilbertxv$ and $\spacexv$ norms for $\pi_\boldL(\boldf)$, we can then apply Young's inequality with a positive constant $\eta/\eps$ (and increment $K_x$ if necessary) to recover the upper bound
\begin{multline*}
\scalprodb{\boldQ(\nabla_x\bepsM,\boldf)+\boldQ(\boldf,\nabla_x\bepsM),\nabla_x\boldf^\perp}_{\hilbertxv}
\\[2mm]   \leq  \eta \delta_\ms C^\LL_1 K_x \normb{\nabla_x\boldf^\perp}_{\spacexv}^2 + \frac{2\eps\delta_\ms C^\LL_1 K_x}{\eta} \normb{\boldf^\perp}_{\spacexv}^2 
\\[3mm]    +  \frac{2\eps^2\delta_\ms C_\pi  C^\LL_1 K_x}{\eta} \norm{\pi_\boldL(\boldf)}_{\hilbertxv}^2,
\end{multline*}
where we have also used that $\eps\leq 1$. Moreover, the second term of $\eqref{Linear initial}$ is handled thanks to estimate $\eqref{Property5}$ on the spectral gap of $\bepsL$, and can be bounded as
\begin{multline*}
\scalprod{\bepsL(\nabla_x\boldf), \nabla_x\boldf}_{\hilbertxv}\leq      -\pabb{\lambda_{\boldL}-(\eps +\eta_1) C^\LL_2}\norm{\nabla_x \boldf^\perp}_{\spacexv}^2 
\\[3mm]                +\eps^2 \delta_\ms  \frac{C^\LL_2}{\eta_1}\norm{\pi_{\boldL}(\boldf)}_{\spacexv}^2.
\end{multline*}
Choosing $\eta = \eta_1$, we can finally apply the Poincar\'e inequality $\eqref{Poincare inequality}$ to obtain
\begin{multline}\label{Linear term x}
\frac{2}{\eps^2}\scalprod{\nabla_x\bepsL(\boldf),\nabla_x\boldf}_{\hilbertxv} 
\\[3mm]\leq  -\frac{2}{\eps^2}\pabb{\lambda_\boldL - \eps C^\LL_2 - \eta_1 (C^\LL_2 + \delta_\ms C^\LL_1 K_x)} \normb{\nabla_x \boldf^\perp}_{\spacexv}^2\hspace{2.5cm}
\\[5mm]   \qquad\qquad + \frac{4\delta_\ms C^\LL_1 K_x}{\eps \eta_1} \normb{\boldf^\perp}_{\spacexv}^2  + \frac{8 \delta_\ms C_\pi C_{\T^3} (C^\LL_2+2 C^\LL_1 K_x)}{\eta_1} \norm{\nabla_x \boldf}_{\hilbertxv}^2
\\[4mm]    + \frac{4 \delta_\ms^3 C_\pi C^\TT (C^\LL_2+2 C^\LL_1 K_x)}{\eta_1}.
\end{multline}
The non-linear term is easily handled thanks to properties \eqref{Property6}--\eqref{Property7}. Applying Young's inequality with a positive constant $\eta_1/\eps$ we successively get
\begin{equation}\label{Non-linear term x}
\begin{split}
\frac{2}{\eps}\scalprod{\nabla_x\boldQ(\boldf,\boldf),\nabla_x\boldf}_{\hilbertxv} = &\ \frac{2}{\eps}\scalprodb{\nabla_x\boldQ(\boldf,\boldf),\nabla_x\boldf^\perp}_{\hilbertxv}
\\[3mm]   \leq &\ \frac{2}{\eta_1} \mathcal{G}^1_x(\boldf,\boldf)^2 +\frac{2\eta_1}{\eps^2}\normb{\nabla_x \boldf^\perp}_{\spacexv}^2.
\end{split}
\end{equation}
Finally, the source term is dealt with using estimate $\eqref{Property11}$ on $x$-derivatives as before, to get
\begin{equation}\label{Source term x}
2\scalprod{\nabla_x\bepsS,\nabla_x \boldf}_{\hilbertxv} \leq \frac{2\delta_\ms^2 C_x}{\eta_4 \eta_5} + 2\eta_4 \norm{\nabla_x\boldf}_{\hilbertxv}^2+ \frac{2\eta_5}{\eps^2} \normb{\nabla_x\boldf^\perp}_{\spacexv}^2,
\end{equation}
where we have also used that $\norm{\pi_{\boldL}(\nabla_x\boldf)}_{\hilbertxv}^2\leq \norm{\nabla_x\boldf}_{\hilbertxv}^2$.


To conclude, summing equations \eqref{Linear term x}--\eqref{Source term x} and recalling that $\delta_\ms\leq 1$, we recover the estimate
\begin{multline*}
\frac{\dd}{\dd t}\norm{\nabla_x \boldf}_{\hilbertxv}^2 \leq -\frac{2}{\eps^2}\pabb{\lambda_\boldL - \eps C^\LL_2 - \eta_1 (1 + C^\LL_2 + C^\LL_1 K_x) - \eta_5} \normb{\nabla_x \boldf^\perp}_{\spacexv}^2
\\[3mm]    \hspace{1.5cm} + \frac{4\delta_\ms C^\LL_1 K_x}{\eps \eta_1} \normb{\boldf^\perp}_{\spacexv}^2 +  \frac{2}{\eta_1}\mathcal{G}^1_x(\boldf,\boldf)^2
\\[4mm]    \hspace{5cm}+ \pa{2\eta_4 + \frac{8\delta_\ms C_\pi C_{\T^3} (C^\LL_2 + 2 C^\LL_1 K_x)}{\eta_1}}\norm{\nabla_x \boldf}_{\hilbertxv}^2 
\\[4mm]    + \frac{2\delta_\ms^2 C_x}{\eta_4\eta_5} + \frac{4\delta_\ms C_\pi C^\TT (C^\LL_2 + 2 C^\LL_1 K_x)}{\eta_1}.
\end{multline*}
By choosing 
$$\eta_1 = \eta_5 = \frac{\lambda_\boldL}{2(2 + 2C^\LL_2 + C^\LL_1 K_x)},\qquad \eta_4=\delta_\ms,\qquad \eps\leq \eta_1,$$
we finally obtain
\begin{multline*}
\frac{\dd}{\dd t}\norm{\nabla_x \boldf}_{\hilbertxv}^2 \leq -\frac{\lambda_\boldL}{\eps^2} \normb{\nabla_x \boldf^\perp}_{\spacexv}^2
\\[4mm]     + \frac{\delta_\ms}{\eps} \frac{8 K_x (2 + 2C^\LL_2 + C^\LL_1 K_x)}{\lambda_\boldL} \normb{\boldf^\perp}_{\spacexv}^2 +  \frac{8 (2 + 2C^\LL_2 + C^\LL_1 K_x)}{\lambda_\boldL}\mathcal{G}^1_x(\boldf,\boldf)^2
\\[5mm]   \qquad\qquad\qquad + \delta_\ms \pa{2 + \frac{16 C_\pi C_{\T^3} (C^\LL_2 + 2K_x)(2 + 2C^\LL_2 + C^\LL_1 K_x)}{\lambda_\boldL}}\norm{\nabla_x \boldf}_{\hilbertxv}^2 
\\[4mm]    + \delta_\ms \pab{4 C_x + 8 C_\pi C^\TT (C^\LL_2 + 2 C^\LL_1 K_x)}\frac{2 + 2C^\LL_2 + C^\LL_1 K_x}{\lambda_\boldL},
\end{multline*}
which is exactly $\eqref{a priori x f}$, with
\begin{gather*}
C^{(2)} = 2 + \frac{16 C_\pi C_{\T^3} (C^\LL_2 + 2 C^\LL_1 K_x)(2 + 2C^\LL_2 + C^\LL_1 K_x)}{\lambda_\boldL},\\[3mm]
C^{(3)} = \frac{8 K_x (2 + 2C^\LL_2 + K_x)}{\lambda_\boldL},\\[3mm]
\tilde{C}_x = \pab{4 C_x + 8 C_\pi C^\TT (C^\LL_2 + 2 C^\LL_1 K_x)}\frac{2 + 2C^\LL_2 + C^\LL_1 K_x}{\lambda_\boldL}.
\end{gather*}

\subsection{Time evolution of $\norm{\nabla_v \boldf}_{\hilbertxv}^2$}  

The evolution equation for the $\hilbertxv$ norm of $\nabla_v \boldf$ writes
\begin{equation*}
\begin{split}
\frac{\dd}{\dd t}\norm{\nabla_v \boldf}_{\hilbertxv}^2   =  &\ \frac{2}{\eps^2}\scalprod{\nabla_v\bepsL(\boldf),\nabla_v\boldf}_{\hilbertxv} - \frac{2}{\eps}\scalprod{\nabla_v \pa{v\cdot\nabla_x\boldf},\nabla_v \boldf}_{\hilbertxv}
\\[3mm]    &  + \frac{2}{\eps}\scalprod{\nabla_v\boldQ(\boldf,\boldf),\nabla_v\boldf}_{\hilbertxv}+2\scalprod{\nabla_v\bepsS,\nabla_v \boldf}_{\hilbertxv}.
\end{split}
\end{equation*}
The first term can be rewritten using the operators $\bepsK$ and $\bepsnu$ and is then dealt with thanks to estimates \eqref{Property3}--\eqref{Property4}. We have, for any $\xi >0$,
\begin{equation*}
\begin{split}
\frac{2}{\eps^2}\scalprod{\nabla_v\bepsL(\boldf),\nabla_v\boldf}_{\hilbertxv} = &\  \frac{2}{\eps^2}\scalprod{\nabla_v\bepsK(\boldf),\nabla_v\boldf}_{\hilbertxv} - \frac{2}{\eps^2}\scalprod{\nabla_v\bepsnu(\boldf),\nabla_v\boldf}_{\hilbertxv}
\\[4mm]  \leq &\ \frac{2(C^\KK_1(\xi)+C^{\NuNu}_5)}{\eps^2}\norm{\boldf}_{\hilbertxv}^2 - \frac{2 C^{\NuNu}_3}{\eps^2}\norm{\nabla_v\boldf}_{\spacexv}^2 
\\[3mm]  &   \hspace{5.5cm} +\frac{2\xi C^\KK_2}{\eps^2}\norm{\nabla_v\boldf}_{\hilbertxv}^2.
\end{split}
\end{equation*}
Now, we use the identity 
$$\norm{\boldf}_{\hilbertxv}^2=\normb{\boldf^\perp}_{\hilbertxv}^2+\norm{\pi_\boldL(\boldf)}_{\hilbertxv}^2,$$
and the fact that the $\spacexv$ norm controls the $\hilbertxv$ norm to deduce the first upper bound
\begin{multline}\label{Linear term v}
\frac{2}{\eps^2}\scalprod{\nabla_v\bepsL(\boldf),\nabla_v\boldf}_{\hilbertxv}
\\[3mm]    \leq  \frac{2\pab{C^\KK_1(\xi)+C^{\NuNu}_5}}{\eps^2 }\normb{\boldf^\perp}_{\spacexv}^2 -\frac{2}{\eps^2}\pa{C^{\nu}_3 - \xi C^\KK_2}\norm{\nabla_v\boldf}_{\spacexv}^2
\\[4mm]     + \frac{4 C_{\T^3}\pab{C^\KK_1(\xi)+C^{\NuNu}_5}}{\eps^2}\norm{\nabla_x\boldf}_{\hilbertxv}^2   + \frac{2 \delta_\ms^2 C^\TT \pab{C^\KK_1(\xi)+C^{\NuNu}_5}}{\eps^2},
\end{multline}
where we also applied the Poincar\'e inequality $\eqref{Poincare inequality}$. Next, the transport term is easily estimated thanks to Young's inequality as
\begin{equation}\label{Transport term v}
\begin{split}
-\frac{2}{\eps}\scalprod{\nabla_v \pa{v\cdot\nabla_x\boldf},\nabla_v \boldf}_{\hilbertxv} = &  -\frac{2}{\eps}\scalprod{\nabla_x\boldf,\nabla_v \boldf}_{\hilbertxv}
\\[3mm] \leq &\  \frac{2}{\eta}\norm{\nabla_x \boldf}_{\hilbertxv}^2 + \frac{2\eta}{\eps^2}\norm{\nabla_v\boldf}_{\spacexv}^2,
\end{split}
\end{equation}
holding for any $\eta >0$.
The non-linear term is handled in a similar way, using Young's inequality with the same constant $\eta/\eps$ to recover
\begin{equation}\label{Non-linear term v}
\frac{2}{\eps}\scalprod{\nabla_v\boldQ(\boldf,\boldf),\nabla_v\boldf}_{\hilbertxv}\leq  \frac{2}{\eta}\mathcal{G}^1_{x,v}(\boldf,\boldf)^2 + \frac{2\eta}{\eps^2}\norm{\nabla_v\boldf}_{\spacexv}^2.
\end{equation}
Finally, the source term is dealt with using estimate $\eqref{Property9}$, which gives
\begin{equation}\label{Source term v}
2\scalprod{\nabla_v\bepsS,\nabla_v \boldf}_{\hilbertxv} \leq  \frac{2 \delta_\ms^2 C_v}{\eta_2} +\frac{2 \eta_2}{\eps^2}\norm{\nabla_v\boldf}_{\spacexv}^2.
\end{equation}

Therefore, summing \eqref{Linear term v}--\eqref{Source term v} with the choices $\disp\xi = \eta = \eta_2 = \frac{C^{\NuNu}_3}{6+2C^\KK_2}$, we can finally recover
\begin{multline}
\frac{\dd}{\dd t}\norm{\nabla_v \boldf}_{\hilbertxv}^2  \leq   \frac{2 \pab{C^\KK_1(\xi)+C^{\NuNu}_5}}{\eps^2}\normb{\boldf^\perp}_{\spacexv}^2   -  \frac{C^{\NuNu}_3}{\eps^2}\norm{\nabla_v\boldf}_{\spacexv}^2    
\\[4mm]   \qquad + \frac{1}{\eps^2}\pa{ 4 C_{\T^3}(C^\KK_1(\xi)+C^\NuNu_5) + \frac{\eps^2(12 + 4C^\KK_2)}{C^{\NuNu}_3}}\norm{\nabla_x\boldf}_{\hilbertxv}^2 + \frac{12 + 4C^\KK_2}{C^{\NuNu}_3}\mathcal{G}^1_{x,v}(\boldf,\boldf)^2
\\[4mm]    + \frac{\delta_\ms}{\eps^2} \pa{ 2\delta_\ms C^\TT(C^\KK_1(\xi) + C^\NuNu_5) + \frac{\delta_\ms\eps^2(12+4C^\KK_2)}{C^\NuNu_3}}
\end{multline}
which is estimate $\eqref{a priori v f}$ with the choices
\begin{gather*}
K_1 = 2(C^\KK_1(\xi) + C^\NuNu_5),  \\[3mm]
K_{\dd x} = 4 C_{\T^3} (C^\KK_1(\xi) + C^\NuNu_5) + \frac{4(3 + C^\KK_2)}{C^{\NuNu}_3},   \\[3mm]
\tilde{C}_v = 2 C^\TT (C^\KK_1(\xi) + C^\NuNu_5) + \frac{4(3+C^\KK_2)}{C^\NuNu_3},
\end{gather*}
where we have also used that both $\eps\leq 1$ and $\delta_\ms \leq 1$.

\subsection{Time evolution of $\scalprod{\nabla_x\boldf,\nabla_v\boldf}_{\hilbertxv}$} 

The equation describing the time evolution of the $\hilbertxv$ norm of the commutator is given by
\begin{equation*}
\begin{split}
\frac{\dd}{\dd t}\scalprod{\nabla_x \boldf,\nabla_v\boldf}_{\hilbertxv} =  &\  \frac{2}{\eps^2}\scalprod{\nabla_x \bepsL(\boldf),\nabla_v\boldf}_{\hilbertxv} -\frac{2}{\eps}\scalprod{\nabla_x (v\cdot\nabla_x\boldf),\nabla_v\boldf}_{\hilbertxv} 
\\[3mm]           &      + \frac{2}{\eps} \scalprod{\nabla_x \boldQ(\boldf,\boldf),\nabla_v\boldf}_{\hilbertxv} + \scalprod{\nabla_x \bepsS,\nabla_v\boldf}_{\hilbertxv}.
\end{split}
\end{equation*}
For the linear term we shall successively apply the Leibniz derivation rule and the decomposition $\nabla_x \boldf=\nabla_x\boldf^\perp+\pi_\boldL(\nabla_x\boldf)$ to obtain
\begin{equation*}
\begin{split}
\frac{2}{\eps^2} \scalprod{\nabla_x \bepsL(\boldf),\nabla_v\boldf}_{\hilbertxv} = & \  \frac{2}{\eps^2} \scalprodb{\bepsL(\nabla_x \boldf^\perp),\nabla_v\boldf}_{\hilbertxv} 
\\[3mm]    & + \frac{2}{\eps^2} \scalprod{(\bepsL-\boldL)(\pi_\boldL(\nabla_x \boldf)),\nabla_v\boldf}_{\hilbertxv}
\\[3mm]   & + \frac{2}{\eps^2} \scalprod{\boldQ(\nabla_x\bepsM,\boldf)+\boldQ(\boldf,\nabla_x\bepsM),\nabla_v\boldf}_{\hilbertxv},
\end{split}
\end{equation*}
where we have used that $\boldL(\pi_\boldL(\nabla_x\boldf))=0$. Now, the first term is handled thanks to estimate $\eqref{Property2}$ as
\begin{equation*}
\frac{2}{\eps^2} \scalprodb{\bepsL(\nabla_x \boldf^\perp),\nabla_v\boldf}_{\hilbertxv} \leq \frac{2 C^\LL_1}{\eps^2}\normb{\nabla_x\boldf^\perp}_{\spacexv} \norm{\nabla_v\boldf}_{\spacexv},
\end{equation*}
and can be bounded using Young's inequality with a positive constant $e/\eps$, which gives
\begin{equation*}
\begin{split}
\frac{2}{\eps^2} \scalprodb{\bepsL(\nabla_x \boldf^\perp),\nabla_v\boldf}_{\hilbertxv}\leq &\ \frac{2 C^\LL_1 e}{\eps^3}\normb{\nabla_x\boldf^\perp}_{\spacexv}^2 + \frac{2 C^\LL_1}{\eps e} \norm{\nabla_v\boldf}_{\spacexv}^2.
\end{split}
\end{equation*}
The second term is of order $\mathcal{O}(\eps\delta_\ms)$ and can be handled more easily. We again use estimate $\eqref{Property2}$ and Young's inequality with a positive constant $\eta$, together with the equivalence of the $\hilbertxv$ and $\spacexv$ norms on $\Ker\boldL$, to get
\begin{equation*}
\begin{split}
\frac{2}{\eps^2} \langle(\bepsL-\boldL)(\pi_\boldL  & (\nabla_x \boldf)),\nabla_v\boldf\rangle_{\hilbertxv}
\\[3mm]   & \leq \frac{2 \eta\delta_\ms C^\LL_1 C_\pi}{\eps}\norm{\pi_\boldL(\nabla_x\boldf)}_{\hilbertxv}^2 + \frac{2 C^\LL_1}{\eta\eps}\norm{\nabla_v\boldf}_{\spacexv}^2
\\[3mm]   & \leq  \frac{2 \eta\delta_\ms C^\LL_1 C_\pi}{\eps}\norm{\nabla_x\boldf}_{\hilbertxv}^2 + \frac{2 C^\LL_1}{\eta\eps}\norm{\nabla_v\boldf}_{\spacexv}^2.
\end{split}
\end{equation*}
At last, we have already seen how to treat the third term. Skipping the details, we obtain
\begin{multline*}
\frac{2}{\eps^2} \scalprod{\boldQ(\nabla_x\bepsM,\boldf)+\boldQ(\boldf,\nabla_x\bepsM),\nabla_v\boldf}_{\hilbertxv}
\\[3mm]    \leq \frac{2 \delta_\ms C^\LL_1 K_x}{\eta\eps}\norm{\nabla_v\boldf}_{\spacexv}^2  +\frac{4\eta \delta_\ms C^\LL_1 K_x}{\eps}\normb{\boldf^\perp}_{\spacexv}^2 
\\[4mm]   + \frac{8\eta\delta_\ms C^\LL_1 K_x C_\pi C_{\T^3}}{\eps} \norm{\nabla_x\boldf}_{\hilbertxv}^2 + \frac{4\eta\delta_\ms^3 C^\LL_1 K_x C_\pi C^\TT}{\eps}.
\end{multline*}
using again Young's inequality with $\eta >0$, and the Poincar\'e inequality $\eqref{Poincare inequality}$. Collecting these upper bounds, we finally derive the estimate for the linear term
\begin{multline}\label{Linear term xv}
\frac{2}{\eps^2}\scalprod{\nabla_x \bepsL(\boldf),\nabla_v\boldf}_{\hilbertxv}
\\[3mm]    \leq  \frac{2 C^\LL_1 e}{\eps^3}\normb{\nabla_x\boldf^\perp}_{\spacexv}^2  + \pa{\frac{2+2 \delta_\ms C^\LL_1 K_x}{\eta\eps} +\frac{2C^\LL_1}{\eps e}} \norm{\nabla_v\boldf}_{\spacexv}^2
\\[4mm]    \hspace{1.5cm} + \frac{4\eta\delta_\ms C^\LL_1 K_x}{\eps}\normb{\boldf^\perp}_{\spacexv}^2  + \frac{2 \eta\delta_\ms (1 + 4 C^\LL_1 K_x C_\pi C_{\T^3})}{\eps}\norm{\nabla_x\boldf}_{\hilbertxv}^2
\\[3mm]    + \frac{4\eta\delta_\ms^3 C^\LL_1 K_x C_\pi C^\TT}{\eps}.
\end{multline}
For the transport term we successively integrate by parts, first in $x$ and then in~$v$. Direct calculations allows to recover
\begin{equation}\label{Transport term xv}
\begin{split}
-\frac{2}{\eps}\scalprod{\nabla_x (v\cdot\nabla_x\boldf),\nabla_v\boldf}_{\hilbertxv} = &  -\frac{1}{\eps}\norm{\nabla_x\boldf}_{\hilbertxv}^2 - \frac{1}{\eps}\norm{v\cdot\nabla_x\boldf}_{\hilbertxv}^2
\\[3mm]  \leq & -\frac{1}{\eps}\norm{\nabla_x\boldf}_{\hilbertxv}^2.
\end{split}
\end{equation}
The non-linear term is treated thanks to Young's inequality with the usual positive constant~$\eta $, and we get
\begin{equation}\label{Non-linear term xv}
\begin{split}
\frac{2}{\eps} \scalprod{\nabla_x \boldQ(\boldf,\boldf),\nabla_v\boldf}_{\hilbertxv}\leq &\  \frac{2\eta}{\eps}\mathcal{G}^1_{x,v}(\boldf,\boldf)^2 + \frac{2}{\eta\eps}\norm{\nabla_v\boldf}_{\spacexv}^2.
\end{split}
\end{equation}
The source term is then treated using estimate $\eqref{Property10}$, which gives
\begin{equation}\label{Source term xv}
2\scalprod{\nabla_x \bepsS,\nabla_v \boldf}_{\hilbertxv} \leq  \frac{2 \delta_\ms^2 C_{x,v}}{\eps \eta_3} +\frac{2\eta_3}{\eps}\norm{\nabla_v \boldf}_{\spacexv}^2.  
\end{equation}

Gathering inequalities \eqref{Linear term xv}--\eqref{Source term xv} with the choices $\eta = e$ and $\eta_3 = 1/e$, we then recover the following estimate
\begin{multline}
\frac{\dd}{\dd t}\scalprod{\nabla_x \boldf,\nabla_v\boldf}_{\hilbertxv} 
\\[3mm]    \leq  -\frac{1}{\eps}\pabb{1-2 e \delta_\ms (1 + 4 C^\LL_1 K_x C_\pi C_{\T^3})}\norm{\nabla_x\boldf}_{\hilbertxv}^2 +  \frac{2 e}{\eps}\mathcal{G}^1_{x,v}(\boldf,\boldf)^2\hspace{2cm}
\\[4mm]   \qquad \qquad + \frac{4 e C^\LL_1 K_x}{\eps}\normb{\boldf^\perp}_{\spacexv}^2 + \frac{5+2 C^\LL_1(1 + K_x)}{\eps e}\norm{\nabla_v\boldf}_{\spacexv}^2
\\[5mm]    +\frac{2 C^\LL_1 e}{\eps^3}\normb{\nabla_x\boldf^\perp}_{\spacexv}^2 + \frac{2 e \delta_\ms^2 (2 C^\LL_1 K_x C_\pi C^\TT + C_{x,v})}{\eps},
\end{multline}
where we have also used that $\delta_\ms\leq 1$. The choices
\begin{gather*}
C^{(4)} =  5+2 C^\LL_1(1 + K_x), \qquad C^{(5)} = 2 (1 + 4 C^\LL_1 K_x C_\pi C_{\T^3}), \\[3mm]
C^{(6)} = 4 C^\LL_1 K_x, \qquad \tilde{C}_{x,v} = 2 (2 C^\LL_1 K_x C_\pi C^\TT + C_{x,v})
\end{gather*}
finally lead to estimate $\eqref{a priori xv f}$.

\subsection{Time evolution of $\norm{\partial^\alpha_x \boldf}_{\hilbertxv}^2$} 

Consider now $\alpha\in\N^3$ such that $|\alpha|\leq s$. Using the Leibniz derivation rule for multi-indices, a direct iteration of the computations that we have made for $\norm{\nabla_x\boldf}_{\hilbertxv}^2$ immediately gives estimate $\eqref{a priori a f}$, for some constants $C^{(7)}$, $K_\alpha$, $C^{(8)}$ and $\tilde{C}_\alpha$ which only depend on $\lambda_\boldL$, $C^\LL_2$, $C_\pi$, $C_{\T^3}$, $C^\TT$ and $C_\alpha$.

\subsection{Time evolution of $\multideriv\boldf$} 

Let $\alpha$, $\beta\in\N^3$ be two multi-indices such that $|\alpha|+|\beta|\leq s$. The estimate for the $\hilbertxv$ norm of $\multideriv \boldf$ is obtained in a very similar way to the the one derived for the $\hilbertxv$ norm of~$\nabla_v\boldf$, therefore we shall skip some passages. Initially, the evolution equation reads
\begin{equation*}
\begin{split}
\frac{\dd}{\dd t}\Big\| \multideriv \boldf & \Big\|_{\hilbertxv}^2   
\\[2mm]   =  &\ \frac{2}{\eps^2}\scalprod{\multideriv\bepsL(\boldf),\multideriv\boldf}_{\hilbertxv} - \frac{2}{\eps}\scalprod{\multideriv\pa{v\cdot\nabla_x\boldf},\multideriv \boldf}_{\hilbertxv}
\\[3mm]    &  + \frac{2}{\eps}\scalprod{\multideriv\boldQ(\boldf,\boldf),\multideriv\boldf}_{\hilbertxv}+2\scalprod{\multideriv\bepsS,\multideriv \boldf}_{\hilbertxv}.
\end{split}
\end{equation*}
Thanks to estimates \eqref{Property3}-\eqref{Property4} on $\bepsK$ and $\bepsnu$, the linear term can be bounded as already seen in the case of $\nabla_v\boldf$. We get
\begin{multline}\label{Linear term ab}
\frac{2}{\eps^2}\scalprod{\multideriv\bepsL(\boldf),\multideriv\boldf}_{\hilbertxv}
\\[2mm]    \leq -\frac{2}{\eps^2}\pabb{C^{\NuNu}_3-\eps\mathds{1}_{\br{|\alpha|\geq1}}C^{\NuNu}_4 - \xi C^\KK_2}\norm{\multideriv \boldf}_{\spacexv}^2  \qquad\qquad
\\[3mm]      +\frac{2}{\eps^2}\pabb{C^{\NuNu}_5+\eps\mathds{1}_{\br{|\alpha|\geq1}}C^\NuNu_6 + C^\KK_1(\xi)}\norm{\boldf}_{\sobolevxv{s-1}}^2
\\[3mm]      + \mathds{1}_{\br{|\alpha|\geq1}}\frac{C^{\NuNu}_7}{\eps}\sum_{0<|\alpha^\prime|+|\beta^\prime|\leq s-1}\norm{\partial^{\beta^\prime}_v\partial^{\alpha^\prime}_x \boldf}_{\spacexv}^2.
\end{multline}
Next, direct calculations and the anti-symmetry of the transport term show that
\begin{equation*}
\scalprod{\multideriv\pa{v\cdot\nabla_x\boldf},\multideriv \boldf}_{\hilbertxv} = \sum_{k,\ \beta_k >0} \beta_k \scalprod{\partial^{\beta-\ee_k}_v\partial^{\alpha+\ee_k}_x\boldf,\multideriv \boldf}_{\hilbertxv}.
\end{equation*}
Since $k\leq 3$ and $\beta_k\leq s$, using Young's inequality with a positive constant $\eta/\eps >0$ we can recover the estimate
\begin{multline}\label{Transport term ab}
- \frac{2}{\eps}\scalprod{\multideriv\pa{v\cdot\nabla_x\boldf},\multideriv \boldf}_{\hilbertxv}
\\[2mm]    \leq  \frac{6\eta}{\eps^2} \norm{\multideriv\boldf}_{\spacexv}^2  + \frac{2s}{\eta} \sum_{k, \beta_k >0} \norm{\partial^{\beta-\ee_k}_v\partial^{\alpha+\ee_k}_x\boldf}_{\hilbertxv}^2.
\end{multline}
We again apply Young's inequality with the same constant $\eta/\eps$ to control the non-linear term
\begin{equation}\label{Non-linear term ab}
\frac{2}{\eps}\scalprod{\multideriv\boldQ(\boldf,\boldf),\multideriv\boldf}_{\hilbertxv}\leq \frac{2\eta}{\eps^2}\norm{\multideriv\boldf}_{\spacexv}^2 + \frac{2}{\eta} \mathcal{G}^s_{x,v}(\boldf,\boldf)^2,
\end{equation}
while the source term is dealt with thanks to estimate $\eqref{Property9}$
\begin{equation}\label{Source term ab}
2\scalprod{\multideriv\bepsS,\multideriv \boldf}_{\hilbertxv} \leq  \frac{2\delta_\ms^2 C_{\alpha,\beta}}{\eta_2} +\frac{2\eta_2}{\eps^2}\norm{\multideriv \boldf}_{\spacexv}^2.
\end{equation}
We now collect estimates \eqref{Linear term ab}--\eqref{Source term ab} with the choice $\eta_2 = \eta$, to recover
\begin{multline*}
\frac{\dd}{\dd t}\norm{\multideriv \boldf}_{\hilbertxv}^2
\\[2mm]  \leq   -\frac{2}{\eps^2}\pabb{C^{\NuNu}_3-\eps\mathds{1}_{\br{|\alpha|\geq1}}C^{\NuNu}_4 - \xi C^\KK_2  - 5 \eta}\norm{\multideriv \boldf}_{\spacexv}^2   \hspace{2cm}
\\[3mm]     +\frac{2}{\eps^2}\pabb{C^{\NuNu}_5+\eps\mathds{1}_{\br{|\alpha|\geq1}}C^\NuNu_6 + C^\KK_1(\xi)}\norm{\boldf}_{\sobolevxv{s-1}}^2 \hspace{2.5cm}
\\[3mm]  + \mathds{1}_{\br{|\alpha|\geq1}}\frac{C^{\NuNu}_7}{\eps}\sum_{0<|\alpha^\prime|+|\beta^\prime|\leq s-1}\norm{\partial^{\beta^\prime}_v\partial^{\alpha^\prime}_x \boldf}_{\spacexv}^2 \hspace{0.5cm}
\\[3mm]    + \frac{2s}{\eta} \sum_{k,\ \beta_k >0} \norm{\partial^{\beta-\ee_k}_v\partial^{\alpha+\ee_k}_x\boldf}_{\hilbertxv}^2  + \frac{2}{\eta} \mathcal{G}^s_{x,v}(\boldf,\boldf)^2 + \frac{2 \delta_\ms^2 C_{\alpha,\beta}}{\eta}.
\end{multline*}
Recalling that both $\eps\leq 1$ and $\delta_\ms\leq 1$, and using the upper bound
\begin{equation*}
\mathds{1}_{\br{|\alpha|\geq1}}\sum_{0<|\alpha^\prime|+|\beta^\prime|\leq s-1}\norm{\partial^{\beta^\prime}_v\partial^{\alpha^\prime}_x \boldf}_{\spacexv}^2  \leq \norm{\boldf}_{H^{s-1}_{x,v}\big(\langle v\rangle^{\frac{\gamma}{2}}\boldmu^{-\frac{1}{2}}\big)}^2,
\end{equation*}
together with $\mathds{1}_{\br{|\alpha|\geq1}}\leq 1$, so that we finally obtain estimate $\eqref{a priori ab f}$ by choosing
\begin{gather*}
\xi = \eta = \frac{C^{\NuNu}_3}{2(5 + C^\KK_2 + C^\NuNu_4)},\qquad \eps\leq \eta, \\[3mm]
K_{s-1} = 2\pab{ C^\NuNu_5 + C^\NuNu_6 + C^\KK_1(\xi) }, \\[3mm]
C^{(9)} = \frac{4 s (5 + C^\KK_2 + C^\NuNu_4)}{C^\NuNu_3},  \\[3mm]
\tilde{C}_{\alpha,\beta} = \frac{4 C_{\alpha,\beta} (5 + C^\KK_2 + C^\NuNu_4)}{C^\NuNu_3}.
\end{gather*}

\subsection{Time evolution of $\scalprod{\partial^\alpha_x \boldf, \partial^{\ee_k}_v \partial^{\alpha-\ee_k}_x\boldf}_{\hilbertxv}$} 

At last, consider a multi-index $\alpha\in\N^3$, with $|\alpha|\leq s$ and $\alpha_k >0$. The equation satisfied by the $\hilbertxv$ norm of the commutator for higher derivatives is
\begin{equation*}
\begin{split}
\frac{\dd}{\dd t}\langle\partial^\alpha_x\boldf,\partial^{\ee_k}_v  & \partial^{\alpha-\ee_k}_x\boldf\rangle_{\hilbertxv} 
\\[2mm]     = &\  \frac{2}{\eps^2}\scalprod{\partial^\alpha_x\bepsL(\boldf),\partial^{\ee_k}_v \partial^{\alpha-\ee_k}_x \boldf}_{\hilbertxv}  - \frac{2}{\eps} \scalprod{\partial^\alpha_x(v\cdot\nabla_x\boldf),\partial^{\ee_k}_v \partial^{\alpha-\ee_k}_x\boldf}_{\hilbertxv}
\\[3mm]  &\  + \frac{2}{\eps}\scalprod{\partial^\alpha_x\boldQ(\boldf,\boldf),\partial^{\ee_k}_v \partial^{\alpha-\ee_k}_x\boldf}_{\hilbertxv}  + 2\scalprod{\partial^\alpha_x\bepsS,\partial^{\ee_k}_v \partial^{\alpha-\ee_k}_x\boldf}_{\hilbertxv}.
\end{split}
\end{equation*}
Thanks to Leibniz derivation rule, the linear operator can be split as
\begin{equation*}
\begin{split}
\partial^\alpha_x \bepsL(\boldf) =  & \  \partial^\alpha_x\boldQ(\bepsM,\boldf) + \partial^\alpha_x\boldQ(\boldf,\bepsM)
\\[4mm]    =  &\  \bepsL(\partial^\alpha_x\boldf) + \sum_{\substack{\gamma \leq \alpha  \\   |\gamma| < |\alpha|}}\binom{\alpha}{\gamma} \pabb{\boldQ(\partial^\gamma_x \bepsM,\partial^{\alpha-\gamma}_x\boldf)+\boldQ(\partial^{\gamma}_x\boldf,\partial^{\gamma-\alpha}_x \bepsM)}
\\[3mm]    = &\  \bepsL(\partial^\alpha_x\boldf^\perp) + (\bepsL-\boldL)(\pi_\boldL(\partial^\alpha_x\boldf)) 
\\[2mm]     &  \hspace{3cm} + \sum_{\substack{\gamma \leq \alpha  \\   |\gamma| < |\alpha|}}\binom{\alpha}{\gamma} \pabb{\boldQ(\partial^\gamma_x \bepsM,\partial^{\alpha-\gamma}_x\boldf)+\boldQ(\partial^{\gamma}_x\boldf,\partial^{\gamma-\alpha}_x \bepsM)}
\end{split}
\end{equation*}
where the second and third terms are of order $\mathcal{O}(\eps\delta_\ms )$. Using estimate $\eqref{Property2}$ we can thus initially bound the linear term as
\begin{multline*}
\frac{2}{\eps^2}\scalprod{\partial^\alpha_x\bepsL(\boldf),\partial^{\ee_k}_v \partial^{\alpha-\ee_k}_x \boldf}_{\hilbertxv} 
\\[3mm]    \leq  \frac{2 C^\LL_1}{\eps^2} \norm{\partial^\alpha_x\boldf^\perp}_{\spacexv}\norm{\partial^{\ee_k}_v \partial^{\alpha-\ee_k}_x\boldf}_{\spacexv} \hspace{3.5cm}
\\[3mm]     + \frac{2 \delta_\ms C^\LL_1 K_{\dd x}' }{\eps} \norm{\pi_\boldL(\partial^\alpha_x\boldf)}_{\spacexv}\norm{\partial^{\ee_k}_v \partial^{\alpha-\ee_k}_x\boldf}_{\spacexv}
\\[3mm]     + \frac{2 \delta_\ms C^\LL_1 \tilde{K}_{\dd x} }{\eps} \sum_{|\alpha'| < |\alpha|} \norm{\partial^{\alpha'}_x\boldf}_{\spacexv}\norm{\partial^{\ee_k}_v \partial^{\alpha-\ee_k}_x\boldf}_{\spacexv},
\end{multline*}
for some positive (explicitly computable) constants $K_{\dd x}'$ and $\tilde{K}_{\dd x}$. We then apply Young's inequality to the first term with a positive constant $e/\eps$ and to the second and third ones with the constant $e >0$, to recover 
\begin{multline}\label{Linear term a}
\frac{2}{\eps^2}\scalprod{\partial^\alpha_x\bepsL(\boldf),\partial^{\ee_k}_v \partial^{\alpha-\ee_k}_x \boldf}_{\hilbertxv} 
\\[2mm]    \leq   \frac{2 C^\LL_1 e}{\eps^3} \norm{\partial^\alpha_x\boldf^\perp}_{\spacexv}^2   + \frac{2 \delta_\ms e C^\LL_1 K_{\dd x}' C_\pi}{\eps} \norm{\partial^\alpha_x\boldf}_{\hilbertxv}^2 \hspace{2cm}
\\[4mm]   \hspace{2cm} + \frac{2 C^\LL_1}{\eps e}\pabb{1+\delta_\ms \pab{K_{\dd x}' C_\pi + 2\tilde{K}_{\dd x}(1 + C_\pi }}\norm{\partial^{\ee_k}_v \partial^{\alpha-\ee_k}_x\boldf}_{\spacexv}^2    
\\[3mm]     \hspace{2cm} + \frac{2\delta_\ms e  C^\LL_1 \tilde{K}_{\dd x}}{\eps} \sum_{|\alpha'| < |\alpha|} \norm{\partial^{\alpha'}_x\boldf}_{\spacexv}^2.    
\end{multline}
The transport term is more tricky in this case. Indeed, integrating by parts in $x_k$ and then in $v_k$, we get the identity
\begin{equation}\label{Transport term a}
\begin{split}
- \frac{2}{\eps} \langle\partial^\alpha_x(v\cdot\nabla_x\boldf), & \partial^{\ee_k}_v \partial^{\alpha-\ee_k}_x\boldf\rangle_{\hilbertxv} \\[2mm]     = & - \frac{1}{\eps} \norm{\partial^\alpha_x \boldf}_{\hilbertxv}^2 - \frac{1}{\eps} \scalprod{\partial^{\alpha - \ee_k}_x(v\cdot\nabla_x \boldf), v_k\partial^\alpha_x \boldf}_{\hilbertxv},
\end{split}
\end{equation}
where the second term comes from the integration by parts when one derives the maxwellian weight $\boldmu^{-1}$ with respect to the variable $v_k$. Note that this term does not have an explicit sign and could therefore create a problem when trying to close the estimates. However, it is important to recall that we are not interested in the estimate of each single $|\alpha| = s$ with $\alpha_k >0$, but we only care about controlling the sum of all these terms. In particular, one can prove that when summing over $|\alpha|\leq s$ and $\alpha_k >0$ with $k=1,2,3,$ we get
\begin{equation*}
\begin{split}
-\sum_{\substack{|\alpha| = s  \\  k,\ \alpha_k >0}}\scalprod{\partial^{\alpha - \ee_k}_x(v\cdot\nabla_x \boldf), v_k\partial^\alpha_x \boldf}_{\hilbertxv} = & - \sum_{|\alpha'| = s-1} \norm{v\cdot\nabla_x (\partial^{\alpha'}_x\boldf)}_{\hilbertxv}^2 \leq 0,
\end{split}
\end{equation*}
so that the problematic term actually exhibits an explicit sign and we can get rid of it.

\smallskip
Finally, thanks to Young's inequality applied with the positive constant $e/C^\LL_1$, we can control the non-linear term as
\begin{equation}\label{Non-linear term a}
\frac{2}{\eps}\scalprod{\partial^\alpha_x\boldQ(\boldf,\boldf),\partial^{\ee_k}_v \partial^{\alpha-\ee_k}_x\boldf}_{\hilbertxv} \leq  \frac{2 e}{\eps C^\LL_1}\mathcal{G}^s_{x}(\boldf,\boldf)^2  + \frac{2 C^\LL_1}{\eps e}\norm{\partial^{\ee_k}_v \partial^{\alpha-\ee_k}_x\boldf}_{\spacexv},
\end{equation}
while the source term is handled thanks to estimate $\eqref{Property10}$, and writes
\begin{equation}\label{Source term a}
2\scalprod{\partial^\alpha_x \bepsS,\partial^{e_k}_v\partial^{\alpha-e_k}_x \boldf}_{\hilbertxv} \leq  \frac{2 \delta_\ms^2 C_{\alpha, k}}{\eps \eta_3} +\frac{2 \eta_3}{\eps}\norm{\partial^{e_k}_v\partial^{\alpha-e_k}_x \boldf}_{\spacexv}^2.
\end{equation}
Gathering estimates \eqref{Linear term a}--\eqref{Source term a}, together with the use of the upper bound
\begin{equation*}
\sum_{|\alpha'| < |\alpha|} \norm{\partial^{\alpha'}_x\boldf}_{\spacexv}^2 \leq \norm{\boldf}_{H^{s-1}_{x,v}\big(\langle v\rangle^{\frac{\gamma}{2}}\boldmu^{-\frac{1}{2}}\big)}^2,
\end{equation*}
choosing $\eta_3 = C^\LL_1/ e$ and recalling that $\delta_\ms\leq 1$, we finally obtain
\begin{multline}
\frac{\dd}{\dd t}\scalprod{\partial^\alpha_x\boldf,\partial^{\ee_k}_v \partial^{\alpha-\ee_k}_x\boldf}_{\hilbertxv}
\\[2mm]    \leq - \frac{1}{\eps}\pabb{1 - 2 \delta_\ms e C^\LL_1 K_{\dd x}' C_\pi} \norm{\partial^\alpha_x \boldf}_{\hilbertxv}^2   + \frac{2 C^\LL_1 e}{\eps^3} \norm{\partial^\alpha_x\boldf^\perp}_{\spacexv}^2 \hspace{1cm}
\\[4mm]      + \frac{2 C^\LL_1}{\eps e}\pabb{3+K_{\dd x}'  C_\pi + 2 \tilde{K}_{\dd x}(1+C_\pi) }\norm{\partial^{\ee_k}_v \partial^{\alpha-\ee_k}_x\boldf}_{\spacexv}^2
\\[5mm]    \hspace{3.5cm} + \frac{2 \delta_\ms  e C^\LL_1 \tilde{K}_{\dd x} }{\eps} \norm{\boldf}_{H^{s-1}_{x,v}\big(\langle v\rangle^{\frac{\gamma}{2}}\boldmu^{-\frac{1}{2}}\big)}^2 + \frac{2 e}{\eps C^\LL_1}\mathcal{G}^s_{x}(\boldf,\boldf)^2 + \frac{2 \delta_\ms^2 e C_{\alpha,k}}{\eps C^\LL_1} 
\\[5mm]     - \frac{1}{\eps} \scalprod{\partial^{\alpha - \ee_k}_x(v\cdot\nabla_x \boldf), v_k\partial^\alpha_x \boldf}_{\hilbertxv},
\end{multline}
which is estimate $\eqref{a priori ak f}$ with the choices
\begin{gather*}
C^{(10)} = 3+K_{\dd x}'  C_\pi + 2 \tilde{K}_{\dd x}(1+C_\pi), \qquad C^{(11)} = 2  C^\LL_1 K_{\dd x}' C_\pi, \\[3mm]
K_{\alpha,k} = 2 C^\LL_1 \tilde{K}_{\dd x}, \qquad \tilde{C}_{\alpha, k} = \frac{2 C_{\alpha,k}}{C^\LL_1}.
\end{gather*}



\providecommand{\href}[2]{#2}
\providecommand{\arxiv}[1]{\href{http://arxiv.org/abs/#1}{arXiv:#1}}
\providecommand{\url}[1]{\texttt{#1}}
\providecommand{\urlprefix}{URL }


\begin{thebibliography}{99}

\bibitem{BarBisBruDes}
\newblock C.~Baranger, M.~Bisi, S.~Brull and L.~Desvillettes,
\newblock On the {C}hapman-{E}nskog asymptotics for a mixture of monoatomic and
  polyatomic rarefied gases,
\newblock \emph{Kinet. Relat. Models}, \textbf{11} (2018), 821--858.

\bibitem{BarMou}
\newblock C.~Baranger and C.~Mouhot,
\newblock Explicit spectral gap estimates for the linearized {B}oltzmann and
  {L}andau operators with hard potentials,
\newblock \emph{Rev. Mat. Iberoamericana}, \textbf{21} (2005), 819--841.

\bibitem{BGL1}
\newblock C.~Bardos, F.~Golse and D.~Levermore,
\newblock Fluid dynamic limits of kinetic equations. {I}. {F}ormal derivations,
\newblock \emph{J. Statist. Phys.}, \textbf{63} (1991), 323--344.

\bibitem{BGL2}
\newblock C.~Bardos, F.~Golse and C.~D.~Levermore,
\newblock Fluid dynamic limits of kinetic equations. {II}. {C}onvergence proofs
  for the {B}oltzmann equation,
\newblock \emph{Comm. Pure Appl. Math.}, \textbf{46} (1993), 667--753.

\bibitem{BarUka}
\newblock C.~Bardos and S.~Ukai,
\newblock The classical incompressible {N}avier-{S}tokes limit of the
  {B}oltzmann equation,
\newblock \emph{Math. Models Methods Appl. Sci.}, \textbf{1} (1991), 235--257.

\bibitem{BasEspLebMar}
\newblock S.~Bastea, R.~Esposito, J.~L.~Lebowitz and R.~Marra,
\newblock Binary fluids with long range segregating interaction. {I}.
  {D}erivation of kinetic and hydrodynamic equations,
\newblock \emph{J. Statist. Phys.}, \textbf{101} (2000), 1087--1136.

\bibitem{BiaDog2}
\newblock C.~Bianca and C.~Dogbe,
\newblock Recovering {N}avier-{S}tokes equations from asymptotic limits of the
  {B}oltzmann gas mixture equation,
\newblock \emph{Commun. Theor. Phys. (Beijing)}, \textbf{65} (2016), 553--562.

\bibitem{BisDes2}
\newblock M.~Bisi and L.~Desvillettes,
\newblock Formal passage from kinetic theory to incompressible {N}avier-{S}tokes equations for a mixture of gases,
\newblock \emph{ESAIM Math. Model. Numer. Anal.}, \textbf{48} (2014), 1171--1197.

\bibitem{BonBri}
\newblock A.~Bondesan and M.~Briant,
\newblock Perturbative {C}auchy theory of a flux-incompressible {M}axwell-{S}tefan system in a non-equimolar regime,
\newblock Submitted for publication, 2019.

\bibitem{BonBouBriGre}
\newblock A.~Bondesan, L.~Boudin, M.~Briant and B.~Grec,
\newblock {Stability of the spectral gap for the Boltzmann multi-species
  operator linearized around non-equilibrium Maxwell distributions},
\newblock To appear in \emph{Commun. Pure Appl. Anal.}, 2019.


\bibitem{BouGotGre}
\newblock L.~Boudin, D.~G\"{o}tz and B.~Grec,
\newblock Diffusion models of multicomponent mixtures in the lung,
\newblock in \emph{C{EMRACS} 2009: {M}athematical modelling in medicine},
  vol.~30 of ESAIM Proc.,
\newblock EDP Sci., Les Ulis, 2010,
\newblock 90--103.

\bibitem{BouGrePav}
\newblock L.~Boudin, B.~Grec and V.~Pavan,
\newblock The {M}axwell-{S}tefan diffusion limit for a kinetic model of mixtures with general cross sections,
\newblock \emph{Nonlinear Anal.}, \textbf{159} (2017), 40--61.


\bibitem{BouGreSal1}
\newblock L.~Boudin, B.~Grec and F.~Salvarani,
\newblock A mathematical and numerical analysis of the {M}axwell-{S}tefan diffusion equations,
\newblock \emph{Discrete Contin. Dyn. Syst. Ser. B}, \textbf{17} (2012), 1427--1440.

\bibitem{BouGreSal2}
\newblock L.~Boudin, B.~Grec and F.~Salvarani,
\newblock The {M}axwell-{S}tefan diffusion limit for a kinetic model of mixtures,
\newblock \emph{Acta Math. Appl.}, \textbf{136} (2015), 79--90.


\bibitem{Bri1}
\newblock M.~Briant,
\newblock From the {B}oltzmann equation to the incompressible {N}avier-{S}tokes
  equations on the torus: a quantitative error estimate,
\newblock \emph{J. Differential Equations}, \textbf{259} (2015), 6072--6141.

\bibitem{Bri2}
\newblock M.~Briant,
\newblock Stability of global equilibrium for the multi-species {B}oltzmann equation in {$L^\infty$} settings,
\newblock \emph{Discrete Contin. Dyn. Syst.}, \textbf{36} (2016), 6669--6688.

\bibitem{BriDau}
\newblock M.~Briant and E.~S.~Daus,
\newblock The {B}oltzmann equation for a multi-species mixture close to global
  equilibrium,
\newblock \emph{Arch. Ration. Mech. Anal.}, \textbf{222} (2016), 1367--1443.

\bibitem{BriGre}
\newblock M.~Briant and B.~Grec,
\newblock Derivation of the Fick's model of multi-species diffusion from the Boltzmann equation for gaseous mixtures,
\newblock In preparation, 2019.

\bibitem{Bru1}
\newblock S.~Brull,
\newblock Problem of evaporation-condensation for a two component gas in the slab,
\newblock \emph{Kinet. Relat. Models}, \textbf{1} (2008), 185--221.

\bibitem{Bru2}
\newblock S.~Brull,
\newblock The Boltzmann equation for a two component gas in the slab for different molecular masses,
\newblock \emph{Adv. Differential Equations}, \textbf{15} (2010), 1103--1124.

\bibitem{Caf1}
\newblock R.~E.~Caflisch,
\newblock The fluid dynamic limit of the nonlinear {B}oltzmann equation,
\newblock \emph{Comm. Pure Appl. Math.}, \textbf{33} (1980), 651--666.


\bibitem{Cer1}
\newblock C.~Cercignani,
\newblock \emph{The {B}oltzmann equation and its applications}, vol.~67 of
  Applied Mathematical Sciences,
\newblock Springer-Verlag, New York, 1988.

\bibitem{CerIllPul}
\newblock C.~Cercignani, R.~Illner and M.~Pulvirenti,
\newblock \emph{The mathematical theory of dilute gases}, vol. 106 of Applied
  Mathematical Sciences,
\newblock Springer-Verlag, New York, 1994.

\bibitem{Cha}
\newblock H.~Chang,
\newblock Multicomponent diffusion in the lung,
\newblock \emph{Fed. proc.}, \textbf{39} (1980), 2759--2764.

\bibitem{DauJunMouZam}
\newblock E.~S. Daus, A.~J{\"u}ngel, C.~Mouhot and N.~Zamponi,
\newblock Hypocoercivity for a linearized multispecies {B}oltzmann system,
\newblock \emph{SIAM J. Math. Anal.}, \textbf{48} (2016), 538--568.

\bibitem{DMEL}
\newblock A.~De~Masi, R.~Esposito and J.~L.~Lebowitz,
\newblock Incompressible {N}avier-{S}tokes and {E}uler limits of the
  {B}oltzmann equation,
\newblock \emph{Comm. Pure Appl. Math.}, \textbf{42} (1989), 1189--1214.

\bibitem{DesMonSal}
\newblock L.~Desvillettes, R.~Monaco and F.~Salvarani,
\newblock A kinetic model allowing to obtain the energy law of polytropic gases
  in the presence of chemical reactions,
\newblock \emph{Eur. J. Mech. B Fluids}, \textbf{24} (2005), 219--236.

\bibitem{EllPin}
\newblock R.~S.~Ellis and M.~A.~Pinsky,
\newblock The first and second fluid approximations to the linearized
  {B}oltzmann equation,
\newblock \emph{J. Math. Pures Appl. (9)}, \textbf{54} (1975), 125--156.


\bibitem{GalSaiTex}
\newblock I.~Gallagher, L.~Saint-Raymond and B.~Texier,
\newblock \emph{From {N}ewton to {B}oltzmann: hard spheres and short-range
  potentials},
\newblock Zurich Lectures in Advanced Mathematics, European Mathematical
  Society (EMS), Z\"urich, 2013.


\bibitem{Gol1}
\newblock F.~Golse,
\newblock Fluid dynamic limits of the kinetic theory of gases,
\newblock in \emph{From particle systems to partial differential equations},
  vol.~75 of Springer Proc. Math. Stat.,
\newblock Springer, Heidelberg, 2014,
\newblock 3--91.

\bibitem{GolStRay1}
\newblock F.~Golse and L.~Saint-Raymond,
\newblock The {N}avier-{S}tokes limit of the {B}oltzmann equation for bounded
  collision kernels,
\newblock \emph{Invent. Math.}, \textbf{155} (2004), 81--161.

\bibitem{GolStRay2}
\newblock F.~Golse and L.~Saint-Raymond,
\newblock The incompressible {N}avier-{S}tokes limit of the {B}oltzmann
  equation for hard cutoff potentials,
\newblock \emph{J. Math. Pures Appl. (9)}, \textbf{91} (2009), 508--552.

\bibitem{Gra1}
\newblock H.~Grad,
\newblock Principles of the kinetic theory of gases,
\newblock in \emph{Handbuch der {P}hysik (herausgegeben von {S}. {F}l\"ugge),
  {B}d. 12, {T}hermodynamik der {G}ase},
\newblock Springer-Verlag, Berlin, 1958,
\newblock 205--294.

\bibitem{Gra}
\newblock H.~Grad,
\newblock Asymptotic equivalence of the {N}avier-{S}tokes and nonlinear {B}oltzmann equations,
\newblock in \emph{Proc. {S}ympos. {A}ppl. {M}ath., {V}ol. {XVII}},
\newblock Amer. Math. Soc., Providence, R.I., 1965,
\newblock 154--183.

\bibitem{Guo3}
\newblock Y.~Guo,
\newblock Boltzmann diffusive limit beyond the {N}avier-{S}tokes approximation,
\newblock \emph{Comm. Pure Appl. Math.}, \textbf{59} (2006), 626--687.


\bibitem{HutSal1}
\newblock H.~Hutridurga and F.~Salvarani,
\newblock On the {M}axwell-{S}tefan diffusion limit for a mixture of monatomic gases,
\newblock \emph{Math. Meth. in Appl. Sci.}, \textbf{40} (2017), 803--813.

\bibitem{IllPul3}
\newblock R.~Illner and M.~Pulvirenti,
\newblock Global validity of the {B}oltzmann equation for two- and
  three-dimensional rare gas in vacuum. {E}rratum and improved result:
  ``{G}lobal validity of the {B}oltzmann equation for a two-dimensional rare
  gas in vacuum'' [{C}omm. {M}ath. {P}hys. {\bf 105} (1986), no. 2, 189--203;
  {MR}0849204 (88d:82061)] and ``{G}lobal validity of the {B}oltzmann equation
  for a three-dimensional rare gas in vacuum'' [ibid. {\bf 113} (1987), no. 1,
  79--85; {MR}0918406 (89b:82052)] by {P}ulvirenti,
\newblock \emph{Comm. Math. Phys.}, \textbf{121} (1989), 143--146.

\bibitem{IllPul2}
\newblock R.~Illner and M.~Pulvirenti,
\newblock A derivation of the {BBGKY}-hierarchy for hard sphere particle
  systems,
\newblock \emph{Transport Theory Statist. Phys.}, \textbf{16} (1987),
  997--1012.


\bibitem{Kat}
\newblock T.~Kato,
\newblock \emph{Perturbation theory for linear operators},
\newblock Classics in Mathematics, Springer-Verlag, Berlin, 1995,
\newblock Reprint of the 1980 edition.


\bibitem{Lan}
\newblock O.~E.~Lanford~III,
\newblock Time evolution of large classical systems,
\newblock in \emph{Dynamical systems, theory and applications ({R}econtres,
  {B}attelle {R}es. {I}nst., {S}eattle, {W}ash., 1974)},
\newblock Springer, Berlin, 1975,
\newblock 1--111. Lecture Notes in Phys., Vol. 38.

\bibitem{LevMas}
\newblock C.~D.~Levermore and N.~Masmoudi,
\newblock From the {B}oltzmann equation to an incompressible
  {N}avier-{S}tokes-{F}ourier system,
\newblock \emph{Arch. Ration. Mech. Anal.}, \textbf{196} (2010), 753--809.

\bibitem{Max}
\newblock J.~Maxwell,
\newblock On the dynamical theory of gases,
\newblock \emph{Phil. Trans. R. Soc. Lond.}, \textbf{157} (1867), 49--88.

\bibitem{Mou1}
\newblock C.~Mouhot,
\newblock Explicit coercivity estimates for the linearized {B}oltzmann and
  {L}andau operators,
\newblock \emph{Comm. Partial Differential Equations}, \textbf{31} (2006),
  1321--1348.

\bibitem{MouNeu}
\newblock C.~Mouhot and L.~Neumann,
\newblock Quantitative perturbative study of convergence to equilibrium for
  collisional kinetic models in the torus,
\newblock \emph{Nonlinearity}, \textbf{19} (2006), 969--998.

\bibitem{PulSafSim}
\newblock M.~Pulvirenti, C.~Saffirio and S.~Simonella,
\newblock On the validity of the {B}oltzmann equation for short range potentials,
\newblock \emph{Rev. Math. Phys.}, \textbf{26} (2014), 1450001, 64.

\bibitem{StRay}
\newblock L.~Saint-Raymond,
\newblock \emph{Hydrodynamic limits of the {B}oltzmann equation}, vol. 1971 of
  Lecture Notes in Mathematics,
\newblock Springer-Verlag, Berlin, 2009.

\bibitem{Ste}
\newblock J.~Stefan,
\newblock {\"U}ber das gleichgewicht und die bewegung, insbesondere die
  diffusion von gasgemengen,
\newblock \emph{Akad. Wiss. Wien}, \textbf{63} (1871), 63--124.

\bibitem{TDBCH}
\newblock M.~Thiriet, D.~Douguet, J.-C.~Bonnet, C.~Canonne and C.~Hatzfeld,
\newblock {The effect on gas mixing of a He-O2 mixture in chronic obstructive
  lung diseases},
\newblock \emph{Bull. Eur. Physiopathol. Respir.}, \textbf{15} (1979), 1053--1068.

\bibitem{Vil2}
\newblock C.~Villani,
\newblock A review of mathematical topics in collisional kinetic theory,
\newblock in \emph{Handbook of mathematical fluid dynamics, {V}ol. {I}},
\newblock North-Holland, Amsterdam, 2002,
\newblock 71--305.

\bibitem{Vil4}
\newblock C.~Villani,
\newblock Hypocoercivity,
\newblock \emph{Mem. Amer. Math. Soc.}, \textbf{202} (2009), iv+141.

\end{thebibliography}
%
%
\bigskip
\signandrea

\bigskip
\signmarc

\end{document}